\theoremstyle{plain}
\newtheorem{Theorem}{Theorem}[section] 
\newtheorem{Proposition}[Theorem]{Proposition}
\newtheorem{Corollary}[Theorem]{Corollary}
\newtheorem{Definition}[Theorem]{Definition}
\newtheorem{Lemma}[Theorem]{Lemma}
\newtheorem{Example}[Theorem]{Example}
\newtheorem{Remark}[Theorem]{Remark}
\newtheorem{Definition/Proposition}[Theorem]{Definition/Proposition}
\newtheorem{Theorem*}{Theorem}
\newtheorem{Corollary*}[Theorem*]{Corollary}
\date{}
\renewcommand\theequation%
\theoremstyle{Definition}
\newtheorem{proposition*}[thmbis]{Proposition}
\theoremstyle{Definition}
\newtheorem{proposition**}[thmter]{Proposition}
\makeatletter \@addtoreset{figure}{section}\makeatother
\newcommand{\R}{\mathbb{R}}
\newcommand{\A}{\mathbb{A}}
\newcommand{\N}{\mathbb{N}}
\newcommand{\Z}{\mathbb{Z}}
\newcommand{\G}{\mathbb{G}}
\newcommand{\htt}{\mathrm{ht}}
\newcommand{\qp}{\varpi}
\newcommand{\efface}[1]{}
\newcommand{\pr}{\mathrm{proj}}
\newcommand{\cC}{\mathcal{C}}
\newcommand{\cD}{\mathcal{D}}
\newcommand{\cH}{\mathcal{H}}
\newcommand{\cK}{\mathcal{K}}
\newcommand{\cQ}{\mathcal{Q}}
\newcommand{\sT}{\mathscr{T}}
\newcommand{\bs}{\mathbf{s}}
\newcommand{\bt}{\mathbf{t}}
\newcommand{\bx}{\mathbf{x}}
\newcommand{\by}{\mathbf{y}}
\newcommand{\bz}{\mathbf{z}}
\renewcommand{\phi}{\varphi}
\renewcommand{\emptyset}{\varnothing}
\renewcommand{\tilde}[1]{\widetilde{#1}}
\def\Ddots{\mathinner{\mkern1mu\raise\p@
\vbox{\kern7\p@\hbox{.}}\mkern2mu
\raise4\p@\hbox{.}\mkern2mu\raise7\p@\hbox{.}\mkern1mu}}
\DeclareMathOperator{\supp}{Supp}
\DeclareMathOperator{\sgn}{sgn}
\newcommand{\Inv}{\mathrm{Inv}}
\title{Quantum roots for  Kac-Moody root systems and finiteness properties of the Kac-Moody affine Bruhat order}
\author{Auguste \textsc{Hébert} \\Université de Lorraine, Institut Élie Cartan de Lorraine, F-54000 Nancy, France\\ UMR 7502,
auguste.hebert@univ-lorraine.fr \\  \and Paul \textsc{Philippe} \\ Université Jean Monnet, Institut Camille Jordan, F-42023 Saint-Étienne, France\\ UMR 5208, paul.philippe@univ-st-etienne.fr}
\begin{document}
\maketitle

\begin{abstract}
Let $G$ be a split Kac-Moody group over a local field. In their study of the Iwahori-Hecke algebra of $G$, A.Braverman, D. Kazhdan and M. Patnaik  defined a partial order - called the affine Bruhat order - on the extended affine Weyl semi-group $W^+$ of $G$. In this paper, we study finiteness questions for covers and co-covers of $W^+$, generalizing results of A. Welch. In particular we prove that the intervals for this order are finite. Our results rely on the finiteness of the set of quantum roots of arbitrary Kac-Moody root systems, which we prove. We also obtain a classification of quantum roots.
\end{abstract}

\section{Introduction}

\paragraph{Affine Weyl semi group $W^+$}

Let $\mathbb{G}$ be a split reductive group scheme with the data of a Borel subgroup $\mathbb{B}$ containing
a maximal torus $\mathbb{T}$. Let $W^v=N_{\mathbb{G}}(\mathbb{T})/\mathbb{T}$ be its vectorial Weyl group and $Y$ be its coweight
lattice: $Y =\mathrm{Hom}(\mathbb{G}_m ,\mathbb{T})$. The action of $W^v$ on $\mathbb{T}$ induces an action of $W^v$ on $Y$ and allows
to form the semidirect product $W^a=Y\rtimes W^v$. This group, called the extended affineWeyl
group of $(\mathbb{G},\mathbb{T})$, naturally appears  in the geometry and the representation theory of $\mathbb{G}$ over
discretely valued fields, for example when one studies the Iwahori-Hecke of $\mathbb{G}(\cK)$, where $\cK$ is a non-Archimedean local field; it is a finite extension of an affine Coxeter group and it is naturally equipped with a Bruhat order. 

Kac-Moody groups are natural infinite dimensional generalizations of reductive groups, and we are interested by the structure of these groups over discretely valued fields.
Assume now that $(\mathbb{G},\mathbb{T})$ is a split Kac-Moody group (à la Tits, as defined in \cite{tits1987uniqueness}). Braverman, Kazhdan and Patnaik and Bardy-Panse, Gaussent and Rousseau associated to $\mathbb{G}(\cK)$  an Iwahori-Hecke algebra $\cH$ (see  \cite{braverman2016iwahori} and \cite{bardy2016iwahori}). The analog of $W^a$ in this context is then $W^+=W^v\rtimes Y^+$, where $Y^+$ is the integral Tits cone. Unless $\mathbb{G}$ is reductive, $W^v$ is infinite and $Y^+$ is a proper sub-semi group of $Y$. Therefore $W^+$ is only a semi-subgroup of $W^v\rtimes Y$ in general.

\paragraph{Affine Bruhat order on $W^+$}
In  \cite[Appendix B2]{braverman2016iwahori}, A. Braverman, D. Kazhdan and M. Patnaik propose the definition of a preorder on $W^+$ which would replace the Bruhat order of $W^a$ and they conjecture that it is a partial order. In \cite{muthiah2018iwahori}, D. Muthiah extends the definition of this preorder to any Kac-Moody group $G$ and proves that it is actually an order. Muthiah and D. Orr have proved that $W^+$ admits a length $\ell^a:W^+\rightarrow \Z$ strictly compatible with the affine Bruhat order (\cite{muthiah2019bruhat}). 
\paragraph{Covers for the affine Bruhat order}
If $\bx,\by$ lie in $W^+$, we say that $\by$ covers $\bx$ (and that $\bx$ co-covers $\by$) if $\by>\bx$ and $\{\bz \in W^+ \mid \bx < \bz < \by\}=\emptyset$. The affine Bruhat length gives a characterization of covers:   Muthiah and Orr (in the affine ADE case) and the second named author (in the general case) have proved that $\by$ covers $\bx$ if and only if $\by>\bx$ and $\ell^a(\by)=\ell^a(\bx)+1$ (\cite{muthiah2019bruhat}, \cite{philippe2023grading}).

\paragraph{Quantum roots}
Let $\Phi$ denote the root system of $(\mathbb{G},\mathbb{T})$. The notion of quantum root comes up in our work through the study of the affine Bruhat order. Let $\bx,\by\in W^+$ be such that $\by$ covers $\bx$. Then we can write $\by=\bs \bx$, where $\bs$ is a reflection of $W^v\rtimes Y$. The linear part of $\bs$ is the reflection $s_\beta$ associated to a root $\beta$ of $\Phi$. Suppose that $\by \notin W^v\bx W^v$ and, to simplify, assume that $\bx$ has dominant coweight. Then $\beta$ has to satisfy certain conditions (see Lemma~\ref{Lemma : form of covers}), a root satisfying these conditions is called a \textbf{quantum root} (see Definition~\ref{d_almost_simple_root}). We prove that the set of quantum roots  of a Kac-Moody root system is finite (see Theorem~\ref{t_finiteness_almost_simple_roots}). We also obtain a complete classification of quantum roots through Dynkin diagrams (see Theorem~\ref{Theorem: Classification quantum roots}), which may be insightful for a future construction of quantum Bruhat graphs associated to infinite root systems.

In the reductive setting, quantum roots first came up in the construction of the quantum Bruhat graph by F. Brenti, S. Fomin and A. Postnikov in 1999 (\cite{brenti1999mixedbruhat}). This graph is used to study the quantum cohomology of flag varieties, which justifies the name. The specific terminology of quantum root seems to come from the more recent studies of the quantum Bruhat graph by F. Schremmer (see \cite{schremmer2024affinebruhat}). Note that in his work, Schremmer already relates quantum roots to covers for the affine Bruhat order, but in the reductive setting.

\paragraph{Finiteness results for covers and co-covers.}

Using our study of quantum roots, we obtain finiteness results for the affine Bruhat order. More precisely, denote by $\qp^\lambda w\in W^+$ the element of $W^+$ corresponding to $(\lambda,w)$ for $\lambda\in Y^+$ and $w\in W^v$.

We say that $\qp^\lambda w\in W^+$ is spherical if $\lambda$ has finite stabilizer in $W^v$. Our main results on the affine Bruhat order are the following: 

\begin{Theorem}\label{thm_finiteness_intro}(see Theorem~\ref{Theorem : finiteness of covers} and Theorem~\ref{thm_finiteness_co-covers})
   Let $\bx\in W^+$. Then $\bx$ admits finitely many covers. If furthermore $\bx$ is spherical, then $\bx$  also admits finitely many co-covers.
\end{Theorem}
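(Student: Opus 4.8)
The plan is to reduce the finiteness of covers and co-covers of a given $\bx=\qp^\lambda w\in W^+$ to the finiteness of quantum roots (Theorem~\ref{t_finiteness_almost_simple_roots}) together with the length function $\ell^a$ and its strict compatibility with the order. By the cover characterization recalled in the introduction, $\by$ covers $\bx$ if and only if $\by>\bx$ and $\ell^a(\by)=\ell^a(\bx)+1$, and dually for co-covers; so it suffices to bound the set of such $\by$. First I would split into the two cases coming from Lemma~\ref{Lemma : form of covers}: either $\by\in W^v\bx W^v$, or $\by\notin W^v\bx W^v$ and $\by=\bs\bx$ with $\bs$ a reflection in $W^v\rtimes Y$ whose linear part $s_\beta$ is attached to a \emph{quantum root} $\beta$ (after conjugating so that the relevant coweight is dominant).

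For the first case — covers and co-covers lying in the same double coset $W^v\bx W^v$ — I would argue that this reduces to a statement internal to the (possibly infinite) Coxeter group $W^v$ acting on a single coweight: the elements of $W^v\bx W^v$ of a fixed length form a finite set, because $\ell^a$ restricted to a double coset is, up to a bounded correction, controlled by the Coxeter length on $W^v$, and the sphericity hypothesis (when needed for co-covers) forces the relevant orbit to be finite. Concretely, if $\bx$ is spherical then $\lambda$ has finite $W^v$-stabilizer, so its $W^v$-orbit is finite and only finitely many $\qp^{\lambda'}w'$ with $\lambda'\in W^v\lambda$ can occur; the length constraint then cuts this down further. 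For general (non-spherical) $\bx$ one still gets finiteness of covers because going \emph{up} in length within the double coset is constrained in a way that going down is not — this asymmetry is exactly why co-covers need sphericity.

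For the second case I would use the classification/finiteness of quantum roots directly. There are only finitely many quantum roots $\beta$ (Theorem~\ref{t_finiteness_almost_simple_roots}); for each such $\beta$, the reflections $\bs$ in $W^v\rtimes Y$ with linear part $s_\beta$ are parametrized by an affine family (one reflection per "level"), and $\by=\bs\bx$ together with the constraint $\ell^a(\by)=\ell^a(\bx)\pm 1$ pins down $\bs$ up to finitely many choices — because moving the hyperplane of $\bs$ changes $\ell^a(\bs\bx)$ monotonically and without bound in one direction. Summing the finitely many contributions over the finitely many quantum roots $\beta$ and the finitely many double-coset representatives handled in case one gives the result. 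The main obstacle I anticipate is the bookkeeping in case two: making precise how $\ell^a(\bs\bx)$ varies as $\bs$ ranges over the affine family of reflections with a fixed linear part, and checking that the sphericity hypothesis is exactly what is needed (and only for co-covers) to rule out an infinite descent — i.e.\ translating the combinatorial "one direction is bounded, the other is not" picture into a clean inequality using the formula for $\ell^a$ from \cite{muthiah2019bruhat} and the quantum-root conditions of Lemma~\ref{Lemma : form of covers}.
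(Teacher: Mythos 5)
Your overall skeleton is right — split covers/co-covers by whether $\pr^{Y^{++}}$ changes, use the grading $\ell^a$, and invoke finiteness of quantum roots — and this is indeed the paper's route. But there are two substantive issues.

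First, the claim in your first case that sphericity makes the $W^v$-orbit of $\lambda$ finite is false: a finite stabilizer in an infinite Coxeter group gives an infinite orbit (by orbit–stabilizer, $|W^v\lambda| = [W^v:W_\lambda]$). The finiteness the paper actually uses for co-covers within a dominance class is not orbit-finiteness but the finiteness of $v^\mu W_{\mu^{++}}$ together with item~(ii) of Lemma~\ref{Lemma : form of covers}, which forces $vs_\beta\in v^\mu W_{\mu^{++}}$ and $v$ to cover $vs_\beta$ in $W^v$; since a fixed Coxeter element has only finitely many covers, $(v,\beta)$ ranges over a finite set. For \emph{covers} of $\bx$ in the same dominance class no sphericity is needed at all, because $v$ is then determined by $\bx$ and the count is just $n_{\rm cov}(v^{-1}w)+n_{\rm cocov}(v)$.

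Second, you locate the main difficulty in controlling the affine ``level'' of the reflection and invoke monotonicity of $\ell^a$; but Lemma~\ref{Lemma : form of covers} already pins $n$ down to at most four values (and to two values, $\{-1,\langle\lambda,\beta\rangle+1\}$, when the dominance class changes), so this is not where the work is. The genuine obstruction for co-covers across dominance classes is the conjugation parameter $v\in W^\lambda$: the reflection has linear part $s_{v(\beta)}$, not $s_\beta$, and $v$ is attached to the unknown co-cover $\bx$, so it ranges over a potentially infinite set. The paper bounds it via item~(iii) of Lemma~\ref{Lemma : form of covers}: there is $u$ with $\ell(vu)=\ell(v)+\ell(u)$ and $vu\mu^{++}=\mu$, so $\ell(v)\le\max\{\ell(\tilde v):\tilde v\mu^{++}=\mu\}$, and sphericity of $\mu$ is exactly what makes that maximum finite. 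Your proposal never isolates $v$ as a free parameter, so as written it does not close this case.
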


\begin{Corollary}\label{Corollary: finiteness intervals introduction}
    Let $\bx,\by\in W^+$. Then the interval $[\bx,\by]$ is finite.
\end{Corollary}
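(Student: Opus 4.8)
The plan is to derive the corollary directly from the finiteness of covers (Theorem~\ref{Theorem : finiteness of covers}), using the fact that $\ell^a$ grades the affine Bruhat order. First I would dispose of the trivial case: if $\bx\not\leq\by$ then $[\bx,\by]=\emptyset$ and there is nothing to prove. So assume $\bx\leq\by$ and set $n:=\ell^a(\by)-\ell^a(\bx)\in\Z_{\geq 0}$, which is non-negative because $\ell^a$ is strictly compatible with the order. The goal is then to show that $[\bx,\by]$ is contained in a finite set built by iterating the ``pass to a cover'' operation at most $n$ times starting from $\bx$.

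The key structural point is that every $\bz\in[\bx,\by]$ can be joined to $\bx$ by a \emph{saturated} chain $\bx=\bz_0<\bz_1<\cdots<\bz_m=\bz$ inside $[\bx,\bz]$, with $m=\ell^a(\bz)-\ell^a(\bx)\leq n$. Indeed, any chain in $[\bx,\bz]$ has length at most $\ell^a(\bz)-\ell^a(\bx)$, since the integers $\ell^a(\bz_i)$ are strictly increasing and lie between $\ell^a(\bx)$ and $\ell^a(\bz)$; hence a maximal chain exists, and in a maximal chain each step $\bz_i<\bz_{i+1}$ is a cover. By the characterization of covers recalled above ($\bz_{i+1}$ covers $\bz_i$ iff $\bz_{i+1}>\bz_i$ and $\ell^a(\bz_{i+1})=\ell^a(\bz_i)+1$), the length of such a chain is exactly $\ell^a(\bz)-\ell^a(\bx)$. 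Thus every element of $[\bx,\by]$ is obtained from $\bx$ by choosing a cover, then a cover of that element, and so on, at most $n$ times.

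I would then conclude by induction on the number of steps. Set $S_0:=\{\bx\}$ and, for $k\geq 0$, let $S_{k+1}$ be the union over $\bw\in S_k$ of the (finite, by Theorem~\ref{Theorem : finiteness of covers}) set of covers of $\bw$; by induction each $S_k$ is finite, being a finite union of finite sets. By the previous paragraph $[\bx,\by]\subseteq\bigcup_{k=0}^{n}S_k$, which is finite, proving the corollary. There is essentially no obstacle here: the genuine content is entirely in Theorem~\ref{Theorem : finiteness of covers} (itself resting on the finiteness of quantum roots), and the only point requiring a little care is that this argument uses finiteness of covers only, so no sphericity hypothesis is needed; when $\by$ happens to be spherical one could equally run the symmetric argument downward from $\by$ using Theorem~\ref{thm_finiteness_co-covers}.
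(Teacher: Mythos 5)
Your argument is correct and is essentially the same as the paper's: the paper first records (Corollary~\ref{c_finiteness_chains}) that the set of elements reachable from $\bx$ by at most $n$ cover steps is finite, then observes that every $\bz\in[\bx,\by]$ lies at the end of such a chain with $n\leq\ell^a(\by)-\ell^a(\bx)$ by strict compatibility of $\ell^a$ with the order. Your $S_k$ construction is just an unpacked version of that intermediate corollary, and your closing remark about running the argument downward from a spherical $\by$ is a harmless aside.
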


In the reductive case, these theorems are easily obtained by considering reduced decompositions. In the Kac-Moody frameworks these theorems were first proved by A. Welch in the affine ADE case using different methods  in \cite{welch2022classification}.

When $W^v$ admits an infinite proper parabolic subgroup (this implies that $\G$ is not affine), we also prove the existence of elements of $W^+$ admitting infinitely many co-covers, see Lemma~\ref{l_reciprocal_thm_cocovers}.

Our main motivation for the study of the combinatorics of $W^+$ is the construction of an affine Kazhdan-Lusztig theory for Kac-Moody groups. This theory was initiated by Muthiah in \cite{muthiah2019double}, where he uses Corollary~\ref{Corollary: finiteness intervals introduction}   to give a conjectural definition of spherical $R$-polynomials  in the affine ADE case.  We plan to use our results to define an Iwahori version of Muthiah's polynomials in the general Kac-Moody frameworks.

\paragraph{Organization of the paper}

In section~\ref{s_almost_simple_roots}, we define the quantum roots and we prove that the set of quantum roots of a Kac-Moody datum is finite (see Theorem~\ref{t_finiteness_almost_simple_roots}). We also obtain a general classification of quantum roots (see Theorem~\ref{Theorem: Classification quantum roots}). We then describe the set of quantum roots in particular cases.

In section~\ref{s_almost_simple_roots_covers}, we explain the link between covers for the affine Bruhat order and quantum roots. We then prove Theorem~\ref{thm_finiteness_intro}. We also show that the spherical condition in Theorem~\ref{thm_finiteness_intro} is necessary by constructing non-spherical elements with infinitely many co-covers (see Proposition~\ref{Proposition: nonspherical infinite cocovers}). We end the section with an explicit construction of covers associated to any given quantum root, thus showing that the notion of quantum root is well-suited.

\paragraph{Acknowledgement}
 We thank Dinakar Muthiah and Stéphane Gaussent for helpful conversations on the subject.

\color{black}

\tableofcontents

\section{Quantum roots}\label{s_almost_simple_roots}

\subsection{General definitions and notations}

\paragraph{Generalized Cartan matrices and Dynkin Diagrams}
\begin{Definition}
 Let $I$ be a finite set. A \textbf{generalized Cartan matrix} over $I$ is a matrix $A=(a_{i,j})_{(i,j)\in I\times I}\in M_I(\mathbb Z)$ with integral coefficients satisfying the following properties:
\begin{enumerate}
    \item $\forall i\in I \; a_{i,i}=2$
    \item $\forall (i,j)\in I^2,\; i\neq j \implies a_{i,j}\leq 0$
    \item $\forall (i,j)\in I^2,\; a_{i,j}=0 \implies a_{j,i}= 0$.
\end{enumerate}
   
\end{Definition}
\begin{Definition}
    A \textbf{Dynkin diagram} is a tuple $(I,E,w)$ where $I$ is a finite set (its vertices), $E$ is a symmetric subset of $I\times I \setminus \{(i,i)\mid i \in I\}$ (its edges) and $w$ is an application $E \rightarrow \mathbb Z_{>0}$ (its weight function). In other words, it is a positively weighted oriented graph such that $(i,j) \in E \iff (j,i) \in E$. 

    Let $\Gamma=(I,E,w)$ be a Dynkin diagram. Then any subset $J\subset I$ admits an induced structure of Dynkin diagram: $(J,E\cap (J\times J), w|_J)$, where $w|_J$ is the restriction of $w$ to $E\cap (J\times J)$. By subdiagram (or subgraph) of $\Gamma$, we mean any Dynkin diagram of this form. 

    A \textbf{leaf} in a Dynkin diagram is any vertex which lies in at most one edge. That is to say, it is any $i\in I$ for which there is at most one $j\in I$ such that $(i,j)\in E$.
\end{Definition}

Dynkin diagrams are graphic representations of generalized Cartan matrices: Suppose that $A=(a_{i,j})_{(i,j)\in I\times I}$ is a generalized Cartan matrix indexed by $I$. Then, setting $E=\{(i,j)\in I^2 \mid a_{i,j}<0\}$ and $w: (i,j)\mapsto -a_{i,j}$, we obtain a Dynkin diagram $(I,E,w)$. Conversely, to any Dynkin diagram $(I,E,w)$ we associate a generalized Cartan matrix $(a_{i,j})_{(i,j)\in I\times I }$ indexed by $I$, setting $a_{i,j}=\begin{cases}2 &\text{ if } i=j \\
    -w(i,j) &\text{ if } (i,j)\in E \\
    
    0 &\text{ otherwise}
\end{cases}$.

\begin{Definition}{Connectivity and $1$-star-convexity}
    A Dynkin diagram $(I,E,w)$ is said \textbf{connected} if it is connected as a graph. That is to say if, for any $i,j\in I$ there is a sequence $i=i_0,i_1,\dots,i_n=j$ such that $(i_k,i_{k+1})\in E$ for all $k\in \llbracket0,n-1\rrbracket$. 
    
    For a general Dynkin diagram $\Gamma$, a \textbf{connected component} of $\Gamma$ is any connected subdiagram which is maximal for the inclusion. Therefore a Dynkin diagram is connected if and only if it has exactly one connected component.

    A connected Dynkin diagram is a \textbf{Dynkin tree} if it does not admit any closed circuit: For any sequence $i_0,\dots, i_n$ such that $(i_k,i_{k+1})\in E$ for all $k\in \llbracket0,n-1\rrbracket$ and $i_{k-1}\neq i_{k+1}$ for all $k \in \llbracket1,n-1\rrbracket$, we have $i_0\neq i_n$. A Dynkin tree is a \textbf{Dynkin segment} if there are at most two edges going out of any given vertex. 
    
    We introduce a refinement of the notion of connectivity:   We say that a Dynkin diagram $\Gamma=(I,E,w)$ is \textbf{$1$-star-convex} if there exists a vertex $i_0\in I$ such that, for any $j\in I$, there is a sequence $(i_1,\dots,i_n)$ with $i_n=j$ such that, for all $k\in \llbracket0,n-1\rrbracket$,
    \begin{equation}
        (i_k,i_{k+1})\in E \text{ and } w(i_k,i_{k+1})=1.
    \end{equation}
In this case, we say that $\Gamma$ is $1$-star-convex at $i_0$.

    By symmetry of $E$, a $1$-star-convex Dynkin diagram is always connected, but the converse is not true.
    
\end{Definition}

Note that given a Dynkin tree $\Gamma$, we can  check whether it is $1$-star-convex by applying the following procedure. Pick one leaf $j$ of  $\Gamma$. Let $i$ be the vertex of $\Gamma$ such that $\{i,j\}$ is an edge of $\Gamma$. If $w(i,j)\neq 1$, then $\Gamma$ is not $1$-star-convex and we stop here. Otherwise, we consider the subtree of $\Gamma$ obtained by deleting the edge $\{i,j\}$ and the vertex $j$ and we iterate the procedure. If the procedure stops when the graph has $2$ or more vertices, then $\Gamma$ is not $1$-star-convex. Otherwise, $\Gamma$ is $1$-star-convex at the last vertex.

\paragraph{Kac-Moody root systems}
Let $\mathcal D = (A,X,Y,(\alpha_i)_{i \in I},(\alpha_i^\vee)_{i \in I})$ be a Kac-Moody root datum  in the sense of \cite[\S 8]{remy2002groupes}. It is a quintuplet such that:
\begin{itemize}
    \item $I$ is a finite indexing set and $A=(a_{ij})_{(i,j)\in I\times I}$ is a generalized Cartan matrix.
    \item $X$ and $Y$ are two dual free $\mathbb Z$-modules of finite rank, we write $\langle , \rangle$ the duality bracket.  
    \item $(\alpha_i)_{i\in I}$ (resp. $(\alpha_i^\vee)_{i \in I}$) is a family of linearly independent elements of $X$ (resp. $Y$), the simple roots (resp. simple coroots).
    
    \item For all $(i,j) \in I^2$ we have $\langle \alpha_i^\vee,\alpha_j \rangle = a_{ij}$.
\end{itemize}
Let $(I,E,w)$ be the Dynkin diagram corresponding to $A$, with vertices $I$, which is fixed once and for all. In this article, every Dynkin diagram we will consider will be a subdiagram of $(I,E,w)$. Since they are identified with subsets of $I$, any $J\subseteq I$ now designates indistinctively the subset $J$ of $I$, or the corresponding subdiagram of $(I,E,w)$.

\paragraph{Height function}
We define a height function on $Y$ as follows: Let $\rho \in X$ be any element such that $\langle \alpha_i^\vee,\rho\rangle =1$ for all $i\in I$, then, for any $\lambda\in Y$, the \textbf{height} of $\lambda$ is: \index{h@$\htt$} $\htt(\lambda)=\langle \lambda,\rho\rangle$. This definition depends on the choice of $\rho$, but its restriction to $Q^\vee=\bigoplus\limits_{i\in I} \mathbb Z\alpha_i^\vee$ does not: $\htt(\sum_{i\in I} n_i \alpha_i^\vee)=\sum_{i\in I} n_i$.

\paragraph{Vectorial Weyl group} For every $i \in I$ set $r_i \in \operatorname{Aut}_\mathbb Z(X):  x \mapsto x- \langle \alpha_i^\vee, x \rangle \alpha_i$. The generated group \index{w@$W^v$} $W^v=\langle r_i \mid i \in I \rangle$ is the \textbf{vectorial Weyl group} of the Kac-Moody root datum. The duality bracket $\langle Y,X\rangle$ induces a contragredient action of $W^v$ on $Y$, explicitly $r_i(y)=y-\langle y,\alpha_i\rangle \alpha_i^\vee$. By construction the duality bracket is then $W^v$-invariant.

The group $W^v$ is a Coxeter group, in particular it has a Bruhat order $<$ and a length function $\ell$ compatible with the Bruhat order.

\paragraph{Real roots} Let $\Phi=W^v.\{\alpha_i \mid i \in I \}$ be the set of real roots of $\mathcal D$, it is a, possibly infinite, root system (see \cite[1.2.2 Definition]{kumar2002kac}). 
In particular let $\Phi_+=\Phi\cap \oplus_{i\in I} \mathbb N \alpha_i$ be the set of positive real roots, then $\Phi=\Phi_+ \sqcup -\Phi_+$, we write $\Phi_-=-\Phi_+$ the set of negative roots.

The set $\Phi^\vee=W^v.\{\alpha_i^\vee \mid i \in I\}$ is the set of \textbf{coroots}, and its subset $\Phi^\vee_+=\Phi^\vee \cap \oplus_{i\in I} \mathbb N \alpha_i^\vee$ is the set of \textbf{positive coroots}.

To each root $\beta$ corresponds a unique coroot $\beta^\vee$: if $\beta=w(\alpha_i)$ then $\beta^\vee=w(\alpha_i^\vee)$. This map $\beta\mapsto\beta^\vee$ is well defined, bijective between $\Phi$ and $\Phi^\vee$ and sends positive roots to positive coroots. Note that $\langle \beta^\vee,\beta\rangle = 2$ for all $\beta \in \Phi$.

Moreover to each root $\beta$ one associates a reflection $s_\beta \in W^v$: if $\beta=w(\pm\alpha_i)$ then $s_\beta:= wr_iw^{-1}$\index{s@$s_\beta$}. It is well-defined, independently of  the choices of $w$ and $i$. Explicitly it is the map $x \mapsto x-\langle \beta^\vee , x \rangle \beta$. We have $s_\beta=s_{-\beta}$ and the map $\beta\mapsto s_\beta$ forms a bijection between  the set of positive roots and the set $\{wr_iw^{-1}\mid (w,i)\in W^v\times I\}$ of reflections of $W^v$.

\paragraph{Dynkin sequence associated to a positive root}
Let $\beta\in \Phi_+$. Since $(\alpha^\vee_i)_{i\in I}$ are linearly independent, there is a unique decomposition $\beta^\vee= \sum_{i\in I} N_i(\beta)\alpha_i^\vee$, and for all $i \in I$, $N_i(\beta)\in \mathbb Z_{\geq0}$. For any $n \in \Z_{\geq 0}$, let us define
\begin{equation}\label{eq: I_n definition}
    I_n(\beta)=\{i\in I \mid N_i(\beta) \geq n \}.
\end{equation} We consider $I_n(\beta)$ as a subdiagram of $\Gamma$. We call $(I_n(\beta))_{n\geq 1}$ the \textbf{Dynkin sequence} of $\beta$. The Dynkin sequence associated to a root is, by definition, non-increasing (for the inclusion) and finitely supported (in the sense that $I_n=\emptyset$ for $n$ large enough). Note that the Dynkin sequence of a simple root $\alpha_i$ is given by $I_1(\alpha_i)=\{i\}$ and $I_n(\alpha_i)=\emptyset$ for $n\geq 2$.
We will provide a classification of quantum roots using Dynkin sequences.

Note that the Dynkin sequence $(I_n(\beta))_{n\geq 1}$ fully determines $\beta$. Indeed, we have $\beta^\vee=\sum_{n=0}^{+\infty} \sum_{i\in I_n(\beta)}\alpha_i^\vee$, which proves that $\beta^\vee$ and thus $\beta$ is determined by its Dynkin sequence.

\paragraph{Inversion sets}
For any $w \in W^v$, let $\Inv(w)=\Phi_+ \cap w^{-1}.\Phi_-= \{ \alpha \in \Phi_+ \mid w(\alpha) <0 \}$\index{i@$\Inv(w)$}. Theses sets are strongly connected to the Bruhat order, as by \cite[1.3.13]{kumar2002kac},
$$\forall \alpha\in\Phi_+, \; \alpha \in \Inv(w) \iff ws_\alpha < w \iff s_\alpha w^{-1} < w^{-1}.$$
Moreover, they are related to the length $\ell$ on $W$: $\ell(w)=|\Inv(w)|$ (\cite[1.3.14]{kumar2002kac}).

\subsection{Quantum roots: definition,  motivation and notations}\label{ss_Quantum_roots}

\begin{Definition}\label{d_almost_simple_root}
    Let $\beta\in \Phi_+$. We say that $\beta$ is \textbf{quantum} if for every $\gamma\in \Inv(s_\beta)\setminus \{\beta\}$, we have $\langle \beta^\vee,\gamma\rangle=1$.  We denote by $\cQ(\Phi_+)$\index{q@$\cQ(\Phi_+)$} the set of quantum roots of $\Phi_+$. 
\end{Definition}

 Note that $\Inv(r_i)=\{\alpha_i\}$ for every $i\in I$ and thus every simple root is quantum.

\medskip 

The aim of this section is to study quantum roots and in particular to prove that for every Kac-Moody datum, the number of quantum roots is finite (see Theorem~\ref{t_finiteness_almost_simple_roots}). This result is a key step to prove our finiteness results on covers and co-covers (see Theorems~\ref{Theorem : finiteness of covers} and \ref{thm_finiteness_co-covers}). We then classify the quantum roots. We end up this section with examples: we describe the set of quantum roots for particular choices of Kac-Moody matrices.

\medskip

Let us describe the main steps of the proofs of Theorem~\ref{t_finiteness_almost_simple_roots} and Theorem~\ref{Theorem: Classification quantum roots}. Let $\beta\in \cQ(\Phi_+)$. Let $\beta\in \cQ(\Phi_+)$. We write \begin{equation}\label{e_reduced_writting_beta}
    \beta=r_{i_L}\ldots r_{i_2}.\alpha_{i_1}, 
\end{equation}  with $i_1, \ldots,i_L\in I$ and $L$ minimal possible. \begin{enumerate}
    \item We begin by proving that up to renumbering, we can assume that if $L_1=|I_1(\beta)|$, we have $\{i_1,\ldots,i_{L_1}\}=I_1(\beta)$ (see Lemma~\ref{l_I1_placed_beggining}). 

    \item We prove that $I_1(\beta)$ is a $1$-star convex tree (see Proposition~\ref{p_characterization_I1}).

    \item We prove that the "orders of appearance" in \eqref{e_reduced_writting_beta} are very constrained. If $i,j\in I_2(\beta)$ belong to the same connected component and if the second appearance of $i$ (when we run the right hand side of \eqref{e_reduced_writting_beta} from the right) precedes the second appearance of $j$, then for $n\in \Z_{\geq 3}$, \[j\in I_n(\beta)\Rightarrow i\in I_n(\beta),\] and the $n$-th appearance of $i$ precedes the $n$-th appearance of $j$ (see Lemma~\ref{l_description_chains_In}).

    \item We prove that the connected components of $I_2(\beta)$ are as follows. Let $\cC$ be a connected of $I_2(\beta)$ and $j$ be the element of $\cC$ whose second appearance in \eqref{e_reduced_writting_beta} is the most precocious. Then $\cC$ is the union of at most three segments based at $j$ (see Lemma~\ref{l_description_chains_I2} and Proposition~\ref{p_connected_components_I2}). 

    \item Using the results described above, we  obtain restrictions on the form of the Dynkin sequence $(I_n(\beta))$. We deduce  that if $k=\max \{n\in \Z\mid I_n(\beta)\neq \emptyset\}$, then $k\leq \max(6,k'+1)$ where $k'$  is the maximal length of a segment appearing in point 4 (see Proposition~\ref{p_majoration_coefficients_AS_roots}). As $k'$ is bounded by the size of $I$, we deduce   a majoration of the heigth of $\beta^\vee$ and then deduce that $\cQ(\Phi_+)$ is finite.

    \item Refining the results of 5., we describe all the possible forms of the Dynkin sequences. We then  construct for all these Dynkin sequences a quantum root to which it is associated (see Theorem~\ref{Theorem: Classification quantum roots}).
\end{enumerate}

Let $\beta\in \Phi_+$.  We write $\beta=w.\alpha_{i_1}$, with $i_1\in I$ and $w\in W^v$ and we assume that $L:=\ell(w)+1$\index{l@$L$}  is the minimal possible. We fix a reduced decomposition $\underline{w}=(r_{i_L},\ldots ,r_{i_2})$ of $w$, with $i_L,\ldots,i_2\in I$.

By \cite[1.3.14 Lemma]{kumar2002kac} we have \begin{equation}\label{e_Inversion_set}
    \Inv(s_\beta)=\{\alpha_{i_L},r_{i_{L}}.\alpha_{i_{L-1}},\ldots,r_{i_L}\ldots r_{i_{2}}.\alpha_{i_1},r_{i_L}\ldots r_{i_1}.\alpha_{i_2},\ldots, r_{i_L}\ldots r_{i_1} \ldots r_{i_{L-1}}.\alpha_{i_L}\}.
\end{equation}

 Set \[\gamma^\vee_1=\alpha_{i_1}^\vee,\ \gamma^\vee_2=r_{i_2}.\alpha_{i_1}^\vee,\  \ldots,\ \gamma^\vee_L=r_{i_L}\ldots r_{i_2}.\alpha_{i_1}^\vee=\beta^\vee.\]

\begin{Lemma}\label{l_inequality_length}
    Let $\beta\in \Phi_+$. Then $\ell(s_\beta)\leq 2\htt(\beta^\vee)-1$.
\end{Lemma}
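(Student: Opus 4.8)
The statement to prove is Lemma~\ref{l_inequality_length}: for every $\beta\in\Phi_+$, we have $\ell(s_\beta)\leq 2\htt(\beta^\vee)-1$. My plan is to exploit the explicit description of the inversion set $\Inv(s_\beta)$ recorded in \eqref{e_Inversion_set}, together with the identity $\ell(s_\beta)=|\Inv(s_\beta)|$, and to relate inversions to the coroot decomposition $\beta^\vee=\sum_{i\in I}N_i(\beta)\alpha_i^\vee$, whose sum of coefficients is exactly $\htt(\beta^\vee)$.

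First I would recall that $\ell(s_\beta)=L$ where $L=\ell(w)+1$ is the minimal length appearing in a writing $\beta=w.\alpha_{i_1}$; indeed the reduced decomposition $s_\beta=r_{i_L}\ldots r_{i_2}r_{i_1}r_{i_2}\ldots r_{i_L}$ obtained from $\underline w$ has length $2L-1$, but more to the point one has the well-known palindromic reduced word and $\ell(s_\beta)=2\ell(w)+1 = 2L-1$ when this writing is length-minimal. Wait — I should be careful here: the clean statement is that if $L$ is minimal with $\beta = w.\alpha_{i_1}$, $\ell(w)=L-1$, then $\ell(s_\beta)=2L-1$. So the inequality $\ell(s_\beta)\le 2\htt(\beta^\vee)-1$ is equivalent to $L\le \htt(\beta^\vee)$, i.e. to the claim that the minimal number of simple reflections needed to bring some simple root to $\beta$ is at most the height of $\beta^\vee$.

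The core step is then to prove $L\leq \htt(\beta^\vee)$ by induction on $\htt(\beta^\vee)$. If $\beta=\alpha_i$ is simple, then $\htt(\beta^\vee)=1$ and $L=1$. If $\beta$ is not simple, then $\beta^\vee$ is a positive coroot of height $\geq 2$, and by the standard theory of root systems (e.g. \cite[1.3.14]{kumar2002kac} or the basic fact about heights of positive roots) there exists $i\in I$ with $\langle\alpha_i^\vee,\beta\rangle>0$ — equivalently $\langle\beta^\vee,\alpha_i\rangle>0$ — and such that $r_i(\beta)\in\Phi_+$ with $\htt(r_i(\beta)^\vee) = \htt(\beta^\vee) - \langle\beta^\vee,\alpha_i\rangle\cdot? $ Hmm, more precisely $r_i(\beta^\vee) = \beta^\vee - \langle\beta,\alpha_i^\vee\rangle\alpha_i^\vee$, so $\htt(r_i(\beta)^\vee)=\htt(\beta^\vee)-\langle\beta,\alpha_i^\vee\rangle$, and since $\beta$ is a non-simple positive root there is at least one $i$ in the support with $\langle\beta,\alpha_i^\vee\rangle\ge 1$ and $r_i(\beta)\in\Phi_+$ (this is the classical lemma that a non-simple positive root can always be lowered by some simple reflection while staying positive). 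Then $\beta'=r_i(\beta)$ has $\htt(\beta'^\vee)\le\htt(\beta^\vee)-1$, so by induction $\beta'=w'.\alpha_{i_1}$ with $\ell(w')\le\htt(\beta'^\vee)-1\le\htt(\beta^\vee)-2$, hence $\beta=r_iw'.\alpha_{i_1}$ is reached by a word of length $\le\htt(\beta^\vee)-1$, giving $L\le\htt(\beta^\vee)$ as desired.

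The main obstacle I anticipate is making the bookkeeping between $\ell(s_\beta)$ and $L$ fully rigorous — in particular justifying $\ell(s_\beta)=2L-1$ from the minimality of $L$, for which I would invoke \eqref{e_Inversion_set}: that formula exhibits $\Inv(s_\beta)$ as a set of $2L-1$ roots, and one must check these are pairwise distinct (so that $|\Inv(s_\beta)|=2L-1$), which follows from the minimality of $L$ in the chosen writing of $\beta$ together with the fact that $\underline w$ is reduced. Alternatively, and perhaps more cleanly, I could bypass the length identity entirely: directly bound $|\Inv(s_\beta)|$ by noting that each $\gamma\in\Inv(s_\beta)$ is a positive root with $\langle\beta^\vee,\gamma\rangle\ge 1$ (since $s_\beta\gamma<0$ forces $\langle\beta^\vee,\gamma\rangle>0$), and $\sum_{\gamma\in\Inv(s_\beta)}\langle\beta^\vee,\gamma\rangle$ can be controlled — but this seems to give a weaker bound, so the inductive height argument above is the route I would actually pursue.
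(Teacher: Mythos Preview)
Your inductive approach is correct and genuinely different from the paper's. Two clarifications: first, you only need the trivial inequality $\ell(s_\beta)\leq 2L-1$ (from the word $r_{i_L}\ldots r_{i_1}\ldots r_{i_L}$ of length $2L-1$), not the equality, so the bookkeeping obstacle you worry about is a non-issue. Second, in the inductive step you should track $\langle\beta^\vee,\alpha_i\rangle$ rather than $\langle\alpha_i^\vee,\beta\rangle$ (they have the same sign but are not equal); the existence of $i$ with $\langle\beta^\vee,\alpha_i\rangle\geq 1$ follows from $\langle\beta^\vee,\beta\rangle=2$ and $\beta\in\sum_j\Z_{\geq 0}\alpha_j$, and then $r_i(\beta)\in\Phi_+$ is automatic since $r_i$ permutes $\Phi_+\setminus\{\alpha_i\}$.

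The paper's proof is essentially the ``alternative'' you sketch at the end and then dismiss: it invokes the identity $2\htt(\beta^\vee)=\sum_{\gamma\in\Inv(s_\beta)}\langle\beta^\vee,\gamma\rangle$ from \cite[Corollary 1.10]{philippe2023grading}, observes that the term $\gamma=\beta$ contributes $2$ and every other term contributes at least $1$, and concludes $2\htt(\beta^\vee)\geq 2+(\ell(s_\beta)-1)$. So this route does \emph{not} give a weaker bound---you were missing the exact summation formula that makes it sharp. The paper's argument is a two-line application of an external result, while yours is self-contained and elementary (height induction on coroots); both are fine, and yours has the advantage of not depending on the cited corollary.
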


\begin{proof}
    By \cite[Corollary 1.10]{philippe2023grading}, we have $2\htt(\beta^\vee)=2+\sum\limits_{\gamma\in \Inv(s_\beta)\setminus\{\beta\}} \langle \beta^\vee,\gamma\rangle$ and all the terms in the right hand side are positive integers. Therefore $2\htt(\beta^\vee)-2\geq |\Inv(s_\beta)|-1=2\ell(s_\beta)-1$. 
\end{proof}

\begin{Lemma}\label{l_characterization_almost_simpleness}
Let $\beta\in \Phi_+$.  Using the same notation as above, set $\beta_1=r_{i_L}\ldots r_{i_3}.\alpha_{i_2}$, $\beta_2=r_{i_L}\ldots r_{i_4}.\alpha_{i_3},\ldots, \beta_{L-1}=\alpha_{i_L}$. Then the following conditions are equivalent: \begin{enumerate}
    \item $\beta$ is quantum,
    
    \item for every $t\in \llbracket 1,L-1\rrbracket$, $\langle \beta^\vee,\beta_t\rangle=1$,

    \item \begin{equation}\label{e_roots}
\forall t\in \llbracket 1,L\rrbracket, \ \gamma^\vee_t=\sum_{k=1}^t \alpha_{i_k}^\vee \text{ and }\langle \gamma_{t-1}^\vee,\alpha_{i_t}\rangle=-1,\text{ if }t\neq 1, 
\end{equation} 
\item $\ell(s_\beta)=2\htt(\beta^\vee)-1$.
\end{enumerate}
\end{Lemma}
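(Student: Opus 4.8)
The plan is to prove the four conditions equivalent by showing a cycle of implications, using the height-counting identity from \cite[Corollary 1.10]{philippe2023grading} as the central tool. Recall that identity gives
\[
2\htt(\beta^\vee)=2+\sum_{\gamma\in \Inv(s_\beta)\setminus\{\beta\}}\langle \beta^\vee,\gamma\rangle,
\]
with every summand a positive integer, and that $|\Inv(s_\beta)|=\ell(s_\beta)=2L-1$ by \eqref{e_Inversion_set} (the listed inversion set has $2L-1$ elements, all distinct). I would first record, by applying $r_{i_L}\cdots r_{i_{t+1}}$ to the reduced-decomposition description of $\Inv(s_{?})$ at lower rank, that the elements $\beta_1,\dots,\beta_{L-1}$ together with $\beta=\gamma_L$ are exactly the $L$ elements of $\Inv(s_\beta)$ lying in $W^v.\{\alpha_{i_2},\dots,\alpha_{i_L}\}$-side of the list — more precisely that $\Inv(s_\beta)\setminus\{\beta\}$ consists of $\{\gamma_1,\dots,\gamma_{L-1}\}$ (the "first half" of \eqref{e_Inversion_set}) together with $\{\beta_1,\dots,\beta_{L-1}\}$ (the "second half", since $r_{i_L}\cdots r_{i_1}r_{i_2}\cdots r_{i_{t}}.\alpha_{i_{t+1}} = r_{i_L}\cdots r_{i_1}.\beta_{?}$ and $s_\beta$ fixes nothing relevant — one checks $s_\beta(\gamma_t)=-\beta_{?}$ pairing up the two halves). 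This pairing is the structural fact that makes the equivalence work, because $\langle\beta^\vee,\gamma\rangle=\langle\beta^\vee,-s_\beta\gamma\rangle$ shows the two halves contribute equally to the sum.

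\textbf{For (1)$\Leftrightarrow$(4):} If $\beta$ is quantum, every one of the $2L-2$ summands equals $1$, so $2\htt(\beta^\vee)=2+(2L-2)=2L$, i.e. $\htt(\beta^\vee)=L$, and since $\ell(s_\beta)=2L-1$ we get $\ell(s_\beta)=2\htt(\beta^\vee)-1$. Conversely if $\ell(s_\beta)=2\htt(\beta^\vee)-1$ then $2\htt(\beta^\vee)=2L$, so $\sum_{\gamma\in\Inv(s_\beta)\setminus\{\beta\}}\langle\beta^\vee,\gamma\rangle = 2L-2 = |\Inv(s_\beta)\setminus\{\beta\}|$; since each term is a positive integer, each equals $1$, which is exactly quantumness. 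This also, via Lemma~\ref{l_inequality_length}, shows (4) says $\beta$ achieves equality in that lemma.

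\textbf{For (1)$\Leftrightarrow$(2):} Using the pairing above, $\sum_{\gamma\in\Inv(s_\beta)\setminus\{\beta\}}\langle\beta^\vee,\gamma\rangle = \sum_{t=1}^{L-1}\langle\beta^\vee,\gamma_t\rangle + \sum_{t=1}^{L-1}\langle\beta^\vee,\beta_t\rangle = 2\sum_{t=1}^{L-1}\langle\beta^\vee,\beta_t\rangle$ (the two sums agree termwise by $\langle\beta^\vee,\gamma_t\rangle = \langle\beta^\vee, -s_\beta\gamma_t\rangle$ and $-s_\beta\gamma_t$ running over the $\beta$'s). Hence quantumness, which forces all $2L-2$ terms to be $1$, is equivalent to all $L-1$ terms $\langle\beta^\vee,\beta_t\rangle$ being $1$ — here one needs that each $\langle\beta^\vee,\beta_t\rangle$ is already a positive integer (true since $\beta_t\in\Inv(s_\beta)$, as the paired element of some $\gamma_s$, so $\langle\beta^\vee,\beta_t\rangle>0$). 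That gives (2).

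\textbf{For (2)$\Leftrightarrow$(3):} I would argue by descending induction on $t$, or rather build \eqref{e_roots} from the minimality of $L$. Set $w_t=r_{i_L}\cdots r_{i_{t+1}}$ so $\gamma_t^\vee = w_t^{-1}\dots$ — more cleanly, $\gamma_t^\vee = r_{i_t}\cdots r_{i_2}.\alpha_{i_1}^\vee$ and $\gamma_t^\vee = r_{i_t}.\gamma_{t-1}^\vee = \gamma_{t-1}^\vee - \langle\gamma_{t-1}^\vee,\alpha_{i_t}\rangle\alpha_{i_t}^\vee$. Since $\beta=w.\alpha_{i_1}$ with $L$ minimal, each partial product is reduced and each $\gamma_t$ is a positive root (again \cite[1.3.14]{kumar2002kac}), so $\langle\gamma_{t-1}^\vee,\alpha_{i_t}\rangle$ is a negative integer, i.e. $\leq -1$. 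Now condition (2), via the termwise analysis, is equivalent to all summands in the height identity being $1$; tracking which summand equals $\langle\gamma_{t-1}^\vee,\alpha_{i_t}\rangle$ up to sign — indeed $\beta_{L-t+1}$ or the relevant paired root has pairing $-\langle\gamma_{t-1}^\vee,\alpha_{i_t}\rangle$ against $\beta^\vee$ after suitable $W^v$-translation — forces $\langle\gamma_{t-1}^\vee,\alpha_{i_t}\rangle=-1$ for all $t$, whence $\gamma_t^\vee=\gamma_{t-1}^\vee+\alpha_{i_t}^\vee$ and inductively $\gamma_t^\vee=\sum_{k=1}^t\alpha_{i_k}^\vee$. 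Conversely \eqref{e_roots} makes $\htt(\gamma_L^\vee)=\htt(\beta^\vee)=L$, which by (1)$\Leftrightarrow$(4) is quantumness, hence (2).

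\textbf{The main obstacle} I expect is the bookkeeping in the pairing of $\Inv(s_\beta)\setminus\{\beta\}$ into the two "halves" of \eqref{e_Inversion_set} and verifying that $\langle\beta^\vee,\gamma_t\rangle$ and $\langle\beta^\vee,\beta_{?}\rangle$ really match up termwise under $\gamma\mapsto -s_\beta\gamma$ — getting the indexing exactly right so that $\beta_t$ equals $-s_\beta(\gamma_{L-t})$ or whichever pairing holds. Everything else is then forced by positivity and integrality of the pairings together with the height identity; the only genuine input beyond Lemma~\ref{l_inequality_length} and its proof is the combinatorial identification of the inversion set, which is already essentially written out in \eqref{e_Inversion_set}.
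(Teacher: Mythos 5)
Your overall strategy — chain the four conditions via the height identity $2\htt(\beta^\vee)=2+\sum_{\gamma\in \Inv(s_\beta)\setminus\{\beta\}}\langle\beta^\vee,\gamma\rangle$, the pairing of the two halves of \eqref{e_Inversion_set} under $\gamma\mapsto -s_\beta(\gamma)$, and the recursion $\gamma_{t}^\vee=\gamma_{t-1}^\vee-\langle\gamma_{t-1}^\vee,\alpha_{i_t}\rangle\alpha_{i_t}^\vee$ — is exactly the paper's. So is the chain of implications: (1)$\Leftrightarrow$(4) from the height identity plus positivity, (1)$\Leftrightarrow$(2) from the two-halves symmetry, (2)$\Leftrightarrow$(3) from the recursion.

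There is, however, a genuine error in your structural claim about the inversion set. You assert that $\Inv(s_\beta)\setminus\{\beta\}=\{\gamma_1,\dots,\gamma_{L-1}\}\cup\{\beta_1,\dots,\beta_{L-1}\}$ and that the pairing sends $\gamma_t$ to $-\beta_{?}$. This is false: the $\gamma_t=r_{i_t}\cdots r_{i_2}.\alpha_{i_1}$ need not lie in $\Inv(s_\beta)$ at all. A minimal counterexample is type $B_2$ with $\langle\alpha_1^\vee,\alpha_2\rangle=-2$, $\langle\alpha_2^\vee,\alpha_1\rangle=-1$, and $\beta=r_1.\alpha_2=2\alpha_1+\alpha_2$ (so $i_1=2$, $i_2=1$, $L=2$). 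Here $\gamma_1=\alpha_2$, but $\Inv(s_\beta)=\{\alpha_1,\ \alpha_1+\alpha_2,\ 2\alpha_1+\alpha_2\}$, which does not contain $\alpha_2$; indeed $\langle\beta^\vee,\gamma_1\rangle=\langle\alpha_1^\vee+\alpha_2^\vee,\alpha_2\rangle=0$, while every element of $\Inv(s_\beta)$ pairs positively with $\beta^\vee$. For the same reason $\langle\beta^\vee,\gamma_t\rangle\neq\langle\beta^\vee,\beta_t\rangle$ in general, so the termwise identification you rely on in (1)$\Leftrightarrow$(2) fails as stated. What is true, and what the paper actually uses, is the identity $\langle\beta^\vee,\beta_{t-1}\rangle=-\langle\gamma_{t-1}^\vee,\alpha_{i_t}\rangle$, obtained by $W^v$-invariance of the pairing; it equates the pairing of $\beta^\vee$ with the first-half element $\beta_{t-1}$ to the pairing with the matching second-half element $r_{i_L}\cdots r_{i_1}\cdots r_{i_{t-1}}.\alpha_{i_t}$ (which is $-s_\beta(\beta_{t-1})$, not $\gamma_{t-1}$), and this is all that is needed both for (1)$\Leftrightarrow$(2) and for (2)$\Leftrightarrow$(3). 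So the gap is precisely the ``bookkeeping obstacle'' you flagged, and it does not resolve the way you guessed; the paper sidesteps it by never trying to name the second-half elements, only to compute the relevant pairings.
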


\begin{proof}
    Let $t\in \llbracket 2,L\rrbracket$. We have: \[\langle \beta^\vee,\beta_{t-1}\rangle=\langle r_{i_L}\ldots r_{i_2}.\alpha_{i_1}^\vee,r_{i_L}\ldots r_{i_{t+1}}.\alpha_{i_{t}}\rangle=\langle r_{i_t}\ldots r_{i_2}.\alpha_{i_1}^\vee,\alpha_{i_t}\rangle=-\langle r_{i_{t-1}}\ldots r_{i_2}.\alpha_{i_1}^\vee,\alpha_{i_t}\rangle.\]

    We thus have: \[\begin{aligned}\langle r_{i_L}\ldots r_{i_2}.\alpha_{i_1}^\vee,r_{i_L}\ldots r_{i_1} \ldots r_{i_{t-1}}.\alpha_{i_t}\rangle &=-\langle \alpha_{i_1}^\vee,r_{i_1}\ldots r_{i_{t-1}}.\alpha_{i_t}\rangle\\
    &=-\langle r_{i_{t-1}}\ldots r_{i_2}.\alpha_{i_1}^\vee,\alpha_{i_t}\rangle=\langle \beta^\vee,\beta_{t-1}\rangle.\end{aligned}\] Therefore \[\{\langle\beta^\vee,\gamma\rangle\mid \gamma\in \Inv(s_\beta)\setminus\{\beta\}\}=\{\langle\beta^\vee,\beta_t\rangle\mid t\in \llbracket 2,L \rrbracket\},\] which proves the equivalence between (1) and (2).

    Let $t\in \llbracket 1,L-1\rrbracket$. We have: \[\langle \beta^\vee,\beta_t\rangle=\langle r_{i_L} \ldots r_{i_2}.\alpha_{i_1}^\vee,r_{i_L}\ldots r_{i_{t+2}}.\alpha_{i_{t+1}}\rangle=\langle r_{i_{t+1}}\ldots r_{i_2}.\alpha_{i_1}^\vee,\alpha_{i_{t+1}}\rangle=-\langle \gamma_t^\vee,\alpha_{i_{t+1}}\rangle.\] Therefore \[\gamma_{t+1}^\vee=r_{i_{t+1}}.\gamma_t^\vee=\gamma_t^\vee-\langle \gamma_t^\vee,\alpha_{i_{t+1}}\rangle \alpha_{i_{t+1}}^\vee=\gamma_t^\vee+\langle \beta^\vee,\beta_t\rangle\alpha_{i_{t+1}}^\vee.\]

    The equivalence between (1) and (3) follows.

Let $\beta\in \Phi_+$ satisfying (3).    We have $\gamma_L^\vee=\beta^\vee$ and $\htt(\gamma_L^\vee)=L$. As $\ell(s_\beta)=2L-1$, we deduce that (3) implies (4). Let  $\beta^\vee\in \Phi^\vee_+$. Asume that $\beta^\vee$ satisfies (4). Then  by \cite[Corollary 1.10]{philippe2023grading}, we have $2\htt(\beta^\vee)=\sum_{\gamma\in \Inv(s_\beta)} \langle \beta^\vee,\gamma\rangle=2+\sum_{\gamma\in \Inv(s_\beta)\setminus\{\beta^\vee\}} \langle \beta^\vee,\gamma\rangle=\ell(s_\beta)+1$ and all the terms of this sum are positive. Therefore $\langle \beta^\vee,\gamma\rangle=1$ for all $\gamma\in \Inv(s_\beta)\setminus \{\beta\}$, which proves that $\beta$ is quantum.
    \end{proof}

\begin{Remark}\label{r_height_almost_simple_coroot}
By Lemma~\ref{l_characterization_almost_simpleness}, for all $t\in \llbracket 1,L\rrbracket$, $\gamma_t=r_{i_t}\dots r_{i_2}(\alpha_{i_1})$ is quantum and $\htt(\gamma_t^\vee)=t$.
\end{Remark}
The following proposition indicates that the set $\cQ(\Phi_+)$ can be constructed recursively. This will be used extensively in Section~\ref{subsection: classification} in order to prove Theorem~\ref{Theorem: Classification quantum roots}.

\begin{Proposition}\label{Proposition: simplereflectionquantumroot}

\begin{enumerate}
    \item If $\beta\in \cQ(\Phi_+)$ is a quantum root which is not simple, then there exists $i\in I$ such that $r_i(\beta)\in \cQ(\Phi_+)$ and $\htt(r_i(\beta^\vee))=\htt(\beta^\vee)-1$.
    \item Let $\beta\in \cQ(\Phi_+)$ and $i\in I$. Then $r_i(\beta)\in \cQ(\Phi_+)$ if and only if $|\langle \beta^\vee,\alpha_i\rangle|\leq 1$. 

\end{enumerate}

\end{Proposition}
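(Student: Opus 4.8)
The plan is to deduce both statements from the numerical characterisation of Lemma~\ref{l_characterization_almost_simpleness}, namely that a positive root $\gamma$ is quantum if and only if $\ell(s_\gamma)=2\htt(\gamma^\vee)-1$, used together with the inequality $\ell(s_\gamma)\leq 2\htt(\gamma^\vee)-1$ of Lemma~\ref{l_inequality_length} and the subadditivity of the Coxeter length. For \emph{(1)}, I would argue straight from a minimal presentation: since $\beta$ is quantum and not simple, in a minimal writing $\beta=r_{i_L}\ldots r_{i_2}.\alpha_{i_1}$ as before Lemma~\ref{l_characterization_almost_simpleness} we have $L=\htt(\beta^\vee)\geq 2$, and by Remark~\ref{r_height_almost_simple_coroot} the root $\gamma_{L-1}=r_{i_{L-1}}\ldots r_{i_2}.\alpha_{i_1}$ is quantum with $\htt(\gamma_{L-1}^\vee)=L-1$. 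As $\beta=\gamma_L=r_{i_L}.\gamma_{L-1}$ and $r_{i_L}$ is an involution, $r_{i_L}(\beta)=\gamma_{L-1}$, so $i=i_L$ satisfies the conclusion.

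For \emph{(2)}, set $n=\langle\beta^\vee,\alpha_i\rangle$ and $L=\htt(\beta^\vee)$, so $\ell(s_\beta)=2L-1$ by Lemma~\ref{l_characterization_almost_simpleness}. When $\beta\neq\alpha_i$ one has $r_i(\beta)\in\Phi_+$ (the reflection $r_i$ permutes $\Phi_+\setminus\{\alpha_i\}$), its coroot is $\beta^\vee-n\alpha_i^\vee\in\Phi^\vee_+$, so $\htt(r_i(\beta)^\vee)=L-n$, and $s_{r_i(\beta)}=r_is_\beta r_i$ gives $|\ell(s_{r_i(\beta)})-\ell(s_\beta)|\leq 2$. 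I would then split on the value of $n$. If $n=0$ then $r_i(\beta)^\vee=\beta^\vee$, hence $r_i(\beta)=\beta\in\cQ(\Phi_+)$. If $n=1$ then $\beta\neq\alpha_i$ and Lemma~\ref{l_inequality_length} gives $\ell(s_{r_i(\beta)})\leq 2(L-1)-1=2L-3$, while subadditivity gives $\ell(s_{r_i(\beta)})\geq\ell(s_\beta)-2=2L-3$; equality holds, so $r_i(\beta)$ is quantum. If $n=-1$ then $\beta\neq\alpha_i$, $s_\beta(\alpha_i)=\alpha_i+\beta\in\Phi_+\setminus\{\alpha_i\}$ and hence $(r_is_\beta)(\alpha_i)=r_i(\alpha_i+\beta)\in\Phi_+$; therefore $\ell(r_is_\beta)=\ell(s_\beta)+1$ and $\ell(s_{r_i(\beta)})=\ell(r_is_\beta r_i)=\ell(s_\beta)+2=2(L+1)-1=2\htt(r_i(\beta)^\vee)-1$, so $r_i(\beta)$ is quantum.

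It remains to treat $|n|\geq 2$. If $n\geq 2$ and $\beta\neq\alpha_i$, then combining $\ell(s_{r_i(\beta)})\leq 2(L-n)-1$ with $\ell(s_{r_i(\beta)})\geq 2L-3$ forces $n\leq 1$, a contradiction; hence $\beta=\alpha_i$ and $r_i(\beta)=-\alpha_i\notin\Phi_+\supseteq\cQ(\Phi_+)$. If $n\leq -2$ then $\beta\neq\alpha_i$ and $\htt(r_i(\beta)^\vee)=L-n\geq L+2$, so if $r_i(\beta)$ were quantum we would get $\ell(s_{r_i(\beta)})=2(L-n)-1\geq 2L+3$, contradicting $\ell(s_{r_i(\beta)})\leq\ell(s_\beta)+2=2L+1$; so $r_i(\beta)\notin\cQ(\Phi_+)$. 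In every case $r_i(\beta)\in\cQ(\Phi_+)$ exactly when $|n|\leq 1$. As a byproduct, the case $n\geq 2$ shows $\langle\beta^\vee,\alpha_i\rangle\leq 1$ for every quantum $\beta\neq\alpha_i$, so the effective constraint is $\langle\beta^\vee,\alpha_i\rangle\geq -1$.

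The ingredients beyond Lemmas~\ref{l_inequality_length} and~\ref{l_characterization_almost_simpleness} are standard facts about Kac--Moody root systems: that $r_i$ permutes $\Phi_+\setminus\{\alpha_i\}$, the bijectivity of $\gamma\mapsto\gamma^\vee$, and the equivalences $\ell(ur_i)=\ell(u)+1\Leftrightarrow u(\alpha_i)>0$ and $\ell(r_iu)=\ell(u)+1\Leftrightarrow u^{-1}(\alpha_i)>0$ (see \cite{kumar2002kac}). I expect the only genuinely delicate point to be the case $n=-1$: there the upper bound from Lemma~\ref{l_inequality_length} and the lower bound from subadditivity do not pinch, so one has to check separately that conjugating $s_\beta$ by $r_i$ really increases the length by $2$; this is exactly the content of the short computation $s_\beta(\alpha_i)=\alpha_i+\beta$ together with the permutation property of $r_i$.
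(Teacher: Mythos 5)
Your proof is correct and follows essentially the same route as the paper's: part (1) is the paper's argument verbatim, and part (2) proceeds exactly as in the paper by combining the length characterisation of Lemma~\ref{l_characterization_almost_simpleness}, the upper bound of Lemma~\ref{l_inequality_length}, and the fact that conjugating a reflection by $r_i$ changes the length by $0$ or $\pm 2$. The only (small) divergence is in the $n=-1$ case: where the paper invokes the Bardy lemma to get $\langle\alpha_i^\vee,\beta\rangle<0$ and then checks directly that $r_is_\beta(\alpha_i)=\beta-(\langle\alpha_i^\vee,\beta\rangle+1)\alpha_i\in\Phi_+$, you instead observe that $s_\beta(\alpha_i)=\alpha_i+\beta\in\Phi_+\setminus\{\alpha_i\}$ and use that $r_i$ permutes $\Phi_+\setminus\{\alpha_i\}$; this is slightly more elementary but amounts to the same computation.
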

\begin{proof}
1. We simply take $i=i_L$ with the notation of Formula \eqref{e_Inversion_set}, then by Remark~\ref{r_height_almost_simple_coroot}, $r_i(\beta)=\gamma_{L-1}$ is a quantum root.

2. Let $\beta\in \cQ(\Phi_+)$ and $i\in I$.  Let $\tilde{\beta}=r_i(\beta)$.  We have $s_{\tilde{\beta}}=r_i s_{\beta}r_i$ and thus: \begin{equation}\label{e_length}
   \ell(s_{\tilde{\beta}})\in \{\ell(s_\beta)-2,\ell(s_\beta),\ell(s_{\beta})+2\} .
\end{equation}  We have $\htt(\tilde{\beta^\vee})=\htt(\beta^\vee)-\langle \beta^\vee,\alpha_i\rangle$. If $r_i(\beta)\in \cQ(\Phi_+)$, then by Lemma~\ref{l_characterization_almost_simpleness}, we have $\ell(s_{\tilde{\beta}})=2\htt(\tilde{\beta}^\vee)-1=2\htt(\beta^\vee)-1-2\langle \beta^\vee,\alpha_i\rangle=\ell(s_\beta)-2\langle \beta^\vee,\alpha_i\rangle$. Using \eqref{e_length} we deduce that if $r_i(\beta)\in \cQ(\Phi_+)$, then $|\langle \beta^\vee,\alpha_i\rangle|\leq 1.$

Let now $\beta\in \cQ(\Phi_+)$ and $i\in I$. If $\langle \beta^\vee,\alpha_i\rangle=0$, then $r_i(\beta)=\beta\in \cQ(\Phi_+)$. Assume now $\langle \beta^\vee,\alpha_i\rangle =1$. Set $\tilde{\beta}=r_i(\beta)$. We have $\htt(\tilde{\beta}^\vee)=\htt(\beta^\vee)-1$ and by Lemma~\ref{l_inequality_length},  \[\ell(s_{\tilde{\beta}})\leq 2\htt(\tilde{\beta})-1=2\htt(\beta^\vee)-3=\ell(s_{\beta})-2.\] By \eqref{e_length} we deduce $\ell(s_{\tilde{\beta}})=\ell(s_{\beta})-2=2\htt(\tilde{\beta})-1$ which proves that $\tilde{\beta}=r_i(\beta)$ is quantum by Lemma~\ref{l_characterization_almost_simpleness}.

Eventually, assume that $\langle \beta^\vee,\alpha_i\rangle = -1$. Let $\tilde \beta:=r_i(\beta)$. Note that the associated coroot is $\tilde \beta^\vee =r_i(\beta^\vee)=\beta^\vee+\alpha_i^\vee$. Therefore $\htt(\tilde \beta^\vee)=\htt(\beta^\vee)+1$ and, by Lemma~\ref{l_characterization_almost_simpleness}, $\tilde \beta$ is quantum if and only if $$\ell(s_{\tilde \beta})=\ell(r_i s_\beta r_i)=\ell(s_\beta)+2.$$
 Since $r_i$ is a simple reflection, $\{\ell(r_i s_\beta),\ell(s_\beta r_i)\}\subset \{\ell(s_\beta)+1,\ell(s_\beta)-1\}$. Moreover since $s_\beta$ is a reflection, $\ell(r_i s_\beta)=\ell((r_is_\beta)^{-1})=\ell(s_\beta r_i)$.
    
    We have $\langle \beta^\vee,\alpha_i\rangle <0$, so $s_\beta(\alpha_i)\in \Phi_+$. Hence $\alpha_i\notin \Inv(s_\beta)$, $s_\beta r_i>s_\beta$ and thus $\ell(s_\beta r_i)=\ell(r_i s_\beta)=\ell(s_\beta)+1$.
    
    Note that $s_\beta(\alpha_i)=\alpha_i-\langle \beta^\vee,\alpha_i\rangle \beta=\beta+\alpha_i$. Moreover since $\langle \beta^\vee,\alpha_i\rangle <0$, by \cite[Lemma 1.1.10]{bardy1996systemes} we also have $\langle \alpha_i^\vee,\beta\rangle <0$, that is to say $\langle \alpha_i^\vee,\beta\rangle+1 \leq 0$ (because it is an integer). Therefore, $$r_i s_\beta(\alpha_i)=r_i(\beta+\alpha_i)=r_i(\beta)-\alpha_i=\beta-(\langle \alpha_i^\vee,\beta\rangle +1)\alpha_i \in \Phi_+.$$
    So $\alpha_i\notin \Inv(r_i s_\beta)$ and $\ell(r_i s_\beta r_i)=\ell(r_i s_\beta)+1=\ell(s_\beta)+2$, which proves the proposition.
\end{proof}

From now on we assume that $\beta$ is quantum.

For $i\in I$ and $t \in \llbracket 1,L\rrbracket$, set $N_i(t)=|\{t'\in \llbracket 1,t\rrbracket \mid i_{t'}=i\}|$\index{n@$N_i(\cdot)$}. Then by  \eqref{e_roots}, \begin{equation}\label{e_coordinate_root}
\gamma^\vee_t=\sum_{i\in I_1} N_i(t) \alpha_i^\vee.
\end{equation}

Note that by \eqref{e_roots}, $N_i$ is non-decreasing for every $i\in I_1(\beta)$, and for every $t\in \llbracket 1,L-1\rrbracket$, $N_i(t)=N_i(t+1)$ for all but one element $k$ of $I_1(\beta)$, and we have $N_k(t+1)=N_k(t)+1$. 

Note that the Dynkin sequence of $\beta$ can be defined by $I_n(\beta):=\{i\in I\mid \max N_i\geq n\}$, with the induced diagram structure from $I$. The set $I_n(\beta)$ is also the set of elements which appear at least $n$ times in $(\underline{w},r_{i_1})$.

In this section, we have fixed a quantum root $\beta$, therefore to simplify notation we set \begin{equation}I_n:=I_n(\beta).\end{equation}\index{i@$I_1,I_2,I_n,I_n(\beta)$}

For $j\in I$ and $n\in \Z_{\geq 0}$, we set \begin{equation}\bt(j,n)=\min \{t\in \llbracket 1,L\rrbracket\mid N_j(t)=n\}-1\in \llbracket 0,L-1\rrbracket\cup \{\infty\}.\end{equation}\index{t@$\bt(\cdot,\cdot)$} We then have $N_j(\bt(j,n))=n-1$ and $N_j(\bt(j,n)+1)=n$, if $\bt(j,n)\neq \infty$.

\begin{Lemma}\label{l_I1_placed_beggining}
Let $L_1=|I_1|$\index{l@$L_1$}. Then up to changing the reduced decomposition of $w$, we may assume $I_1=\{i_1,\ldots,i_{L_1}\}$. 
\end{Lemma}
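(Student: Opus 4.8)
The plan is to prove, by induction on $L=\htt(\beta^\vee)$, the slightly more flexible statement: for any quantum root $\beta$ and any $i_1$ with $\beta\in W^v.\alpha_{i_1}$ realized by a minimal length element, there is a reduced writing $\beta=r_{i_L}\ldots r_{i_2}.\alpha_{i_1}$ with $\{i_1,\ldots,i_{L_1}\}=I_1(\beta)$. First I would translate between reduced writings and chains: by Remark~\ref{r_height_almost_simple_coroot} and Lemma~\ref{l_characterization_almost_simpleness}\,(3), such a reduced writing is the same data as a chain of quantum roots $\gamma_1=\alpha_{i_1},\gamma_2,\ldots,\gamma_L=\beta$ with $\gamma_t^\vee=\gamma_{t-1}^\vee+\alpha_{i_t}^\vee$ and $\langle\gamma_{t-1}^\vee,\alpha_{i_t}\rangle=-1$; conversely, bijectivity of $\beta\mapsto\beta^\vee$ shows that any chain of roots with these two properties equals $\bigl(r_{i_t}\ldots r_{i_2}.\alpha_{i_1}\bigr)_t$, and by Lemmas~\ref{l_inequality_length} and \ref{l_characterization_almost_simpleness}\,(4) the word $(r_{i_L},\ldots,r_{i_2})$ is automatically reduced of the right length. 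Since $\gamma_t^\vee=\sum_{k\le t}\alpha_{i_k}^\vee$, the support $\supp(\gamma_t^\vee)=\{i_1,\ldots,i_t\}$ increases by at most one vertex per step, so the goal "$\{i_1,\ldots,i_{L_1}\}=I_1$" is equivalent to "$\supp(\gamma_{L_1}^\vee)=I_1$", i.e. to the support reaching its final value as early as combinatorially possible.

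For the inductive step, set $\gamma:=\gamma_{L-1}$: it is quantum (Remark~\ref{r_height_almost_simple_coroot}), $\htt(\gamma^\vee)=L-1$, $\beta=r_{i_L}(\gamma)$ and $\langle\gamma^\vee,\alpha_{i_L}\rangle=-1$ (so $\beta^\vee=\gamma^\vee+\alpha_{i_L}^\vee$); moreover $\gamma$ is again realized by a minimal length element keeping $i_1$ (otherwise $\beta=r_{i_L}v.\alpha_{i_1}$ would contradict minimality of $L$). By the induction hypothesis I may choose the first $L-1$ links so that $\gamma_1=\alpha_{i_1}$ and $\gamma_{L-1}=\gamma$ are unchanged and the first $L_1':=|I_1'|$ indices equal $I_1':=\supp(\gamma^\vee)$. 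If $i_L\in I_1'$ then $I_1=I_1'$ and $L_1=L_1'\le L-1$, so simply appending $\gamma_L=\beta$ already realizes the goal — the easy case. The main work is the case $i_L\notin I_1'$, where $I_1=I_1'\sqcup\{i_L\}$, $L_1=L_1'+1$, and $r_{i_L}$ must be inserted at position $L_1$ rather than appended at the end.

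In that case I would analyse the tail $\gamma_{L_1-1},\ldots,\gamma_{L-1}=\gamma$. All its indices lie in $I_1'$ (they occur in $\gamma^\vee$), each coroot coefficient is non-decreasing, and $\gamma_{L_1-1}^\vee=\sum_{i\in I_1'}\alpha_i^\vee$. Writing $\langle\gamma_t^\vee,\alpha_{i_L}\rangle=\sum_{i\in I_1'}c_i(t)\,a_{i,i_L}$ as a sum of non-positive integers equal to $-1$ when $t=L-1$, I get: there is a unique $i^*\in I_1'$ with $a_{i^*,i_L}\neq 0$, in fact $a_{i^*,i_L}=-1$; $a_{i,i_L}=0$ for $i\in I_1'\setminus\{i^*\}$; the coefficient of $\alpha_{i^*}^\vee$ equals $1$ all along the tail; hence $\langle\gamma_t^\vee,\alpha_{i_L}\rangle=-1$ for every $t\in\llbracket L_1-1,L-1\rrbracket$ and $i^*$ is never the new index on the tail. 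Then I set $\tilde\eta_k:=r_{i_L}(\gamma_{L_1-1+k})$ for $k\in\llbracket 0,L-L_1\rrbracket$: each is quantum by Proposition~\ref{Proposition: simplereflectionquantumroot}\,(2) (because $|\langle\gamma_{L_1-1+k}^\vee,\alpha_{i_L}\rangle|=1$), has coheight $L_1+k$, satisfies $\tilde\eta_0^\vee=\gamma_{L_1-1}^\vee+\alpha_{i_L}^\vee$ and $\tilde\eta_{L-L_1}=\beta$, and $\tilde\eta_{k+1}^\vee-\tilde\eta_k^\vee=\alpha_{i_{L_1+k}}^\vee$ with $\langle\tilde\eta_k^\vee,\alpha_{i_{L_1+k}}\rangle=\langle\gamma_{L_1-1+k}^\vee,\alpha_{i_{L_1+k}}\rangle+a_{i_L,i_{L_1+k}}=-1+0=-1$, the vanishing $a_{i_L,i_{L_1+k}}=0$ coming from $i_{L_1+k}\in I_1'\setminus\{i^*\}$ together with the generalized Cartan matrix symmetry $a_{pq}=0\Leftrightarrow a_{qp}=0$. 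Concatenating $\gamma_1,\ldots,\gamma_{L_1-1},\tilde\eta_0,\ldots,\tilde\eta_{L-L_1}=\beta$ yields a chain of the required type with first $L_1$ indices $\{i_1,\ldots,i_{L_1-1}\}\cup\{i_L\}=I_1'\sqcup\{i_L\}=I_1$, which by the first paragraph corresponds to a reduced writing $\beta=r_{i_L}\ldots r_{i_2}.\alpha_{i_1}$ of the desired form; the base case $L=1$ is trivial. The point I expect to be the real obstacle is exactly this last case: one must check that re-routing the chain through $r_{i_L}$ at position $L_1$ does not destroy the relations $\langle\gamma_{t}^\vee,\alpha_{i_{t+1}}\rangle=-1$ further down, and this is precisely what the coefficient monotonicity along the tail and the symmetry of the Cartan matrix deliver.
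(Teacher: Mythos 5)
Your proof is correct, but it takes a genuinely different route from the paper's. The paper argues directly on the fixed reduced decomposition: it locates the first repeat at position $k$, the first position $k'>k$ at which a new letter appears, and uses the identity $\sum_{i\in I_1}N_i(k'-1)a_{i,i_{k'}}=-1$ (a sum of non-positive integers) to force $a_{i_{k''},i_{k'}}=0$ for $k''\in\llbracket k,k'-1\rrbracket$ — since those coefficients are already $\geq 2$. Thus $r_{i_{k'}}$ commutes past $r_{i_{k''}}$ for all such $k''$, the new letter bubbles leftward to position $k$, and a finite iteration (the first-repeat position strictly increases) terminates. Your proof instead reformulates reduced writings as chains of quantum coroots, inducts on $\htt(\beta^\vee)$, and in the inductive step applies $r_{i_L}$ to an entire suffix of the chain produced by the induction hypothesis; the single arithmetic computation $\langle\gamma_{L-1}^\vee,\alpha_{i_L}\rangle=-1$ pins down a unique neighbour $i^*$ of $i_L$ in $I_1'$ with $c_{i^*}\equiv 1$ along the tail, and the Cartan-matrix symmetry $a_{pq}=0\Leftrightarrow a_{qp}=0$ then makes all the rerouted links valid at once. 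In effect you bundle the paper's iterated commutations into one conjugation of the tail by $r_{i_L}$. Both approaches rest on the same arithmetic observation that a coefficient $\geq 2$ forces the corresponding Cartan integer with a newly appearing index to vanish. The paper's argument is shorter and stays entirely inside the word combinatorics of $\underline{w}$; yours is somewhat longer but produces the target decomposition in one shot and makes the "chain of quantum roots" structure explicit, which fits naturally with Proposition~\ref{Proposition: simplereflectionquantumroot} and the way quantum roots are built up recursively elsewhere in the paper.
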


\begin{proof}
Assume $\{i_1,\ldots,i_{L_1}\}\neq I_1$. Then $(i_j)_{j\in \llbracket 1,L_1\rrbracket}$ is not injective.  Let $k\in \llbracket 1,L_1\rrbracket$ be minimal such that $i_k\in \{i_1,\ldots, i_{k-1}\}$. Let $k'\in \llbracket k+1,L\rrbracket$ be minimal such that $i_{k'}\notin \{i_1,\ldots,i_{k'-1}\}$. Then for every $k''\in \llbracket k,k'-1\rrbracket$, $N_{i_{k''}}(k')=N_{i_{k''}}(k'-1)\geq 2$ and $N_{i_{k'}}(k'-1)=0=N_{i_{k'}}(k')-1$. By assumption on $\beta$, we have  $\langle r_{i_{k'-1}}\ldots r_{i_2}.\alpha_{i_1}^\vee, \alpha_{i_{k'}}\rangle=-1$. Using \eqref{e_coordinate_root}, we deduce $\sum_{i\in I_1} N_i(k'-1) a_{i,i_{k'}}=-1$. As $N_{i_{k'}}(k'-1)=0$, all the terms appearing in the left hand side are non positive integers. Therefore only one is nonzero and equal to $-1$. Therefore $a_{i_{k''},i_{k'}}=0$ for every $k''\in \llbracket k, k'-1\rrbracket$. In other words, $r_{i_{k''}}$ commutes with $r_{i_{k'}}$ for every $k''\in \llbracket k,k'-1\rrbracket$. Therefore we can replace $r_{i_{k'}}r_{i_{k'-1}}\ldots r_{i_k} r_{i_{k-1}}\ldots r_{i_2}$ by $r_{i_{k'-1}}\ldots r_{i_k} r_{i_{k'}} r_{i_{k-1}}\ldots r_{i_2}$ in the reduced decomposition of $w$. Reiterating this process as many times as necessary, we end up with a reduced decomposition $\underline{w}$ such that $(i_j)_{j\in \llbracket 1,L_1\rrbracket}$ is injective and then $\{i_1,\ldots,i_{L_1}\}=I_1$. 
\end{proof}

From now on, we assume that the reduced decomposition $\underline{w}$ is chosen  so that $\{i_1,\ldots,i_{L_1}\}=I_1$, which is possible by Lemma~\ref{l_I1_placed_beggining}. 

\subsection{General structure of $I_1(\beta)$}
In this section, we study the Dynkin diagram structure of $I_1(\beta)$ when $\beta$ is a quantum root. We have fixed a quantum root $\beta$ and we have set $I_1=I_1(\beta)$.

For $i\in I$,  set \begin{equation}\label{e_definition_degree}
    \deg(i)=-\sum_{j\in I_1\setminus \{i\}} a_{j,i}\in \Z_{\geq 0}\end{equation}\index{d@$\deg$} and \[\supp(i)=\{j\in I_1\mid a_{j,i}\neq 0\}.\]\index{s@$\supp$}

\begin{Lemma}\label{l_I1_tree}
The graph $I_1$ is a tree: it is connected and without circuit.
\end{Lemma}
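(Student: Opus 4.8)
The plan is to exploit the recursive structure of quantum roots given by Proposition~\ref{Proposition: simplereflectionquantumroot} together with the explicit coordinate formula \eqref{e_coordinate_root} and the normalization $\{i_1,\ldots,i_{L_1}\}=I_1$ coming from Lemma~\ref{l_I1_placed_beggining}. Recall that $I_1=I_1(\beta)$ is the support of $\beta^\vee$, and that after the normalization the first $L_1$ letters $i_1,\ldots,i_{L_1}$ of the reduced word are pairwise distinct and exhaust $I_1$. For each $t\in\llbracket 1,L_1\rrbracket$ we have $\gamma_t^\vee=\alpha_{i_1}^\vee+\cdots+\alpha_{i_t}^\vee$, and by \eqref{e_roots} the condition $\langle\gamma_{t-1}^\vee,\alpha_{i_t}\rangle=-1$ holds for $t\geq 2$. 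Unpacking this with \eqref{e_coordinate_root} gives $\sum_{k=1}^{t-1} a_{i_k,i_t}=-1$ for every $t\in\llbracket 2,L_1\rrbracket$.

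Connectivity follows immediately: for $t\geq 2$ the sum $\sum_{k<t} a_{i_k,i_t}=-1$ is nonzero, so there is some $k<t$ with $a_{i_k,i_t}<0$, i.e. $i_t$ is adjacent in the Dynkin diagram to one of the earlier vertices $i_1,\ldots,i_{t-1}$. Hence every vertex of $I_1$ other than $i_1$ is connected by an edge to the subgraph on the vertices preceding it, and by induction on $t$ the whole graph $I_1$ is connected. For the absence of circuits, I would argue that $I_1$ has exactly $L_1-1$ edges: since all $a_{i_k,i_t}$ are nonpositive integers and $\sum_{k<t}a_{i_k,i_t}=-1$, for each $t\geq 2$ there is \emph{exactly one} index $k(t)<t$ with $a_{i_{k(t)},i_t}=-1$ and $a_{i_k,i_t}=0$ for all other $k<t$. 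This assigns to each $t\in\llbracket 2,L_1\rrbracket$ a unique edge $\{i_{k(t)},i_t\}$, and these edges are distinct (the edge attached to $t$ has $i_t$ as its larger-indexed endpoint). So $I_1$ has at least $L_1-1$ edges; conversely any edge $\{i_a,i_b\}$ with $a<b$ contributes $a_{i_a,i_b}<0$ to the sum $\sum_{k<b}a_{i_k,i_b}=-1$, forcing $a_{i_a,i_b}=-1$ and $a=k(b)$, so every edge is of this form and there are exactly $L_1-1$ of them. A connected graph on $L_1$ vertices with $L_1-1$ edges is a tree, which is the claim.

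The main obstacle, such as it is, lies in being careful about the sign conventions and the fact that the matrix entries $a_{i_k,i_t}$ are a priori only nonpositive integers (they could be very negative), so that extracting ``exactly one edge per new vertex'' from the relation $\sum_{k<t}a_{i_k,i_t}=-1$ genuinely uses integrality of the entries rather than just their signs; I would state this explicitly. One should also double-check that after the renumbering of Lemma~\ref{l_I1_placed_beggining} the relations \eqref{e_roots} still hold for the new reduced word — but this is automatic since Lemma~\ref{l_characterization_almost_simpleness} characterizes quantumness intrinsically (independently of the chosen reduced decomposition) and \eqref{e_roots} is part of that characterization. With these points nailed down the edge-counting argument is short and the lemma follows.
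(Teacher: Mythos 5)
Your proof is correct, and it takes a genuinely different route from the paper's. The paper establishes connectivity by citing the standard fact (Kac, Lemma 1.6) that the support of any real root is connected, and then rules out circuits by contradiction: given a circuit, it takes the vertex $\tau(i)$ whose first appearance $\bt(\tau(i),1)$ is latest and observes that at that time two of its neighbours on the circuit already have coefficient $\geq 1$ while $\tau(i)$ has coefficient $0$, forcing $\alpha_{\tau(i)}(\gamma_t^\vee)\leq -2$ instead of $-1$. You instead extract the single relation $\sum_{k<t}a_{i_k,i_t}=-1$ for $t\in\llbracket 2,L_1\rrbracket$ from \eqref{e_roots} and the normalization of Lemma~\ref{l_I1_placed_beggining}, and use it twice: once for connectivity (some term is negative, so each new vertex attaches to the prefix) and once for acyclicity (integrality forces \emph{exactly} one term to be $-1$, giving exactly one new edge per vertex and hence $L_1-1$ edges total). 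The edge-counting step, in particular the converse direction — that any edge $\{i_a,i_b\}$ with $a<b$ must be the unique one attached to $b$ because $a_{i_a,i_b}<0$ already exhausts the sum $-1$ — is sound. Your approach is self-contained (no appeal to Kac) and, as a by-product, shows that the spanning-tree edges all have $a_{i_k(t),i_t}=-1$, which is essentially the $1$-star-convexity proved separately in Proposition~\ref{p_characterization_I1}; the paper's argument is shorter for the circuit-freeness part but relies on an external reference for connectivity. One cosmetic remark: you announce that you will use Proposition~\ref{Proposition: simplereflectionquantumroot}, but the proof as written only uses \eqref{e_roots}, \eqref{e_coordinate_root}, and Lemma~\ref{l_I1_placed_beggining}; you could drop that mention.
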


\begin{proof}
By \cite[Lemma 1.6]{kac1994infinite}, $I_1$ is connected ($I_1$ is the support of $\beta$ for the terminology of \cite{kac1994infinite}). Assume by contradiction that $I_1$ contains a circuit. Let $n\in \Z_{>0}$ and $\tau:\llbracket 0,n\rrbracket\rightarrow I_1$ be such that $\tau(0)=\tau(n)$ and $\{\tau(i) ,\tau(i+1)\}$ is an edge of $I_1$ for all $i\in \llbracket 0,n-1\rrbracket$. We assume moreover that $\tau(i)\neq \tau(0)$ for $i\in \llbracket 1,n-1\rrbracket$. Then every element of $\tau(\llbracket 0,n\rrbracket)$  has at least two neighbours in $I_1$. Let $i\in \llbracket 0,n-1\rrbracket$ be such that $\bt(\tau(i),1)=t:=\max\{\bt(\tau(j),1)\mid j\in \llbracket 0,n-1\rrbracket\}$. Then there exist $i_1,i_2\in I_1\setminus \{\tau(i)\}$ such that $a_{i_1,\tau(i)}, a_{i_2,\tau(i)}\neq 0$ and  $\gamma_t^\vee=\alpha_{i_1^\vee}+\alpha_{i_2}^\vee+x$, for  some $x\in \bigoplus_{j\in I_1\setminus \{\tau(i)\}}\N \alpha_j^\vee$. Then $\alpha_{\tau(i)}(\gamma^\vee_t)\leq \alpha_{\tau(i)}(\alpha_{i_1}^\vee+\alpha_{i_2}^\vee)\leq -2<-1$: a contradiction. Lemma follows.
\end{proof}

A first step in order to determine $\cQ(\Phi_+)$ is to understand what the support of quantum roots can be. That is to say, to determine $\{I_1(\beta) \mid \beta \in \cQ(\Phi_+)\}$. Proposition~\ref{p_characterization_I1} below caracterizes this set.

\begin{Proposition}\label{p_characterization_I1}
    Let $T$ be a subset of $I$. Then the following properties are equivalent:
    \begin{enumerate}
        \item As a subdiagram of $I$, $T$ is a $1$-star-convex Dynkin tree.
        \item There exists $\beta \in \cQ(\Phi_+)$ such that $T=I_1(\beta)$.
        \item There exists $\beta \in \cQ(\Phi_+)$ such that $T=I_1(\beta)$ and $I_2(\beta)=\emptyset$. More precisely, $\beta$ is the unique element of $\Phi$ such that $\beta^\vee=\sum_{i\in T} \alpha_i^\vee$. 
    \end{enumerate}
\end{Proposition}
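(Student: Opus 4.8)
The plan is to prove the cyclic chain of implications $(3)\Rightarrow(2)\Rightarrow(1)\Rightarrow(3)$, where $(3)\Rightarrow(2)$ is trivial.

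\medskip

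\textbf{Step 1: $(2)\Rightarrow(1)$.} Suppose $T=I_1(\beta)$ for a quantum root $\beta$. By Lemma~\ref{l_I1_tree}, $T$ is a Dynkin tree, so it only remains to show $1$-star-convexity. I would use the numbering coming from Lemma~\ref{l_I1_placed_beggining}: choose the reduced decomposition with $\{i_1,\dots,i_{L_1}\}=I_1$, and set $i_0:=i_1$ as the candidate center. The point is that the restriction of the reduced word to its first $L_1$ letters realizes a spanning tree traversal of $T$ in which each new vertex $i_t$ ($t\in\llbracket 2,L_1\rrbracket$) is attached by exactly one edge to the previously-seen vertices $\{i_1,\dots,i_{t-1}\}$: indeed, by \eqref{e_coordinate_root} and \eqref{e_roots}, $\langle\gamma_{t-1}^\vee,\alpha_{i_t}\rangle=\sum_{k<t}N_{i_k}(t-1)a_{i_k,i_t}=-1$, and since $N_{i_t}(t-1)=0$ all terms on the left are $\leq 0$ integers, forcing exactly one term equal to $-1$ and hence exactly one $k<t$ with $a_{i_k,i_t}=-1$ (weight $1$ edge). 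So each $i_t$ has a unique ``parent'' $p(t)<t$ joined to it by a weight-$1$ edge. Following parents back to $i_1$ yields, for every $j=i_t\in T$, a path from $j$ to $i_0=i_1$ made entirely of weight-$1$ edges; reversing it gives the sequence required in the definition of $1$-star-convexity at $i_0$. (One must check the parent-path stays inside $T$, which is clear as all the $i_k$ with $k\leq L_1$ lie in $T$.)

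\medskip

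\textbf{Step 2: $(1)\Rightarrow(3)$.} Suppose $T\subseteq I$ is a $1$-star-convex Dynkin tree, $1$-star-convex at $i_0$. Let $\beta^\vee:=\sum_{i\in T}\alpha_i^\vee$; I must show this is a coroot and that the corresponding root $\beta$ is quantum with $I_1(\beta)=T$ and $I_2(\beta)=\emptyset$. The natural approach is constructive: build a reduced word for $s_\beta$ of the shape produced by Lemma~\ref{l_characterization_almost_simpleness}(3), then invoke that lemma. Concretely, order $T=\{j_1=i_0,j_2,\dots,j_{L_1}\}$ so that each $j_t$ ($t\geq 2$) is joined by a weight-$1$ edge to exactly one earlier vertex $j_{p(t)}$ — possible because $T$ is a $1$-star-convex tree (do a BFS/DFS from $i_0$ along weight-$1$ edges; every edge of the tree is then used and, being a tree, every vertex acquires a unique parent; $1$-star-convexity guarantees all these edges have weight $1$). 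Set $\gamma_t^\vee:=\alpha_{j_1}^\vee+\dots+\alpha_{j_t}^\vee$. I claim $\langle\gamma_{t-1}^\vee,\alpha_{j_t}\rangle=-1$ for all $t\geq 2$: the sum $\sum_{k<t}a_{j_k,j_t}$ has exactly one nonzero term, namely $a_{j_{p(t)},j_t}=-1$, because in a tree $j_t$ has no edge to any $j_k$ with $k<t$ other than its parent. Then each $\gamma_t^\vee$ is inductively a positive coroot (it is $r_{j_t}(\gamma_{t-1}^\vee)$ applied appropriately: $\gamma_t^\vee=\gamma_{t-1}^\vee-\langle\gamma_{t-1}^\vee,\alpha_{j_t}\rangle\alpha_{j_t}^\vee=\gamma_{t-1}^\vee+\alpha_{j_t}^\vee$, and one checks the word $r_{j_t}\cdots r_{j_2}$ is reduced of length $t-1$ by the usual inversion-set/positivity argument), so $\beta^\vee=\gamma_{L_1}^\vee$ is a coroot with $\htt(\beta^\vee)=L_1$ and $\ell(s_\beta)\geq L_1-1$. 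For the reverse inequality and quantumness, I would continue the word by the ``mirror'' letters $r_{j_1},r_{j_2},\dots$ as in \eqref{e_Inversion_set} and verify directly that $\gamma_t=r_{j_t}\cdots r_{j_2}(\alpha_{j_1})$ satisfies condition (3) of Lemma~\ref{l_characterization_almost_simpleness}, i.e.\ $\langle\beta^\vee,\beta_t\rangle=1$ throughout; the key identity $\langle\gamma_{t-1}^\vee,\alpha_{j_t}\rangle=-1$ established above is exactly what \eqref{e_roots} requires. Hence $\beta$ is quantum by Lemma~\ref{l_characterization_almost_simpleness}. Finally $N_i(\beta)=1$ for $i\in T$ and $0$ otherwise by construction of $\beta^\vee$, so $I_1(\beta)=T$ and $I_2(\beta)=\emptyset$; uniqueness of $\beta$ is the observation (already made in the excerpt) that a root is determined by its coroot, which is determined by its Dynkin sequence.

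\medskip

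\textbf{Main obstacle.} The delicate point is Step 2: checking that the word $r_{j_{L_1}}\cdots r_{j_2}$ (and its mirror extension) is \emph{reduced}, equivalently that the $\gamma_t^\vee$ and the associated roots stay positive and distinct, and that no unexpected cancellation occurs because $T$ sits inside a possibly larger diagram $I$. I expect this to go through cleanly using the tree structure — in a tree each new vertex contributes exactly one new edge — together with the standard criterion relating reducedness to inversion sets (\cite[1.3.13, 1.3.14]{kumar2002kac}); the length count $\ell(s_\beta)=2L_1-1=2\htt(\beta^\vee)-1$ then lets Lemma~\ref{l_characterization_almost_simpleness} do the rest. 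A secondary subtlety in Step 1 is verifying the parent-path construction genuinely yields a valid $1$-star-convexity witness (staying within $T$, using only weight-$1$ edges), but this is immediate from the forced ``exactly one $-1$'' computation.
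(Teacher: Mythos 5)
Step 1 ((2)$\Rightarrow$(1)) matches the paper's argument: with the numbering from Lemma~\ref{l_I1_placed_beggining}, the identity $\langle\gamma_{t-1}^\vee,\alpha_{i_t}\rangle=-1$ with all summands non-positive forces a unique weight-$1$ parent for each $i_t$ ($t\le L_1$), and chaining parents gives $1$-star-convexity at $i_1$; Lemma~\ref{l_I1_tree} supplies the tree structure. That part is fine.

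Step 2 ((1)$\Rightarrow$(3)) has a real gap, which you flag but underestimate. You build a word $r_{j_{L_1}}\cdots r_{j_2}$ from a BFS/DFS of $T$, show $\gamma_t^\vee=\sum_{k\le t}\alpha_{j_k}^\vee$ with $\langle\gamma_{t-1}^\vee,\alpha_{j_t}\rangle=-1$, and then want to invoke Lemma~\ref{l_characterization_almost_simpleness}(3). But that lemma is stated for a \emph{reduced} decomposition with $L=\ell(w)+1$ \emph{minimal}; it does not say any word realizing \eqref{e_roots} is automatically reduced, nor that its length is minimal. The positivity of your $\gamma_t^\vee=r_{j_t}\cdots r_{j_2}(\alpha_{j_1}^\vee)$ does not yield reducedness of $r_{j_{L_1}}\cdots r_{j_2}$ (the reducedness criterion concerns $r_{j_{L_1}}\cdots r_{j_{t+1}}(\alpha_{j_t})$, a different family you never compute), and even a reduced word would still leave open whether $L_1-1$ is \emph{minimal} among all $\ell(w')$ with $\beta=w'(\alpha_i)$ — a statement that is essentially equivalent to $\beta$ being quantum, so the reasoning risks circularity. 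The ``usual inversion-set/positivity argument'' is not a ready-made fix. The paper avoids all of this by inducting on $|T|$ (peeling a leaf) and invoking Proposition~\ref{Proposition: simplereflectionquantumroot}(2): $\gamma_1=\alpha_{j_1}$ is quantum, and at each step $\langle\gamma_{t-1}^\vee,\alpha_{j_t}\rangle=-1$ shows $\gamma_t=r_{j_t}(\gamma_{t-1})$ is quantum, with the needed length count ($\langle\beta^\vee,\alpha_i\rangle=-1\Rightarrow\ell(r_is_\beta r_i)=\ell(s_\beta)+2$) living inside that proposition's proof. Your $\langle\gamma_{t-1}^\vee,\alpha_{j_t}\rangle=-1$ computation is exactly its hypothesis, so rerouting Step 2 through Proposition~\ref{Proposition: simplereflectionquantumroot}(2) closes the gap; as written, Step 2 does not.
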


\begin{proof} 
    Let $\beta\in \cQ(\Phi_+)$ and $T=I_1(\beta)$. By Remark~\ref{r_I1}, we can assume $\beta=\sum_{i\in T} \alpha_i$. Write $\beta=r_{i_L}\ldots r_{i_2}.\alpha_{i_1}$, where $L=|I_1(\beta)|$ and $\{i_L,\ldots,i_1\}=I_1(\beta)$. Let us prove that $T$ is $1$-star-convex at $i_1$. Let $t\in\llbracket 1,L-1\rrbracket$ and assume that, as a subdiagram of $T$, $\{i_1,\ldots,i_t\}$ is $1$-star-convex at $i_1$.  Then we have  $r_{i_t}\ldots r_{i_2}.\alpha_{i_1}^\vee=\sum_{t''=1}^{t} \alpha_{i_{t''}}^\vee$ and  $r_{i_{t+1}}.(r_{i_t}\ldots r_{i_2}.\alpha_{i_1}^\vee)= (r_{i_t}\ldots r_{i_2}.\alpha_{i_1}^\vee)+\alpha_{i_{t+1}}^\vee$, by Lemma~\ref{l_characterization_almost_simpleness}.  Therefore $\langle \sum_{t''=1}^{t} \alpha_{i_{t''}}^\vee,\alpha_{i_{t+1}}\rangle=\sum_{t''=1}^{t}\langle \alpha_{i_{t''}}^\vee,\alpha_{i_{t+1}}\rangle=-1$. As this is a sum of non-positive integers, exactly one term is $-1$ and the others are $0$. Let $t_1\in \llbracket 1,t\rrbracket$ be such that $a_{i_{t_1},i_{t+1}}=-1$. By assumption, there exist $t'\in \llbracket 1,t\rrbracket$ and a sequence $(u_i)_{i\in \llbracket 1,t'\rrbracket}\in \{i_1,\ldots,i_t\}^{\llbracket 1,t'\rrbracket}$ such that $a_{u_i,u_{i+1}}=-1$ for all $i\in \llbracket 1,t'-1 \rrbracket$, $u_1=i_1$ and $u_{t'}=i_{t_1}$. Then $((u_i)_{i\in \llbracket 1,t'\rrbracket},i_{t+1})$ is a path in $I_1$ relating $i_1$ and $i_{t+1}$, which proves the $1$-star-convexity of $\{i_1,\ldots,i_{t+1}\}$ at $i_1$. By induction, we deduce that $T$ is $1$-star-convex. Using Lemma~\ref{l_I1_tree}, we deduce that  $I_1$ is a $1$-star-convex tree.

 Let $T$ be a subdiagram of $I$ which is $1$-star-convex and which is a tree. We assume that $|T|\geq 2$ and that for all $T'\subset I$ such that $|T'|<|T|$ and for which $T'$ is $1$-star-convex, there exists $\beta'\in \cQ(\Phi_+)$ such that $T'=I_1(\beta')$. Let $j$ be a leaf of $T$ and write $\{j,k\}$ the edge of $T$ containing $j$. Then by $1$-star-convexity of  $T$, we have $a_{k,j}=-1$. Moreover, $T\setminus \{j\}$ is $1$-star-convex. Let $\beta_1\in \cQ(\Phi_+)$ be such that $(\beta_1)^\vee=\sum_{i\in T\setminus \{j\}} \alpha_i^\vee$. We have $\langle (\beta_1)^\vee,\alpha_j\rangle=-1$, thus $\beta^\vee=r_j(\beta_1^\vee)$ and thus by Proposition~\ref{Proposition: simplereflectionquantumroot}, $\beta=r_j(\beta_1)$ is quantum.

\end{proof}

\subsection{Order of appearance and general structure of $I_2(\beta)$}

Let $n\in \N$. For $i,j\in I_n$, we write $i \lhd_n j$\index{z@$\lhd_n$, $\lhd_2$} if $(i,j)$ is an edge of $I_n$, and if $\bt(i,n)<\bt(j,n)$. The first condition means that $a_{i,j}\neq 0$ and the second condition simply means that if we run the decomposition of $w$ from the right, we meet $i$ for the $n$-th times before we meet  $j$ for the $n$-th times. We denote by $\leq_n$\index{z@$\leq_n$} the preorder generated by $\lhd_n$. Note that for $i,j\in I_n$, if $i\leq_n j$ and $j\leq_n i$, then $i$ and $j$ appears at the same time for the $n$-th times when we run $(\underline{w},i_1)$ from the right, and thus $i=j$. Therefore $\leq_n$ is an order on $I_n$. Note that although this is not obvious from the definition, this order  is independent of the chosen decomposition of $w$, as we will see in Remark~\ref{r_connected_components_In}. As $I_n$ is finite, all the chains are contained in a maximal one. 

\medbreak

In this section, we use $\leq_2$ to characterize the diagram $I_2$. In Lemma~\ref{l_description_chains_In} we then describe chains for $\leq_n$. This will permit to characterize the diagrams $I_n$ in Section~\ref{subsection: Dynkin sequences quantum roots}.

\begin{Lemma}\label{l_description_chains_I2}
Assume $I_2\neq \emptyset$. Let $j_1,\ldots,j_k\in I_2$\index{j@$j_1,\ldots,j_k$}\index{k@$k$} be a maximal chain for $\leq_2$, i.e: \begin{enumerate}
\item $j_1$ is minimal in $I_2$ for $\leq_2$,

\item $j_k$ is maximal in $I_2$ for $\leq_2$,

\item $j_1\lhd_2 j_2\lhd_2 \ldots \lhd_2 j_k$. 
\end{enumerate}

Then: \begin{enumerate}
\item $\deg(j_1)=3$, $|\supp(j_1)|\leq 4$.

\item for every $t\in \llbracket 2,k-1\rrbracket$, $\supp(j_t)=\{j_{t-1},j_t,j_{t+1}\}$, $\deg(j_t)=2$ and $a_{j_{t-1},j_t}=a_{j_{t+1},j_t}=-1$.

\item We have $\deg(j_k)\leq 2$, $|\supp(j_k)|\leq 3$ and $\supp(j_k)\supset \{j_{k-1},j_k\}$. 
\begin{enumerate}
\item If $|\supp(j_k)|=3$, then $\supp(j_k)=\{j_{k-1},j_k,j_{k+1}\}$ for some $j_{k+1}\in I_1\setminus I_2$, and we have $a_{j_{k-1},j_k}=a_{j_{k+1},j_k}=-1$. \item If $|\supp(j_{k})|=\{j_{k-1},j_k\}$, then  \begin{equation}\label{e_passage_3_before_2}
a_{j_{k-1},j_{k}}=-1\text{ and }\bt(j_{k-1},3)<\bt(j_k,2)<\bt(j_{k-1},4).
\end{equation}\end{enumerate}
\end{enumerate}

In particular, if $j\in I_2$ is such that $|\supp(j)=2|$, then $j$ is either minimal or maximal for $\leq_2$.
\end{Lemma}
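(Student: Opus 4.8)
The plan is to exploit the characterization of quantum roots from Lemma~\ref{l_characterization_almost_simpleness}, especially the local formula $\langle \gamma_{t-1}^\vee,\alpha_{i_t}\rangle = -1$ for all $t$, together with the relations \eqref{e_coordinate_root} expressing $\gamma_t^\vee$ as $\sum_{i\in I_1} N_i(t)\alpha_i^\vee$. The key idea throughout is: whenever we meet $i_t$ for the $n$-th time (with $n\geq 2$, so $i_t\in I_2$), the element $\gamma_{t-1}^\vee$ already has all coordinates $N_i(t-1)$ non-negative, and the constraint $\sum_{i\in I_1} N_i(t-1)a_{i,i_t} = -1$ must be satisfied. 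Since the $a_{i,i_t}$ for $i\neq i_t$ are $\leq 0$ and $N_i(t-1)\geq 0$, exactly one neighbour $i$ of $i_t$ in $I_1$ has $N_i(t-1)a_{i,i_t} = -1$, while $N_{i_t}(t-1)a_{i_t,i_t} = 2N_{i_t}(t-1)$ must be cancelled by the remaining neighbours. This is the engine that forces the degree bounds.

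First I would set up notation: for the maximal chain $j_1 \lhd_2 \cdots \lhd_2 j_k$, let $t_1 < t_2 < \cdots < t_k$ be the times of their respective second appearances, i.e. $t_m = \bt(j_m,2)+1$. For the analysis of $j_m$ with $1 < m < k$, I would examine the constraint at time $t_m$: just before the second appearance of $j_m$, we have $N_{j_m}(t_m-1) = 1$ (it has appeared once), and $N_i(t_m - 1) \in \{0,1\}$ for neighbours, being $1$ precisely for those neighbours whose first appearance precedes $t_m$. The equation $\sum_i N_i(t_m-1)a_{i,j_m} = -1$, combined with $2\cdot N_{j_m}(t_m-1) = 2$ contributing $+2$, forces the neighbours-with-coordinate-$1$ to contribute exactly $-3$; since each contributes a non-positive integer and since $j_{m-1}$ (which appears for the second time before $j_m$, hence certainly once before) is a neighbour, and by maximality of the chain $j_{m+1}$ must also already have appeared once before $t_m$ (this needs a small argument using that the chain is maximal / that $j_{m+1}$ is between $j_m$ and $j_{m+1}$'s second appearance). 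Working out that exactly $j_{m-1}$ and $j_{m+1}$ contribute $-1$ each and no other neighbour contributes, and that these are the only elements of $I_1$ adjacent to $j_m$, gives $\supp(j_m) = \{j_{m-1},j_m,j_{m+1}\}$, $a_{j_{m-1},j_m} = a_{j_{m+1},j_m} = -1$, and $\deg(j_m) = 2$. I would handle $j_1$ similarly but now $j_1$ is $\leq_2$-minimal, so at its second appearance all its $I_2$-neighbours have already appeared once but none for the second time; a careful count of the constraint yields $\deg(j_1) = 3$ and $|\supp(j_1)| \leq 4$ (the extra slack coming from neighbours in $I_1\setminus I_2$ that may or may not have appeared yet). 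For $j_k$ ($\leq_2$-maximal), the two sub-cases arise depending on whether $j_k$ has a second neighbour besides $j_{k-1}$: if yes, that neighbour lies in $I_1\setminus I_2$ (else the chain extends, contradicting maximality) giving case (a); if $\supp(j_k) = \{j_{k-1},j_k\}$, then the constraint at the time of $j_{k-1}$'s third appearance, and at $j_k$'s second appearance, pins down the interleaving \eqref{e_passage_3_before_2}, using Lemma~\ref{l_description_chains_In} about chains for $\leq_n$ with $n\geq 3$ (or arguing directly that $N_{j_{k-1}}$ must reach $3$ strictly between, to supply the missing $-1$ that $j_k$'s defect needs when it appears the second time, and must not yet have reached $4$).

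Finally, the last sentence ("if $j\in I_2$ with $|\supp(j)| = 2$ then $j$ is minimal or maximal for $\leq_2$") follows immediately from parts (1)--(3): parts (2) and (3)(a) show that any non-extremal element, and any maximal element with $|\supp| = 3$, has $|\supp|\geq 3$; only (3)(b) (maximal) and (1) (minimal, where $|\supp(j_1)|\leq 4$ but in the degenerate chain $k=1$ case one must also check) allow $|\supp| = 2$; so $|\supp(j)| = 2$ forces $j$ extremal. I'd also note the edge case $k = 1$ (a one-element chain, $j_1$ both minimal and maximal) is consistent. The main obstacle I anticipate is the bookkeeping in the non-extremal case — specifically proving that $j_{m+1}$ has indeed appeared once before $j_m$'s second appearance (needed so that it contributes $-1$ to the constraint at time $t_m$), and conversely controlling which neighbours in $I_1\setminus I_2$ could sneak in an extra contribution; these require using both the ordering $\lhd_2$ carefully and the fact that the $N_i$ increase by exactly one step at a time so that "appeared once before time $t$" is exactly "$N_i(t-1)\geq 1$". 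Getting the exact value $\deg(j_1) = 3$ (rather than just $\leq 3$) will need the observation that the defect $2N_{j_1}(t_1-1) = 2$ at $j_1$'s second appearance must be compensated, forcing at least two neighbours to have already appeared, and combined with the tree structure (Lemma~\ref{l_I1_tree}) and $1$-star-convexity (Proposition~\ref{p_characterization_I1}) to rule out additional edges.
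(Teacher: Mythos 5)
Your overall strategy — apply the characterization $\langle\gamma_{t-1}^\vee,\alpha_{i_t}\rangle=-1$ from Lemma~\ref{l_characterization_almost_simpleness} at the second-appearance times $\bt(j_m,2)$, expand via~\eqref{e_coordinate_root}, and count — is exactly the paper's strategy, so you have the right engine. However, there are two genuine errors in the execution.

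First, the claim that $N_i(t_m-1)\in\{0,1\}$ for neighbours $i$ of an interior chain element $j_m$ is false for $i=j_{m-1}$. Since $j_{m-1}\lhd_2 j_m$, by definition $\bt(j_{m-1},2)<\bt(j_m,2)$, so $j_{m-1}$ has already appeared \emph{twice} by time $t_m=\bt(j_m,2)$, giving $N_{j_{m-1}}(t_m)\geq 2$. This matters: the constraint reads $2+\sum_{i\in\supp(j_m)\setminus\{j_m\}}N_i(t_m)a_{i,j_m}=-1$, so the neighbour terms must total $-3$. Your proposed accounting, ``exactly $j_{m-1}$ and $j_{m+1}$ contribute $-1$ each,'' totals $-2$ and does not close the equation. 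The correct split, which the paper uses, is $N_{j_{m-1}}(t_m)a_{j_{m-1},j_m}=-2$ (coordinate $2$, coefficient $-1$) and $N_{j_{m+1}}(t_m)a_{j_{m+1},j_m}=-1$ (coordinate $1$, coefficient $-1$); the term $N_{j_{m-1}}(t_m)\geq 2$ together with all neighbour coordinates being $\geq 1$ is also what forces $|\supp(j_m)|\leq 3$ in the first place.

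Second, you anticipate needing the tree structure and $1$-star-convexity to pin down $\deg(j_1)=3$ and to argue that various neighbours ``have already appeared once,'' but this detour is neither needed nor sound. The paper relies on the normalization from Lemma~\ref{l_I1_placed_beggining}: the reduced word is chosen so that all of $I_1$ appears before any letter repeats, hence $N_i(t)\geq 1$ for \emph{every} $i\in I_1$ once $t\geq L_1$, in particular at $t_1=\bt(j_1,2)$. Combined with minimality of $j_1$ for $\leq_2$ (so no neighbour $i$ with $a_{i,j_1}\neq 0$ can have $N_i(t_1)\geq 2$), every term in $\sum_{i\in\supp(j_1)\setminus\{j_1\}}N_i(t_1)a_{i,j_1}$ has $N_i(t_1)=1$, and the constraint collapses directly to $2-\deg(j_1)=-1$. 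Invoking Lemma~\ref{l_I1_placed_beggining} at the outset both gives you $\deg(j_1)=3$ cleanly and dissolves your worries about ``which neighbours in $I_1\setminus I_2$ could sneak in an extra contribution'' — they all contribute exactly $-a_{i,j_1}$.
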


\begin{proof}
Let $t_1=\bt(j_1,2)$. By \eqref{e_coordinate_root}, we have: $\gamma^\vee_{t_1}=\sum_{i\in I_1} N_i(t_1)\alpha_i^\vee$ and $N_i(t_1)\geq 1$ for every $i\in I_1$ by choice of $\underline{w}$.  Then $\alpha_{i_{t_1}}(\gamma^\vee_{t_1})=-1$. Let $i\in I_1$ and assume that $N_i(t_1)\geq 2$. Then as $i{\not\!\! \lhd}_2 j_1$, we have $a_{i,j_1}=0$. Therefore \[\begin{aligned} \alpha_{j_1}(\gamma^\vee_{t_1})&=-1\\ &=\alpha_{j_1}(\sum_{i\in \supp(j_1)} N_i(t_1) \alpha_i^\vee)\\
&=\alpha_{j_1}(\sum_{i\in \supp(j_1)} \alpha_{i}^\vee)\\
&=2+\sum_{i\in \supp(j_1)\setminus\{j_1\}} a_{i,j_1}\\
&=2-\deg(j_1),\end{aligned}\] and thus $\deg(j_1)=3$.

Let now $x\in \llbracket 2,k\rrbracket$. Let $t_x=\bt(j_x,2)$. We have: \[\alpha_{j_x}(\gamma^\vee_{t_x})=-1=\alpha_{j_x}\left(\sum_{i\in \supp(j_x)} N_i(t_x) \alpha_i^\vee\right)=2+\sum_{i\in \supp(j_x)\setminus \{j_x\}}N_i(t_x)a_{i,j_x}.\] Therefore $\sum_{i\in \supp(j_x)\setminus \{j_x\}}N_i(t_x)a_{i,j_x}=-3$, and this is a sum of non positive integers. Moreover, as $N_{j_{x-1}}$ is non decreasing and $t_x>t_{x-1}$,  we have $N_{j_{x-1}}(t_x)\geq 2$. Consequently, $|\supp(j_i)|\leq 3$. Assume $x\leq k-1$.  As $j_{x-1}\lhd_2 j_x\lhd_2 j_{x+1}$, we have $\supp(j_x)=\{j_{x-1},j_x,j_{x+1}\}$ and $a_{j_{x+1},j_x}=a_{j_{x-1},j_x}=-1$. In particular,  $\deg(j_x)=2$. 

Assume now $x=k$. Then if $|\supp(j_k)|=3$, we have $a_{j_{k+1},j_k}=-1$, for some $j_{k+1}\in I_1$. If $j_{k+1}\in I_2$, then we would have $j_k\lhd_2 j_{k+1}$, which would contradict our maximality assumption. Therefore, $j_{k+1}\in  I_1\setminus I_2$. Assume $|\supp(j_k)|=2$. Then $\supp(j_k)=\{j_{k-1},j_k\}$. Let $t=\bt(j_k,2)$. Then \[\alpha_{j_k}(\gamma^\vee_t)=-1=\alpha_{j_k}(N_{j_{k-1}}(t)\alpha_{j_{k-1}}^\vee+\alpha_{j_k}^\vee)=2-3,\] which proves \eqref{e_passage_3_before_2}. 
\end{proof}

In terms of Dynkin diagrams, Lemma~\ref{l_description_chains_I2} implies that the form of the connected components of $I_2$ are very constrained, see Proposition~\ref{p_connected_components_I2} and Figure~\ref{f_conneceted_components_I2}. 

 \begin{figure}[h]
 \centering
 \includegraphics[scale=0.33]{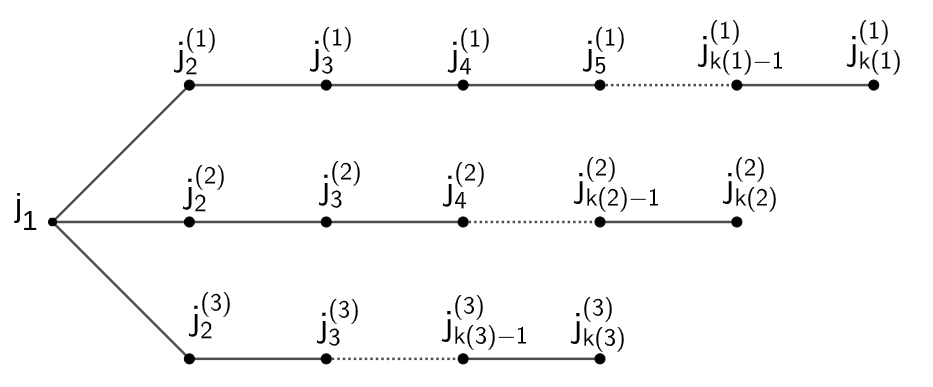}
 \caption{Generic form of a connected component of $I_2$, when $m=3$  . Actually, as we shall see in Lemma~\ref{Lemma: Class 4S}, if $j_1\in I_3$, then  we have $\min\{k(1),k(2),k(3)\}\leq 2$. }\label{f_conneceted_components_I2}
\end{figure}

\begin{Proposition}\label{p_connected_components_I2}
Let $\cC$ be a connected component of $I_2$. Then: \begin{enumerate}
    \item  $\cC$ admits a minimum $j_1$ and $m:=|\supp(j_1)|-1\leq 3$.

    \item Write $\supp(j_1)\setminus \{j_1\}=\{j_2^{(i)}\mid i\in \llbracket 1,m\rrbracket\}$. For $i\in \llbracket 1,m\rrbracket$, $\{x\in I_2\mid x\geq_2 j_2^{(i)}\}$ is a segment: we can write it $\{j_2^{(i)},j_3^{(i)},\ldots, j_{k(i)}^{(i)}\}$, for some $j_2^{(i)},\ldots,j_{k(i)}^{(i)}\in I_2$ and the edges  are the $(j_t^{(i)},j_{t+1}^{(i)})$ for $t\in \llbracket 1,k(i)-1\rrbracket$.
    
    \item We have  $\cC=\{j_1\}\sqcup \bigsqcup_{i=1}^m [j_2^{(i)},j_{k(i)}^{(i)}]$.

\item If $j\in \cC\setminus\{j_1\}$ then $|\supp(j)|\leq 3$ and $a_{j',j}=-1$ for all $j'\in \supp(j)$.
    \end{enumerate} 
\end{Proposition}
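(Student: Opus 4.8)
The plan is to read off the shape of $\cC$ from the description of maximal $\leq_2$-chains in Lemma~\ref{l_description_chains_I2}, viewing $\cC$ as a tree whose edges are oriented by $\lhd_2$. As preliminaries: $\cC$ is a subdiagram of the tree $I_1$ (Lemma~\ref{l_I1_tree}), hence is itself a tree; and for every edge $\{i,j\}$ of $\cC$ the integers $\bt(i,2)$ and $\bt(j,2)$ are finite and distinct, so exactly one of $i\lhd_2 j$, $j\lhd_2 i$ holds. I will orient every edge of $\cC$ from $i$ to $j$ whenever $i\lhd_2 j$.

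The first step is to prove that every $j\in I_2$ has at most one \emph{$\lhd_2$-predecessor}, i.e. at most one neighbour in $I_2$ with value of $\bt(\cdot,2)$ smaller than $\bt(j,2)$. To see this, embed $j$ in a maximal chain $v_1\lhd_2\cdots\lhd_2 v_k$ (extend any chain containing $j$ downwards to a $\leq_2$-minimal element and upwards to a $\leq_2$-maximal one), say $j=v_t$. By Lemma~\ref{l_description_chains_I2}, $\supp(j)$ lies inside $\{v_{t-1},v_t,v_{t+1}\}$ together with, when $t=k$, at most one additional vertex of $I_1\setminus I_2$; among the neighbours of $j$ that lie in $I_2$, only $v_{t-1}$ (present exactly when $t\geq 2$) can have $\bt(\cdot,2)$ smaller than $\bt(j,2)$, since $v_t\lhd_2 v_{t+1}$ and the additional vertex is not in $I_2$. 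In the oriented tree $\cC$ this says every vertex has in-degree at most $1$; as a tree on $|\cC|$ vertices has $|\cC|-1$ edges, exactly one vertex $j_1$ has in-degree $0$. Walking from $j_1$ to an arbitrary $x\in\cC$ and using the in-degree bound inductively, every edge on the path is oriented away from $j_1$, whence $j_1\lhd_2\cdots\lhd_2 x$ and $j_1=\min\cC$; moreover $j_1$ is the bottom of a maximal chain, so $|\supp(j_1)|\leq 4$ by Lemma~\ref{l_description_chains_I2}(1), i.e. $m\leq 3$. This proves (1).

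Next I would show that every vertex of $\cC\setminus\{j_1\}$ has at most one child (out-neighbour for the orientation): for such a vertex $j=v_t$ in a maximal chain we have $t\geq 2$, and Lemma~\ref{l_description_chains_I2}(2) forces $\supp(v_t)=\{v_{t-1},v_t,v_{t+1}\}$ when $t<k$, so the only child is $v_{t+1}$, while Lemma~\ref{l_description_chains_I2}(3) shows $v_k$ has no neighbour in $I_2$ other than $v_{k-1}$, hence no child (a second child would force the first out of $\supp(j)$, a contradiction). Therefore $\cC$ is the vertex $j_1$ with at most $m$ pairwise disjoint paths (\emph{arms}) attached at $j_1$. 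For each $j_2^{(i)}\in\supp(j_1)\setminus\{j_1\}$ that lies in $I_2$, its arm is the connected component of $\cC\setminus\{j_1\}$ containing it; using that $\bt(\cdot,2)$ increases along forward-oriented paths, this component equals $\{x\in I_2\mid x\geq_2 j_2^{(i)}\}$, and by the previous sentence it is a segment $[j_2^{(i)},j_{k(i)}^{(i)}]$. Removing $j_1$ splits $\cC$ precisely into these arms, giving (2) and (3). Finally, (4) is a direct reading of Lemma~\ref{l_description_chains_I2}(2)--(3) at the vertex $j=v_t$ of a maximal chain: if $t<k$ then $\supp(j)=\{v_{t-1},v_t,v_{t+1}\}$ with $a_{v_{t-1},v_t}=a_{v_{t+1},v_t}=-1$, and if $t=k$ then $|\supp(j)|\leq 3$ and the off-diagonal entries $a_{j',j}$ all equal $-1$.

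I expect the only genuinely delicate point to be the uniqueness of the $\leq_2$-minimum of $\cC$ (the in-degree/edge count combined with the ``forward path'' induction); the rest is bookkeeping built on top of Lemma~\ref{l_description_chains_I2}, which already carries the hard local computations. A harmless degeneracy to keep in mind is that some $j_2^{(i)}$ may lie in $I_1\setminus I_2$ --- this occurs already for $G_2$, where $I_2$ of the highest root is a single vertex --- in which case the corresponding arm is empty and $\cC$ has fewer than $m$ nontrivial arms, still consistent with the stated decomposition.
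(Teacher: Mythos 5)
Your proof is correct and follows essentially the same route as the paper: both hinge on Lemma~\ref{l_description_chains_I2} to read off local support constraints and on $I_1$ being a tree to rule out cycles. The only organizational difference is that you establish the unique $\leq_2$-minimum by orienting the edges of $\cC$, bounding in-degrees by $1$, and counting edges, whereas the paper picks a minimal element $j_1$, assembles $\cC'=\{j_1\}\cup\bigcup E_i$ from the segments growing out of $\supp(j_1)$, and shows $\cC'=\cC$ because $\cC'$ is connected and closed under taking $I_2$-neighbours; both arguments deliver the same tree-of-arms picture.
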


\begin{proof}
    Let $j_1\in \cC$ be minimum for $\leq_2$. By Lemma~\ref{l_description_chains_I2}, $m:=|\supp(j_1)|-1\leq 3$. Write $\supp(j_1)=\{j_1\}\cup  \{j_2^{(i)}\mid i\in \llbracket 1,m\rrbracket\}$. Let $i\in \llbracket 1,m\rrbracket$. By Lemma~\ref{l_description_chains_I2}, the set $E_i$ of the elements  of $I_2$ which are bigger than $j_2^{(i)}$ for $\leq_2$ is of the form $\{j_2^{(i)},j_3^{(i)},\ldots, j_{k(i)}^{(i)}\}$ for some $k(i)\in \Z_{\geq 2}$, and as a subgraph of $I_2$, $E_i$ is the line segment $[j_2^{(i)},j_{k(i)}^{(i)}]$. In particular, if $\cC'=\{j_1\}\cup \bigcup_{i=1}^m E_i$, then for each $x\in \cC'$, we have $\supp(x)\cap I_2\subset \cC'$. Therefore $\cC'$ is connected and $\cC=\cC'$. Moreover $\cC$ is a connected subgraph of the tree $I_1$ (by Proposition~\ref{p_characterization_I1}) and thus it is a tree. Therefore the $E_i$ are disjoint. Finally, by Lemma~\ref{l_description_chains_I2} 2-3, for all $j\in \cC'$ we have $|\supp(j)|\leq 3$ and, for all $j'\in \supp(j)$, $a_{j',j}=-1$. Lemma follows. 
\end{proof}

\begin{Remark}\label{r_independence_leq_2}
    The order $\leq_2$ does not depend on the choice of the  reduced decomposition made in the beginning of subsection~\ref{ss_Quantum_roots}, provided that it satisfies Lemma~\ref{l_I1_placed_beggining}. Indeed, if 2 elements of $I_2$ are comparable for $\leq_2$, they belong to the same connected component of $I_2$. Thus it suffices to prove that for each connected component $\cC$ of $I_2$  the restriction  of $\leq_2$ to $\cC$ is independent of the choice of the reduced decomposition. By Lemma~\ref{l_description_chains_I2}  and Proposition~\ref{p_connected_components_I2}, the minimum $j_1$ for $\leq_2$ is the unique element of $\cC$ whose degree is $3$. Then the elements $j_2^{(1)}, \ldots, j_2^{(m)}$ (where $m=|\supp(j_1)-1|$) that cover $j_1$ are the elements of $\supp(j_1)\setminus \{j_1\}$. If $i\in \llbracket 1,m\rrbracket$, the only element (if it exists) covering $j_2^{(i)}$ is the element of $\supp(j_2^{(i)})\setminus \{j_1,j_2^{(i)}\}$ etc. All these elements are defined uniquely from the graph $I_2$, which is independent of the choice of the reduced decomposition. 
\end{Remark}

With the above notation, we set: \begin{equation}\label{e_notation_k_kprime}
    k'=k,\text{ if }|\supp(j_k)|=2\text{ and }k'=k+1\text{ if }\supp(j_k)=\{j_{k-1},j_k,j_{k+1}\}.
\end{equation}\index{k@$k'$} The following lemma implies that \begin{equation}\label{e_inequalities_N}
    N_{j_1}(t)\geq N_{j_2}(t)\geq \ldots \geq N_{j_{k'}}(t), \forall t\in \llbracket L_1+1,L\rrbracket. 
\end{equation}

\begin{Lemma}\label{l_description_chains_In}
Let $j_1,\ldots,j_k\in I_2$ be a maximal chain for $\leq_2$. Let $n\in \Z_{\geq 3}$. Then:\begin{enumerate}
\item there exists $k_n\in \llbracket 0,k\rrbracket$ such that $\{i\in \llbracket 1,k\rrbracket\mid j_i\in I_n\}=\llbracket 1,k_n\rrbracket$,

\item we have $j_1\lhd_n j_2\lhd_n\ldots \lhd_n j_{k_n}$,

\item the sequence $(k_n)$ is non-increasing.

\end{enumerate}
Suppose moreover that  for all $n\geq 2$, $\bt(j_1,n)\geq\bt(j_2,n-1)$. Then:
\begin{enumerate}
\setcounter{enumi}{3}
    \item we have $\supp(j_k)=\{j_{k-1},j_k,j_{k+1}\}$,
    \item for all $i\in \llbracket 1,k\rrbracket$ and $n\geq 2$\begin{equation}\label{eq: k_n decreasing}
     \bt(j_i,n)\geq \bt(j_{i+1},n-1),
\end{equation}
    \item the sequence $(k_n)$ is decreasing until it reaches $0$. In particular $\max N_{j_1} \leq k+1$.

\end{enumerate} 
\end{Lemma}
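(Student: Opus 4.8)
\textbf{Plan for the proof of Lemma~\ref{l_description_chains_In}.}

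The plan is to extract everything from the inequality $\alpha_{j_x}(\gamma^\vee_t)\geq -1$ applied at various times $t=\bt(j_x,n)$, combined with the description of $\supp(j_x)$ from Lemma~\ref{l_description_chains_I2} and Proposition~\ref{p_connected_components_I2}. The key observation is that, by \eqref{e_coordinate_root}, for any $x$ and any $t\in\llbracket L_1+1,L\rrbracket$ one has $\alpha_{j_x}(\gamma^\vee_t)=2+\sum_{i\in\supp(j_x)\setminus\{j_x\}}N_i(t)a_{i,j_x}$, and since $\beta$ is quantum this quantity is $\geq -1$ (indeed $N_i$ only grows by $1$ at a time, so just before the $(N_{j_x}(t)+1)$-th appearance of $j_x$ one gets equality $-1$). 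For the middle vertices $j_x$, $2\leq x\leq k-1$, we know $\supp(j_x)=\{j_{x-1},j_x,j_{x+1}\}$ with $a_{j_{x\pm1},j_x}=-1$, so the constraint becomes $N_{j_{x-1}}(t)+N_{j_{x+1}}(t)\leq 3$ for the relevant times. This is the engine of the whole lemma.

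First I would prove (1)--(3) by induction on $n\geq 3$. For (1): suppose $j_x\in I_n$, i.e.\ $N_{j_x}$ reaches $n$; I want $j_{x-1}\in I_n$ (when $x\geq 2$). Consider $t=\bt(j_x,n)$, so $N_{j_x}(t)=n-1$ and $N_{j_{x-1}}(t)+N_{j_{x+1}}(t)\leq 3$ is false for $n\geq 3$ unless... wait — the right inequality to use is at $t=\bt(j_x,n)+1$ or rather the relation $\alpha_{j_{x-1}}(\gamma^\vee_{t'})\geq -1$ at $t'=\bt(j_{x-1},m)$ for suitable $m$; the cleanest route is: since $N_{j_{x-1}}$ is non-decreasing and, by the middle-vertex relation evaluated whenever $j_x$ is about to increase, $N_{j_{x-1}}(t)\geq N_{j_x}(t)$ for all $t$ (because each time $N_{j_x}$ increases past a value $v$ we need $N_{j_{x-1}}+N_{j_{x+1}}\leq 3$ and symmetrically, forcing a monotone domination along the chain). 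This gives the chain of inequalities $N_{j_1}(t)\geq N_{j_2}(t)\geq\cdots$, hence $j_x\in I_n\Rightarrow j_{x-1}\in I_n$, which is (1) with $k_n=\max\{x\mid j_x\in I_n\}$, and (3) is immediate since $I_{n+1}\subseteq I_n$. For (2), $j_{x}\lhd_n j_{x+1}$ means $\bt(j_x,n)<\bt(j_{x+1},n)$; this follows from the same domination: if $j_{x+1}$ reached its $n$-th appearance before $j_x$ did, then at that time $N_{j_{x+1}}(t)=n-1$ while $N_{j_x}(t)\leq n-2$, contradicting $N_{j_x}(t)\geq N_{j_{x+1}}(t)$.

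For the second half, under the hypothesis $\bt(j_1,n)\geq\bt(j_2,n-1)$ for all $n\geq 2$, I would argue as follows. Point (5), the inequality \eqref{eq: k_n decreasing} $\bt(j_i,n)\geq\bt(j_{i+1},n-1)$, should propagate along the chain by an induction on $i$ using the middle-vertex relation: at time $t=\bt(j_{i+1},n-1)$ we have $N_{j_{i+1}}(t)=n-2$, and the constraint $N_{j_i}(t)+N_{j_{i+2}}(t)\leq 3$... actually the relevant constraint at the vertex $j_{i+1}$ reads $N_{j_i}(t)+N_{j_{i+2}}(t)\leq 3$ at times when $j_{i+1}$ is about to move; combined with the inductive hypothesis $\bt(j_{i-1}+1\text{-shift})$ one forces $N_{j_i}(t)\leq n-1$, i.e.\ $\bt(j_i,n)\geq t=\bt(j_{i+1},n-1)$. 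Then (4) follows: if $\supp(j_k)=\{j_{k-1},j_k\}$ only, then by \eqref{e_passage_3_before_2} of Lemma~\ref{l_description_chains_I2} we would have $\bt(j_{k-1},3)<\bt(j_k,2)$, i.e.\ $\bt(j_k,2)>\bt(j_{k-1},3)\geq\bt(j_{k-1},2)$ is fine, but iterating the quantum constraint at $j_{k-1}$ with only the neighbours $j_{k-2},j_k$ present forces $N_{j_k}$ to keep pace with $N_{j_{k-1}}$ in a way incompatible with $\bt(j_1,n)\geq\bt(j_2,n-1)$ unless the chain terminates immediately; making this precise (the endpoint cannot "absorb" from only one side while the hypothesis pushes appearances apart) is where I expect the real work. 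Finally (6): by (5) with $i=1$, $\bt(j_1,n)\geq\bt(j_2,n-1)$ and then inductively $\bt(j_1,n)\geq\bt(j_{n}, 1)$-type bounds show that once $k_n<k_{n-1}$ fails to drop it contradicts strict inequalities in (5) accumulated along the chain; so $(k_n)$ strictly decreases until $0$, whence $\max N_{j_1}=\max\{n\mid k_n\geq 1\}\leq k+1$ since $k_2\leq k$ and it drops by at least $1$ each step, reaching $0$ after at most $k$ further steps — wait, that gives $\leq k+1$ as claimed.

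\textbf{Main obstacle.} The genuinely delicate point is part (4) together with the strictness in (6): one must show the extra hypothesis $\bt(j_1,n)\geq\bt(j_2,n-1)$ is incompatible with the "short endpoint" case $|\supp(j_k)|=2$, and that it forces $(k_n)$ to decrease \emph{strictly} rather than just weakly. This requires carefully tracking, via repeated use of $\alpha_{j_x}(\gamma^\vee_t)\geq -1$ at the right sequence of times $\bt(j_x,n)$, how a deficit at one end of the chain propagates; the bookkeeping of which $N_i(t)$ equals which value at which $\bt(j_x,n)$ is where errors would creep in, so I would set up once and for all the monotonicity table $N_{j_1}(t)\geq N_{j_2}(t)\geq\cdots\geq N_{j_{k'}}(t)$ (equation~\eqref{e_inequalities_N}) and derive (4)--(6) purely from it plus the hypothesis, rather than re-deriving inequalities ad hoc.
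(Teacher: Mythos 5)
Your approach for parts (1)–(3) matches the paper's in spirit — both rest on evaluating $\alpha_{j_x}(\gamma^\vee_t)$ at the time just before the $n$-th appearance of $j_x$ and using $\supp(j_x)=\{j_{x-1},j_x,j_{x+1}\}$ with $a=-1$ coefficients for interior vertices. However, your slogan ``$N_{j_{x-1}}(t)+N_{j_{x+1}}(t)\leq 3$'' is only the $n=2$ instance of the correct constraint; at the $n$-th passage of $j_x$ the identity is $N_{j_{x-1}}(t)+N_{j_{x+1}}(t)=2(n-1)+1=2n-1$, and the contradiction in the paper comes from observing that if one of the outer terms already exceeds $n-1$ while the other is still $\leq n-1$, the sum cannot be odd/exceed $2(n-1)$ in the way required. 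The ordering claim then follows. That part of your plan is salvageable.

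The genuine gap is in (4) and (6), and you are right to flag (4) as ``where I expect the real work'' — but the missing idea is concrete and short, not a long bookkeeping computation. Having proved the inductive step of (5) for all interior indices $i\in\llbracket 2,k-1\rrbracket$ (each such $j_i$ has a three-element support with $-1$ coefficients, so the middle-vertex identity pins $N_{j_{i-1}}(\bt(j_i,n))=n$ and then forces $N_{j_{i+1}}(\bt(j_i,n))=n-1$), one applies the very same computation at $(i,n)=(k,2)$: the induction hypothesis at $(k-1,3)$ gives $\bt(j_{k-1},3)\geq\bt(j_k,2)$, hence $N_{j_{k-1}}(\bt(j_k,2))=2$ exactly, so $\alpha_{j_k}(\gamma^\vee_{\bt(j_k,2)})=2-2-N_{j_{k+1}}=-1$ forces a third neighbour $j_{k+1}$ with $N_{j_{k+1}}=1$. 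Equivalently, if $|\supp(j_k)|=2$ then Lemma~\ref{l_description_chains_I2}(3b) gives $\bt(j_{k-1},3)<\bt(j_k,2)$, directly contradicting (5) at $(k-1,3)$. Either framing closes (4); your sketch (``forces $N_{j_k}$ to keep pace...incompatible with the hypothesis'') does not reach this contradiction. For (6), your justification of strict decrease (``it contradicts strict inequalities in (5)'') is wrong — the inequalities in (5) are non-strict. The correct mechanism is propagation of $\infty$: apply (5) at $(i,n)=(k_n,n+1)$; since $j_{k_n+1}\notin I_n$ we have $\bt(j_{k_n+1},n)=\infty$, hence $\bt(j_{k_n},n+1)=\infty$ and $k_{n+1}<k_n$. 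The bound $\max N_{j_1}\leq k+1$ can alternatively be read off by chaining (5) all the way to $j_{k+1}\in I_1\setminus I_2$, which is closer to what you wrote, but that gives only the bound and not the strict-decrease statement.
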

\begin{proof}
We assume that $E:=\{x\in \llbracket 1,k\rrbracket\mid j_x\in I_n\}$ is non-empty. Let $y=|E|$. For $x\in \llbracket 1,y\rrbracket$, let $\tilde{t}_x\in \llbracket 2,L\rrbracket$ be such that $|\{x'\in \llbracket 1,k\rrbracket\mid N_{j_{x'}}(\tilde{t}_{x})\geq n\}|\geq x$ and such that $\tilde{t}_x$ is minimal for this property. By definition, $\tilde{t}_1<\tilde{t}_2<\ldots <\tilde{t}_y$.

 Let $x\in \llbracket 1,k\rrbracket$ be such that $N_{j_x}(\tilde{t}_1)=n$. We then have $N_{j_x}(\tilde{t}_1-1)=n-1$ and $\alpha_{j_x}(\gamma^\vee_{\tilde{t}_1-1})=-1$.   Assume $x\in \llbracket 2,k'-1\rrbracket$. Then by Lemma~\ref{l_description_chains_I2} we have  $\supp(j_x)=\{j_{x-1},j_x,j_{x+1}\}$ and $a_{j_{x-1},j_x}=-1=a_{j_{x+1},j_x}$. We have $N_{j_{x-1}}(\tilde{t}_1-1),N_{j_{x+1}}(\tilde{t}_1-1)\leq n-1$. Then: \begin{align*}
     \alpha_{j_x}(\gamma^\vee_{\tilde{t}_1-1})=&\alpha_{j_x}\left(N_{j_{x-1}}(\tilde{t}_1-1)\alpha_{j_{x-1}}^\vee+N_{j_{x}}(\tilde{t}_1-1)\alpha_{j_{x}}^\vee+N_{j_{x+1}}(\tilde{t}_1-1)\alpha_{j_{x+1}}^\vee\right)\\
     &\geq 2(n-1)-2(n-1)=0:
 \end{align*} a contradiction. Therefore $x\notin \llbracket 2,k'-1\rrbracket$. Assume $k=k'$ and $x=k$. Then $N_{j_{x-1}}(\tilde{t}_1-1)\leq n-1$. By \eqref{e_passage_3_before_2}, $a_{j_{k-1},j_k}=-1$ and  we reach a contradiction again. Therefore, $x=1$.

Let $x\in \llbracket 2,y\rrbracket$ and $i\in \llbracket 1,k\rrbracket$ be such that $N_{j_i}(\tilde{t}_x-1)=n-1=N_{j_i}(\tilde{t}_x)-1$. We assume that for all $x'\in \llbracket 1,x-1\rrbracket$,
 \[\{z\in \llbracket 1,k\rrbracket \mid N_{j_z}(\tilde{t}_{x'})\geq n\}=\llbracket 1,x'\rrbracket \text{ and }\{z\in \llbracket 1,k\rrbracket \mid N_{j_z}(\tilde{t}_{x'})\leq n-1\}=\llbracket x'+1,k\rrbracket.\] Then $i\geq x$. Assume $i\in \llbracket x+1,y\rrbracket$ and $i<k'$. Then: \begin{equation}\label{e_alpha_intermediate}
\alpha_{j_i}(\gamma^\vee_{\tilde{t}_x-1})=-1=\alpha_{j_i}\left(N_{j_{i-1}}(\tilde{t}_x-1)\alpha_{j_{i-1}}^\vee+N_{j_{i}}(\tilde{t}_x-1)\alpha_{j_{i}}^\vee+N_{j_{i+1}}(\tilde{t}_x-1)\alpha_{j_{i+1}}^\vee\right).
\end{equation} By choice of $i$, $N_{j_i}(\tilde{t}_x-1)=n-1$ and by our assumption $N_{j_{i-1}}(\tilde{t}_x-1),N_{j_{i+1}}(\tilde{t}_x-1)\leq n-1$. Therefore $\alpha_{j_i}(\gamma^\vee_{\tilde{t}_x-1})\geq 2(n-1)-2(n-1)=0$: a contradiction. Therefore $i=x$ (the case $i=k'=k$ is similar).  Thus we proved that for every $x\in \llbracket 1,y\rrbracket$, we have: \[\{z\in \llbracket 1,k\rrbracket \mid N_{j_z}(\tilde{t}_{x})\geq n\}=\llbracket 1,x\rrbracket \text{ and }\{z\in \llbracket 1,k\rrbracket \mid N_{j_z}(\tilde{t}_{x})\leq n-1\}=\llbracket x+1,k\rrbracket.\] In other words, $\bt(j_1,n)<\bt(j_2,n)<\ldots <\bt(j_y,n)$.  As $a_{j_z,j_{z+1}}\neq 0$ for every $z\in \llbracket 1,k-1\rrbracket$, we have $j_1\lhd_n j_2\lhd_n \ldots \lhd_n j_y$. We thus set $k_n=y=|\{i\in \llbracket 1,k\rrbracket \mid j_i\in I_n\}|$ and we have 1) and 2). Then 3) is clear since $I_n\subset I_{n-1}$.

We prove \eqref{eq: k_n decreasing} by iteration on $i$, as the assumption corresponds to the case $i=1$. Let $i\in \llbracket 1,k-1\rrbracket$ and suppose that \eqref{eq: k_n decreasing} is satisfied for $(i-1,n+1)$, that is to say $\bt(j_{i-1},n+1)\geq \bt(j_i,n)$. Let $t=\bt(j_i,n)$. If it is infinite, then \eqref{eq: k_n decreasing} is clear, so we suppose it is finite. Then $n\leq N_{j_{i-1}}(t)$, since $j_{i-1}\lhd_n j_i$ by 2). We also have $N_{j_{i-1}}(t)<n+1$ by assumption, so $N_{j_{i-1}}(t)=n$. Therefore $-1=\alpha_{j_i}(\gamma^\vee_t)=2N_{j_i}(t)-N_{j_{i-1}}(t)-N_{j_{i+1}}(t)= 2(n-1)-n - N_{j_{i+1}}(t)$, so $N_{j_{i+1}}(t)=n-1$. Thus $t \geq \bt(j_{i+1},n-1)$, which ends the recursion. For $(i,n)=(k,2)$, since $\bt(j_k,2)$ is finite (because $j_k \in I_2$) the same computation proves that $N_{j_{k+1}}(\bt(j_k,2))=1$, hence $|\supp(j_k)|=3$. Thus we have proved 4) and 5). In particular, if $\bt(j_{i+1},n-1)=\infty$ then $\bt(j_i,n)= \infty$. If $k_n\neq 0$, applying this to $(i,n)=(k_n,n+1)$ we deduce that $j_{k_n}\notin I_{n+1}$, so $k_{n+1}<k_n$. Therefore the sequence $(k_n)$ is decreasing until it reaches zero, $k_{k+2}=0$ and $j_1\notin I_{k+2}$, which proves 5).

\end{proof}
\begin{Remark}\label{r_connected_components_In}
Let $\cC$ be a connected component of $I_2$. Write $\cC=\{j_1\}\sqcup \bigsqcup_{i=1}^m\{j_2^{(i)},\ldots,j_{k(i)}^{(i)}\}$, with the same notation as in Proposition~\ref{p_connected_components_I2}. Then Lemma~\ref{l_description_chains_In} implies that if $I_n\cap \cC\neq \emptyset$, then  there exists $(k_m(i))_{i\in \llbracket 1,m\rrbracket}\in \prod_{i=1}^m \llbracket 1,k(i)\rrbracket$ such that $\cC\cap I_n=\{j_1\}\sqcup \bigsqcup_{i=1}^m\{j_2^{(i)},\ldots,j_{k_m(i)}^{(i)}\}$ (where $\{j_2^{(i)},\ldots,j_{k_m(i)}^{(i)}\}=\emptyset$, if $k_m(i)=1$). Moreover, the order $\leq_n$ is simply the restriction of $\leq_2$ to $\leq_n$, and it is independent on the choice of a reduced decomposition for  $\beta$, by Remark~\ref{r_independence_leq_2}. 
\end{Remark}

\subsection{Optimal bound on $\max N_{j_1}$}
In this section, we fix $j_1\in I_2$ minimal for $\leq_2$, and we fix a maximal chain $j_1\lhd_2 j_2 \lhd_2 \dots \lhd_2 j_k$ as in Lemma~\ref{l_description_chains_I2}. In particular, $k$ always denote the length of a maximal $\leq_2$-chain starting at $j_1$.

By Lemma \ref{l_description_chains_I2} 1), we have $\deg(j_1)=3$, therefore $|\supp(j_1)|\in \{2,3,4\}$. We treat each case separately, to obtain a bound on $\max N_{j_1}$ which depends on $k$. Along the way, we obtain more precise information on the $\leq_n$-chains starting at $j_1$. Beforehand, let us give a technical lemma on the difference $N_{j_k}-N_{j_{k-1}}$.
\begin{Lemma}\label{l_relation_Nk-1_Nk}
Let $t\in \llbracket L_1+1,L\rrbracket$ be such that $N_{j_k}(t)\geq 2$. Then: \begin{enumerate}
\item If $\supp(j_k)=\{j_{k-1},j_k,j_{k+1}\}$ (i.e if $k'=k+1$), then  $2N_{j_k}(t)\leq N_{j_{k-1}}(t)+2$. If moreover $N_{j_k}(t)<\max N_{j_k}$, then $N_{j_{k-1}}(t)\leq 2 N_{j_k}(t)$. 

\item If $\supp(j_k)=\{j_{k-1},j_k\}$ (i.e if $k'=k$), then   $ 2N_{j_k}(t)\leq N_{j_{k-1}}(t)+1$. If moreover $N_{j_k}(t)<\max N_{j_k}$, then $2N_{j_k}(t)\geq N_{j_{k-1}}(t)-1$. In particular, if  $N_{j_{k-1}}(t)$ is even, $N_{j_k}(t)=\frac{1}{2}N_{j_{k-1}}(t)$ and if $N_{j_{k-1}}(t)$ is odd, $N_{j_k}(t)\in \{\frac{N_{j_{k-1}}(t)-1}{2},\frac{N_{j_{k-1}}(t)+1}{2}\}$.  
\end{enumerate}

\end{Lemma}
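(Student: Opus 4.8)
The plan is to compute $\alpha_{j_k}(\gamma^\vee_t)$ directly using the support of $j_k$, exactly as in the proofs of Lemma~\ref{l_description_chains_I2} and Lemma~\ref{l_description_chains_In}, and then feed in the two sided bounds coming from the chain structure. Recall from \eqref{e_coordinate_root} that $\gamma^\vee_t=\sum_{i\in I_1}N_i(t)\alpha_i^\vee$, and that by definition of $\bt$ the integer $N_{j_k}(t)\geq 2$ means $t\geq \bt(j_k,2)$, so $t$ lies in the range where the computation $\alpha_{j_x}(\gamma^\vee_{\bt(j_x,n)})=-1$ from Lemma~\ref{l_characterization_almost_simpleness}/\eqref{e_roots} is available at the relevant places.

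First I would treat case (1), where $\supp(j_k)=\{j_{k-1},j_k,j_{k+1}\}$ with $j_{k+1}\in I_1\setminus I_2$ and $a_{j_{k-1},j_k}=a_{j_{k+1},j_k}=-1$ by Lemma~\ref{l_description_chains_I2}(3a). Since $j_{k+1}\notin I_2$ we have $N_{j_{k+1}}(t)\leq 1$, hence
\[
\alpha_{j_k}(\gamma^\vee_t)=2N_{j_k}(t)-N_{j_{k-1}}(t)-N_{j_{k+1}}(t)\geq 2N_{j_k}(t)-N_{j_{k-1}}(t)-1.
\]
On the other hand $\alpha_{j_k}(\gamma^\vee_t)$ is bounded above: when $N_{j_k}(t)<\max N_{j_k}$, there is some later index $t'>t$ with $N_{j_k}$ increasing at $t'$ for the $(N_{j_k}(t)+1)$-th time, and the point is to extract from the constraint $\alpha_{j_k}(\gamma^\vee_{\bt(j_k,N_{j_k}(t)+1)})=-1$ (and monotonicity of the $N_i$, together with $N_{j_{k-1}}\lhd$-domination $N_{j_{k-1}}(t)\geq N_{j_k}(t)$ from \eqref{e_inequalities_N}) the inequality $N_{j_{k-1}}(t)\leq 2N_{j_k}(t)$. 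More precisely, from $-1=\alpha_{j_k}(\gamma^\vee_s)=2N_{j_k}(s)-N_{j_{k-1}}(s)-N_{j_{k+1}}(s)$ with $s=\bt(j_k,N_{j_k}(t)+1)$, $N_{j_k}(s)=N_{j_k}(t)$, $N_{j_{k+1}}(s)\in\{0,1\}$, I get $N_{j_{k-1}}(s)=2N_{j_k}(t)+1-N_{j_{k+1}}(s)\leq 2N_{j_k}(t)+1$, but parity/integrality plus $N_{j_{k-1}}(s)\geq N_{j_{k-1}}(t)$ and the earlier-appearance ordering $\bt(j_{k-1},\cdot)<\bt(j_k,\cdot)$ should be pushed to give $N_{j_{k-1}}(t)\leq 2N_{j_k}(t)$; and running the same computation at $s'=\bt(j_k,N_{j_k}(t))-1$ (where $\alpha_{j_k}(\gamma^\vee_{s'})=-1$ and $N_{j_k}(s')=N_{j_k}(t)-1$) gives the companion bound $2N_{j_k}(t)\leq N_{j_{k-1}}(t)+2$.

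Then I would do case (2), $\supp(j_k)=\{j_{k-1},j_k\}$, which is simpler because there is no $j_{k+1}$ term: $\alpha_{j_k}(\gamma^\vee_t)=2N_{j_k}(t)-N_{j_{k-1}}(t)$, so evaluating at $s'=\bt(j_k,N_{j_k}(t))-1$ gives $-1=2(N_{j_k}(t)-1)-N_{j_{k-1}}(s')$ hence $N_{j_{k-1}}(s')=2N_{j_k}(t)-1$ and with $N_{j_{k-1}}(t)\geq N_{j_{k-1}}(s')$ one gets $2N_{j_k}(t)\leq N_{j_{k-1}}(t)+1$; and when $N_{j_k}(t)<\max N_{j_k}$, evaluating at $s=\bt(j_k,N_{j_k}(t)+1)$ gives $-1=2N_{j_k}(t)-N_{j_{k-1}}(s)$, i.e. $N_{j_{k-1}}(s)=2N_{j_k}(t)+1$; since $N_{j_{k-1}}$ can only grow by steps of one and $N_{j_{k-1}}(t)\leq N_{j_{k-1}}(s)$, combined with the previous bound this pins $N_{j_{k-1}}(t)$ to within one of $2N_{j_k}(t)$, whence the stated $2N_{j_k}(t)\geq N_{j_{k-1}}(t)-1$ and the even/odd dichotomy for $N_{j_k}(t)$. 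The main obstacle I anticipate is bookkeeping the precise inequality in case (1) under the hypothesis $N_{j_k}(t)<\max N_{j_k}$: one must be careful that the value $N_{j_{k+1}}(s)$ at the auxiliary time $s$ is $0$ or $1$ and cannot spoil the parity argument, and that $\bt(j_k,\cdot)$ is finite in the relevant range (guaranteed since $N_{j_k}(t)+1\leq \max N_{j_k}$), so that the identity $\alpha_{j_k}(\gamma^\vee_{\bt(j_k,n)-1})=-1$ is legitimately applicable.
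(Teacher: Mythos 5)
Your overall plan is the same as the paper's: evaluate $\alpha_{j_k}$ at the two auxiliary times $\bt\big(j_k,N_{j_k}(t)\big)$ and $\bt\big(j_k,N_{j_k}(t)+1\big)$, use $\alpha_{j_k}(\gamma^\vee_{\bt(j_k,n)})=-1$, and compare with $N_{j_{k-1}}(t)$ via monotonicity. However, two points in your write-up need to be fixed before the argument closes.

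First, you systematically shift the auxiliary time by one: you take $s'=\bt\big(j_k,N_{j_k}(t)\big)-1$ and assert $\alpha_{j_k}(\gamma^\vee_{s'})=-1$, and at the end you refer to ``the identity $\alpha_{j_k}(\gamma^\vee_{\bt(j_k,n)-1})=-1$''. This is not the correct identity. From \eqref{e_roots} we have $\langle\gamma^\vee_{\tau},\alpha_{i_{\tau+1}}\rangle=-1$, and the $n$-th occurrence of $j_k$ in the word is at position $\bt(j_k,n)+1$, i.e.\ $i_{\bt(j_k,n)+1}=j_k$; hence the correct statement is $\alpha_{j_k}(\gamma^\vee_{\bt(j_k,n)})=-1$, with no ``$-1$''. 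So the auxiliary time you want is $t'=\bt\big(j_k,N_{j_k}(t)\big)$ itself (which still satisfies $N_{j_k}(t')=N_{j_k}(t)-1$), not its predecessor. As written, your claimed equality at $s'$ is false in general.

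Second, in case (1) you stop at $N_{j_{k+1}}(s)\in\{0,1\}$ and then invoke ``parity/integrality'' together with the domination $N_{j_{k-1}}\geq N_{j_k}$ from \eqref{e_inequalities_N} to try to improve the resulting bound $N_{j_{k-1}}(s)\leq 2N_{j_k}(t)+1$ to $N_{j_{k-1}}(t)\leq 2N_{j_k}(t)$. That step is not actually carried out and is unnecessary. Since $N_{j_k}(t)\geq 2$ we have $\bt(j_k,2)\geq L_1$ (because $N_{j_k}(L_1)=1$), so both auxiliary times lie in $\llbracket L_1,L\rrbracket$; and since $j_{k+1}\in I_1\setminus I_2$, the function $N_{j_{k+1}}$ is identically $1$ there. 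Plugging $N_{j_{k+1}}=1$ \emph{exactly} into $-1=2N_{j_k}-N_{j_{k-1}}-N_{j_{k+1}}$ gives the clean equalities $N_{j_{k-1}}(t')=2N_{j_k}(t)-2$ and $N_{j_{k-1}}(t'')=2N_{j_k}(t)$, and then $t'<t\leq t''$ together with monotonicity of $N_{j_{k-1}}$ immediately yields both inequalities of case (1), with no parity argument and no use of \eqref{e_inequalities_N}. With these two corrections your proof coincides with the one in the paper.
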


\begin{proof}

Let $t'=\bt(j_k,N_{j_k}(t))$ and let $t''=\bt(j_k,N_{j_k}(t)+1)$, if $N_{j_k}(t)<\max N_{j_k}$. We have $t'<t\leq t''$.
\begin{enumerate}
\item  If $\supp(j_k)=\{j_{k-1},j_k,j_{k+1}\}$, by Lemma~\ref{l_description_chains_I2} we have $j_{k+1}\in  I_1\setminus I_2$ and thus $N_{j_{k+1}}(t')=1$. We also have $2N_{j_k}(t')=N_{j_{k-1}}(t')+N_{j_{k+1}}(t')-1=N_{j_{k-1}}(t')$ and thus $2N_{j_k}(t)-2=N_{j_{k-1}}(t')\leq N_{j_{k-1}}(t)$, since $N_{j_{k-1}}$ is non-decreasing. We also have $2N_{j_k}(t'')=2N_{j_k}(t)=N_{j_{k-1}}(t'')\geq N_{j_{k-1}}(t)$, which proves 1).

\item If $\supp(j_k)=\{j_{k-1},j_k\}$, we have $2N_{j_k}(t')=N_{j_{k-1}}(t')-1$ and thus $2(N_{j_k}(t)-1)=2N_{j_k}(t')=N_{j_{k-1}}(t')-1\leq N_{j_{k-1}}(t)-1$. Therefore $2N_{j_k}(t)\leq N_{j_{k-1}}(t)+1$.  

We also have  $2N_{j_{k}}(t'')=N_{j_{k-1}}(t'')-1$ and thus $2N_{j_k}(t)\geq N_{j_{k-1}}(t)-1$. Consequently we have \[N_{j_{k-1}}(t)-1\leq 2N_{j_k}(t)\leq N_{j_{k-1}}(t)+1,\] which proves 2). 
\end{enumerate}
\end{proof}
We can now obtain a bound on $N_{j_1}$.
\subsubsection{Case where $|\supp(j_1)|\leq 3$}\label{subsection: supp < 3}
\begin{Lemma}\label{l_bounding_supp=2}
Let $j_1\lhd_2 j_2 \lhd_2 \dots \lhd_2 j_k\in I_2$ be as in Lemma~\ref{l_description_chains_I2}. We assume that $\supp(j_1)=\{j_1,j_2\}$. Then $\max N_{j_1}\leq 2$. 
\end{Lemma}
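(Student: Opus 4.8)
The plan is to exploit an elementary divisibility obstruction hidden in Formula~\eqref{e_roots}. First I would record the relevant Cartan data at $j_1$: by Lemma~\ref{l_description_chains_I2}(1) we have $\deg(j_1)=3$, and since by hypothesis $\supp(j_1)=\{j_1,j_2\}$, the vertex $j_2$ is the unique element of $I_1\setminus\{j_1\}$ with $a_{j_2,j_1}\neq 0$; hence $-a_{j_2,j_1}=\deg(j_1)=3$, i.e.\ $a_{j_2,j_1}=-3$.

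Then I would argue by contradiction. Assume $\max N_{j_1}\geq 3$, i.e.\ $j_1$ occurs at least three times in the sequence $(i_1,\dots,i_L)$, and let $t$ be the position of its third occurrence. Then $t\geq 3$ (in particular $t\neq 1$) and $N_{j_1}(t-1)=2$. Since $\beta$ is quantum, Formula~\eqref{e_roots} gives $\langle\gamma^\vee_{t-1},\alpha_{j_1}\rangle=-1$, while \eqref{e_coordinate_root} yields $\gamma^\vee_{t-1}=\sum_{i\in I_1}N_i(t-1)\alpha_i^\vee$. As $a_{i,j_1}=0$ for every $i\in I_1\setminus\supp(j_1)$, this gives
$$-1=\langle\gamma^\vee_{t-1},\alpha_{j_1}\rangle=\sum_{i\in I_1}N_i(t-1)\,a_{i,j_1}=2N_{j_1}(t-1)+a_{j_2,j_1}N_{j_2}(t-1)=4-3N_{j_2}(t-1),$$
so that $3N_{j_2}(t-1)=5$, which is impossible since $N_{j_2}(t-1)\in\Z_{\geq 0}$. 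Hence $\max N_{j_1}\leq 2$.

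I do not expect any genuine obstacle here: the argument is short, and the only points requiring care are bookkeeping — namely that ``third occurrence'' really forces $N_{j_1}(t-1)=2$ and $t\neq 1$, so that both \eqref{e_roots} and \eqref{e_coordinate_root} apply, and that the particular reduced decomposition fixed via Lemma~\ref{l_I1_placed_beggining} is irrelevant, since we only use the multiplicities $N_{j_1}(t-1),N_{j_2}(t-1)$ and the single entry $a_{j_2,j_1}=-3$. (As a sanity check, the same computation run at the first occurrence shows it must be $t=1$, i.e.\ $j_1=i_1$, and at the second occurrence shows $N_{j_2}(\bt(j_1,2))=1$; these refinements are not needed for the statement.)
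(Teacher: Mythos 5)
Your proof is correct and is essentially the paper's proof: both identify $a_{j_2,j_1}=-3$ from $\deg(j_1)=3$ and $|\supp(j_1)|=2$, then evaluate the quantum condition $\langle\gamma^\vee_{\bt(j_1,3)},\alpha_{j_1}\rangle=-1$ (your $t-1$ equals the paper's $\bt(j_1,3)$) to get $4-3N_{j_2}=-1$, which has no integer solution. The only difference is cosmetic bookkeeping in the time index.
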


\begin{proof}
By Lemma~\ref{l_description_chains_I2}, $\deg(j_1)=-a_{j_2,j_1}=3$. Assume $j_1\in I_3$ and set $t=\bt(j_1,3)$. Then \[\alpha_{j_1}(\gamma^\vee_t)=\alpha_{j_1}(2\alpha_{j_1}^\vee+N_{j_2}(t)\alpha_{j_2}^\vee)=4-3N_{j_2}(t)=-1:\] this is impossible. Lemma follows.
\end{proof}

\begin{Lemma}\label{Lemma: class 3}
Let $j_1\in I_2$ be minimal for $\leq_2$ such that $|\supp(j_1)|=3$. Then $\max N_{j_1}\leq k+1$. 

More precisely, write $\supp(j_1)=\{j_1,j_2,j_2'\}$ with $a_{j_2',j_1}=-1$ and $a_{j_2,j_1}=-2$. Then:
\begin{enumerate}
    \item if $\bt(j_2',2)\leq \bt(j_1,3)$ then $\bt(j_1,3)=\infty$,
    \item if $\bt(j_1,3)<\bt(j_2',2)\neq \infty$ then $\supp(j_2')=\{j_1,j_2'\}$ and $\bt(j_1,4)=\bt(j_2',3)=\infty$,
    \item else $j_2'\notin I_2$ and, for all $n\geq 2$, $\bt(j_2,n-1)\leq \bt(j_1,n) \leq \bt(j_2,n)$. The chain $j_1\lhd_2 j_2 \lhd_2 \dots \lhd_2 j_k$ then satisfies properties 4-6 of Lemma~\ref{l_description_chains_In} and in particular $\max N_{j_1} \leq k+1$.
\end{enumerate}
\end{Lemma}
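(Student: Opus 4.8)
The plan is to analyze the three cases according to the relative position of the second appearance of $j_2'$ (the "short" neighbour, with $a_{j_2',j_1}=-1$) and the third appearance of $j_1$, exploiting the identity $\alpha_{j_1}(\gamma^\vee_t)=-1$ evaluated at the relevant times $t=\bt(j_1,n)$. First I would record the basic local equation: since $\supp(j_1)=\{j_1,j_2,j_2'\}$ with $a_{j_2,j_1}=-2$ and $a_{j_2',j_1}=-1$, for any $t\in\llbracket L_1+1,L\rrbracket$ we have
\[
\alpha_{j_1}(\gamma^\vee_t)=2N_{j_1}(t)-2N_{j_2}(t)-N_{j_2'}(t).
\]
Evaluating at $t=\bt(j_1,n)$ gives $N_{j_1}(t)=n-1$ and the left side equals $-1$, so $2N_{j_2}(t)+N_{j_2'}(t)=2n-1$; in particular $N_{j_2'}(t)$ is odd, hence $\geq 1$, and $N_{j_2}(t)=n-1-\tfrac{N_{j_2'}(t)-1}{2}$. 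This single relation drives everything.

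For case (1), suppose $\bt(j_2',2)\leq \bt(j_1,3)$; if $\bt(j_1,3)$ were finite, then at $t=\bt(j_1,3)$ we would have $N_{j_2'}(t)\geq 2$, but the displayed relation at $n=3$ forces $N_{j_2'}(t)=2N_{j_2}(t)$ wait — more carefully, $2N_{j_2}(t)+N_{j_2'}(t)=5$ with $N_{j_2'}(t)\geq 2$ and odd forces $N_{j_2'}(t)=3$, $N_{j_2}(t)=1$; but $N_{j_2'}(t)=3$ means $j_2'\in I_3$, and one then runs the chain-type argument of Lemma~\ref{l_description_chains_I2}/\ref{l_description_chains_In} at $j_2'$ to derive a contradiction (the support of $j_2'$ cannot absorb coefficient $3$ compatibly with $\deg$ constraints), giving $\bt(j_1,3)=\infty$, i.e. $\max N_{j_1}\leq 2\leq k+1$. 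For case (2), with $\bt(j_1,3)<\bt(j_2',2)<\infty$: at $t=\bt(j_1,3)$ we have $N_{j_2'}(t)=1$, so $N_{j_2}(t)=2$; I would then show $\supp(j_2')=\{j_1,j_2'\}$ by evaluating $\alpha_{j_2'}(\gamma^\vee_{\bt(j_2',2)})=-1$ and using that $j_2'\notin I_2$ until after $\bt(j_1,3)$, so that at its second appearance only $j_1$ contributes a nonzero off-diagonal term, forcing $a_{j_1,j_2'}=-1$ and no further neighbour; then a parity/counting computation at $\bt(j_1,4)$ and $\bt(j_2',3)$ (using $2N_{j_2}+N_{j_2'}=2n-1$ again) shows both are $\infty$, so $\max N_{j_1}\leq 3\leq k+1$.

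For case (3), the remaining situation is $\bt(j_2',2)=\infty$, i.e. $j_2'\notin I_2$, so $N_{j_2'}(t)=1$ for all $t$ past $L_1$, and the relation collapses to $N_{j_2}(\bt(j_1,n))=n-1$, which is exactly $\bt(j_2,n-1)\leq\bt(j_1,n)$; the inequality $\bt(j_1,n)\leq\bt(j_2,n)$ follows since $j_1\lhd_2 j_2$ propagates (if $N_{j_2}(t)=n$ with $t<\bt(j_1,n)$, then $N_{j_1}(t)\leq n-2$ and $\alpha_{j_2}(\gamma^\vee_t)$ becomes nonnegative, a contradiction, using Proposition~\ref{p_connected_components_I2}(4) that all edges at $j_2$ have coefficient $-1$). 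This is precisely the hypothesis ``$\bt(j_1,n)\geq\bt(j_2,n-1)$ for all $n\geq 2$'' needed to invoke parts 4--6 of Lemma~\ref{l_description_chains_In} for the chain $j_1\lhd_2\cdots\lhd_2 j_k$, yielding $\max N_{j_1}\leq k+1$. I expect the main obstacle to be case (2): pinning down $\supp(j_2')=\{j_1,j_2'\}$ and then simultaneously showing both $\bt(j_1,4)$ and $\bt(j_2',3)$ are infinite requires carefully tracking the coefficient vector $\gamma^\vee_t$ around $j_2'$ and ruling out that $j_2'$ could acquire a third neighbour or that the coefficients could keep growing — it is the case where the ``$-2$ edge'' and the ``$-1$ edge'' interact most delicately.
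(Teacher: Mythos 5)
Your plan follows the paper's proof essentially step for step: the same case split according to the position of $\bt(j_2',2)$ relative to $\bt(j_1,3)$, the same local equation $2N_{j_1}(t)-2N_{j_2}(t)-N_{j_{2}'}(t)=-1$ evaluated at the critical times, and the same reduction of case (3) to parts 4--6 of Lemma~\ref{l_description_chains_In} once $N_{j_2}(\bt(j_1,n))=n-1$ is extracted. The one point where you diverge slightly is case (1): the paper argues that $N_{j_2'}(t)=2$ (hence even) at any $t$ with $N_{j_1}(t)=2$ past $\bt(j_2',2)$, so $\alpha_{j_1}(\gamma^\vee_t)$ is even and can never equal $-1$; you instead solve the equation to get $(N_{j_2}(t),N_{j_2'}(t))=(1,3)$ and invoke a contradiction. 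A valid contradiction is indeed available, but it is not the parenthetical degree count you gesture at; it is simply that Lemma~\ref{l_description_chains_In}~2) gives $j_1\lhd_3 j_2'$, i.e.\ $\bt(j_1,3)<\bt(j_2',3)$, whereas $N_{j_2'}(\bt(j_1,3))=3$ forces $\bt(j_2',3)<\bt(j_1,3)$; you should state that explicitly rather than appeal vaguely to $\deg$ constraints.

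Two smaller imprecisions are worth flagging. In case (2), the crucial input is that $N_{j_1}(\bt(j_2',2))\geq 3$ (which follows from $\bt(j_1,3)<\bt(j_2',2)$); it is only by combining this with $\alpha_{j_2'}(\gamma^\vee_{\bt(j_2',2)})=-1\leq 2-N_{j_1}(t)$ that one simultaneously forces $N_{j_1}(t)=3$, $a_{j_1,j_2'}=-1$ and $\supp(j_2')=\{j_1,j_2'\}$; as written you assert the last conclusion before justifying it. In case (3), your re-derivation of $\bt(j_1,n)\leq\bt(j_2,n)$ is off: $t<\bt(j_1,n)$ only gives $N_{j_1}(t)\leq n-1$, not $\leq n-2$, so the computation of $\alpha_{j_2}(\gamma^\vee_t)$ does not become nonnegative on those grounds alone. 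This step, however, needs no new argument: it is exactly Lemma~\ref{l_description_chains_In}~2), which the paper simply cites, and you should do the same.
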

\begin{proof}
Note that, by minimality of $j_1$ for $\leq_2$, $\bt(j_1,2)\leq \bt(j_2',2)$ and $\bt(j_1,2)\leq \bt(j_2,2)$. Moreover by Lemma~\ref{l_description_chains_In}, the same holds replacing $2$ by $n\geq2$.

    Suppose first that $\bt(j_2',2) \leq \bt(j_1,3)$. If $\bt(j_2',2)=\infty$ then $\bt(j_1,3)=\infty$ is clear. Else, for $t\geq \bt(j_2',2)+1$ such that $N_{j_1}(t)=2$, $\alpha_{j_1}(\gamma^\vee_t)$ is even and therefore $\bt(j_1,3)=\infty$.

   Suppose now that $\bt(j_1,3)<\bt(j_2',2)\neq \infty$. Then at time $t=\bt(j_2',2)$, we have $-1=\alpha_{j_2'}(\gamma_t^\vee)\geq 2-N_{j_1}(t)\geq 2-3$. Therefore $N_{j_1}(t)=3$ and there is no other element in $\supp(j_2')$. In particular $\bt(j_2',2)<\bt(j_1,4)\leq \bt(j_2',3)$. Let $\bt(j_2',2)<t\leq \bt(j_1,4)$ be a finite time. Then $N_{j_2'}(t)=2$ and $\alpha_{j_1}(\gamma^\vee_t)=2N_{j_1}(t)-2N_{j_2}(t)-2$ is even, so $\bt(j_1,4)=\infty$. Therefore $\max N_{j_1}\leq 3$. Moreover $k\geq 2$, since $|\supp(j_1)|>1$, which proves that $\max N_{j_1}\leq k+1.$

   Finally, suppose that $\bt(j_2',2)=\infty$, that is to say $j_2'\notin I_2$. Let $n\geq 2$ and suppose that $t:=\bt(j_1,n)$ is finite. Then $-1=\alpha_{j_1}(\gamma^\vee_t)=2(n-1)-2N_{j_2}(t)-1$, so $N_{j_2}(t)=n-1$. Hence $\bt(j_2,n-1)\leq \bt(j_1,n)$. The second inequality is given by Lemma~\ref{l_description_chains_In} 2). By Lemma~\ref{l_description_chains_In} 6), we deduce $\max N_{j_1} \leq k+1$.
\end{proof}

\subsubsection{Case  where $|\supp(j_1)|=4$}\label{subsection: supp = 3}

\begin{Proposition}\label{Proposition: supp=4 kmax}
    Let $j_1 \in I_2$ be minimal for $\leq_2$ such that $\supp(j_1)=4$. Let $k$ denote the maximal length of a $\leq_2$ chain starting at $j_1$. Then $\max N_{j_1} \leq \max(6,k+1)$.
\end{Proposition}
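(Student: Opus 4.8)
The plan is to establish the bound according to whether one of the three ``arms'' hanging off $j_1$ behaves regularly.

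First, the setup. By Lemma~\ref{l_description_chains_I2}(1) we have $\deg(j_1)=3$, so $|\supp(j_1)|=4$ forces $j_1$ to have exactly three neighbours $j_2^{(1)},j_2^{(2)},j_2^{(3)}$ in $I_1$, and since $\sum_{i=1}^{3}a_{j_2^{(i)},j_1}=-\deg(j_1)=-3$ is a sum of three nonzero non-positive integers, $a_{j_2^{(i)},j_1}=-1$ for each $i$. By Proposition~\ref{p_connected_components_I2}, the connected component of $I_2$ containing $j_1$ is $\{j_1\}\sqcup\bigsqcup_{i=1}^{3}[j_2^{(i)},j_{k(i)}^{(i)}]$, with $k=\max_i k(i)$. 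Evaluating $\langle\gamma_t^\vee,\alpha_{j_1}\rangle=-1$ from \eqref{e_roots} at $t=\bt(j_1,n)$, where $N_{j_1}(t)=n-1$, and using \eqref{e_coordinate_root}, gives the governing identity $N_{j_2^{(1)}}(t)+N_{j_2^{(2)}}(t)+N_{j_2^{(3)}}(t)=2n-1$, together with $N_{j_2^{(i)}}(t)\leq N_{j_1}(t)=n-1$ for each $i$ (by Lemma~\ref{l_description_chains_In}(2), $N_{j_2^{(i)}}\leq N_{j_1}$ pointwise).

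Next, the regular case. Say arm $i$ is \emph{regular} if $\bt(j_1,n)\geq\bt(j_2^{(i)},n-1)$ for all $n\geq 2$. If some arm $i$ is regular, then the maximal $\leq_2$-chain $j_1\lhd_2 j_2^{(i)}\lhd_2\dots\lhd_2 j_{k(i)}^{(i)}$ satisfies the extra hypothesis of Lemma~\ref{l_description_chains_In}; since parts 4--6 of that lemma only use the equations at $j_2^{(i)},\dots,j_{k(i)}^{(i)}$ and never the one at $j_1$, they apply unchanged here even though $j_1$ has a third neighbour, and part 6 gives $\max N_{j_1}\leq k(i)+1\leq k+1$. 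So I may assume all three arms are irregular.

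Now the irregular case. For each $i$ let $n_i$ be minimal with $\bt(j_1,n_i)<\bt(j_2^{(i)},n_i-1)$; since $I_1$ occupies the beginning of $\underline w$, one has $n_i\geq 3$, hence $j_1\in I_{n_i}\subseteq I_3$. The governing identity at $n=3$ then forces $\{N_{j_2^{(1)}}(t),N_{j_2^{(2)}}(t),N_{j_2^{(3)}}(t)\}=\{2,2,1\}$ at $t=\bt(j_1,3)$, so the arm realizing the value $1$ --- say arm $3$ --- is already irregular at step $3$. I would then show $k(3)=2$ (arm $3$ is a single vertex): if $k(3)\geq 3$ then $\supp(j_2^{(3)})=\{j_1,j_2^{(3)},j_3^{(3)}\}$ with all Cartan coefficients $-1$, and evaluating $\langle\gamma_s^\vee,\alpha_{j_2^{(3)}}\rangle=-1$ at $s=\bt(j_2^{(3)},2)>\bt(j_1,3)$ yields $N_{j_1}(s)+N_{j_3^{(3)}}(s)=3$ with $N_{j_1}(s)\geq 3$, contradicting $N_{j_3^{(3)}}(s)\geq 1$. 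With arm $3$ a single vertex, its increments obey $2N_{j_2^{(3)}}(s)=N_{j_1}(s)+\varepsilon-1$ at each increment time $s$, where $\varepsilon\in\{0,1\}$ according to $|\supp(j_2^{(3)})|\in\{3,2\}$, so that $N_{j_2^{(3)}}(\bt(j_1,n))\leq\lceil n/2\rceil$. Feeding this into the governing identity, and using that arms $1$ and $2$ are themselves irregular --- so each $j_2^{(i)}$ lags behind $j_1$ and, by the same local-equation bookkeeping as in Lemmas~\ref{l_bounding_supp=2}--\ref{Lemma: class 3}, ceases to supply the growth needed on the right-hand side after a bounded number of further increments of $j_1$ --- one arrives at $j_1\notin I_7$, i.e.\ $\max N_{j_1}\leq 6\leq\max(6,k+1)$.

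The technical heart, and the step I expect to be longest, is this last propagation: pushing the constraints forward through $n=4,5,6$, splitting on $|\supp(j_2^{(1)})|,|\supp(j_2^{(2)})|$ and on the orders of the second appearances, to rule out $j_1\in I_7$ while all three arms are irregular --- the three-arm analogue of the case analysis behind Lemmas~\ref{l_bounding_supp=2} and \ref{Lemma: class 3}. Finally, the constant $6$ is sharp: the highest root $\theta$ of $E_8$ is quantum (it is reached by the recursion of Proposition~\ref{Proposition: simplereflectionquantumroot}), $I_1(\theta)$ is the whole diagram, and $\theta^\vee$ has coefficient $6$ at the branch vertex, which is exactly a minimal $j_1$ with $|\supp(j_1)|=4$ whose longest arm has length $k-1=4$, so $\max(6,k+1)=6$ is attained.
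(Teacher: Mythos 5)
Your reorganization around the notion of a ``regular arm'' --- an arm $i$ with $\bt(j_1,n)\geq\bt(j_2^{(i)},n-1)$ for all $n\geq 2$ --- is a valid repackaging of the paper's three-way dichotomy. In the paper's Lemmas~\ref{Lemma: class 4A} and~\ref{Lemma: class 4D} the proof in fact establishes exactly your regularity condition for some arm, so your case~A subsumes both; and you are right that the proof of parts~4--6 of Lemma~\ref{l_description_chains_In} only ever invokes the governing identity at $j_2,\dots,j_k$, never at $j_1$, so the conclusion $\max N_{j_1}\leq k(i)+1\leq k+1$ goes through unchanged even though $j_1$ has a third neighbour.

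The genuine gap is case~B, which you flag but do not close. The assertion that ``all three arms irregular'' forces $j_1\notin I_7$ is not ``the same local-equation bookkeeping as in Lemmas~\ref{l_bounding_supp=2}--\ref{Lemma: class 3}'': following the paper's Lemma~\ref{Lemma: class 4E} case~3), one must split at time $\bt(j_1,6)$ on the three possible profiles $(N_{j_2},N_{j_2'},N_{j_2''})\in\{(5,3,3),(5,4,2),(4,4,3)\}$. The first two actually produce a regular arm (so they belong to your case~A, and identifying this is already part of the work), while $(4,4,3)$ --- the only profile in which all three arms remain irregular at $n=6$ --- is precisely where the argument gets hard: excluding $j_1\in I_7$ there requires the separate Lemma~\ref{l_impossible_scheme}, a delicate walk that tracks multiplicities along five successive vertices of arm~1 to rule out the transition of $(N_{j_1},N_{j_2})$ from $(6,4)$ to $(6,6)$. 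Plugging the bound $N_{j_2^{(3)}}\leq\lceil n/2\rceil$ into the identity at $j_1$ does not reach this conclusion, and until the $(4,4,3)$ branch is actually disposed of the proposal does not prove the proposition. (A smaller omission: your arm-3 analysis tacitly assumes $\supp(j_1)\subset I_2$; in case~B this does hold, but only because $j_2^{(i)}\notin I_2$ would force the other two arms regular by the proof of Lemma~\ref{Lemma: class 4A}, a reduction that should be stated.)
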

\begin{proof}
If $j_1 \notin I_3$ then $\max N_{j_1}\leq 2$ and the result is clear, so we can assume $j_1 \in I_3$.
    If $|\supp(j_1)|=4$ and $\supp(j_1)\not\subset I_2$, then by Lemma~\ref{Lemma: class 4A} $\max N_{j_1} \leq k+1$. Else, since we assume $j_1\in I_3$, either there is $j\in \supp(j_1)$ such that $\supp(j)\not\subset I_2$ and, by Lemma~\ref{Lemma: class 4D}, $\max N_{j_1} \leq k+1$; either $\supp(j)\subset I_2$ for all $j\in \supp(j_1)$ and, by Lemma~\ref{Lemma: class 4E}, we have $\max N_{j_1}\leq 6$.
\end{proof}
\begin{Lemma}\label{Lemma: class 4A}
    Let $j_1\in I_2$ be minimal for $\leq_2$ such that $|\supp(j_1)|=4$ and suppose $\supp(j_1)\not\subset I_2$. Then, any maximal chain $j_1 \lhd_2 j_2 \dots \lhd_2 j_k$ with $k\geq 2$ satisfies properties 4-5-6 of Lemma \ref{l_description_chains_In}, and in particular $\max N_{j_1}\leq k+1$.

\end{Lemma}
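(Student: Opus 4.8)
The plan is to deduce the statement directly from Lemma~\ref{l_description_chains_In}: it suffices to check that the given maximal chain $j_1\lhd_2 j_2\lhd_2\dots\lhd_2 j_k$ satisfies the extra hypothesis appearing there, namely $\bt(j_1,n)\ge\bt(j_2,n-1)$ for all $n\ge 2$. Once this holds, properties 4, 5, 6 of Lemma~\ref{l_description_chains_In} --- and in particular $\max N_{j_1}\le k+1$ --- follow at once.

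First I would fix notation. By Lemma~\ref{l_description_chains_I2}(1) we have $\deg(j_1)=3$, so since $|\supp(j_1)|=4$ the vertex $j_1$ has exactly three neighbours $u_1,u_2,u_3$ in $I_1$, all with $a_{u_1,j_1}=a_{u_2,j_1}=a_{u_3,j_1}=-1$; relabel them so that $j_2=u_1$. Because $\supp(j_1)\not\subset I_2$ while $u_1=j_2\in I_2$, at least one of $u_2,u_3$ lies in $I_1\setminus I_2$; say $u_3\in I_1\setminus I_2$, so $\max N_{u_3}=1$. Using that $(i_1,\dots,i_{L_1})$ enumerates $I_1$ without repetition one gets $N_{j_1}(L_1)=1$, hence $\bt(j_1,n)\ge L_1$ for every $n\ge 2$, and therefore $N_{u_3}(t)=1$ whenever $t\ge\bt(j_1,n)$.

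The core computation is then the following. Fix $n\ge 2$ and assume $t:=\bt(j_1,n)$ is finite (otherwise the wanted inequality is automatic). Expanding $\alpha_{j_1}(\gamma^\vee_t)=-1$ via \eqref{e_roots} and \eqref{e_coordinate_root}, and using $a_{u_\ell,j_1}=-1$, one obtains $N_{u_1}(t)+N_{u_2}(t)+N_{u_3}(t)=2n-1$; since $N_{u_3}(t)=1$ this gives $N_{u_1}(t)+N_{u_2}(t)=2(n-1)$. I would then split into two cases according to whether $u_2\in I_2$. If $u_2\in I_2$: since $j_1$ is $\le_2$-minimal, Lemma~\ref{l_description_chains_In}(2) applied to the maximal $\le_2$-chains through $u_1$ and through $u_2$ (together with the trivial bound $N_{u_\ell}\le\max N_{u_\ell}$) gives $N_{u_1}(t)\le n-1$ and $N_{u_2}(t)\le n-1$, so both equal $n-1$; in particular $N_{j_2}(t)=n-1$, i.e. $\bt(j_2,n-1)\le t$. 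If $u_2\in I_1\setminus I_2$: then $N_{u_2}(t)=1$, so $N_{u_1}(t)=2n-3$, which for $n\ge 3$ forces $N_{u_1}(t)\ge n$ and contradicts $N_{u_1}(t)\le n-1$; hence $\bt(j_1,n)=\infty$ for every $n\ge 3$, while for $n=2$ we read off $N_{j_2}(t)=1=n-1$. In all cases $\bt(j_1,n)\ge\bt(j_2,n-1)$, and Lemma~\ref{l_description_chains_In} concludes.

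The step I expect to demand the most care is the second case, $u_2\notin I_2$: one must notice it forces $j_1\notin I_3$, so the chain meets no $I_n$ with $n\ge 3$ and conclusions 4--6 of Lemma~\ref{l_description_chains_In} hold only degenerately there. I would then double-check directly that these conclusions still make sense and are valid --- e.g. $|\supp(j_k)|=3$, since $|\supp(j_k)|=2$ would force $j_{k-1}\in I_3$ by \eqref{e_passage_3_before_2} (using $k\ge 2$); property 5 is vacuous for $n\ge 3$ and elementary for $n=2$; and $\max N_{j_1}=2\le k+1$ since $k\ge 2$.
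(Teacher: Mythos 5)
Your proof is correct and follows essentially the same route as the paper's: both reduce the lemma to verifying the hypothesis $\bt(j_1,n)\geq\bt(j_2,n-1)$ of Lemma~\ref{l_description_chains_In} and establish it by evaluating $\alpha_{j_1}(\gamma^\vee_t)=-1$ at $t=\bt(j_1,n)$, using that the vertex of $\supp(j_1)$ outside $I_2$ contributes only $1$ and that $\leq_2$-minimality of $j_1$ bounds the remaining two coefficients by $n-1$. The only departure is your explicit case split on whether $u_2\in I_2$ (the paper handles the case $u_2\notin I_2$ implicitly, as the same inequalities then force $\bt(j_1,n)=\infty$ for $n\geq3$), together with the closing sanity check that conclusions 4--6 of Lemma~\ref{l_description_chains_In} hold in the degenerate case --- a check which is in fact already covered by that lemma once the hypothesis is verified.
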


\begin{proof}
    Let us write $\supp(j_1)=\{j_1,j_2,j_2',j_2''\}$ with $j_2''\notin I_2$. 
    By Lemma~\ref{l_description_chains_In}, to prove the result it suffices to prove that $\bt(j_1,n)\geq \bt(j_2,n-1)$ for all $n\geq 2$.

    If $\bt(j_1,n)=\infty$ then the inequality is clear, so suppose $t=\bt(j_1,n)\neq \infty$. Since $N_{j_2''}(t)=1$ we have $$-1=\alpha_{j_1}(\gamma_t^\vee)=2N_{j_1}(t)-N_{j_2}(t)-N_{j_2'}(t)-N_{j_2''}(t)=2(n-1)-1-N_{j_2}(t)-N_{j_2'}(t),$$ so $N_{j_2}(t)+N_{j_2'}(t)=2(n-1)$. Moreover,  $\max(N_{j_2}(t),N_{j_2'}(t))\leq n-1$, since, by Lemma~\ref{l_description_chains_In} 2), $j_1$ is also minimal for $\leq_n$. Therefore $N_{j_2}(t)+N_{j_2'}(t)=2(n-1)$ implies $N_{j_2}(t)=N_{j_2'}(t)=n-1$, which proves $\bt(j_1,n)=t>\bt(j_2,n-1)$. 

\end{proof}

\begin{Lemma}\label{Lemma: Class 4S}
    Let $j_1\in I_2$ be minimal for $\leq_2$ such that $|\supp(j_1)|=4$, $\supp(j_1) \subset I_2$ and $j_1\in I_3$.
 Then there is a unique $j\in \supp(j_1)$ such that $|\supp(j)|=2$, and for any $j_2\in \supp(j_1)\setminus \{j_1,j\}$, we have $\supp(j_2)=3$ and 
        \begin{equation}\label{eq: Class 4S statement}\bt(j_2,2)<\bt(j_1,3)<\bt(j,2)\leq \bt(j_1,4).\end{equation}
\end{Lemma}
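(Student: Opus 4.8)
The plan is to exploit, over and over, two elementary identities. First, for the vertices we will care about ($j_1$ and its neighbours inside the component $\cC$ of $j_1$ in $I_2$), all incident edges have weight $1$ (Lemma~\ref{l_description_chains_I2} for $j_1$, since $\deg(j_1)=3$ and $j_1$ has three neighbours; Proposition~\ref{p_connected_components_I2} for the others), so by \eqref{e_coordinate_root} one has $\alpha_i(\gamma_t^\vee)=2N_i(t)-\sum_{j\in\supp(i)\setminus\{i\}}N_j(t)$ at every time $t$. Second, by \eqref{e_roots}, $\alpha_i(\gamma_t^\vee)=-1$ at $t=\bt(i,n)$ whenever $i\in I_n$. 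Write $\supp(j_1)=\{j_1,a,b,c\}$; since $\deg(j_1)=3$, the three edges into $j_1$ all have weight $1$, and the hypothesis $\supp(j_1)\subset I_2$ forces $a,b,c\in I_2$, so $\{a,b,c\}$ are exactly the three elements $j_2^{(1)},j_2^{(2)},j_2^{(3)}$ opening the three segments of $\cC$ (Proposition~\ref{p_connected_components_I2}). Applying \eqref{e_inequalities_N} along each segment, $N_x(t)\le N_{j_1}(t)$ for $x\in\{a,b,c\}$ and all $t\in\llbracket L_1+1,L\rrbracket$; this is legitimate here because \eqref{e_inequalities_N} follows from the unconditional parts (1)--(3) of Lemma~\ref{l_description_chains_In}.

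First I would evaluate at $t_0:=\bt(j_1,3)$, which is finite since $j_1\in I_3$. There $N_{j_1}(t_0)=2$ and $\alpha_{j_1}(\gamma_{t_0}^\vee)=-1$, whence $N_a(t_0)+N_b(t_0)+N_c(t_0)=5$. Since $t_0\ge L_1+1$ each term is $\ge 1$, and each is $\le N_{j_1}(t_0)=2$ by the monotonicity above; hence exactly two of them equal $2$ and the third equals $1$. Let $j$ be the neighbour with $N_j(t_0)=1$ and $j_2^{(1)},j_2^{(2)}$ the other two. Translating back into values of $\bt$: $N_j(t_0)=1$ gives $\bt(j,2)>t_0=\bt(j_1,3)$, while $N_{j_2^{(i)}}(t_0)=2$ gives $\bt(j_2^{(i)},2)<\bt(j_1,3)$ for $i=1,2$. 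This already yields the left part of \eqref{eq: Class 4S statement}.

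Next I would pin down $\supp(j)$ by evaluating at $s_0:=\bt(j,2)$, finite since $j\in\supp(j_1)\subset I_2$. As $s_0>\bt(j_1,3)$ we get $N_{j_1}(s_0)\ge 3$, and $\alpha_j(\gamma_{s_0}^\vee)=-1$ rewrites as $\sum_{j'\in\supp(j)\setminus\{j\}}N_{j'}(s_0)=3$. Here $j_1$ is one of the $j'$ and $N_{j_1}(s_0)\ge 3$, while every other term is $\ge 1$ (again $s_0> L_1$, and $\supp(j)\subseteq I_1$). So there is no other term: $\supp(j)=\{j_1,j\}$, i.e. $|\supp(j)|=2$; and moreover $N_{j_1}(s_0)=3$, so $\bt(j,2)=s_0\le\bt(j_1,4)$. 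Combined with the previous paragraph this gives the full chain $\bt(j_2,2)<\bt(j_1,3)<\bt(j,2)\le\bt(j_1,4)$.

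It remains to show $|\supp(j_2^{(i)})|=3$ for $i=1,2$ (which also gives uniqueness of $j$). By Proposition~\ref{p_connected_components_I2}, $|\supp(j_2^{(i)})|\in\{2,3\}$. Were it $2$, then $j_2^{(i)}$ would be a leaf of $\cC$, so the maximal $\leq_2$-chain through it would be just $j_1\lhd_2 j_2^{(i)}$, and the degenerate case (3)(b) of Lemma~\ref{l_description_chains_I2}, i.e. \eqref{e_passage_3_before_2}, would force $\bt(j_1,3)<\bt(j_2^{(i)},2)$, contradicting $\bt(j_2^{(i)},2)<\bt(j_1,3)$ from the first step. Hence $|\supp(j_2^{(i)})|=3$, and since $|\supp(j_1)|=4$, the vertex $j$ is the unique element of $\supp(j_1)$ whose support has size $2$. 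The whole argument is just a short string of evaluations of $\alpha_i(\gamma_t^\vee)$ at the times $\bt(\cdot,\cdot)$; the only delicate points, and so the main (mild) obstacle, are to invoke the correct clause of Lemma~\ref{l_description_chains_I2} in this last step (according to whether $j_2^{(i)}$ terminates its segment) and to make sure \eqref{e_inequalities_N} is used only in its unconditional form.
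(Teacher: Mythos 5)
Your proof is correct and follows essentially the same line as the paper's: you evaluate $\alpha_{j_1}$ at $\bt(j_1,3)$ to force the pattern $(2,2,1)$ among the three neighbours of $j_1$, then evaluate $\alpha_j$ at $\bt(j,2)$ to conclude $\supp(j)=\{j_1,j\}$ and $\bt(j,2)\le\bt(j_1,4)$. The only place you diverge is in establishing $|\supp(j_2^{(i)})|=3$ for the two neighbours with $N=2$: you argue by contradiction against the terminal case~\eqref{e_passage_3_before_2} of Lemma~\ref{l_description_chains_I2}, whereas the paper evaluates $\alpha_{j_2^{(i)}}$ directly at $\bt(j_2^{(i)},2)$ (there $N_{j_1}=2$, $N_{j_2^{(i)}}=1$, so the residual $-1$ must come from a third neighbour); both are short, valid, and use the same machinery.
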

\begin{proof}
   
    Write $\supp(j_1)=\{j_2^{(1)},j_2^{(2)},j_2^{(3)}\}$. Fix $i\in \llbracket 1,3\rrbracket$. Let $t=\bt(j_1,3)\neq \infty$. Since $j_1$ is minimal for $\leq_2$, it is also minimal for $\leq_3$. Then $N_{j_2^{(1)}}(t)+N_{j_2^{(2)}}(t)+N_{j_2^{(3)}}(t)=5$ and $N_{j_2^{(i)}}(t)\leq 2$ for all $i$, so upon relabelling, we can suppose $(N_{j_2^{(1)}}(t),N_{j_2^{(2)}}(t),N_{j_2^{(3)}}(t))=(2,2,1)$.
    
    For $i\in \{1,2\}$, $t_i:=\bt(j_2^{(i)},2)$ satisfies $t_i<t$ and $-1=\alpha_{j_2^{(i)}}(\gamma_{t_i}^\vee)<0=2N_{j_2^{(i)}}(t_i)-N_{j_1}(t_i)$. Therefore $\supp(j_2^{(i)})$ admits exactly one other element, hence is of cardinality $3$. Let now $t=\bt(j_2^{(3)})<\bt(j_1,3)$. Then $\alpha_{j_2^{(3)}}(\gamma_t^\vee)=-1$ and $2N_{j_2^{(3)}(t)}-N_{j_1}(t)\leq 2-3=-1$ so $N_{j_1}(t)=3$ and $\supp(j_2^{(3)})=\{j_1,j_2^{(3)}\}$, which concludes the proof.
\end{proof}

\begin{Lemma}\label{Lemma: class 4D}
    Let $j_1\in I_2$ be minimal for $\leq_2$ such that $|\supp(j_1)|=4$, $\supp(j_1) \subset I_2$ and $j_1 \in I_3$. Let $j_2''$ be the unique element of $\supp(j_1)$ with $|\supp(j_2'')|=2$, given by Lemma~\ref{Lemma: Class 4S}.
    
    Suppose that there exists $j_2' \in \supp(j_1)$ with $\supp(j_2')\not\subset I_2$. Then the following properties are satisfied:
    \begin{enumerate}
        \item For any $n\geq 1$, 
        \begin{equation}\label{eq: type 4D}
           \bt(j_1,2n)\leq\bt(j_2',n+1)\leq\bt(j_1,2n+1)\leq\bt(j_2'',n+1)\leq\bt(j_1,2n+2).   
        \end{equation}
        \item The unique maximal chain $j_1 \lhd_2 j_2 \dots \lhd_2 j_k$ such that $j_2\notin\{j_2',j_2''\}$ satisfies properties 4-5-6 of Lemma \ref{l_description_chains_In}, and in particular $\max N_{j_1}\leq k+1$.
        
    \end{enumerate}
\end{Lemma}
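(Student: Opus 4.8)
The plan is to pin down the local shape of the diagram around $j_1$, locate the appearance times of $j_2'$ and $j_2''$ relative to those of $j_1$ via two coordinate computations against $\gamma^\vee$, and feed the resulting interlacing into Lemma~\ref{l_description_chains_In}; throughout set $p:=\max N_{j_1}$, so $p\geq 3$ since $j_1\in I_3$. First, Lemma~\ref{Lemma: Class 4S} lets us write $\supp(j_1)=\{j_1,j_2,j_2',j_2''\}$, where $j_2''$ is the unique neighbour of $j_1$ with $\supp(j_2'')=\{j_1,j_2''\}$ and $j_2$ is the remaining neighbour, so that $j_1\lhd_2 j_2$ is exactly the chain in the statement. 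Since $\supp(j_2')\not\subset I_2$ while $j_1,j_2''\in I_2$ we get $j_2'\neq j_2''$, hence $|\supp(j_2')|=3$ and $\supp(j_2')=\{j_1,j_2',u\}$ with $u\in I_1\setminus I_2$; together with $\deg(j_1)=3$ on a support of size $4$ and Proposition~\ref{p_connected_components_I2} 4), this forces all the relevant off-diagonal Cartan entries to equal $-1$. As $j_1$ is the only $I_2$-neighbour of each of $j_2'$ and $j_2''$, the chains $j_1\lhd_2 j_2'$ and $j_1\lhd_2 j_2''$ are maximal too; in particular $N_{j_2'}(t),N_{j_2''}(t),N_{j_2}(t)\leq N_{j_1}(t)$ for all $t$, and $u\in I_1\setminus I_2$ gives $N_u(t)=1$ for all $t\geq L_1$.

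Next, for $m\geq 2$ with $\bt(j_2',m)$ finite, expanding $\langle\gamma^\vee_{\bt(j_2',m)},\alpha_{j_2'}\rangle=-1$ by \eqref{e_coordinate_root} and using $N_u\equiv 1$ gives $N_{j_1}(\bt(j_2',m))=2m-2$; likewise $N_{j_1}(\bt(j_2'',m))=2m-1$. Translating ``$N_{j_1}(t)=r$'' into ``$t\in(\bt(j_1,r),\bt(j_1,r+1)]$'', and using that $\bt(j_2',m)\neq\bt(j_1,2m-1)$ because two distinct simple reflections occur at distinct steps, one obtains
\begin{equation*}
\bt(j_1,2m-2)<\bt(j_2',m)<\bt(j_1,2m-1),\qquad \bt(j_1,2m-1)<\bt(j_2'',m)<\bt(j_1,2m)
\end{equation*}
on the respective finite ranges of $m$; with $m=n+1$ this is \eqref{eq: type 4D} wherever $\bt(j_2',n+1)$ and $\bt(j_2'',n+1)$ are finite. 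Reading these inequalities backwards gives, for $2\leq q\leq p$,
\begin{equation*}
N_{j_2'}(\bt(j_1,q))=\min\bigl(\lfloor (q+1)/2\rfloor,\max N_{j_2'}\bigr),\qquad N_{j_2''}(\bt(j_1,q))=\min\bigl(\lfloor q/2\rfloor,\max N_{j_2''}\bigr).
\end{equation*}

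Then, for $2\leq q\leq p$, expanding $\langle\gamma^\vee_{\bt(j_1,q)},\alpha_{j_1}\rangle=-1$ yields $N_{j_2}(\bt(j_1,q))+N_{j_2'}(\bt(j_1,q))+N_{j_2''}(\bt(j_1,q))=2q-1$; since the three summands are respectively $\leq q-1$, $\leq\lfloor(q+1)/2\rfloor$, $\leq\lfloor q/2\rfloor$ and these bounds sum to $2q-1$, all three must be equalities. Taking $q=p$ gives $\max N_{j_2'}\geq\lceil p/2\rceil$ and $\max N_{j_2''}\geq\lfloor p/2\rfloor$, which with the reverse bounds $2\max N_{j_2'}-2\leq p$ and $2\max N_{j_2''}-1\leq p$ from the previous paragraph pins down $\max N_{j_2'}$ and $\max N_{j_2''}$ up to the parity of $p$; a short case split then shows that whenever $\bt(j_2',n+1)$ or $\bt(j_2'',n+1)$ is infinite, the corresponding stretch of \eqref{eq: type 4D} consists entirely of infinite terms, proving item (1). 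For item (2), the equality $N_{j_2}(\bt(j_1,q))=q-1=N_{j_1}(\bt(j_1,q))$ just obtained is precisely $\bt(j_1,q)\geq\bt(j_2,q-1)$ (the cases $q=2$ and $q>p$ being immediate), so, together with $\bt(j_1,q)<\bt(j_2,q)$ from Lemma~\ref{l_description_chains_In} 2), the chain $j_1\lhd_2 j_2\lhd_2\cdots\lhd_2 j_k$ satisfies the hypothesis of properties 4-5-6 of Lemma~\ref{l_description_chains_In}; these give $\supp(j_k)=\{j_{k-1},j_k,j_{k+1}\}$ and $\max N_{j_1}\leq k+1$, as desired.

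The coordinate identities are routine; the delicate part is that they only control the \emph{finite} portion of \eqref{eq: type 4D}, so one genuinely needs the exact values of $\max N_{j_2'}$ and $\max N_{j_2''}$ in terms of $p$ --- supplied by the clipping argument together with the parity analysis --- to see that the sequences $\bt(j_1,\cdot)$, $\bt(j_2',\cdot)$, $\bt(j_2'',\cdot)$ run out in a synchronized way; and keeping straight the off-by-one translation between ``$N_{j_1}(t)=q$'' and the interval $(\bt(j_1,q),\bt(j_1,q+1)]$ accounts for most of the remaining care.
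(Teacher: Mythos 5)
Your proof is correct and follows essentially the same route as the paper's: in both cases item (1) comes from coordinate computations of $\alpha_{j_2'}$, $\alpha_{j_2''}$, $\alpha_{j_1}$ against $\gamma^\vee$ at appropriate times, and item (2) is obtained by verifying the interlacing hypothesis $\bt(j_1,n)\ge\bt(j_2,n-1)$ and invoking Lemma~\ref{l_description_chains_In} 4)--6). The differences are organizational: the paper obtains the ``lower'' halves $\bt(j_1,2n)\le\bt(j_2',n+1)$ and $\bt(j_1,2n+1)\le\bt(j_2'',n+1)$ by a direct call to Lemma~\ref{l_relation_Nk-1_Nk} (handling infinite $\bt$'s trivially), and the ``upper'' halves by evaluating $\alpha_{j_1}(\gamma^\vee_{\bt(j_1,q)})=-1$ directly at $q=2n+1,\,2n+2$, whereas you evaluate at $\bt(j_2',m)$ and $\bt(j_2'',m)$ (thereby re-deriving the interior computation of Lemma~\ref{l_relation_Nk-1_Nk}), translate to strict inequalities, and then recover the infinite cases from a clipping-plus-parity argument at $t=\bt(j_1,q)$. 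Both are sound; the paper's ordering of the two coordinate computations is slightly more economical, while yours has the mild bonus of yielding strict inequalities in the finite range and of unifying the proof of item (2) with the same clipping computation.
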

\begin{proof}
    By assumption, $\supp(j_2'')=\{j_1,j_2''\}$ and $\supp(j_2')\not\subset I_2$, hence both $j_2'$, $j_2''$ are maximal for $\leq_2$. Let $j_2$ denote the remaining element of $\supp(j_1)$. 
    
    Fix $n\geq2$. Since $j_2'$ is maximal for $\leq_2$ and has support of cardinality $3$, Lemma~\ref{l_relation_Nk-1_Nk} applied to $t:=\bt(j_2',n+1)+1$ (if it is finite, otherwise it is clear) gives $\bt(j_1,2n)\leq\bt(j_2',n+1)$. Similarly, point 2 of the same Lemma applied to $t:=\bt(j_2'',n+1)+1$ gives $\bt(j_1,2n+1)\leq\bt(j_2'',n+1)$.

    Let $t=\bt(j_1,2n+1)$, which we suppose finite. Then $N_{j_1}(t)=2n$, $N_{j_2''}(t)\leq n$ and, since $j_1$ is minimal for $\lhd_{2n+1}$, $N_{j_2}(t)\leq 2n$. Therefore $-1=\alpha_{j_1}(\gamma_t^\vee)\geq 4n-n-2n-N_{j_2'}(t)=n-N_{j_2'}(t)$, so $N_{j_2'}(t)\geq n+1$ and $t>\bt(j_2',n+1)$.

    Let $t=\bt(j_1,2n+2)$ which we suppose finite, then $N_{j_1}(t)=2n+1$, $N_{j_2'}(t)\leq n+1$ (by Lemma~\ref{l_relation_Nk-1_Nk}) and $N_{j_2}(t)\leq 2n+1$. Therefore $-1=\alpha_{j_1}(\gamma_t^\vee)\geq 4n+2-(2n+1)-(n+1)-N_{j_2''}(t)=n-N_{j_2''}(t)$, so $N_{j_2''}(t)\geq n+1$ and $\bt(j_2'',n+1)\leq t$. This concludes the proof of Formula~\eqref{eq: type 4D}.

    Let us check that $\bt(j_1,n)\geq \bt(j_2,n-1)\geq\bt(j_1,n-1)$ for any $n\geq 3$. The inequality $\bt(j_2,n-1)\geq \bt(j_1,n-1)$ holds because $j_1 \lhd_2 j_2 \implies j_1 \lhd_{n-1} j_2$ by point 2) of Lemma \ref{l_description_chains_In} (if they belong to $I_{n-1}$). For the remaining inequality, suppose $t:=\bt(j_1,n)\neq \infty$:
    \begin{itemize}
        \item If $n$ is even, write $n=2p$. Then by Formula~\eqref{eq: type 4D}, $\max(N_{j_2'}(t),N_{j_2''}(t))\leq p$, so $-1=\alpha_{j_1}(\gamma_t^\vee)\geq 2(2p-1)-2p-N_{j_2}(t)$ . We deduce $N_{j_2}(t)\geq n-1$.
        \item Else, write $n=2p+1$. Then similarly $N_{j_2''}(t)\leq p$ and $N_{j_2'}(t)\leq p+1$, so $-1=\alpha_{j_1}(\gamma_t^\vee)\geq 2(2p)-2p-1-N_{j_2}(t)$, so $N_{j_2}(t)\geq 2p=n-1$.
    \end{itemize}
Either way, we have proved $\bt(j_1,n)\geq\bt(j_2,n-1)$. Then 2) follows from Lemma~\ref{l_description_chains_In}.
\end{proof}

\begin{Lemma}\label{Lemma: class 4E}
     Let $j_1\in I_2$ be minimal for $\leq_2$ with $|\supp(j_1)|=4$, $j_1\in I_3$ and, for all $j\in \supp(j_1)$, $\supp(j)\subset I_2$. Then $\max N_{j_1} \leq 6$. 
     
     More precisely, using Lemma~\ref{Lemma: Class 4S}, we write $\supp(j_1)=\{j_1,j_2,j_2',j_2''\}$ such that $|\supp(j_2'')|=2$ and $\bt(j_2,3)\leq \bt(j_2',3)$, and write $\supp(j_2')=\{j_1,j_2',j_3'\}$. Then:
     \begin{enumerate}
         \item If $\bt(j_2',3)\leq \bt(j_1,4)$, then $j_1\notin I_4$ and $j_2''\notin I_3$
         \item If $\bt(j_1,4)\neq \infty$ and $\bt(j_2',3)=\infty$, then $j_1\notin I_5$ and $j_2''\notin I_3$
         \item Else if $\bt(j_1,4)< \bt(j_2',3)<\infty$, then   $\bt(j_2,n-1)\leq \bt(j_1,n)$ for all $n\in \llbracket2, 5\rrbracket$, $\supp(j_3')=\{j_2',j_3'\}$ and $j_1 \notin I_7$, $j_2'\notin I_5$, $j_3'\notin I_3$.
     \end{enumerate}
\end{Lemma}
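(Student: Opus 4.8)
The proof follows the pattern of Lemmas~\ref{Lemma: class 4A} and~\ref{Lemma: class 4D}: for each vertex $j$ adjacent to $j_1$ (and for $j_3'$) I evaluate the identity $\langle\gamma^\vee_t,\alpha_j\rangle=-1$ — which holds at $t=\bt(j,n)$, since then $i_{t+1}=j$, by \eqref{e_roots} — and combine the resulting linear relations with the monotonicity of the functions $N_\bullet$ and with the fact that $j_1$ is $\le_n$-minimal, so that $N_j\le N_{j_1}$ pointwise for all $j\in\supp(j_1)$ (Lemma~\ref{l_description_chains_In}(2)). The local data needed is supplied by Lemma~\ref{Lemma: Class 4S}: since $\deg(j_1)=3$ and $|\supp(j_1)|=4$, all three off-diagonal coefficients $a_{j',j_1}$ ($j'\in\{j_2,j_2',j_2''\}$) equal $-1$; $\supp(j_2'')=\{j_1,j_2''\}$ with $a_{j_1,j_2''}=-1$ and $|\supp(j_2)|=|\supp(j_2')|=3$ (using Proposition~\ref{p_connected_components_I2}(4) for the missing coefficients); and $\bt(j_2,2),\bt(j_2',2)<\bt(j_1,3)<\bt(j_2'',2)\le\bt(j_1,4)$, the labelling chosen so that $\bt(j_2,3)\le\bt(j_2',3)$. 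Spelled out, these identities read: at $t=\bt(j_1,n)$, $N_{j_2}(t)+N_{j_2'}(t)+N_{j_2''}(t)=2n-1$; at $t=\bt(j_2'',m)$, $N_{j_1}(t)=2m-1$; and at $t=\bt(j_2',m)$, writing $\supp(j_2')=\{j_1,j_2',j_3'\}$, $N_{j_1}(t)+N_{j_3'}(t)=2m-1$. A recurring tool is a parity remark: $2N_{j_1}(t)$ minus a sum of three neighbour counts must equal the odd number $-1$, so once two of these counts are pinned down, $N_{j_1}$ cannot increase.

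In case~1 ($\bt(j_2',3)\le\bt(j_1,4)$), suppose $j_1\in I_4$. By the labelling $\bt(j_2,3)\le\bt(j_2',3)\le\bt(j_1,4)$, so at $t=\bt(j_1,4)$ one has $N_{j_2}(t),N_{j_2'}(t)\ge3$, hence both $=3$ (they are $\le N_{j_1}(t)=3$), forcing $N_{j_2''}(t)=1$; but $\bt(j_2'',2)\le\bt(j_1,4)=t$ forces $N_{j_2''}(t)\ge2$ (the case $\bt(j_2'',2)=t$ is impossible, two distinct letters cannot occupy the same index): a contradiction. Hence $j_1\notin I_4$, and then $j_2''\notin I_3$ since $j_2''\in I_3$ would give $N_{j_1}(\bt(j_2'',3))=5$. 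Case~2 ($\bt(j_1,4)$ finite, $\bt(j_2',3)=\infty$) is analogous: if $j_1\in I_5$, then at $t=\bt(j_1,5)$ we have $N_{j_2'}(t)\le2$ (as $j_2'\notin I_3$) and $N_{j_2''}(t)\le3$ (since $\bt(j_2'',4)\ge\bt(j_1,7)>t$), so $N_{j_2}(t)=4$ and $N_{j_2''}(t)=3$; but $N_{j_2''}(t)=3$ means $\bt(j_2'',3)<\bt(j_1,5)$, contradicting $N_{j_1}(\bt(j_2'',3))=5$. So $j_1\notin I_5$ and $j_2''\notin I_3$.

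Case~3 ($\bt(j_1,4)<\bt(j_2',3)<\infty$) is the substantial one. The $j_2'$-identity at $t=\bt(j_2',3)$ reads $N_{j_1}(t)+N_{j_3'}(t)=5$; since $t>\bt(j_1,4)$ gives $N_{j_1}(t)\ge4$ and $N_{j_3'}(t)\ge1$, it pins $N_{j_1}(t)=4$, $N_{j_3'}(t)=1$, whence $\bt(j_3',2)>\bt(j_2',3)$. Evaluating the $j_3'$-identity at $\bt(j_3',2)$ — allowing for a putative third neighbour of $j_3'$ — forces $N_{j_2'}(\bt(j_3',2))=3$ and leaves no room for a third neighbour, so $\supp(j_3')=\{j_2',j_3'\}$. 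Then $(j_2',j_3')$ is in the situation of Lemma~\ref{l_relation_Nk-1_Nk}(2), which gives $N_{j_2'}(\bt(j_3',p))=2p-1$; chasing the $j_2'$- and $j_3'$-identities step by step yields $N_{j_1}(\bt(j_2',4))=5$ and $N_{j_1}(\bt(j_2',5))=7$, so $j_3'\in I_3\Rightarrow j_2'\in I_5\Rightarrow j_1\in I_7$, and it suffices to prove $j_1\notin I_7$. One also checks $\bt(j_2,n-1)\le\bt(j_1,n)$ for $n\le5$ exactly as in Lemma~\ref{l_description_chains_In}(5). To prove $j_1\notin I_7$, assume $j_1\in I_7$; the above estimates force $N_{j_2'}(\bt(j_1,7))=4$ and $N_{j_2''}(\bt(j_1,7))=3$, so the $j_1$-identity forces $N_{j_2}(\bt(j_1,7))=6=N_{j_1}(\bt(j_1,7))$, i.e.\ $j_2$ keeps pace with $j_1$; feeding $\bt(j_2,6)\le\bt(j_1,7)$ into the $j_2$-identity and descending along the branch $j_2\lhd_2 j_3\lhd_2\cdots$ (whose first step lies inside $I_2$ by hypothesis and whose inner vertices have $\supp$-cardinality $3$ by Lemma~\ref{l_description_chains_I2}), one obtains a string of forced equalities that becomes incompatible with the finiteness of the branch — a contradiction. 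Hence $\max N_{j_1}\le6$ in all three cases, and $j_2'\notin I_5$, $j_3'\notin I_3$ follow.

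The main obstacle is exactly the proof that $j_1\notin I_7$ in case~3. The soft estimates — $\max N_{j_1}\le\max N_{j_2'}+\max N_{j_2''}$ (from $N_{j_2}\le N_{j_1}$) together with the growth rates of the $j_2'$- and $j_2''$-branches — only give $\max N_{j_1}\le7$; squeezing out the last unit requires carefully ordering the critical times $\bt(j_1,n)$, $\bt(j_2',m)$, $\bt(j_2'',p)$, $\bt(j_3',q)$ and the $\bt(j_s,\cdot)$ along the $j_2$-branch, tracking precisely which of $N_{j_2},N_{j_2'},N_{j_2''}$ increments on each sub-interval, and treating separately the numerous degenerate sub-cases in which some of these times are infinite.
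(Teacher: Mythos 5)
Your Cases~1 and~2 are correct, with cosmetic variations from the paper (in Case~1 you force $N_{j_2''}(\bt(j_1,4))=1$ from the $j_1$-identity rather than directly summing to $\geq 8$; in Case~2 your bound $N_{j_2''}(\bt(j_1,5))\leq 3$ is weaker than the $\leq 2$ the paper reads off from Lemma~\ref{l_relation_Nk-1_Nk}, but still suffices for your argument). The first half of Case~3 — pinning $N_{j_1}(\bt(j_2',3))=4$ and $N_{j_3'}(\bt(j_2',3))=1$, deducing $\supp(j_3')=\{j_2',j_3'\}$, the chain $j_3'\in I_3\Rightarrow j_2'\in I_5\Rightarrow j_1\in I_7$, and the inequalities $\bt(j_2,n-1)\leq\bt(j_1,n)$ — also tracks the paper's argument.

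The gap is in the decisive final step, proving $j_1\notin I_7$. You correctly pin the configuration $(N_{j_1},N_{j_2},N_{j_2'},N_{j_2''})(\bt(j_1,7))=(6,6,4,3)$, but then reduce the contradiction to ``descending along the branch $j_2\lhd_2 j_3\lhd_2\cdots$, one obtains a string of forced equalities that becomes incompatible with the finiteness of the branch'', which you do not carry out. That descent is precisely the content of the standalone Lemma~\ref{l_impossible_scheme} (there is no pair of times at which $(N_{j_1},N_{j_2})$ takes the values $(6,4)$ and then $(6,6)$), whose proof is a delicate page-long forced chase through $j_3,j_4,j_5$ ending in a frozen state; it is not a routine verification, and you acknowledge as much in your final paragraph. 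Moreover, your uniform jump to $\bt(j_1,7)$ is not enough to invoke that lemma: one must first exhibit a time at which $(N_{j_1},N_{j_2})=(6,4)$, and the paper secures this by a three-way split on $(N_{j_2},N_{j_2'},N_{j_2''})(\bt(j_1,6))\in\{(5,3,3),(5,4,2),(4,4,3)\}$, of which only the third satisfies the lemma's hypotheses — the other two are dispatched by short direct arguments, one of them after normalizing the reduced word so that $\bt(j_3',2)=\bt(j_2',3)+1$. Skipping this split leaves both the $(6,4)$ precondition and the descent itself unestablished, so the proof of $j_1\notin I_7$ (and hence of $\max N_{j_1}\leq 6$) does not close.
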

\begin{proof}
    Suppose first that $\bt(j_2',3)\leq \bt(j_1,4)$ and assume by contradiction that $t:=\bt(j_1,4)$ is finite. By Lemma~\ref{Lemma: Class 4S}, we have $N_{j_2''}(t)\geq 2$, and since $\bt(j_2,3)\leq \bt(j_2',3)\leq \bt(j_1,4)$, we have $N_{j_2}(t)\geq 3$, $N_{j_2'}(t)\geq 3$. Therefore $N_{j_2}(t)+N_{j_2'}(t)+N_{j_2''}(t)\geq 8$, so $-1=\alpha_{j_1}(\gamma_t^\vee)\leq -2$, a contradiction. Hence in this case, $\bt(j_1,4)=\infty$ and $j_1\notin I_4$.

    Suppose now that $\bt(j_1,4)\neq \infty$ and $\bt(j_2',3)=\infty$ and let $t\geq \bt(j_1,4)+1$. Then $N_{j_1}(t)=4$ so $N_{j_2}(t)\leq 4$ and $N_{j_2''}(t)\leq 2$ by Lemma~\ref{l_relation_Nk-1_Nk}. Moreover, by assumption $N_{j_2'}(t)\leq 2$ and thus $\alpha_{j_1}(\gamma_t^\vee)\geq 8-4-2-2=0$ can not be equal to $-1$, so $N_{j_1}$ is blocked at $4$: $j_1\notin I_5$. Moreover, in these two cases, $N_{j_1}\leq 4$ and thus by Lemma~\ref{l_relation_Nk-1_Nk}, $N_{j_2''}\leq 2$. 

    Finally, assume that $\bt(j_1,4)<\bt(j_2',3)\neq\infty$. Note that $\bt(j_2,3) < \bt(j_1,4)$ is necessary in order to have $\alpha_{j_1}(\gamma^\vee_{\bt(j_1,4)})=-1$. Let $t=\bt(j_2',3)$, by assumption $N_{j_1}(t)\geq 4$ and $N_{j_3'}(t)\geq 1$. So $\alpha_{j_2'}(\gamma^\vee_t)=-1$ enforces that $N_{j_1}(t)=4$ and $N_{j_3'}(t)= 1$. 
    The first consequence is that $\bt(j_2',3)<\bt(j_3',2)$. Since we suppose $\supp(j_2')\subset I_2$, computation at time $\bt(j_3',2)$ then proves that \[\supp(j_3')=\{j_2',j_3'\}.\] 
    
    We then have \[j_3'\in I_3 \implies j_2'\in I_5 \implies j_1\in I_7.\] Indeed, the first implication is a direct consequence of Lemma~\ref{l_relation_Nk-1_Nk} 2). Now assume that $t:=\bt(j_2',5)<\infty$. Then by Lemma~\ref{l_relation_Nk-1_Nk}, we have $N_{j_3'}(t)\leq 2$ and thus $\alpha_{j_2'}(\gamma_t^\vee)=-1=8-N_{j_1}(t)-N_{j_3'}(t)$ and so $N_{j_1}(t)\geq 7$ and \begin{equation}\label{e_ineq_j1_j2'}
        \bt(j_1,7)\leq \bt(j_2',5).
    \end{equation}

    We now show that, under the assumptions of the statement, $j_1\notin I_7$, which is enough to conclude.

    We computed $N_{j_1}(\bt(j_2',3))=4$ and $N_{j_3'}(\bt(j_2',3))=1$, so $\bt(j_2',3)<\bt(j_1,5)$. Let $t=\bt(j_1,5)$ and suppose it is finite. For $\bt(j_2',3)<t'\leq t$  we have $\alpha_{j_2'}(\gamma^\vee_{t'})\geq 6 -4-2\geq 0$, so $N_{j_2'}(t)=3$. Moreover since $N_{j_1}(t)=4$, we have $N_{j_2''}(t)=2$ and $N_{j_2}(t)\leq 4$. Therefore $\alpha_{j_1}(\gamma^\vee_t)=-1$ enforces that $N_{j_2}(t)=4$.  This proves $\bt(j_2,n-1)\leq \bt(j_1,n)$ for $n=5$. For $n=2$ it is clear, for  $n=3$ it is given by Lemma~\ref{Lemma: Class 4S} and we have already checked it for $n=4$, hence the first statement of point 3 is checked. Moreover at time $t'=t+1$ we have \begin{equation}\label{eq: time 5+1}(N_{j_1}(t'),N_{j_2}(t'),N_{j_2'}(t'),N_{j_2''}(t'))=(5,4,3,2).\end{equation} Now set $t=\bt(j_1,6)$. If it is finite, then at this time exactly two out of the three values $N_{j_2},N_{j_2'},N_{j_2''}$ have increased by one. Indeed $N_{j_2}+N_{j_2'}+N_{j_2''}$ need to increase by two. By Lemma~\ref{l_relation_Nk-1_Nk}, $N_{j_2}\leq N_{j_1}$ and $2N_{j_2''}\leq N_{j_1}+1$ so these two terms can not increase by two. Moreover, by the same lemma, as long as $N_{j_2'}\leq 4$, $N_{j_3'}\leq 2$. Thus if $t'=\bt(j_2',5)$, we have $\alpha_{j_2'}(\gamma_{t'}^\vee)=-1=8-N_{j_1}(t')-N_{j_3'}(t')\geq 6-N_{j_1}(t')$, so $t'\geq \bt(j_1,7)\geq t$ and $N_{j_2'}(t)\leq 4$: $N_{j_2'}$ can not increase by two either. Thus two out of the three values $N_{j_2},N_{j_2'},N_{j_2''}$ have increased by one, we can actually check that all the three cases are possible. We deal with each case separately.

    \begin{enumerate}
        \item Suppose that $(N_{j_2}(t),N_{j_2'}(t),N_{j_2''}(t))=(5,3,3)$. Then at any time $t$' such that $\bt(j_2',4)\geq t'>t$ we have $\alpha_{j_2'}(\gamma^\vee_{t'})\leq 6-6-2<-1$ by Lemma~\ref{l_relation_Nk-1_Nk}), so $\bt(j_2',4)=\infty$. Therefore if $t'=\bt(j_1,7)$ is finite, necessarily $(N_{j_1}(t'),N_{j_2''}(t'))=(6,4)$, which contradicts Lemma~\ref{l_relation_Nk-1_Nk} 2). Hence $\bt(j_1,7)$ is infinite and $j_1\notin I_7$, which implies $j_2'\notin I_5$, $j_3'\notin I_3$.
        \item Suppose that $(N_{j_2}(t),N_{j_2'}(t),N_{j_2''}(t))=(5,4,2)$. The situation is similar: in this case $N_{j_2''}$ is stuck at $2$, so $\bt(j_2',5)\leq \bt(j_1,7)$. However by Lemma~\ref{l_relation_Nk-1_Nk} 2), $\bt(j_2',5) \leq \bt(j_3',3)$. We deduce that, for any $\bt(j_1,6)< t'\leq \bt(j_2',5)$, $\alpha_{j_2'}(\gamma^\vee_{t'})=0$, so $\bt(j_2',5)=\bt(j_1,7)=\bt(j_3',3)=\infty$.
        \item Suppose that $(N_{j_2}(t),N_{j_2'}(t),N_{j_2''}(t))=(4,4,3)$. Then by \eqref{e_ineq_j1_j2'} and Lemma~\ref{l_relation_Nk-1_Nk}, we have $$\bt(j_2,6)\leq \bt(j_1,7)\leq \min(\bt(j_2',5),\bt(j_2'',4)).$$ Hence if $t:=\bt(j_1,7)<\infty$, we have $N_{j_2}(t)+N_{j_2'}(t)+N_{j_2''}(t)=13$, with  $N_{j_2'}(t)+N_{j_2''}(t)\leq 7$. Consequently $\bt(j_2,6)<\bt(j_1,7)<\infty$. But this contradicts Lemma~\ref{l_impossible_scheme} below. Therefore $j_1\notin I_7$, $j_2'\notin I_5$ and $j_3'\notin I_3$.
    \end{enumerate}
\end{proof}

\begin{Lemma}\label{l_impossible_scheme}
Let $j_1,j_2$ be the first terms of a maximal chain for $\leq_2$. Then there exists no couple $(x,y)\in \llbracket 1,L\rrbracket^2$ such that $(N_{j_1}(x),N_{j_2}(x))=(6,4)$ and  $(N_{j_1}(y),N_{j_2}(y))=(6,6)$. 
\end{Lemma}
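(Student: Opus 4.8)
The plan is to argue by contradiction and descend along the maximal chain $j_1\lhd_2 j_2\lhd_2\dots\lhd_2 j_k$. Assume $(x,y)\in\llbracket 1,L\rrbracket^2$ satisfies $(N_{j_1}(x),N_{j_2}(x))=(6,4)$ and $(N_{j_1}(y),N_{j_2}(y))=(6,6)$. Since each $N_i$ is non-decreasing, $x<y$ and $N_{j_1}$ is constant equal to $6$ on $\llbracket x,y\rrbracket$; also $x>L_1$ (because $N_{j_1}(L_1)=1$), so $N_i\ge 1$ on $I_1$ at every time $\ge x$. I would then prove by induction on $m\in\llbracket 2,5\rrbracket$ the statement $\mathcal P(m)$: \emph{the chain has length at least $m$, and there are times $p_m=\bt(j_m,7-m)<q_m=\bt(j_m,8-m)$, both finite and contained in $\llbracket x,y\rrbracket$, with $N_{j_l}\equiv 7-l$ on $\llbracket p_m,q_m\rrbracket$ for all $l\in\llbracket 1,m-1\rrbracket$} (in particular $N_{j_1}\equiv 6$ there, and $N_{j_m}(p_m)=6-m$, $N_{j_m}(q_m)=7-m$ automatically). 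The base case $m=2$ is immediate: $p_2=\bt(j_2,5)$ and $q_2=\bt(j_2,6)$ are finite because $N_{j_2}(y)=6$, and $x\le p_2<q_2\le y-1$ because $N_{j_2}(x)=4$.

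For the inductive step from $\mathcal P(m)$ to $\mathcal P(m+1)$ (run for $m\in\{2,3,4\}$), I would evaluate the relations $\langle\gamma^\vee_{p_m},\alpha_{j_m}\rangle=-1$ and $\langle\gamma^\vee_{q_m},\alpha_{j_m}\rangle=-1$, which hold by \eqref{e_roots} since $i_{p_m+1}=i_{q_m+1}=j_m$, using \eqref{e_coordinate_root}. Because $I_1$ is a tree (Lemma~\ref{l_I1_tree}) and $|\supp(j_m)|\le 3$ (Lemma~\ref{l_description_chains_I2}), $\supp(j_m)$ is made up of $j_m$, the predecessor $j_{m-1}$ and at most one further vertex. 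Substituting the values from $\mathcal P(m)$ into the relation at $p_m$ gives $2(6-m)+a_{j_{m-1},j_m}(8-m)+\varepsilon=-1$ with $\varepsilon\le 0$ an integer, which, as $a_{j_{m-1},j_m}\le -1$, forces $a_{j_{m-1},j_m}=-1$ and, for $m\le 4$, a further vertex $j_{m+1}\in\supp(j_m)$ with $a_{j_{m+1},j_m}N_{j_{m+1}}(p_m)=m-5$; the relation at $q_m$ likewise gives $a_{j_{m+1},j_m}N_{j_{m+1}}(q_m)=m-7$. Since $a_{j_{m+1},j_m}$ divides $\gcd(5-m,7-m)\le 2$, we get $N_{j_{m+1}}(q_m)=(7-m)/|a_{j_{m+1},j_m}|\ge 2$, so $j_{m+1}\in I_2$; being a neighbour of $j_m$ in $I_2$ distinct from $j_{m-1}$, Lemma~\ref{l_description_chains_I2} 3) shows $j_m$ is not the terminal vertex $j_k$ (whose only $I_2$-neighbour is $j_{k-1}$), hence $j_m$ is interior, $j_{m+1}$ is its chain successor, and $a_{j_{m+1},j_m}=-1$ by Lemma~\ref{l_description_chains_I2} 2). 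Thus $N_{j_{m+1}}(p_m)=5-m$, $N_{j_{m+1}}(q_m)=7-m$, and putting $p_{m+1}=\bt(j_{m+1},6-m)$, $q_{m+1}=\bt(j_{m+1},7-m)$, I would check $p_m<p_{m+1}<q_{m+1}<q_m$ (using $i_{p_m+1}=j_m\ne j_{m+1}$ and that $N_{j_{m+1}}$ overshoots $6-m$ at $q_m$) and $N_{j_m}\equiv 7-m$ on $\llbracket p_m+1,q_m\rrbracket$; this establishes $\mathcal P(m+1)$.

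Finally, running the same computation at $m=5$ yields the contradiction. The relation at $p_5=\bt(j_5,2)$ now reads $2\cdot 1+a_{j_4,j_5}\cdot 3+\varepsilon=-1$ with $\varepsilon\le 0$, which forces $a_{j_4,j_5}=-1$ and $\varepsilon=0$, so $\supp(j_5)=\{j_4,j_5\}$. But then the relation at $q_5=\bt(j_5,3)$ — which is finite, since $N_{j_5}$ already reached $3$ at $q_4$ — reads $\langle\gamma^\vee_{q_5},\alpha_{j_5}\rangle=2N_{j_5}(q_5)+a_{j_4,j_5}N_{j_4}(q_5)=2\cdot 2+(-1)\cdot 3=1\ne -1$, using $N_{j_5}(q_5)=2$ and $N_{j_4}(q_5)=3$ from $\mathcal P(5)$. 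This contradiction completes the proof.

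The most delicate point, I expect, is the bookkeeping inside the inductive step: keeping track of which $\bt$-values are finite (so that the relations $\langle\gamma^\vee_{\bullet},\alpha_{j_m}\rangle=-1$ are actually available), checking the strict nesting $p_m<p_{m+1}<q_{m+1}<q_m$ of the intervals, and — the substantive ingredient — using Lemma~\ref{l_description_chains_I2} 2)--3) at each stage to rule out $j_m=j_k$, which is precisely what forces the chain to stay long enough for the descent to reach $m=5$, where the numerology finally breaks down. One must also keep in mind that the generalized Cartan matrix need not be symmetric, so the relation attached to $j_m$ constrains the entries $a_{\bullet,j_m}$, not $a_{j_m,\bullet}$.
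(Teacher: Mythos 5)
Your proof is correct and follows essentially the same approach as the paper's: a descent along the maximal $\leq_2$-chain from $j_1$, constrained at each step by the quantum-root relation \eqref{e_roots} evaluated at the times $\bt(j_m,\cdot)$, using Lemma~\ref{l_description_chains_I2} to pin down $\supp(j_m)$, and reaching a contradiction at $j_5$. The paper carries out the same chase step-by-step with explicit configurations (ending at $(6,5,4,3,2)$, which it then shows cannot evolve), whereas your uniform inductive hypothesis $\mathcal P(m)$ with nested times $p_m<q_m$ packages the same computation more compactly.
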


\begin{proof}
We assume that there exists  $(x,y)\in \llbracket 1,L\rrbracket^2$ such that $(N_{j_1}(x),N_{j_2}(x))=(6,4)$ and  $(N_{j_1}(y),N_{j_2}(y))=(6,6)$.  Let $z\in \llbracket 1,L\rrbracket$. Assume $(N_{j_1}(z),N_{j_2}(z))=(6,4)$. If we had $\supp(j_2)=\{j_1,j_2\}$, then $j_2$ would be maximal for $\leq_2$ and Lemma~\ref{l_relation_Nk-1_Nk} (2)  would lead to a contradiction. Therefore we have $\supp(j_2)=\{j_1,j_2,j_3\}$, for some $j_3\in I_1$, and by Lemma~\ref{l_description_chains_I2}, we have $a_{j_1,j_2}=a_{j_3,j_1}=-1$. 

Let $t=\bt(j_2,6)$. We have $N_{j_1}(t)=6$ by assumption. We have $10-6-N_{j_3}(t)=-1$ and thus \begin{equation}\label{e_ineq_t_t}
   \bt(j_3,4)< \bt(j_3,5)<\bt(j_2,6).
\end{equation}

Let $t=\bt(j_2,5)$. We have $t<\bt(j_1,7)$ and thus  we have $N_{j_3}(t)=3$ (in order to have $8-6-3=-1$). Therefore $(N_{j_1}(t+1),N_{j_2}(t+1),N_{j_3}(t+1))=(6,5,3)$.

Let now $t\in \llbracket 1,L\rrbracket$ be such that  $(N_{j_1}(t),N_{j_2}(t),N_{j_3}(t))=(6,5,3)$. As $\bt(j_3,4)<\infty$ and  $6-5\neq -1$, we have $\supp(j_3)\neq \{j_2,j_3\}$. So by Lemma~\ref{l_description_chains_I2}, we can write $\supp(j_3)=\{j_2,j_3,j_4\}$, for some $j_4\in I_1$, and we have $a_{j_2,j_3}=a_{j_4,j_2}=-1$. If $t=\bt(j_3,4)$, we have  $N_{j_4}(t)=2$ (since $6-5-2=-1$). So we have \begin{equation}\label{e_configuration1}
(N_{j_1}(t),N_{j_2}(t),N_{j_3}(t),N_{j_4}(t))=(6,5,3,2)
\end{equation}
and \begin{equation}\label{e_configuration2}
(N_{j_1}(t+1),N_{j_2}(t+1),N_{j_3}(t+1),N_{j_4}(t+1))=(6,5,4,2).
\end{equation} 

Let $t=\bt(j_3,5)$. By \eqref{e_ineq_t_t}, we have $\gamma_t^\vee=6\alpha_{j_1}^\vee+5\alpha_{j_2}^\vee+4\alpha_{j_3}^\vee+a\alpha_{j_4}^\vee+?$, for some $a\in \llbracket 1,4\rrbracket$. We have $\alpha_{j_3}(\gamma_t^\vee)=8-5+a\alpha_{j_3}(\alpha_{j_4}^\vee)=-1$ and thus $a \alpha_{j_3}(\alpha_{j_4}^\vee)=aa_{j_4,j_3}=-4$ and thus $a=4$. Consequently \[\bt(j_4,4)<\bt(j_3,5)<\infty.\] 

Let $\tilde{t}=\bt(j_4,4)$. We have $2N_{j_4}(\tilde{t})=6> N_{j_3}(\tilde{t})+1$ since $N_{j_3}(\tilde{t})\leq 4$.  By Lemma~\ref{l_relation_Nk-1_Nk}, this implies $\supp(j_4)\neq \{j_3,j_4\}.$ Let $j_5$ be such that $\supp(j_4)=\{j_3,j_4,j_5\}$.
 Then $6-N_{j_3}(\tilde{t})-N_{j_5}(\tilde{t})=-1=6-4-N_{j_5}(\tilde {t})$ and thus \begin{equation}\label{e_majoration_t5}
    \bt(j_5,3)<\bt(j_4,4)<\infty.
\end{equation}

 Let  $t=\bt(j_4,3)$. By \eqref{e_configuration2}, we have $4-4-N_{j_5}(t)=-1$ and thus $N_{j_5}(t)=1$. We thus have $(N_{j_1}(t),N_{j_2}(t),N_{j_3}(t),N_{j_4}(t),N_{j_5}(t))=(6,5,4,2,1)$ and then $(N_{j_1}(t+1),N_{j_2}(t+1),N_{j_3}(t+1),N_{j_4}(t+1),N_{j_5}(t+1))=(6,5,4,3,1)$.
 
By \eqref{e_majoration_t5}, $t:=\bt(j_5,2)<\infty$. Then $\gamma_t^\vee= z \alpha_{j_4}^\vee+\alpha_{j_5}^\vee+?,$ for some $z\in \Z_{\geq 3}$. By Lemma~\ref{l_description_chains_I2} $a_{j_4,j_5}=-1$, thus $2-z-\alpha_{j_5}(?)=-1$ and hence $z=3$ and $\alpha_{j_5}(?)=0$. This proves that $\supp(j_5)=\{j_4,j_5\}$. 
 Then if $t=\bt(j_5,2)$, we have \[(N_{j_1}(t),N_{j_2}(t),N_{j_3}(t),N_{j_4}(t),N_{j_5}(t))=(6,5,4,3,1)\] and  \[(N_{j_1}(t+1),N_{j_2}(t+1),N_{j_3}(t+1),N_{j_4}(t+1),N_{j_5}(t+1))=(6,5,4,3,2).\] But then $2N_{j_i}(t+1)-N_{j_{i+1}}(t+1)-N_{j_{i-1}}(t+1)=0$ for every $i\in \llbracket 2,4\rrbracket$, and $2N_{j_5}(t+1)-N_{j_4}(t+1)=1$. Therefore the situation can no longer evolve, which proves the lemma. 
\end{proof}

\subsubsection{Conclusion: Finiteness of the set of quantum roots}

With the preceding results, we are already able to give an upper bound on the number of quantum roots in a given Kac-Moody root system. In particular, we prove that the set $\cQ(\Phi_+)$ is finite for any Kac-Moody root system.

\begin{Proposition}\label{p_majoration_coefficients_AS_roots}
 Let $k_{\max}$ be the length of a maximal chain for $\leq_2$ and $n=|I|$ (we have $k_{\max}\leq n$).  Let $\beta$ be a quantum root. 
Write $\beta=\sum_{i\in I} x_i \alpha_i$. Then we have $x_i\leq\max(k_{\max}+1,6)$ for every $i\in I$. In particular, $\htt(\beta^\vee)\leq n(\max(6,k_{\max}+1))$. 
\end{Proposition}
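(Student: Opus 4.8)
The plan is to reduce the proposition to the single inequality
$N_i(\beta)\le\max(6,k_{\max}+1)$ for every $i\in I$, where $N_i(\beta)$ denotes the coefficients of $\beta^\vee=\sum_{i\in I}N_i(\beta)\alpha_i^\vee$, and then to assemble the case analysis of the previous subsections. Observe first that a $\leq_2$-chain is a sequence of pairwise distinct vertices of $I_2(\beta)\subseteq I$, so $k_{\max}\le n$. Moreover, once $N_i(\beta)\le\max(6,k_{\max}+1)$ is known for all $i$, the estimate $\htt(\beta^\vee)=\sum_{i\in I}N_i(\beta)\le n\,\max(6,k_{\max}+1)$ is immediate, since $N_i(\beta)=0$ for $i\notin I_1(\beta)$ and $|I_1(\beta)|\le n$. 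So everything comes down to bounding the $N_i(\beta)$.

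First I would dispose of the vertices $i\in I\setminus I_2(\beta)$: for these $N_i(\beta)\le 1\le\max(6,k_{\max}+1)$ by definition of $I_2(\beta)$. (If $I_2(\beta)=\emptyset$, e.g. when $\beta$ is simple, we are already done with the convention $k_{\max}=0$.) Now fix $i\in I_2(\beta)$ and let $\cC$ be its connected component in $I_2(\beta)$. By Proposition~\ref{p_connected_components_I2}, $\cC$ admits a minimum $j_1$ for $\leq_2$, and by Remark~\ref{r_connected_components_In} one has $j_1\in I_n(\beta)$ as soon as $\cC\cap I_n(\beta)\ne\emptyset$, in particular as soon as $i\in I_n(\beta)$. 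Hence $\{n\ge 1\mid i\in I_n(\beta)\}\subseteq\{n\ge 1\mid j_1\in I_n(\beta)\}$, which is exactly $N_i(\beta)\le\max N_{j_1}$. It therefore suffices to bound $\max N_{j_1}$.

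Next I would split according to $|\supp(j_1)|$. Since $j_1\in I_2(\beta)$, Lemma~\ref{l_description_chains_I2}(1) gives $\deg(j_1)=3$; as $I_1(\beta)$ is a tree with at least two vertices (Lemma~\ref{l_I1_tree} and Proposition~\ref{p_characterization_I1}) and $j_1$ lies in it, $j_1$ has between one and three neighbours, so $|\supp(j_1)|\in\{2,3,4\}$. Let $k$ be the maximal length of a $\leq_2$-chain starting at $j_1$, so that $k\le k_{\max}$. If $|\supp(j_1)|=2$ then $\max N_{j_1}\le 2$ by Lemma~\ref{l_bounding_supp=2}; if $|\supp(j_1)|=3$ then $\max N_{j_1}\le k+1$ by Lemma~\ref{Lemma: class 3}; and if $|\supp(j_1)|=4$ then $\max N_{j_1}\le\max(6,k+1)$ by Proposition~\ref{Proposition: supp=4 kmax}. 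In every case $\max N_{j_1}\le\max(6,k+1)\le\max(6,k_{\max}+1)$, hence $N_i(\beta)\le\max(6,k_{\max}+1)$. Combining this with the first case gives the bound on all coefficients, and summing over $I$ yields the bound on $\htt(\beta^\vee)$.

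Essentially all the substance of the argument has already been done in the lemmas invoked (the description of the connected components of $I_2$ and the case-by-case bounds on $\max N_{j_1}$), so the only genuinely new point requiring care is the middle step: that among the vertices of a connected component of $I_2(\beta)$, the $\leq_2$-minimal vertex $j_1$ realizes the maximal value of $N(\beta)$. This is where I would lean most heavily on Remark~\ref{r_connected_components_In}, i.e.\ on the fact that, for each $n$, $I_n(\beta)\cap\cC$ is an "initial" sub-union of the branches of $\cC$ emanating from $j_1$.
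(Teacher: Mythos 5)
Your proof is correct and follows essentially the same route as the paper's: reduce to the observation that $N_i(\beta)\le \max N_{j_1}$ for the $\leq_2$-minimal vertex $j_1$ of the $I_2$-component of $i$, then quote the case-by-case bounds of Lemma~\ref{l_bounding_supp=2}, Lemma~\ref{Lemma: class 3} and Proposition~\ref{Proposition: supp=4 kmax}. The only cosmetic difference is that you justify $N_i(\beta)\le\max N_{j_1}$ via Remark~\ref{r_connected_components_In} while the paper cites the monotonicity \eqref{e_inequalities_N}; both are consequences of Lemma~\ref{l_description_chains_In}.
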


\begin{proof}
    Let $i\in I$. If $i\notin I_2$, we have $x_i\leq 1$. Assume $i\in I_2$. Let $j\in I_2$ be such that $j\leq_2 i$ and such that $j$ is minimal for $\leq_2$.
Then by \eqref{e_inequalities_N}, we have $x_i\leq x_j$. Moreover, by Lemma \ref{Lemma: class 3}, Lemma \ref{l_bounding_supp=2} and Proposition \ref{Proposition: supp=4 kmax}, $x_j\leq \max(6,k_{\max}+1)$. Therefore $\htt(\beta^\vee)=\sum_{i'\in I_1} x_{i'}\leq |I_1| \max(k_{\max}+1,6)\leq n\max(k_{\max}+1,6).$
\end{proof}

\begin{Theorem}\label{t_finiteness_almost_simple_roots}
Let $n\in \Z_{\geq 2}$. Then   for every Kac-Moody matrix $A$ of size $n$, if $\Phi$ is the real root system associated to $A$, then the set of quantum roots is finite, with cardinality at most $n^{n+5}$. 
\end{Theorem}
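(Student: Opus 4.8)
The plan is to obtain the theorem as a short counting consequence of Proposition~\ref{p_majoration_coefficients_AS_roots}: essentially all the substantive work has already been carried out in the structural analysis of the orders $\leq_n$ (Lemma~\ref{l_description_chains_In}) and in the case split according to $|\supp(j_1)|$ (Lemma~\ref{l_bounding_supp=2}, Lemma~\ref{Lemma: class 3}, Proposition~\ref{Proposition: supp=4 kmax}), which together yield a uniform bound on the coordinates of an arbitrary quantum root. Once that bound is in hand, only a crude enumeration and an elementary inequality remain.

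First I would fix a quantum root $\beta\in\cQ(\Phi_+)$ and write $\beta=\sum_{i\in I}x_i\alpha_i$ with $x_i\in\Z_{\geq 0}$. Since the simple roots $(\alpha_i)_{i\in I}$ are linearly independent, $\beta$ is entirely determined by the tuple $(x_i)_{i\in I}$ (this is the same data as $\beta^\vee$, hence as the Dynkin sequence $(I_n(\beta))_{n\geq 1}$, cf.\ the discussion following \eqref{eq: I_n definition}). Next I would invoke Proposition~\ref{p_majoration_coefficients_AS_roots}: writing $k_{\max}$ for the length of a maximal $\leq_2$-chain and using $k_{\max}\leq n=|I|$, we get $0\leq x_i\leq M$ for every $i\in I$, where $M:=\max(n+1,6)$. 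Hence $(x_i)_{i\in I}$ ranges over the set $\{0,1,\dots,M\}^{I}$, of cardinality $(M+1)^n$, so that $|\cQ(\Phi_+)|\leq (M+1)^n$.

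It then remains to check $(M+1)^n\leq n^{n+5}$ for every $n\geq 2$. For $n\geq 5$ one has $M=n+1$, and the inequality $(n+2)^n\leq n^{n+5}$ is, after taking logarithms, equivalent to $n\ln(1+2/n)\leq 5\ln n$; this holds because $n\ln(1+2/n)\leq n\cdot(2/n)=2\leq 5\ln 5\leq 5\ln n$. For $n\in\{2,3,4\}$ one has $M=6$, and it suffices to verify directly $7^2=49\leq 128=2^7$, $7^3=343\leq 6561=3^8$ and $7^4=2401\leq 262144=4^9$. Combining these gives $|\cQ(\Phi_+)|\leq n^{n+5}$ in all cases, which is the assertion.

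I do not anticipate a genuine obstacle at this final stage: the only nontrivial ingredient is Proposition~\ref{p_majoration_coefficients_AS_roots}, which absorbs all the combinatorial difficulty, and the closing estimate is elementary and deliberately generous. (One could sharpen $n^{n+5}$ substantially by counting only tuples supported on a $1$-star-convex subtree, cf.\ Proposition~\ref{p_characterization_I1}, whose coordinates moreover obey the monotonicity \eqref{e_inequalities_N}, but there is no need for this here.)
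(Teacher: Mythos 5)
Your proof is correct and takes essentially the same route as the paper: both derive the theorem directly from Proposition~\ref{p_majoration_coefficients_AS_roots} by bounding each coordinate $x_i$ and then counting the possible tuples $(x_i)_{i\in I}$. If anything, you are more careful on the numerics---you keep the sharper bound $M=\max(n+1,6)$ from the proposition and explicitly verify $(M+1)^n\leq n^{n+5}$, whereas the paper loosens to $x_i\in\llbracket 0,n+5\rrbracket$ and leaves the elementary inequality $(n+6)^n\leq n^{n+5}$ (for $n\geq 2$) implicit.
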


\begin{proof}
Let $\cQ(\Phi_+)$ be the set of quantum roots.
By Proposition~\ref{p_majoration_coefficients_AS_roots}, for every $\beta\in \cQ(\Phi_+)$, we can write $\beta^\vee=\sum_{i\in I} x_i \alpha_i$, with $x_i\in \llbracket 0,n+5\rrbracket$ for every $i\in I$. Therefore $|\cQ(\Phi_+)|=|\{\beta^\vee\mid \beta\in \cQ(\Phi_+)\}|\leq n^{n+5}$.
\end{proof}

\begin{Remark}
\begin{enumerate}
    \item   Note that the bound given in Theorem~\ref{t_finiteness_almost_simple_roots} is very rough and is never sharp. A way to improve it would be to first count the number of possible $1$-star convex subtrees of $I$ (whose cardinality is at most $2^{|I|}$). Then for a given such subtree $I_1$ of $I$, the constraints of Lemma~\ref{l_description_chains_I2}, Lemma~\ref{l_description_chains_In} and Proposition~\ref{p_majoration_coefficients_AS_roots} should enable to give a much better majoration of the number of quantum $\beta$ satisfying $I_1(\beta)=I_1$.  Moreover if a Kac-Moody matrix has few $-1$ coefficients, then it will have few quantum roots, see Lemma~\ref{l_no_-1_coefficient} for example.

    \item Note that in the reductive case, in ADE type, the set of quantum root is the entire set $\Phi_+$ (see Proposition~\ref{p_quantum_ADE}). On the contrary, when $\Phi_+$ is infinite, ``almost all'' the roots are not quantum, by the theorem above.
\end{enumerate}  
\end{Remark}

\subsection{Classification of quantum roots through Dynkin sequences}\label{subsection: classification}
In Subsections~\ref{subsection: supp < 3} and~\ref{subsection: supp = 3} we have obtained finer structure results on quantum roots than what we used to prove finiteness of $\cQ(\Phi_+)$. In this section, we reformulate these results in terms of Dynkin sequences. We then obtain a complete classification of quantum roots through Dynkin sequences (see Theorem~\ref{Theorem: Classification quantum roots}). This section is independent of Section~\ref{s_almost_simple_roots_covers}.
\subsubsection{Standard Dynkin diagrams}

There is a name for certain classes of Dynkin diagrams, which we now give. Most of our notation follow the standard classification of Dynkin diagrams (see e.g \cite{bourbaki1981elements}). In this section, $\Gamma$ is a Dynkin diagram with set of edges $E$ and weight function $w$. Recall that, if $A=(a_{i,j})_{i,j\in I}$ is a generalized Cartan matrix, then the associated Dynkin diagram is $\Gamma=(I,E,w)$ with $E=\{(i,j)\in I^2 \mid a_{i,j}< 0\}$ and $w(i,j)=-a_{i,j}$ for all $(i,j)\in E$. We identify a Dynkin diagram $\Gamma$ with its set of vertices.

Amongst Dynkin segments, we have the following classes:
\begin{itemize}
    \item[(A)] For $n\geq 1$, we say that $\Gamma$ is a Dynkin diagram of type $A_n$ if it is a segment with $n$ vertices such that $w(i,j)=1$ for all $(i,j)\in E$.
    \item[(C)] For $n\geq 2$, we say that $\Gamma$ is a Dynkin diagram of type $C_n$ if it is a segment with a leaf $j_0$ such that $\Gamma \setminus \{j_0\}$ is a diagram of type $A_{n-1}$ and, for $j_1$ the unique vertex adjacent to $j_0$, $(w(j_0,j_1),w(j_1,j_0))=(1,2)$. We call $j_0$ the \textbf{special vertex} of $\Gamma$.
    \item[(F)] For $n\geq 4$, we say that $\Gamma$ is a Dynkin diagram of type $F_n$ if it is a Dynkin segment with a leaf $j_{-1}$ such that $\Gamma \setminus \{j_{-1}\}$ is a Dynkin diagram of type $C_{n-1}$ and its special vertex $j_0$ is the unique neighbour of $j_{-1}$.
    \item[(G)] For $n\geq 2$, we say that $\Gamma$ is a Dynkin diagram of type $G_n$ if it is a segment with a leaf $j_0$ such that $\Gamma \setminus \{j_0\}$ is a diagram of type $A_{n-1}$ and, for $j_1$ the unique vertex adjacent to $j_0$, $(w(j_0,j_1),w(j_1,j_0))=(1,3)$.     
\end{itemize}
We also have classes of Dynkin trees which are not Dynkin segments, but Dynkin trees with a unique branching point:
\begin{itemize}
    \item[(D)] For $n\geq 4$, we say that $\Gamma$ is a Dynkin diagram of type $D_n$ if it has two distinct leaves $j_2'$, $j_2''$ sharing their unique neighbour $j_1$, such that $\Gamma \setminus \{j_2',j_2''\}$ is a Dynkin diagram of type $A_{n-2}$ with leaf $j_1$. 
    \item[(E)] For $n\geq 6$, we say that $\Gamma$ is a Dynkin diagram of type $E_n$ if it has a unique branching point $j_1$ with three neighbours, it is not of type $D_n$ but there exists $j_3'\in \Gamma$ such that $\Gamma \setminus \{j_3'\}$ is a Dynkin diagram of type $D_{n-1}$.
\end{itemize}

Note that the Dynkin diagrams associated to finite root systems are exactly the Dynkin diagrams of type $(A_n)_{n\geq 1}$, $(B_n)_{n\geq 2}$ (which is dual to type $C$), $(C_n)_{n\geq 2}$, $(D_n)_{n\geq 4}$ and the exceptional types $E_6$, $E_7$, $E_8$, $F_4$ and $G_2$. 
\subsubsection{Dynkin sequences associated to quantum roots}\label{subsection: Dynkin sequences quantum roots}
In this section, we fix a quantum root $\beta$. For $n\geq 2$ and $j\in I_2(\beta)$ such that $\deg(j)=3$, let $I_n(\beta,j)$ denote the connected component of $I_n(\beta)$ containing $j$ (if $j\notin I_n(\beta)$ then we set $I_n(\beta,j)=\emptyset$). We have the following classification for $(I_n(\beta,j))_{n\geq 2}$.
\begin{Proposition}\label{Classification: supp=2}
    Let $j\in I_2(\beta)$ be such that $\deg(j)=3$ and $|\supp(j)|=2$. Then 
    \begin{itemize}
        \item[(2G)] $I_2(\beta,j)$ is a Dynkin diagram of type $G$, $j$ is the only leaf of $I_2(\beta,j)$ which is a leaf in $I_1(\beta)$. Moreover $I_3(\beta,j)$ is empty. 
        
    \end{itemize}
\end{Proposition}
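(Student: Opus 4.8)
The plan is to show that $j$ must be the minimal vertex of its $\leq_2$-component, and then to read off the shape of that component from the structural results of the previous subsections. Write $\supp(j)=\{j,j'\}$; since $\deg(j)=-a_{j',j}=3$ we get $a_{j',j}=-3$, i.e. the edge joining $j$ and $j'$ carries weight $w(j',j)=3$. By Remark~\ref{r_independence_leq_2} the minimum for $\leq_2$ of the connected component $\cC:=I_2(\beta,j)$ is the unique vertex of $\cC$ of degree $3$; hence $j$ is that minimum, so we may take $j=j_1$ in the notation of Lemma~\ref{l_description_chains_I2}. As $m:=|\supp(j_1)|-1=1$, Proposition~\ref{p_connected_components_I2} shows that $\cC$ is a single segment $j_1\lhd_2 j_2\lhd_2\cdots\lhd_2 j_k$. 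Since $j'$ is the only $I_1$-neighbour of $j_1$, either $j'=j_2\in I_2$ with $k\geq 2$, or $j'\notin I_2$ and $\cC=\{j\}$ ($k=1$); the latter, degenerate case is immediate and is disposed of at the end. Applying Lemma~\ref{l_bounding_supp=2} to the chain gives $\max N_{j_1}\leq 2$, so $j_1\notin I_3(\beta)$ and hence $I_3(\beta,j)=\emptyset$, which is the last assertion.

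Next I determine the weights along $\cC$. By Proposition~\ref{p_connected_components_I2}(4), for every $x\in\cC\setminus\{j_1\}$ and every $x'\in\supp(x)$ one has $a_{x',x}=-1$. Taking $x=j_2$, $x'=j_1$ gives $a_{j_1,j_2}=-1$, so $w(j_1,j_2)=1$, and together with $w(j_2,j_1)=3$ the edge $\{j_1,j_2\}$ has weight pair $(1,3)$. Taking $x=j_{t+1},\,x'=j_t$ and $x=j_t,\,x'=j_{t+1}$ for $2\leq t\leq k-1$ gives $a_{j_t,j_{t+1}}=a_{j_{t+1},j_t}=-1$, so every edge of $\cC\setminus\{j_1\}$ has weight $1$. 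Thus $\cC\setminus\{j_1\}$ is a segment of type $A_{k-1}$, and $\cC$ is a Dynkin segment whose edge at the leaf $j_1$ has weight pair $(1,3)$; by definition $\cC$ is of type $G_k$ with special vertex $j_1=j$.

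It remains to see that $j$ is the only leaf of $\cC$ which is a leaf of $I_1(\beta)$. The two leaves of the segment $\cC$ are $j_1$ and $j_k$. Since $\supp(j_1)=\{j_1,j_2\}$, the only $I_1$-neighbour of $j_1$ is $j_2$, so $j_1=j$ is a leaf of $I_1$. For $j_k$: by Lemma~\ref{l_description_chains_I2}(3) either $|\supp(j_k)|=3$, in which case $\supp(j_k)=\{j_{k-1},j_k,j_{k+1}\}$ with $j_{k+1}\in I_1\setminus I_2$, so $j_k$ has two neighbours in $I_1$ and is not a leaf of $I_1$; or $|\supp(j_k)|=2$, in which case \eqref{e_passage_3_before_2} forces $\bt(j_{k-1},3)<\bt(j_k,2)<\infty$, so $j_{k-1}\in I_3(\beta)$, contradicting $\max N_{j_{k-1}}\leq\max N_{j_1}\leq 2$ (the monotonicity \eqref{e_inequalities_N} when $k\geq 3$, and $j_{k-1}=j_1$ when $k=2$). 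Hence $|\supp(j_k)|=3$ and $j_k$ is not a leaf of $I_1$, so $j$ is the unique leaf of $\cC$ among the leaves of $I_1(\beta)$.

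For the degenerate case $\cC=\{j\}$ (equivalently $j'\notin I_2$): $I_3(\beta,j)=\emptyset$ still holds, since if $j\in I_3$ then evaluating $\alpha_j$ on $\gamma^\vee_{\bt(j,3)}$ would give $4-3N_{j'}(\bt(j,3))=-1$, impossible; $j$ is trivially the only leaf of $\cC$; and the edge at $j$ still has weight pair $(1,3)$, so $\cC$ is the $n=1$ degeneration $A_1$ of type $G$. The only real difficulty in the whole argument is bookkeeping: correctly orienting the $(1,3)$ weight at $j$ by combining $\deg(j)=3$ with Proposition~\ref{p_connected_components_I2}(4), and excluding case $3(\mathrm b)$ of Lemma~\ref{l_description_chains_I2} at the far endpoint of $\cC$ via the bound $\max N_{j_1}\leq 2$; no idea beyond the structural results already at hand is needed.
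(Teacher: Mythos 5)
Your proof is correct and follows essentially the same route as the paper: Lemma~\ref{l_bounding_supp=2} gives $\max N_{j_1}\leq 2$ hence $I_3(\beta,j)=\emptyset$, and Lemma~\ref{l_description_chains_I2} together with Proposition~\ref{p_connected_components_I2} give the $G$-segment structure with $j$ as special vertex. Where the paper invokes Lemma~\ref{l_description_chains_In}~4) to conclude that the far leaf $j_k$ has support of size three, you rule out case~3(b) of Lemma~\ref{l_description_chains_I2} directly via the bound $\max N_{j_{k-1}}\leq\max N_{j_1}\leq 2$ coming from \eqref{e_inequalities_N}; this is a minor, equivalent variation that also sidesteps verifying the (implicitly satisfied) hypothesis of Lemma~\ref{l_description_chains_In}~4).
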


 \begin{figure}[h]
 \centering
 \includegraphics[scale=0.30]{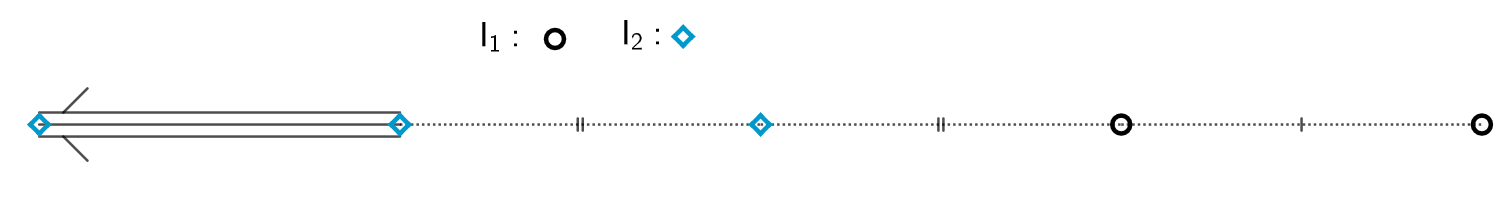}
 \caption{Dynkin diagram of type (2G)}
\end{figure}

\begin{proof}
    By Lemma~\ref{l_bounding_supp=2}, $j\notin I_3$, so $I_3(\beta,j)$ is empty. Moreover, since $|\supp(j)|=2$, by Lemma \ref{l_description_chains_I2} $I_2(\beta,j)$ is a Dynkin segment of type $G$, with $j$ as special vertex. If $I_2(\beta,j)\neq \{j\}$, by Lemma \ref{l_description_chains_In} 4), the other leaf of $I_2(\beta,j)$ has support of size three, hence is not a leaf in $I_1(\beta)$.
\end{proof}

\begin{Proposition}\label{Classification: supp=3}
    Let $j\in I_2(\beta)$ be such that $\deg(j)=3$ and $|\supp(j)|=3$. Then, one of the following is satisfied:
    \begin{itemize}
        \item[(3S)] $I_3(\beta,j)=\emptyset$ and $I_2(\beta,j)\setminus \{j\}$ is the disjoint union of two non-empty Dynkin diagrams of type $A$. Moreover the leaves of $I_2(\beta,j)$ are not leaves in $I_1(\beta)$.
        \item[(3C)] $(I_n(\beta,j))_{n\geq 2}$ is a decreasing sequence of Dynkin diagrams of type $C$ with leaf $j$. Leaves of $I_2(\beta,j)$  are not leaves of $I_1(\beta)$.
        \item[(3F)] Let $j_2'\in \supp(j)$ be such that $a_{j_2',j}=-1$. Then $j_2'$ is a leaf of both $I_1(\beta)$ and $I_2(\beta,j)$, $I_2(\beta,j)\setminus \{j'_2\}$ is of type $C$, $I_3(\beta,j)$ is of type $C$ and $I_4(\beta,j)=\emptyset$. The leaves of $I_3(\beta,j)$ (resp. $I_2(\beta,j)\setminus \{j_2'\}$) are not leaves in $I_2(\beta,j)$ (resp. $I_1(\beta)$).
    \end{itemize}
\end{Proposition}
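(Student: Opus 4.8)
The plan is to read Proposition~\ref{Classification: supp=3} off Lemma~\ref{Lemma: class 3}, whose three cases will correspond respectively to (3S), (3F) and (3C). Fix $j=j_1\in I_2(\beta)$ with $\deg(j_1)=3$ and $|\supp(j_1)|=3$; by Lemma~\ref{l_description_chains_I2}~1) and Remark~\ref{r_independence_leq_2}, $j_1$ is the $\leq_2$-minimum of its connected component $\cC=I_2(\beta,j)$, and writing $\deg(j_1)=-a_{j_2,j_1}-a_{j_2',j_1}=3$ with both summands negative integers forces $\supp(j_1)=\{j_1,j_2,j_2'\}$ with $a_{j_2,j_1}=-2$ and $a_{j_2',j_1}=-1$, which is exactly the setup of Lemma~\ref{Lemma: class 3}. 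A short computation with the quantum relations \eqref{e_roots} and \eqref{e_coordinate_root} at the step where $j_2$ first appears (or an appeal to the $1$-star-convexity of $I_1(\beta)$) also pins down the remaining Cartan integer as $a_{j_1,j_2}=-1$, so the edge $\{j_1,j_2\}$ carries weights $(1,2)$ with special vertex $j_1$.

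I would then run through the three cases of Lemma~\ref{Lemma: class 3}. In Case~1 ($\bt(j_2',2)\le\bt(j_1,3)$) one gets $j_1\notin I_3$, hence $I_n(\beta,j)=\emptyset$ for $n\ge 3$; Proposition~\ref{p_connected_components_I2} then exhibits $I_2(\beta,j)$ as $\{j_1\}$ together with the two branches issued from $j_2$ and $j_2'$, and part~4) of that proposition makes all internal edges of the branches have Cartan integers $-1$, so they are segments of type $A$; moreover Lemma~\ref{l_description_chains_I2}~3) forces the far endpoint of each branch to have a neighbour in $I_1\setminus I_2$ (the alternative \eqref{e_passage_3_before_2} would require an element of $I_3$), which is the leaf clause. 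This is (3S). In Case~2 ($\bt(j_1,3)<\bt(j_2',2)\ne\infty$) the lemma gives $\supp(j_2')=\{j_1,j_2'\}$, so $j_2'$ is a leaf of both $I_1(\beta)$ and $I_2(\beta,j)$, together with $\bt(j_1,4)=\bt(j_2',3)=\infty$, i.e. $\max N_{j_1}=3$, $\max N_{j_2'}=2$, hence $I_4(\beta,j)=\emptyset$; deleting $j_2'$ leaves the $\leq_2$-chain through $j_2$, whose first edge is $(1,2)$ and whose other edges are $-1$ by Lemma~\ref{l_description_chains_I2}~2), so $I_2(\beta,j)\setminus\{j_2'\}$ is of type $C$, and the same analysis inside $I_3(\beta,j)$ yields a type-$C$ diagram there as well; the non-leaf statements again come from Lemma~\ref{l_description_chains_I2}~3) applied within $I_2$ and $I_3$. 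This is (3F). In Case~3 ($j_2'\notin I_2$) the component $\cC$ is $\{j_1\}$ together with the single branch through $j_2$, the chain $j_1\lhd_2 j_2\lhd_2\cdots\lhd_2 j_k$ satisfies properties~4--6 of Lemma~\ref{l_description_chains_In}, so each nonempty $I_n(\beta,j)$ is a type-$C$ segment with special vertex $j_1$, Lemma~\ref{l_description_chains_In}~6) gives the nesting $I_2(\beta,j)\supseteq I_3(\beta,j)\supseteq\cdots$, and $j_1$ remains a leaf of each $I_n(\beta,j)$ while keeping its $I_1$-neighbour $j_2'$; this is (3C).

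The main obstacle, as throughout this section, is not the case split but the passage from the inequalities between the quantities $\bt(j_\ell,n)$ and the coordinate functions $N_i$ to honest statements about Dynkin types: beyond tracking vertex sets and multiplicities one must verify that the precise Cartan integers along the chain — especially on the first edge $\{j_1,j_2\}$ — are the ones that make the relevant components genuine type-$A$ or type-$C$ segments, and this requires reading the quantum relations carefully at the times $\bt(j_\ell,n)$. A secondary point is to treat the shortest admissible diagrams separately: a branch in Case~1 reduced to a single vertex is still of type $A_1$, one checks directly the boundary instances of the type-$C$ families appearing in Cases~2 and~3, and one pays attention, in Case~1, to whether both branches are genuinely present.
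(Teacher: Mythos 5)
Your proof follows the same route as the paper: reduce to Lemma~\ref{Lemma: class 3}, match its three cases to (3S), (3F), (3C) respectively, and derive the Dynkin-diagram shape of the branches from Proposition~\ref{p_connected_components_I2} and the chain lemmas~\ref{l_description_chains_I2} and~\ref{l_description_chains_In}. Two remarks you make are genuinely worth keeping. First, you are right that the type-$C$ labels in (3F) and (3C) require the extra identity $a_{j,j_2}=-1$ (so the edge $\{j,j_2\}$ really has weights $(1,2)$ with $j$ as the special vertex), which the paper's one-line proof does not state; the $1$-star-convexity route you suggest is the clean one — since $a_{j_2,j}=-2$, the vertex $j_2$ cannot be the endpoint of $\{j,j_2\}$ closer to the base of the star, hence $j$ is, and $a_{j,j_2}=-1$ follows from Proposition~\ref{p_characterization_I1}. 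Second, your caution about degenerate branches in Case~1 is justified: case~1 of Lemma~\ref{Lemma: class 3} also captures the subcase $\bt(j_2',2)=\bt(j_1,3)=\infty$, where the branch through $j_2'$ (and possibly even that through $j_2$) is absent, so $I_2(\beta,j)\setminus\{j\}$ need not be a union of two non-empty type-$A$ segments; this is a rough edge of the statement itself rather than of your argument, and it is correct to flag it and to handle $I_2(\beta,j)=\{j\}$ or a single branch separately (reading it as a degenerate (3C) or as a trivial diagram, as appropriate).
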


 \begin{figure}[h]
 \centering
 \includegraphics[scale=0.28]{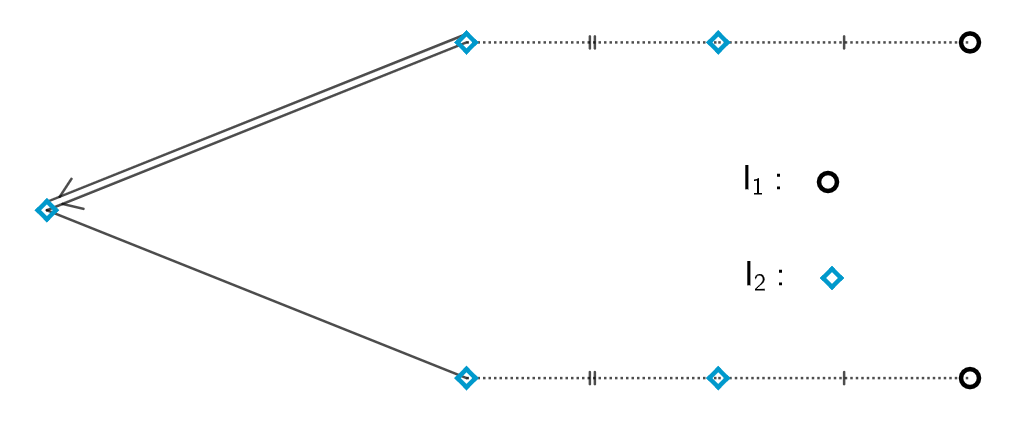}
 \caption{Dynkin diagram of type (3S) }
\end{figure}

 \begin{figure}[h]
 \centering
 \includegraphics[scale=0.28]{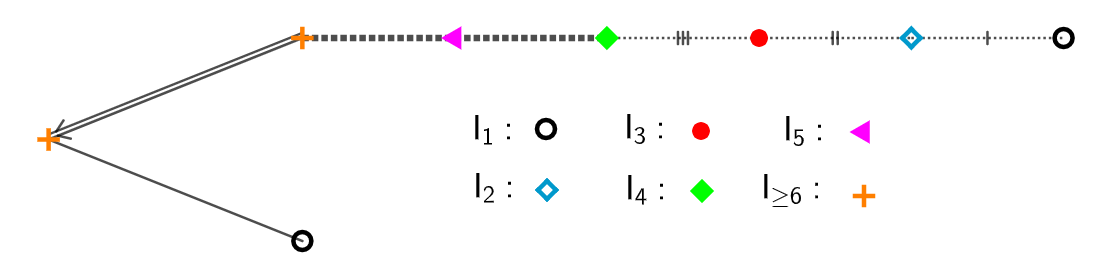}
 \caption{Dynkin diagram of type (3C) }
\end{figure}

 \begin{figure}[h]
 \centering
 \includegraphics[scale=0.28]{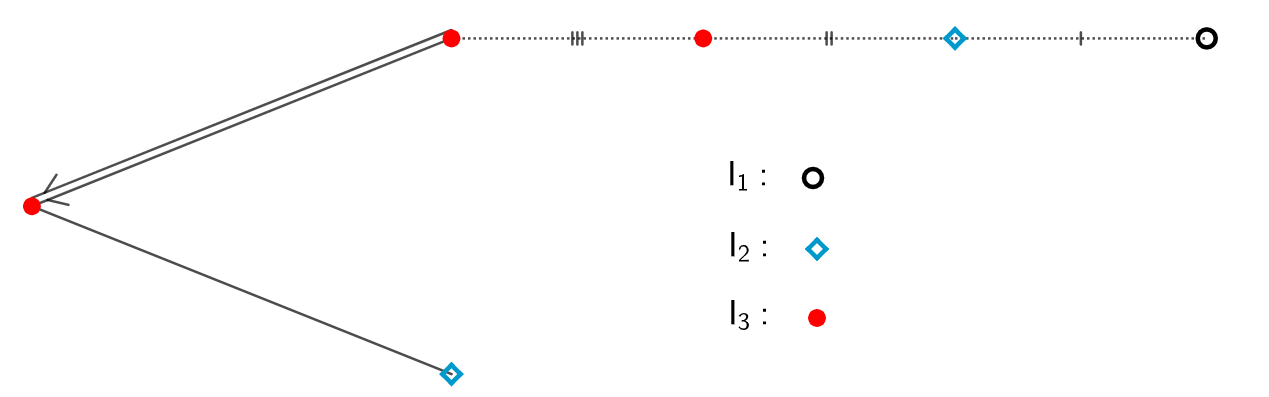}
 \caption{Dynkin diagram of type (3F) }
\end{figure}

\begin{proof}
    This is a reformulation of Lemma \ref{Lemma: class 3}. More precisely, write $\supp(j)=\{j,j_2,j_2'\}$ with $a_{j_2,j_1}=-2$ and $a_{j_2',j_1}=-1$. By Proposition \ref{p_connected_components_I2}, $I_2(\beta,j)\setminus \{j\}$ is the disjoint union of two diagrams of type $A$. The case 1) of Lemma  \ref{Lemma: class 3} therefore corresponds to type (3S), the case 2) corresponds to the type (3F), and the case 3) corresponds to type (3C). Moreover if $j_2\in I_2(\beta,j)$, in every case we can apply Lemma \ref{l_description_chains_In} 4-5) to the chain starting by $j_1 \lhd_2 j_2$ to deduce that, for every $n\geq 2$ leaves of $I_n(\beta,j)$ are not leaves of $I_{n-1}(\beta,j)$ (except $j_2'$ for $n=2$ in type (3F), and $j$ for $n\geq 3$ in type (3C)).
\end{proof}
\begin{Proposition}\label{Classification supp=4}
    Let $j_1\in I_2(\beta)$ be such that $\deg(j_1)=3$ and $|\supp(j_1)|=4$. Then the Dynkin sequence $(I_n(\beta,j_1))_{n\geq 2 }$ is of one of the following forms.
    \begin{itemize}
        \item[(4S)] The diagram $I_2(\beta,j_1)\setminus \{j_1\}$ is the disjoint union of three non-empty diagrams of type $A$. Moreover the leaves of $I_2(\beta,j_1)$ are not leaves in $I_1(\beta)$, and $I_3(\beta,j_1)=\emptyset$.

        \item[(4A)] The Dynkin sequence $(I_n(\beta,j_1))_{n\geq 2}$ is a decreasing sequence of diagrams of type $A$ such that, for all $n \geq 2$, the leaves of $I_n(\beta,j_1)$ are not leaves of $I_{n-1}(\beta,j_1)$.

        \item[(4D)]\label{Class 4D} Let $K=N_{j_1}(\beta)=\max \{n\in \Z_{\geq 0}\mid j_1\in I_K(\beta)\}$. Then $K\geq 5$  and there exist $j_2',j_2''\in \supp(j_1)$   such that:
        \begin{itemize}
            \item $j_2''$ is a leaf in $I_1(\beta)$ and $j_2'$ is a leaf of $I_2(\beta,j_1)$ but not of $I_1(\beta)$

\item $(I_n(\beta,j_1))_{n \in \llbracket 1,K\rrbracket}$ is a decreasing sequence of diagrams. If $K$ is even, then $I_{n}(\beta,j_1)$ is of type $D$ for $n\in \llbracket 1,K/2\rrbracket$  and $I_{n}(\beta,j_1)$ is of type $A$ for $n\in \llbracket K/2+1,K\rrbracket$. If $K$ is odd, there exists $\eta\in \{0,1\}$ such that $I_{n}(\beta,j_1)$ is of type $D$ for $n\in \llbracket 1,(K-1)/2+\eta\rrbracket$ and $I_{n}(\beta,j_1)$ is of type $A$ for $n\in \llbracket (K-1)/2+\eta+1,K\rrbracket$.

\item $j_2'$ and $j_1''$ are leaves of the diagrams containing them. For $n\in \llbracket 1,K-1\rrbracket$, if $\tilde{j}$ is a leaf of $I_{n}(\beta,j_1)$ and $\tilde{j}\notin\{j_1,j_2',j_2''\}$, then $\tilde{j}\notin I_{n+1}(\beta,j_1)$. 

\item $I_{K-1}(\beta,j_1)\neq \{j_1\}.$

        \end{itemize}

        \item[(4EA)] $I_2(\beta,j_1)$ is a Dynkin diagram of type $E$, and there exist $j_2',j_2''\in \supp(j_1)$ such that:
        \begin{itemize}
            \item $j_2''$ is a leaf in $I_1(\beta)$ and does not lie in $I_3(\beta,j_1)$ 
            \item $\supp(j_2')=\{j_1,j_2',j_3'\}$ for some leaf $j_3'$ of both $I_1(\beta)$, $I_2(\beta)$ and $j_3'\notin I_3(\beta,j_1)$
            \item $(I_n(\beta,j_1))_{n\in \{3,4,5\}}$ is a decreasing sequence of diagrams of type $A$, $j_2'$ is a leaf of $I_3(\beta,j_1)$ and $j_2'\notin I_5(\beta,j_1)$
            \item $I_7(\beta,j_1)=\emptyset$ and, if $I_5(\beta,j_1)=\emptyset$ then $j_2'\notin I_4(\beta,j_1)$
            \item For $n\geq 1$, if $j$ is a leaf of $I_n(\beta,j_1)$ and $j\notin \{j_1,j_2',j_3',j_2''\}$ then $j\notin I_{n+1}(\beta)$.
            \end{itemize}
        \item[(4ED)]  $I_2(\beta,j_1)$ is a Dynkin diagram of type $E$, and there exist $j_2',j_2''\in \supp(j_1)$ such that:
        \begin{itemize}
            
            \item $\supp(j_2')=\{j_1,j_2',j_3'\}$ for some leaf $j_3'$ of $I_1(\beta)$ and $j_3'\notin I_3(\beta,j_1)$
            \item $I_3(\beta,j_1)$ is a diagram of type $D$, two of its leaves are $j_2',j_2''$ and $j_2''\notin I_4(\beta,j_1)$
            \item $I_4(\beta,j_1)$, $I_5(\beta,j_1)$ are non empty diagrams of type $A$, $j_1$ is a leaf of $I_5(\beta,j_1)$
            \item $I_7(\beta,j_1)=\emptyset$
            \item Except in the case (4ED2) below, for $n\geq 1$ if $j$ is a leaf of $I_n(\beta,j_1)$ and $j\notin \{j_1,j_2',j_3',j_2''\}$ then $j\notin I_{n+1}(\beta,j_1)$.
        \end{itemize}
        Moreover, exactly one of the following holds:
        \begin{itemize}
            \item[(4ED1)] $j_2' \in I_4(\beta,j_1)$, $I_6(\beta,j_1)=\emptyset$.
            \item[(4ED2)] $j_2' \in I_4(\beta,j_1)$, $I_6(\beta,j_1)=\{j_1\}$ and there exists $n_0\in \llbracket 2,6\rrbracket$ such that $|I_{n-1}(\beta,j_1)\setminus \big(I_{n}(\beta,j_1)\cup \{j_2',j_2'',j_3'\}\big)|=1$ for $n\in \llbracket n_0+1,5\rrbracket$, and $I_{n_0-1}(\beta,j_1) \setminus \{j_2',j_2'',j_3'\}=I_{n_0}(\beta,j_1)\setminus \{j_2',j_2'',j_3'\}$. Moreover for $n<n_0$, the leaf of $I_n(\beta,j_1)$ not in $\{j_1,j_2',j_2'',j_3'\}$ is not a leaf of $I_{n-1}(\beta,j_1)$. In particular if $n_0=2$ then $\beta$ is the highest root of an $E_8$-subsystem of $\Phi$. 
            \item[(4ED3)] $j_2'\notin I_4(\beta,j_1)$ and $I_5(\beta,j_1)\neq I_6(\beta,j_1)$.
        \end{itemize}
        \item[(4SA)] There is a unique $j_2''\in \supp(j_1)$ which is a leaf of $I_2(\beta,j_1)$ and it is the unique leaf of $I_2(\beta,j_1)$ which is also a leaf of $I_1(\beta,j_1)$; $I_3(\beta,j_1)$ is a non-empty diagram of type $A$. Moreover exactly one of the following holds:
        \begin{itemize}
            \item[(4SA1)] The vertex $j_1$ is not a leaf of $I_3(\beta,j_1)$. Then $I_4(\beta,j_1)=\emptyset$, and leaves of $I_3(\beta,j_1)$ are not leaves of $I_2(\beta,j_1)$.
            \item[(4SA2)] The vertex $j_1$ is a leaf of $I_3(\beta,j_1)$. Then $I_4(\beta,j_1)$ is a diagram of type $A$, either empty either having $j_1$ as a leaf, and $I_5(\beta,j_1)=\emptyset$. Moreover $j_1$ is the unique leaf of $I_4(\beta,j_1)$ which is also a leaf of $I_3(\beta,j_1)$, and leaves of $I_3(\beta,j_1)$ are not leaves of $I_2(\beta,j_1)$. 
        \end{itemize} 
 \end{itemize}
\end{Proposition}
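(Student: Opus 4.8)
The plan is to obtain the statement as a translation, into the vocabulary of the named Dynkin types $A$, $D$, $E$, of the structural results already established in Section~\ref{s_almost_simple_roots}: chiefly Proposition~\ref{p_connected_components_I2}, Lemmas~\ref{l_description_chains_I2} and~\ref{l_description_chains_In}, and the case analysis of Subsections~\ref{subsection: supp < 3}–\ref{subsection: supp = 3} (Lemmas~\ref{Lemma: class 4A}, \ref{Lemma: Class 4S}, \ref{Lemma: class 4D}, \ref{Lemma: class 4E}, \ref{l_impossible_scheme}). First I would set up the dictionary. Since $\deg(j_1)=3$ while $|\supp(j_1)|=4$, the vertex $j_1$ has exactly three neighbours in $I_1(\beta)$ and (by the degree identity and Proposition~\ref{p_connected_components_I2}(4)) every edge of the connected component $I_2(\beta,j_1)$ of $I_2(\beta)$ is simply-laced; hence, by Remark~\ref{r_connected_components_In}, each $I_n(\beta,j_1)$ with $n\ge 2$ is a simply-laced tree obtained from $I_2(\beta,j_1)$ by truncating each of the three arms issued from $j_1$ to a weakly shorter initial segment, the truncation lengths being non-increasing in $n$. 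A finite simply-laced tree with a unique degree-$3$ vertex is a segment (so of type $A$) exactly when at most two of its arms are non-empty, of type $D$ exactly when it is branched with two arms of length $1$, and of type $E$ exactly when it is branched with arm-lengths $(1,2,c)$, $c\ge 2$. So the proposition is simply a complete description of which arms survive in $I_n(\beta,j_1)$, with which lengths, and of which of their endpoints are leaves of $I_1(\beta)$ — exactly the data recorded by the inequalities among the quantities $\bt(\cdot,\cdot)$ proved in Subsections~\ref{subsection: supp < 3}–\ref{subsection: supp = 3}.

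Then I would run the case division of the proof of Proposition~\ref{Proposition: supp=4 kmax}. If $j_1\notin I_3(\beta)$, then $\max N_{j_1}=2$, so $I_n(\beta,j_1)=\emptyset$ for $n\ge 3$, $I_2(\beta,j_1)$ is $\{j_1\}$ together with three type-$A$ arms by Proposition~\ref{p_connected_components_I2}, and Lemma~\ref{l_description_chains_I2}(3) forces each arm-endpoint to have support of cardinality $3$ (the alternative $|\supp|=2$ would put $j_1$ in $I_3$), hence not to be a leaf of $I_1(\beta)$: this is case (4S). If $j_1\in I_3(\beta)$ and $\supp(j_1)\not\subset I_2(\beta)$, then $j_1$ has at most two neighbours in $I_2(\beta)$, so $I_2(\beta,j_1)$ — and, by Lemma~\ref{l_description_chains_In}, every $I_n(\beta,j_1)$ — is a segment; Lemma~\ref{Lemma: class 4A} together with Lemma~\ref{l_description_chains_In}(4)–(6) supplies the uniform truncation and the leaf condition of case (4A).

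If $j_1\in I_3(\beta)$ and $\supp(j_1)\subset I_2(\beta)$, Lemma~\ref{Lemma: Class 4S} produces the distinguished neighbour $j_2''$ with $|\supp(j_2'')|=2$ (a leaf of $I_1(\beta)$, so the arm through it has length $1$), and I would split once more according to whether some neighbour of $j_1$ has support not contained in $I_2(\beta)$. In the affirmative case, Lemma~\ref{Lemma: class 4D}, whose inequalities~\eqref{eq: type 4D} pin down exactly when $j_2'$ and $j_2''$ leave $I_n$ relative to $j_1$, yields the "two short arms $j_2',j_2''$ plus one long arm" pattern; the long arm truncates uniformly by Lemma~\ref{l_description_chains_In}(4)–(6), $I_n(\beta,j_1)$ is of type $D$ while both $j_2'$ and $j_2''$ survive and of type $A$ afterwards, and comparing $\max N_{j_2'}$ with $\max N_{j_1}$ through~\eqref{eq: type 4D} yields both the parity-of-$K$ statement and the bound $K\ge 5$ (a short $K$ would force $j_2'\notin I_3$, landing in a type-$A$-at-level-$3$ case instead) — this is case (4D). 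In the negative case all three arms have length $\ge 1$ with the two non-$j_2''$ arms continuing, so $I_2(\beta,j_1)$ is of type $E$, and Lemma~\ref{Lemma: class 4E} (with Lemma~\ref{l_impossible_scheme} for the final distinction) governs levels $3$ through $7$: whether $j_2'$ and $j_2''$ lie in $I_3(\beta,j_1)$ decides whether $I_3(\beta,j_1)$ is a segment (cases (4EA) and (4SA1)/(4SA2)) or is still branched, of type $D$ (case (4ED)), and the three sub-configurations singled out in the proof of Lemma~\ref{Lemma: class 4E} by the triple $(N_{j_2},N_{j_2'},N_{j_2''})$ at time $\bt(j_1,6)$ become (4ED1), (4ED2), (4ED3), the degenerate one being the highest root of an $E_8$-subsystem.

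The routine but lengthy part — and essentially the only obstacle — is the last paragraph's bookkeeping: one must check, configuration by configuration, that each clause displayed in the statement (the precise types, the "a leaf of $I_n(\beta,j_1)$ outside the distinguished set does not persist in $I_{n+1}(\beta,j_1)$" clauses, the "this leaf is/is not a leaf of $I_1(\beta)$" clauses, the values of $K$, $\eta$, $n_0$, the emptiness of $I_7(\beta,j_1)$, and so on) is a verbatim transcription of an inequality or of a small support-computation already present in Subsections~\ref{subsection: supp < 3}–\ref{subsection: supp = 3}. This forces one to re-read the \emph{proofs} of Lemmas~\ref{Lemma: class 4D}, \ref{Lemma: class 4E} and~\ref{l_impossible_scheme}, not merely their statements, but no genuinely new argument beyond those already in Section~\ref{s_almost_simple_roots} is needed.
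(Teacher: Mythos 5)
Your overall plan is the right one and follows the paper closely: translate the combinatorics of Proposition~\ref{p_connected_components_I2}, Remark~\ref{r_connected_components_In} and Lemmas~\ref{l_description_chains_I2}--\ref{l_description_chains_In} into arm-truncation language, then let Lemmas~\ref{Lemma: class 4A}--\ref{l_impossible_scheme} drive the case analysis. But your very first case is mislabeled, and the error is structural rather than bookkeeping. You claim that $j_1\notin I_3(\beta)$ forces form (4S), on the grounds that Proposition~\ref{p_connected_components_I2} gives ``$\{j_1\}$ together with three type-$A$ arms'' all non-empty. Proposition~\ref{p_connected_components_I2} only says there are $m=|\supp(j_1)|-1=3$ \emph{potential} arms, indexed by the neighbours of $j_1$ in $I_1(\beta)$; an arm $\{x\in I_2\mid x\geq_2 j_2^{(i)}\}$ is empty whenever $j_2^{(i)}\notin I_2(\beta)$, i.e.\ whenever $\supp(j_1)\not\subset I_2(\beta)$. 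This can perfectly well happen with $j_1\notin I_3(\beta)$. A concrete instance is the highest root $\theta$ of a finite $D_5$ subsystem: with $j_1$ the trivalent vertex, $N_{j_1}(\theta)=2$ so $j_1\notin I_3(\beta)$, yet two of the three neighbours of $j_1$ have coefficient $1$ and lie outside $I_2(\beta)$. There $I_2(\beta,j_1)$ is a two-vertex segment and the correct form is (4A), not (4S). So in your case ``$j_1\notin I_3(\beta)$'' you must split on whether $\supp(j_1)\subset I_2(\beta)$: if yes, your argument does give (4S); if not, Lemma~\ref{Lemma: class 4A} gives the degenerate (4A) with $\max N_{j_1}=2$.

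The cleanest fix is to adopt the paper's decision tree, which branches first on $\supp(j_1)\subset I_2(\beta)$ or not (the latter yielding (4A) by Lemma~\ref{Lemma: class 4A} regardless of whether $j_1\in I_3(\beta)$), and only then, in the branched case, on $j_1\in I_3(\beta)$ or not. The remainder of your sketch --- $j_2''$ from Lemma~\ref{Lemma: Class 4S}, form (4D) from Lemma~\ref{Lemma: class 4D}, and the (4EA)/(4ED)/(4SA) subdivision from Lemma~\ref{Lemma: class 4E} together with Lemma~\ref{l_impossible_scheme} --- matches the paper's proof, and your closing remark that the bookkeeping requires re-reading the proofs (not just the statements) of Lemmas~\ref{Lemma: class 4D}, \ref{Lemma: class 4E} and~\ref{l_impossible_scheme} is accurate: the paper does indeed re-run those arguments at the level of the sequences $(N_{j_i}(t))$ to extract the finer statements about $K$, $\eta$ and $n_0$.
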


 \begin{figure}[h]
 \centering
 \includegraphics[scale=0.33]{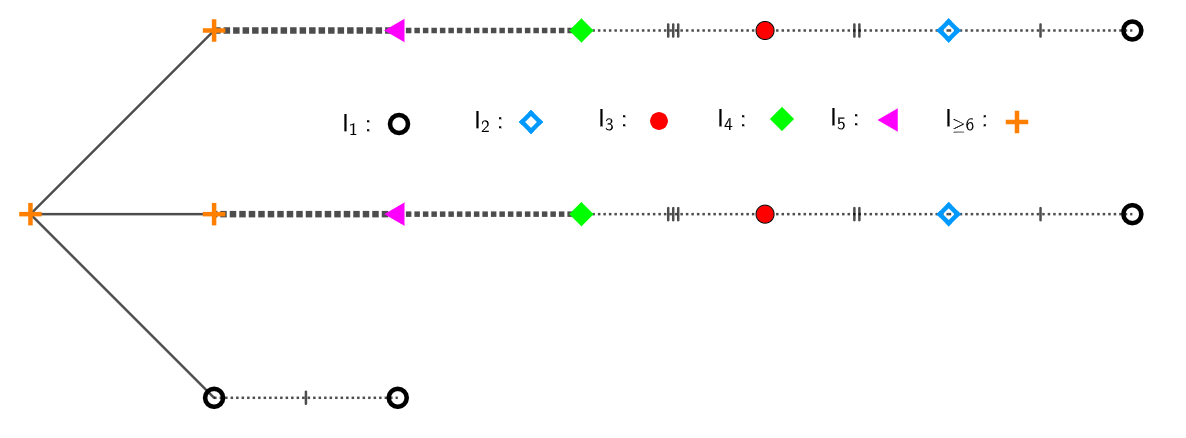}
 \caption{Dynkin diagram of type (4A) }
\end{figure}

 \begin{figure}[h]
 \centering
 \includegraphics[scale=0.20]{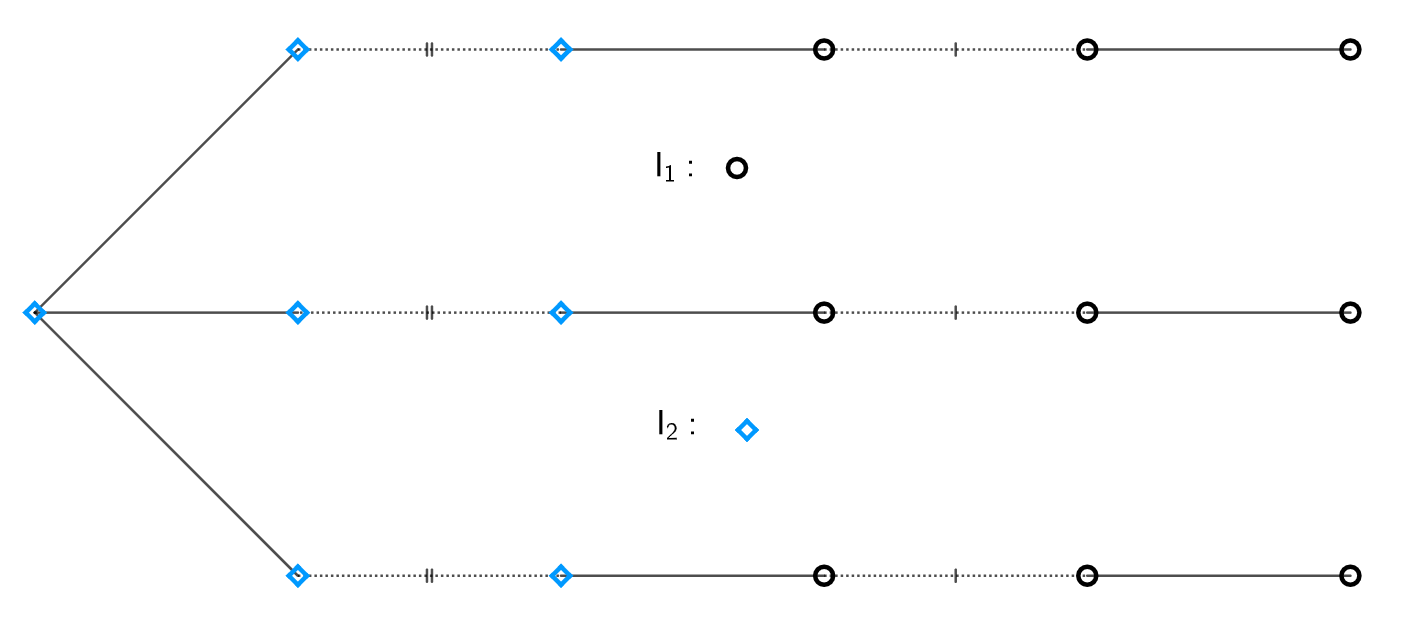}
 \caption{Dynkin diagram of type (4S) }
\end{figure}

 \begin{figure}[h]
 \centering
 \includegraphics[scale=0.33]{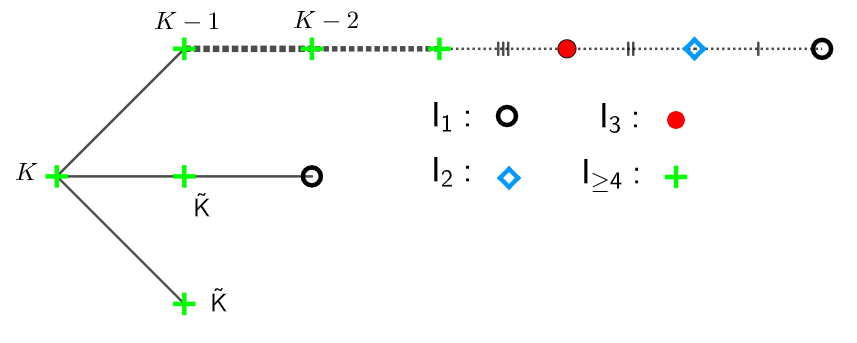}
 \caption{Dynkin diagram of type (4D)}
 \end{figure}

 \begin{figure}[h]
 \centering
 \includegraphics[scale=0.28]{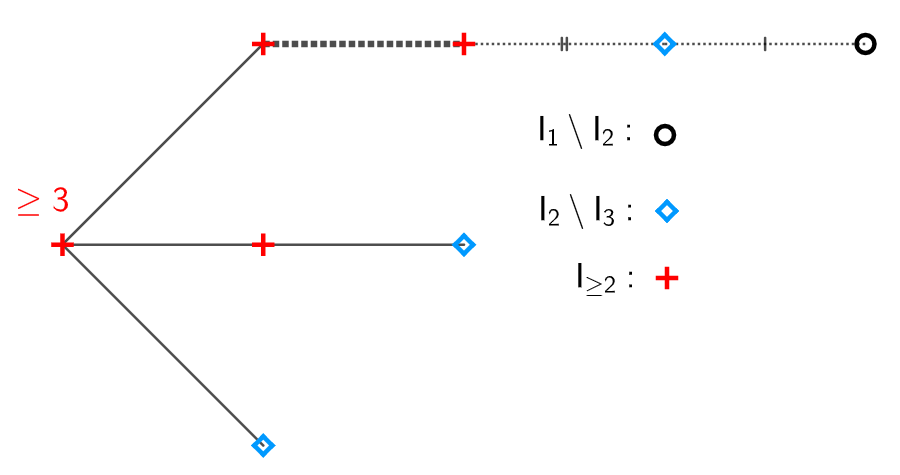}
 \caption{Dynkin diagram of type (4EA)}
\end{figure}

 \begin{figure}[h]
 \centering
 \includegraphics[scale=0.25]{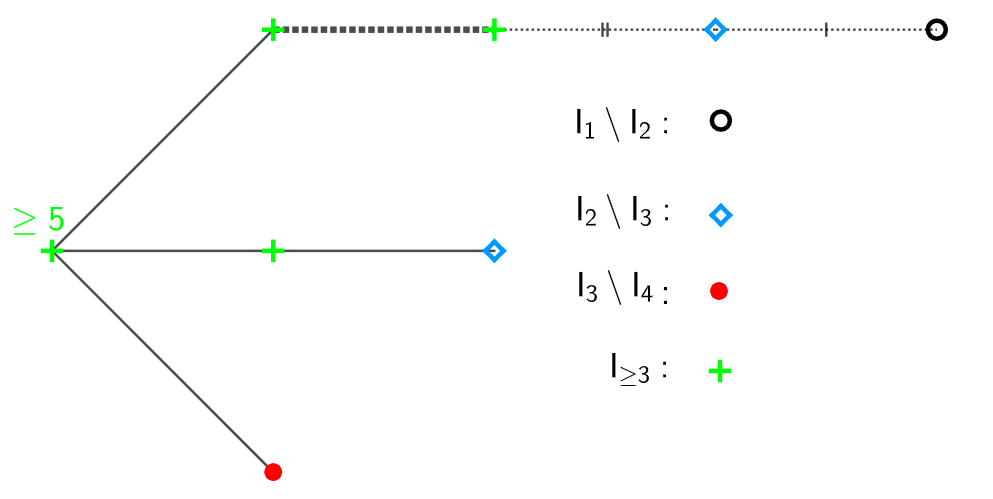}
 \caption{Dynkin diagram of type (4ED)}
\end{figure}

 \begin{figure}[h]
 \centering
 \includegraphics[scale=0.25]{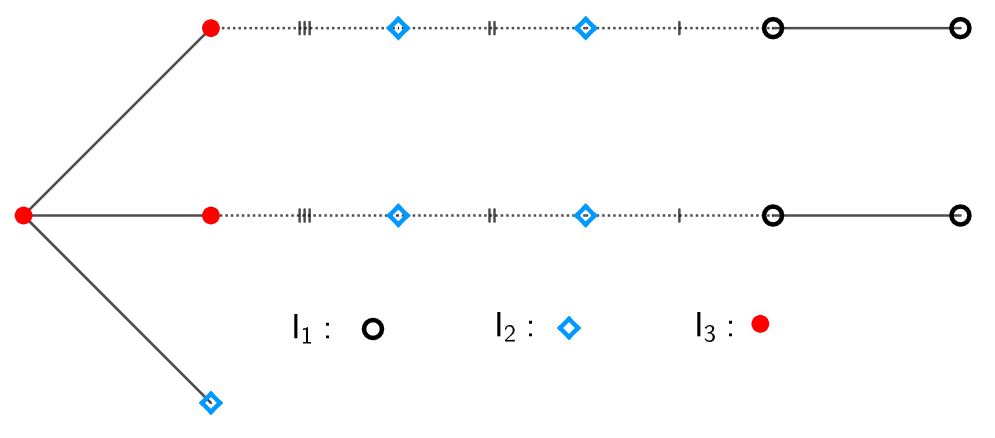}
 \caption{Dynkin diagram of type (4SA1)}
\end{figure}

 \begin{figure}[h]
 \centering
 \includegraphics[scale=0.25]{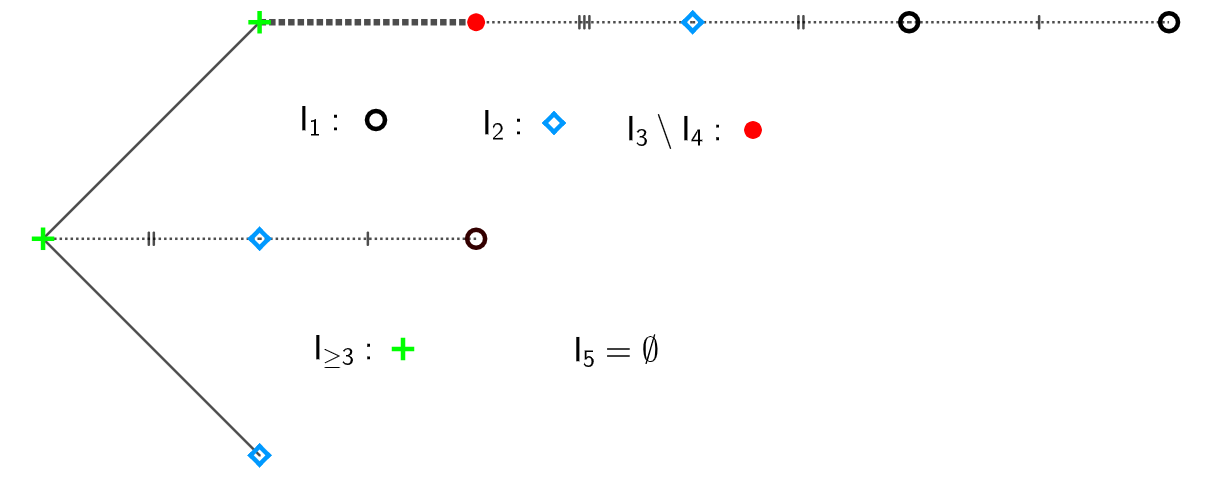}
 \caption{Dynkin diagram of type (4SA2)}
\end{figure}

\begin{proof}
    Fix $j_1\in I_2(\beta)$  such that $|\supp(j_1)|=4$ and $\deg(j_1)=3$. Then the connected components $(I_n(\beta,j_1))_{n\geq 2}$ of $j_1$ in $I_n(\beta)$ form a sequence of Dynkin diagrams ordered for the inclusion. By Proposition \ref{p_connected_components_I2}, $I_2(\beta,j_1)\setminus \{j_1\}$ (and therefore by Lemma \ref{l_description_chains_In}, $I_n(\beta,j_1)\setminus\{j_1\}$ for all $n\geq 2$) is a disjoint union of $3$ diagrams of type $A$ (possibly empty), each one starting at a distinct element of $\supp(j_1)\setminus\{j_1\}$. Moreover, Lemmas \ref{Lemma: class 4A} to \ref{l_impossible_scheme} give more information on their forms:
    \begin{enumerate}
        \item If $\supp(j_1)\not\subset I_2$, then by Lemma \ref{Lemma: class 4A}, the sequence of Dynkin diagrams $(I_n(\beta,j_1))_{n\geq 2}$ is of the form (4A).
        \item Else, if $|\supp(j_1)|= 3$  for all $j\in \supp(j_1)\subset I_2$, by Lemma \ref{Lemma: Class 4S} it is of the form (4S). Moreover, by Lemma~\ref{l_description_chains_In}  4-6), the leaves of $I_2(\beta,j_1)$ are not leaves of $I_1(\beta)$.

        \item Else, if there is $j\in \supp(j_1)$ such that $\supp(j_1)\not\subset I_2$, by Lemma \ref{Lemma: class 4D}, it is of the form (4D). 
        Indeed, let $K=N_{j_1}(\beta)$. First assume that $K$ is even. Let  $\tilde{K}=K/2$. Then by \eqref{eq: type 4D}, we have \[\bt(j_2',\tilde{K})\leq \bt(j_2'',\tilde{K})\leq \bt(j_1,2\tilde{K})\leq \bt(j_2',\tilde{K}+1)\leq \bt(j_1,2\tilde{K}+1)=\infty=\bt(j_2'',\tilde{K}+1),\] with $\bt(j_1,2\tilde{K})<\infty$.

We have $I_{\tilde{K}}(\beta,j_1)\supset \{j_2',j_2''\}$ and $j_{2}''\notin I_{\tilde{K}+1}(\beta,j_1)$. Therefore $I_{\tilde{K}}(\beta,j_1)$ is of type $D$ and $I_{\tilde{K}+1}(\beta,j_1)$ is of type $A$. By Lemma~\ref{Lemma: class 4D}, $j_2\in I_{2\tilde{K}-1}(\beta,j_1)\neq \{j_1\}$.

        Assume now that $K$ is odd. Set $\tilde{K}=(K-1)/2$. Then by \eqref{eq: type 4D}, we have
        \[\bt(j_2'',\tilde{K})\leq \bt(j_2',\tilde{K}+1)\leq \bt(j_1,2\tilde{K}+1)\leq \bt(j_2'',\tilde{K}+1)\leq \bt(j_1,2\tilde{K}+2)=\infty,\] 
        \[\bt(j_1,2\tilde{K}+1)<\infty \text{ and }\infty=\bt(j_1,2\tilde{K}+2)=\bt(j_1',\tilde{K}+2)=\bt(j_1'',\tilde{K}+2).\] Therefore $j_2'\in I_{\tilde{K}+1}(\beta,j_1)$, $j_2',j_2''\notin I_{\tilde{K}+2}(\beta,j_1)$ and thus $I_{\tilde{K}+1}(\beta,j_1)$ is of type $D$ or $A$ and $I_{\tilde{K}+2}(\beta,j_1)$ is of type $A$. By Lemma~\ref{Lemma: class 4A}, $j_2\in I_{2\tilde{K}}(\beta,j_1)\neq \{j_1\}.$

        \item Else, we can apply Lemma \ref{Lemma: class 4E}. If $j_1\notin I_4(\beta,j_1)$ we are in the case 1) of Lemma \ref{Lemma: class 4E} and we obtain the form (4SA1), else if $j_2'\notin I_3(\beta,j_1)$ we are in the cases 1) or 2) of Lemma \ref{Lemma: class 4E} and we obtain the form (4SA2).

        Finally case 3) of Lemma \ref{Lemma: class 4E} corresponds to the forms (4EA) or (4ED), according to whether $j_2''$ lies in $I_3$. We obtain the more specific forms (4ED1) to (4ED3) through a more careful study in these cases, as it is done in the proofs of Lemma \ref{Lemma: class 4E} and Lemma \ref{l_impossible_scheme}, we now develop these cases.  
        
       Recall the notation of Lemma~\ref{Lemma: class 4E} for $j_3'$. Assume by contradiction that $j_1\notin I_5(\beta,j_1)$ and $j_2'\in I_4(\beta,j_1)$. Then as we are in the case 3) of Lemma~\ref{Lemma: class 4E}, $j_3'\notin I_3(\beta,j_1)$. But then if $t=\bt(j_2',4)$, we have $\alpha_{j_2'}(\gamma_t^\vee)=6-N_{j_1}(\gamma_t^\vee)-N_{j_3'}(\gamma_t^\vee)=-1$, with $N_{j_3'}(\gamma_t^\vee)\leq 2$ and thus $N_{j_1}(\gamma_t^\vee)\geq 5$: $j_1\in I_5$, a contradiction. Therefore if $j_1\notin I_5$, we have $j_2\notin I_4(\beta,j_1)$. Moreover by Lemma~\ref{l_relation_Nk-1_Nk}, we have $j_2''\notin I_3(\beta,j_1)$. Thus if $j_1\notin I_5$, we have $j_2'\notin I_4(\beta,j_1)$, $j_3'\notin I_3(\beta,j_1)$, $j_2''\notin I_3(\beta,j_1)$ so the sequence is of the form (4EA). Else, $j_1\in I_5(\beta,j_1)$ and by Formula~\eqref{eq: time 5+1}, there is a time $t$ such that \[(N_{j_1}(t),N_{j_2}(t),N_{j_2'}(t),N_{j_2''}(t))=(5,4,3,2).\]  We can also suppose that $N_{j_3'}(t)=2$. Indeed, $j_3'\in I_2$ so $\bt(j_3',2)$ is finite. Since $\supp(j_3')=\{j_2',j_3'\}$, by Lemma~\ref{l_relation_Nk-1_Nk} $\bt(j_2',3)<\bt(j_3',2)<\bt(j_2',4)$, $r_{j_3'}$ commutes with every other reflection applied between $\bt(j_2',3)$ and $\bt(j_2',4)$, so we can suppose that $\bt(j_3',2)=\bt(j_2',3)+1$.
        \begin{itemize}
            \item If $N_{j_2''}$ no longer increases, the sequence is of the form (4EA).
            
            \item Else, by Lemma~\ref{l_relation_Nk-1_Nk}, $N_{j_2''}$ increases before $N_{j_1}$, and if $N_{j_2'}$ does not increase the sequence is of the form (4ED3).  Indeed, by Lemma~\ref{Lemma: class 4E} 3), $\bt(j_2,n-1)\leq \bt(j_1,n)$ for $n\in \llbracket2,5\llbracket$ and it is also satisfied for $n\geq 7$, since $\bt(j_1,7)=\infty$. If $t:=\bt(j_1,6)$ is finite, since $N_{j_2'}(t)+N_{j_2''}(t)\leq 6$ necessarily $N_{j_2}(t)\geq 5$ hence $\bt(j_2,5)\leq \bt(j_1,6)$. Therefore by Lemma~\ref{l_description_chains_In} 6), the corresponding sequence $(k_n)_{n\geq 2}$ is decreasing, in particular $I_5(\beta,j_1)\neq I_6(\beta,j_1)$.

            \item Else if $N_{j_2'}$ increases and $j_1\notin I_6$, we obtain the form (4ED1). 
            
            \item The final case is if $N_{j_2'}$ increases and $j_1\in I_6(\beta,j_1)$, then there exists a time $t$ such that $(N_{j_1}(t),N_{j_2}(t),N_{j_2'}(t),N_{j_2''}(t))=(6,4,4,3)$. Following the analysis done in Lemma \ref{l_impossible_scheme}, we obtain the form (4ED2): if $N_{j_2}$ stays at $4$, it is of the form (4ED2) with $n_0=6$ (since $j_3'\notin I_3$ and $\max N_{j_1}=6$, we have $j_2'\notin I_5$).
            
            \item Else (using the same reasoning as in the proof of    Lemma~\ref{l_impossible_scheme} to obtain Formula~\eqref{e_configuration1}) 
there is a time $t$ such that \[(N_{j_1}(t),N_{j_2}(t),N_{j_3}(t),N_{j_4}(t))\in \{(6,5,3,2),(6,5,3,3)\}.\] If $j_3\notin I_4(\beta,j_1)$ we obtain the form (4ED2) with $n_0=5$, else there is a time for which Equality~\eqref{e_configuration2} holds, and the argument can be repeated. This necessarily ends for $n_0=2$, in which case $j_5\in I_2(\beta,j_1)\setminus I_3(\beta,j_1)$ has support of size $2$, and the root is the highest root of $E_8$.
        \end{itemize}
    \end{enumerate}
\end{proof}

\subsubsection{General classification of quantum roots}
\begin{Definition}\label{Definition: quantum dynkin sequence}
    Let $I$ be a Dynkin diagram, and let $(I_n)_{n\in \mathbb Z_{\geq 1}}$ be a sequence of Dynkin subdiagram. We say that $(I_n)_{n\in \mathbb Z_{\geq 1}}$ is \textbf{a quantum Dynkin sequence} if it is the Dynkin sequence of a quantum root. That is to say, $(I_n)_{n\in \Z_{\geq1}}$ is a quantum Dynkin sequence if and only if there exists $\beta\in \cQ(\Phi_+)$ such that $\beta^\vee=\sum_{n\geq 1} \sum_{i\in I_n} \alpha_i^\vee$.
\end{Definition}

In this section, we obtain a classification of quantum Dynkin sequences, and therefore a classification of quantum roots.

\begin{Definition}
Let $I$ be a Dynkin diagram and let $(I_n)_{n\in \Z_{\geq 1}}$ and $(I_n')_{n\in \Z_{\geq 1}}$ be two Dynkin sequences of $I$. We say that  $(I_n)$ and  $(I_n')$ are \textbf{mergeable} if $I_1=I_1'$ and if $I_2$, $I_2'$ are disconnected. That is to say, if $I_2\cap I_2'=\emptyset$ and if there is no edge $(i,i')$ such that $i\in I_2$, $i'\in I_2'$.
    
\end{Definition}

\begin{Lemma}\label{l_union_quantum_dynkin_diagrams}
    Suppose that $(I_n)_{n\in \Z_{\geq 1}}$ and $(I_n')_{n\in \Z_{\geq 1}}$ are two quantum Dynkin sequences. Then, if $(I_n)_{n\in \Z_{\geq 1}}$ and $(I_n')_{n\in \Z_{\geq 1}}$ are mergeable, $(I_n\cup I_n')_{n\in \Z_{\geq 1}}$ is also quantum.
\end{Lemma}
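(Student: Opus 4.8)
The plan is to produce a quantum root whose Dynkin sequence is $(I_n\cup I_n')_{n\geq1}$, by ``transplanting'' onto $\beta$ the sequence of reflections that builds $\beta'$ from the common core. Concretely, choose quantum roots $\beta,\beta'\in\cQ(\Phi_+)$ with Dynkin sequences $(I_n)_{n\geq1}$ and $(I_n')_{n\geq1}$, and write $I_1=I_1(\beta)=I_1'=I_1(\beta')$, $I_2=I_2(\beta)$, $I_2'=I_2(\beta')$; by hypothesis $I_2$ and $I_2'$ are disjoint and no edge joins them. Let $\beta_0$ be the quantum root with $\beta_0^\vee=\sum_{i\in I_1}\alpha_i^\vee$, which exists and is unique by Proposition~\ref{p_characterization_I1} applied to the $1$-star-convex tree $I_1$. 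Applying Lemma~\ref{l_I1_placed_beggining} and Remark~\ref{r_height_almost_simple_coroot} to $\beta'$, I would extract from a reduced expression a chain of quantum roots $\beta_0=\epsilon_0,\epsilon_1,\ldots,\epsilon_{m'}=\beta'$ together with $k'_1,\ldots,k'_{m'}\in I_2'$ with $\epsilon_s=r_{k'_s}(\epsilon_{s-1})$: the ``$I_1$-stage'' $\gamma'_{L_1'}$ of $\beta'$ has coroot $\sum_{i\in I_1}\alpha_i^\vee$, hence equals $\beta_0$, and every letter of the reduced expression occurring after that stage lies in $I_2(\beta')=I_2'$.

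Then I would set $\delta_0:=\beta$, $\delta_s:=r_{k'_s}(\delta_{s-1})$, and prove by induction on $s$ that $\delta_s\in\cQ(\Phi_+)$ and $N_i(\delta_s)=N_i(\epsilon_s)$ for every $i\in I_1\setminus I_2$. For $s=0$ both quantities equal $1$ on $I_1\setminus I_2$. For the inductive step, the decisive observation — and the only place the disconnectedness of $I_2,I_2'$ is used — is that $\supp(k'_{s+1})\subseteq I_1\setminus I_2$: indeed $k'_{s+1}\in I_2'\subseteq I_1\setminus I_2$, and an $I_1$-neighbour of $k'_{s+1}$ cannot lie in $I_2$ since no edge joins $I_2$ to $I_2'$. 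Since $\delta_s^\vee$ and $\epsilon_s^\vee$ are supported on $I_1$, this together with the induction hypothesis gives
\[\langle\delta_s^\vee,\alpha_{k'_{s+1}}\rangle=\sum_{j\in\supp(k'_{s+1})}N_j(\delta_s)\,a_{j,k'_{s+1}}=\sum_{j\in\supp(k'_{s+1})}N_j(\epsilon_s)\,a_{j,k'_{s+1}}=\langle\epsilon_s^\vee,\alpha_{k'_{s+1}}\rangle.\]
As $\epsilon_s$ and $\epsilon_{s+1}=r_{k'_{s+1}}(\epsilon_s)$ are quantum, Proposition~\ref{Proposition: simplereflectionquantumroot}(2) yields $|\langle\epsilon_s^\vee,\alpha_{k'_{s+1}}\rangle|\leq1$, hence $|\langle\delta_s^\vee,\alpha_{k'_{s+1}}\rangle|\leq1$; since $\delta_s$ is quantum, the same proposition then shows that $\delta_{s+1}=r_{k'_{s+1}}(\delta_s)$ is quantum. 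Moreover $\delta_{s+1}^\vee$ and $\epsilon_{s+1}^\vee$ are obtained from $\delta_s^\vee$ and $\epsilon_s^\vee$ by subtracting the \emph{same} multiple of $\alpha_{k'_{s+1}}^\vee$, so the coordinate identity persists at $s+1$ (recall $k'_{s+1}\in I_1\setminus I_2$); note also that $\delta_{s+1}^\vee$ stays supported on $I_1$.

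Finally, with $\delta:=\delta_{m'}\in\cQ(\Phi_+)$, I would read off $N_i(\delta)$. Each $r_{k'_s}$ changes only the coordinate at $k'_s\in I_2'$, which is disjoint from $I_2$, so $N_i(\delta)=N_i(\beta)$ for $i\in I_2$; by the coordinate identity $N_i(\delta)=N_i(\epsilon_{m'})=N_i(\beta')$ for $i\in I_1\setminus I_2$, i.e. $1$ on $I_1\setminus(I_2\cup I_2')$ and $N_i(\beta')$ on $I_2'$; and $\delta^\vee$ is supported on $I_1$. Since $N_i(\beta)\geq2>N_i(\beta')$ on $I_2$ and symmetrically on $I_2'$, one gets $N_i(\delta)=\max(N_i(\beta),N_i(\beta'))=|\{n\geq1\mid i\in I_n\cup I_n'\}|$ for all $i\in I$, that is $\delta^\vee=\sum_{n\geq1}\sum_{i\in I_n\cup I_n'}\alpha_i^\vee$ (equivalently $\delta^\vee=\beta^\vee+\beta'^\vee-\beta_0^\vee$); hence $(I_n\cup I_n')_{n\geq1}$ is the Dynkin sequence of the quantum root $\delta$. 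The main obstacle is isolating the correct induction hypothesis — namely that the reflections building $\beta'$, being indexed in $I_2'$, act on $\beta$ exactly as they act on $\beta_0$ along the coordinates they can see — after which the argument reduces to repeated use of Proposition~\ref{Proposition: simplereflectionquantumroot}(2) together with routine bookkeeping.
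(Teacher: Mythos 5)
Your proof is correct and takes essentially the same approach as the paper: both write $\beta=w(\beta_0)$ and $\beta'=w'(\beta_0)$ with $w,w'$ supported on $I_2,I_2'$ respectively, then apply the simple reflections of $w'$ to $\beta$ one at a time and use Proposition~\ref{Proposition: simplereflectionquantumroot}(2) to preserve quantumness, with disconnectedness of $I_2,I_2'$ as the key input. The only cosmetic difference is that the paper factors the step out into Lemma~\ref{Lemma: commutation reflections} (stated via $w^{-1}(\alpha_i)=\alpha_i$), whereas you inline it by tracking the coordinates $N_i$ on $I_1\setminus I_2$ — the two formulations encode the same use of disconnectedness.
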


\begin{proof}
    Let $\beta\in \cQ(\Phi_+)$ (resp. $\beta'\in \cQ(\Phi_+)$ be the quantum root such that $(I_n)_{n\in \Z_{\geq 1}}=(I_n(\beta))_{n\in \Z_{\geq 1}}$ (resp. $(I_n')_{n\in \Z_{\geq 1}}=(I_n(\beta'))_{n\in \Z_{\geq 1}}$), which exists since by assumption the two sequences are quantum. Since the two sequences are also mergeable, we have $I_1=I_1'$ and this is a $1$-star convex tree by Proposition~\ref{p_characterization_I1}. 
    
    Let $\beta_0\in \cQ(\Phi_+)$ be the quantum root such that $I_1(\beta_0)=I_1$ and $I_2(\beta_0)=\emptyset$, which is given by Proposition~\ref{p_characterization_I1}. Note that, since $I_1=I_1'$ and $I_n$, $I_n'$ are disjoint for $n\geq 2$, we have $\sum_{n\geq 1}\sum_{i\in I_n \cup I_n'} \alpha_i^\vee=\beta^\vee+(\beta')^\vee-\beta_0^\vee$. It therefore suffices to prove that this is the coroot associated to a quantum root.
    
    By Lemma~\ref{l_I1_placed_beggining}, we can write $\beta=w(\beta_0)$ (resp. $\beta'=w'(\beta_0)$) where $w\in \langle r_i\mid i \in I_2\rangle \subset W^v$ (resp. $w'\in \langle r_i\mid i \in I_2'\rangle \subset W^v$) is minimal. Fix a reduced decomposition $r_{i_1}\dots r_{i_n}$ of $w'$. Note that for all $k\in \llbracket 1,n\rrbracket$, $r_{i_k}\dots r_{i_n}(\beta_0)$ is quantum with coroot $\beta_0^\vee +\sum_{j=k}^n \alpha_{i_j}^\vee$ (in particular $(\beta')^\vee=\beta_0^\vee +\sum_{j=1}^n \alpha_{i_j}^\vee$). Since $I_2$ and $I_2'$ are disconnected, we have $w^{-1}(\alpha_i)=\alpha_i$ for all $i\in \{i_1,\dots i_n\}\subset I_2'$. Therefore, by applying Lemma~\ref{Lemma: commutation reflections} to the triple $(\beta_0,i_n,w)$, $r_{i_n}w(\beta_0)$ is quantum with coroot $\beta^\vee+\alpha_{i_n}^\vee$. Since $r_{i_n}w(\beta_0)=wr_{i_n}(\beta_0)$ we can iterate the application of Lemma~\ref{Lemma: commutation reflections}, to the triple $(r_{i_n}(\beta_0),i_{n-1},w)$ and we obtain that $r_{i_{n-1}}r_{i_n} w(\beta_0)$ is quantum with coroot $\beta^\vee+\alpha_{i_{n-1}}^\vee+\alpha_{i_n}^\vee$. After $n$ successive applications of Lemma~\ref{Lemma: commutation reflections}, we obtain that $w'w(\beta_0)=r_{i_1}\dots r_{i_n} w(\beta_0)$ is quantum with coroot $\beta^\vee+ \sum_{j=1}^n \alpha_{i_j}^\vee=\beta^\vee +(\beta')^\vee-\beta_0^\vee$. Therefore $(I_n\cup I_n')_{n\in \Z_{\geq 1}}=\left(I_n(w'w(\beta_0)\right)_{n\in \Z_{\geq 1}}$ is a quantum Dynkin sequence.
\end{proof}

\begin{Lemma}\label{Lemma: commutation reflections}
    Suppose that $\beta \in \Phi_+$, $i \in I$ and $w\in W^v$ are such that $w^{-1}(\alpha_i)=\alpha_i$. Then if $\beta$, $r_i(\beta)$, $w(\beta)$ are quantum, $r_i w(\beta)$ is also quantum. Moreover $r_i w(\beta^\vee)=w(\beta^\vee)+r_i(\beta^\vee)-\beta^\vee$
\end{Lemma}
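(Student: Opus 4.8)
The plan is to reduce everything to Proposition~\ref{Proposition: simplereflectionquantumroot}(2), using only the $W^v$-invariance of the pairing $\langle\cdot,\cdot\rangle$ and the $W^v$-equivariance of the duality $\gamma\mapsto\gamma^\vee$. The single elementary observation driving the argument is that, since $w^{-1}(\alpha_i)=\alpha_i$,
\[
\langle w(\beta^\vee),\alpha_i\rangle=\langle \beta^\vee,w^{-1}(\alpha_i)\rangle=\langle\beta^\vee,\alpha_i\rangle .
\]

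For the displayed coroot identity I would expand directly using the contragredient formula $r_i(y)=y-\langle y,\alpha_i\rangle\alpha_i^\vee$ on $Y$: on one hand $r_iw(\beta^\vee)=w(\beta^\vee)-\langle w(\beta^\vee),\alpha_i\rangle\alpha_i^\vee=w(\beta^\vee)-\langle\beta^\vee,\alpha_i\rangle\alpha_i^\vee$, and on the other hand $r_i(\beta^\vee)-\beta^\vee=-\langle\beta^\vee,\alpha_i\rangle\alpha_i^\vee$; combining the two gives $r_iw(\beta^\vee)=w(\beta^\vee)+r_i(\beta^\vee)-\beta^\vee$. That $r_iw(\beta^\vee)$ really is the coroot of $r_iw(\beta)$ is exactly the $W^v$-equivariance of $\gamma\mapsto\gamma^\vee$.

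For the quantum claim, I would first apply the ``only if'' direction of Proposition~\ref{Proposition: simplereflectionquantumroot}(2) to the quantum root $\beta$ and the index $i$: this is legitimate precisely because $r_i(\beta)$ is assumed quantum, and it yields $|\langle\beta^\vee,\alpha_i\rangle|\leq 1$. By the identity above, $|\langle (w\beta)^\vee,\alpha_i\rangle|=|\langle w(\beta^\vee),\alpha_i\rangle|=|\langle\beta^\vee,\alpha_i\rangle|\leq 1$. Since $w\beta$ is quantum by hypothesis, the ``if'' direction of Proposition~\ref{Proposition: simplereflectionquantumroot}(2), this time applied to $w\beta$ and $i$, then shows that $r_i(w\beta)=r_iw(\beta)$ is quantum; in particular it lies in $\Phi_+$, so positivity requires no separate check.

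I do not anticipate a genuine obstacle: the proof amounts to a two-line computation together with two invocations of Proposition~\ref{Proposition: simplereflectionquantumroot}(2). The only points needing attention are that the hypotheses of that proposition are met each time it is used (the root to which it is applied must itself be quantum, which is why all three quantumness assumptions in the statement are needed), and that one keeps the sign conventions for the contragredient $W^v$-action on $Y$ straight so that the two displayed equalities above are literally the defining formulas.
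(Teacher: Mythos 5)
Your argument is correct and is essentially the paper's own proof: both invoke Proposition~\ref{Proposition: simplereflectionquantumroot}(2) twice (once at $(\beta,i)$, once at $(w\beta,i)$), link them via the $W^v$-invariance identity $\langle w(\beta^\vee),\alpha_i\rangle=\langle\beta^\vee,w^{-1}(\alpha_i)\rangle=\langle\beta^\vee,\alpha_i\rangle$, and verify the coroot identity by direct expansion of $r_i$ on $Y$. Incidentally, your intermediate line $r_i(\beta^\vee)-\beta^\vee=-\langle\beta^\vee,\alpha_i\rangle\alpha_i^\vee$ has the correct sign, whereas the paper's proof states it with the opposite (wrong) sign before landing on the right final identity, so your write-up is slightly cleaner on that point.
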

\begin{proof}
    Suppose that $\beta$ and $w(\beta)$ are quantum. By Proposition~\ref{Proposition: simplereflectionquantumroot}, $r_i w(\beta)$ is quantum if and only if $|\langle w (\beta^\vee), \alpha_i\rangle|=|\langle  \beta^\vee, w^{-1}(\alpha_i)\rangle|\leq 1$. However, $w^{-1}(\alpha_i)=\alpha_i$. Therefore this is satisfied if and only if $|\langle  \beta^\vee, \alpha_i\rangle|\leq 1$ that is to say, by Proposition~\ref{Proposition: simplereflectionquantumroot}, if and only if $r_i(\beta)$ is quantum. 
    
    Moreover since $r_i(\beta^\vee)-\beta^\vee=\langle \beta^\vee,\alpha_i\rangle \alpha_i^\vee$, we check that $r_iw(\beta^\vee)=w(\beta^\vee)-\langle w(\beta^\vee),\alpha_i\rangle \alpha_i^\vee= w(\beta^\vee)+r_i(\beta^\vee)-\beta^\vee$.
\end{proof}

\begin{Theorem}\label{Theorem: Classification quantum roots}
    Let $I$ be a Dynkin diagram with associated root system $\Phi$, and let $(I_n)_{n\in \mathbb Z_{\geq 1}}$ be a sequence of Dynkin subdiagrams of $I$. Then there exists a quantum root $\beta\in \cQ(\Phi_+)$ such that $(I_n)_{n\geq 1}=(I_n(\beta))_{n\geq 1}$ (see Definition \eqref{eq: I_n definition}) if and only if the sequence $(I_n)_{n\in \mathbb Z_{\geq 1}}$ satisfies the following properties:
                \begin{enumerate}
        \item For any $n\in \mathbb Z_{\geq 1}$, $I_{n+1}$ is a subdiagram of $I_n$.
        \item The diagram $I_1$ is a $1$-convex Dynkin tree.
        \item For any $n\geq 2$, for any vertex $j\in I_n$ we have $\deg(j)\leq 3$ (see Formula~\eqref{e_definition_degree}). Moreover each connected component of $I_n$ admits a unique vertex of degree $3$. 
        
        \item For any vertex $j\in I_2$ such that $\deg(j)=3$, let $I_n(j)$ denote the corresponding connected component (with $I_n(j)=\emptyset$ if $j\notin I_n$), given by point 3. Then:
        \begin{enumerate}
            \item If $|\supp(j)|=2$, then $(I_n(j))_{n\geq 2}$ is a sequence of type (2G) (see Proposition \ref{Classification: supp=2}).
            \item If $|\supp(j)|=3$, then $(I_n(j))_{n\geq 2}$ is a sequence of type (3S), (3C) or (3F) (see Proposition \ref{Classification: supp=3}).
            \item If $|\supp(j)|=4$, then $(I_n(j))_{n\geq 2}$ is a sequence of type (4S), (4A), (4D), (4EA), (4ED) or (4SA) (see Proposition \ref{Classification supp=4}).
        \end{enumerate}
    \end{enumerate}

\end{Theorem}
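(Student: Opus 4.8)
The plan is to prove both implications of Theorem~\ref{Theorem: Classification quantum roots}. The forward direction ("if $(I_n)=(I_n(\beta))$ for some $\beta\in\cQ(\Phi_+)$, then properties 1--4 hold") is essentially a summary of the structural results already established. Property 1 is immediate from the definition of the Dynkin sequence. Property 2 is Proposition~\ref{p_characterization_I1}. For property 3, the bound $\deg(j)\leq 3$ on $I_2(\beta)$ comes from Lemma~\ref{l_description_chains_I2}~1) (applied to the minimal element of the chain through $j$) together with parts 2--3 of that lemma for non-minimal vertices; since $I_n\subseteq I_2$ for $n\geq 2$, the same bound holds on every $I_n$. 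Uniqueness of the degree-$3$ vertex in each connected component is Remark~\ref{r_independence_leq_2} combined with Proposition~\ref{p_connected_components_I2}~(1), which says every connected component of $I_2$ (hence of $I_n$, by Remark~\ref{r_connected_components_In}) has a unique minimum for $\leq_2$, and that minimum is exactly the unique vertex of degree $3$. Property 4 is then a direct citation of Proposition~\ref{Classification: supp=2}, Proposition~\ref{Classification: supp=3}, and Proposition~\ref{Classification supp=4}, according to the value of $|\supp(j)|$, after noting that $|\supp(j)|\leq 4$ because $\deg(j)=3$ and each $a_{j',j}\leq -1$.

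The converse direction is the substantive part: given a sequence $(I_n)_{n\geq 1}$ satisfying 1--4, construct a quantum root $\beta$ with $(I_n(\beta))=(I_n)$. The strategy is to build $\beta$ connected-component-by-connected-component of $I_2$ and then merge. First, using property 2 and Proposition~\ref{p_characterization_I1}~(3), there is a quantum root $\beta_0$ with $I_1(\beta_0)=I_1$ and $I_2(\beta_0)=\emptyset$. Next, for each connected component $\cC$ of $I_2$ with its distinguished degree-$3$ vertex $j_\cC$, one must produce a quantum root $\beta_\cC$ whose Dynkin sequence is $(I_1,\ I_n(j_\cC))_{n\geq 2}$ — i.e.\ realizing the prescribed local shape (one of types 2G, 3S, 3C, 3F, 4S, 4A, 4D, 4EA, 4ED, 4SA). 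This is done by reversing the analysis in Subsections~\ref{subsection: supp < 3}--\ref{subsection: supp = 3}: each listed type comes with an explicit order of appearance of reflections, and one checks via Lemma~\ref{l_characterization_almost_simpleness} (the criterion $\langle\gamma_{t-1}^\vee,\alpha_{i_t}\rangle=-1$ and $\gamma_t^\vee=\sum_{k\leq t}\alpha_{i_k}^\vee$) that applying the reflections $r_{i_t}$ in that order starting from $\alpha_{i_1}$ indeed stays within $\cQ(\Phi_+)$ at every step; equivalently, one can build up the root one simple reflection at a time and invoke Proposition~\ref{Proposition: simplereflectionquantumroot}~(2), verifying at each stage that $|\langle\beta^\vee,\alpha_i\rangle|\leq 1$ for the next index $i$. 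Finally, the sequences $(I_1,\ I_n(j_\cC))_{n\geq 2}$ for distinct components $\cC$ are pairwise mergeable (same $I_1$, and $I_2(j_\cC)\cap I_2(j_{\cC'})=\emptyset$ with no connecting edge, since they are distinct connected components of $I_2$), so repeated application of Lemma~\ref{l_union_quantum_dynkin_diagrams} yields a single quantum root $\beta$ with $I_n(\beta)=\bigcup_\cC I_n(j_\cC)=I_n$ for all $n$.

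The main obstacle is the component-wise construction for the $|\supp(j_\cC)|=4$ cases, especially realizing types (4D), (4EA), and the (4ED1)--(4ED3) subcases and their exceptional $E_8$ degeneration: one has to exhibit, for each admissible shape of the Dynkin sequence, a concrete sequence of reflections and then verify the quantum condition inductively. The bookkeeping mirrors (in reverse) the case analysis in Lemma~\ref{Lemma: class 4D}, Lemma~\ref{Lemma: class 4E} and Lemma~\ref{l_impossible_scheme}, where the constraints \eqref{eq: type 4D}, \eqref{eq: time 5+1}, \eqref{e_configuration1}--\eqref{e_configuration2} precisely dictate the order in which the coordinates $N_{j_i}$ must be incremented; reading those chains of inequalities as a recipe for the decomposition $\underline{w}$ gives the construction, and Lemma~\ref{l_characterization_almost_simpleness}~(3) certifies quantumness. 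A secondary point requiring care is checking that the local data in property~4 (leaves of $I_n(j)$ that are or are not leaves of $I_{n-1}(j)$, positions of the special vertices $j_2',j_2'',j_3'$) is exactly enough to pin down the component up to the allowed freedom — i.e.\ that no extra hidden constraint is needed — which is where one cross-checks against Remark~\ref{r_connected_components_In} to see that the order $\leq_n$ is determined by the $I_n$'s alone.
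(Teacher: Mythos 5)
Your outline is correct and matches the paper's proof essentially point for point: the forward direction is a citation of the structural results, and the converse reduces to a single connected component of $I_2$ via Lemma~\ref{l_union_quantum_dynkin_diagrams} and then builds $\beta$ up from $\beta_\emptyset$ by iterating Proposition~\ref{Proposition: simplereflectionquantumroot}, with Lemma~\ref{l_characterization_almost_simpleness} certifying quantumness. The only thing the paper adds that your sketch defers is the explicit reflection words $w_n^{(i)}$ and the case-by-case verifications for (3F), (4D), (4EA), (4ED1)--(4ED3), (4SA1)--(4SA2), which constitute most of the actual work but follow the recipe you describe.
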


\begin{proof}
    The definition of quantum Dynkin sequences is such that it is clear, with the results of the preceding sections, that $(I_n(\beta))_{n\geq 1}$ is a quantum Dynkin sequence when $\beta \in \cQ(\Phi_+)$ (see Proposition~\ref{p_characterization_I1} for point 2) of Definition~\ref{Definition: quantum dynkin sequence}, Proposition~\ref{p_connected_components_I2} for point 3) and Propositions~\ref{Classification: supp=2},~\ref{Classification: supp=3} and~\ref{Classification supp=4} for point 4). Therefore it remains to associate a quantum root $\beta$ to each Dynkin sequence sequence satisfying the properties 1 to 4.

    Let $(I_n)_{n\geq 1}$ be a quantum Dynkin sequence. Let $J_3:=\{j\in I_2 \mid \deg(j)=3\}$. For any $j\in J_3$ and $n\geq 1$, as in Definition~\ref{Definition: quantum dynkin sequence} let $I_n(j)$ denote the connected component of $j$ in $I_n$, which is of one of the type given by Definition~\ref{Definition: quantum dynkin sequence} 4), and either $I_n(j)=\emptyset$, either $I_n(j)\cap J_3=\{j\}$. We have $I_1(j)=I_1$ for every $j\in J_3$.  The graphs $I_2(j)$ and $I_2(j')$ are disconnected for  $j,j'\in J_3$ such that $j\neq j'$, and thus $(\bigcup_{j\in E_1} I_n(j))_{n\in \Z_{\geq 1}}$ and $(\bigcup_{j\in E_2} I_n(j))_{n\in \Z_{\geq 1}}$ are mergeable  for every subsets $E_1,E_2$ of $J_3$ such that  $E_1\cap E_2=\emptyset$. Therefore using Lemma~\ref{l_union_quantum_dynkin_diagrams}, we can assume $|J_3|\leq 1$. We already proved the case where $J_3$ is empty in Proposition~\ref{p_characterization_I1} and thus we now assume that $J_3$ is a singleton $\{j\}$. We have $I_n(j)=I_n$ for all $n\in \Z_{\geq 1}$.

    By Proposition~\ref{p_characterization_I1} there exists $\beta_\emptyset \in \cQ(\Phi_+)$ such that $I_1(\beta_\emptyset)=I_1$ and $I_n(\beta_\emptyset)=\emptyset$ for $n\geq 2$. Note that, since $(I_n)_{n\geq 2}$ is amongst the types given in Propositions~\ref{Classification: supp=2},~\ref{Classification: supp=3} and~\ref{Classification supp=4}, it has the same properties as the connected component $\mathcal C$ in Proposition~\ref{p_connected_components_I2} and Remark~\ref{r_connected_components_In}. That is to say, with $m=|I_2\cap \supp(j)|-1$, $I_2\setminus \{j\}$ is the disjoint union of $m$ non-empty Dynkin segments of type $A$, one for each neighbour of $j$ in $I_1$. As in Proposition~\ref{p_connected_components_I2}, let us write $(I_2\cap\supp(j))\setminus \{j\} = \{j_2^{(i)}\mid i \in \llbracket1,m\rrbracket\}$ and, for all $i\in \llbracket 1,m \rrbracket$ write $j_2^{(i)},j_3^{(i)},\dots, j_{k(2,i)}^{(i)}$ the ordered elements of the segment starting at $j_2^{(i)}$. Therefore, $k(2,i)-1$ is the length of the segment of type $A$ starting at $j_2^{(i)}$ in $I_2\setminus \{j\}$. Upon relabelling, we may suppose $k(2,i)\geq k(2,i+1)$ for all $i \in \llbracket 1,m-1\rrbracket$. 
    Moreover by Remark~\ref{r_connected_components_In}, $[j_2^{(i)},j_{k(2,i)}^{(i)}]\cap I_n=[j_2^{(i)},j_{k(n,i)}^{(i)}]$ for some non-increasing sequence $(k(n,i))_{n\geq 2}$ (with the convention that $k(n,i)=1$ if $j_2^{(i)}\notin I_n$, and $[j_2^{(i)},j_1^{(i)}]=\emptyset$). 

    Finally, for each $i \in \llbracket1,m\rrbracket$, $n\geq 2$, let us define the following element of $W^v$: 
    \begin{equation}
        w^{(i)}_n=\begin{cases}r_{j_{k(n,i)}^{(i)}}\dots r_{j_2^{(i)}} \text{ if } k(n,i)\geq 2 \\
        1_{W^v} \text{ otherwise.}
        
        \end{cases}, \;
        w_n=(\prod_{i=1}^m w_n^{(i)})r_j
    \end{equation}
With these notations, we can now construct $\beta$ from $\beta_\emptyset$.  
\medskip

Suppose for now that $(I_n)_{n\geq 2}$ is of the form (2G), (3S), (3C), (4S) or (4A). In all these cases, $j^{(i)}_{k(n,i)}$ is not a leaf of $I_{n-1}$ so the sequences $(k(n,i))_{n\geq 2}$ are decreasing until they reach $1$, and $|\supp(j^{(i)}_{k(2,i)})|=3$.

By Proposition~\ref{Proposition: simplereflectionquantumroot}, as $\deg(j)=3$,  $r_j(\beta_\emptyset)$ is quantum, and $I_2(r_j(\beta_\emptyset))=\{j\}$. If $m=0$, $\supp(j)\cap I_2=\{j\}$, so we are done. Else, recursively applying Proposition~\ref{Proposition: simplereflectionquantumroot} $\ell(w_2)$ times, we similarly check that  $w_2(\beta_\emptyset)$ is quantum (we crucially use that $|\supp(j^{(i)}_{k(2,i)})|=3$ when we apply $r_{j_{k(2,i)}^{(i)}}$). Indeed, for any $i\in \llbracket1,m\rrbracket$ and $p\in \llbracket2,k(2,i)\rrbracket$, at time $t:=\bt(j_p^{(i)},2)$, we have $(N_{j_{p-1}^{(i)}}(t),N_{j_{p}^{(i)}}(t),N_{j_{p+1}^{(i)}}(t))=(2,1,1)$ therefore we can apply Proposition~\ref{Proposition: simplereflectionquantumroot}.  Moreover $I_2(w_2(\beta_\emptyset))=I_2$ and $I_3(w_2(\beta_\emptyset))=\emptyset$. We iterate the process: if $j\in I_n$ we apply $w_n$ to $w_{n-1}\dots w_2(\beta_ \emptyset)$, which is quantum, and we obtain a quantum root (we need to use that $k(n,i)<k(n-1,i)$ if $w_n^{(i)}\neq 1_{W^v}$). When we reach $n_0$ such that $I_{n_0+1}=\emptyset$ and $I_{n_0}\neq \emptyset$, we stop the process, and $\beta:=w_{n_0}\dots w_2(\beta_\emptyset)$ is a quantum root such that $I_n(\beta)=I_n$ for all $n\geq 2$.
\medskip

It remains to deal with the forms (3F), (4D), (4EA), (4ED), (4SA). Let us start by the form (4D), it is the only one amongst these for which $I_n$ may be non empty for arbitrary large $n$". Let $K=\max\{n\in \Z\mid I_n\neq \emptyset\}$. Let  $\tilde{K}=\lfloor (k-1)/2\rfloor$.
In this case $m=3$ but $k(2,2)=k(2,3)=2$, and upon relabelling we can suppose that $j_2^{(3)}$ is a leaf of $I_1$, but $j_2^{(2)}$ is not. Set $\beta^0=\beta_\emptyset$. Suppose that $p\in \llbracket 0,\tilde{K}-1\rrbracket$ is such that $\beta^{p}$ is constructed and quantum, and satisfies \eqref{eq: case4dthm}:
\begin{align*}
    \tag{$H_p$}\label{eq: case4dthm}
N_{\!j_k^{(1)}}(\beta^p)=N_{j}(\beta^p)=2p+1\text{ for } k\in \llbracket 1,k(2p+1,1)\rrbracket, N_{j_{k(2p+1,1)+1}^{(1)}}(\beta^p)=2p\\
\text{ and } N_{\!j_2^{(i)}}(\beta^p)=p+1 \text{ for } i\in \{2,3\},
\end{align*} where the condition "$N_{j_{k(2p+1,1)+1}^{(1)}}(\beta_p)=2p$" is deleted when $p=0$.

 Then using the fact that $(k(n,1))$ is decreasing and applying Proposition~\ref{Proposition: simplereflectionquantumroot} we check that $\beta^{p+1}:=r_{j_2^{(3)}}w^{(1)}_{2p+3}r_jr_{j_2^{(2)}}w^{(1)}_{2p+2}r_j(\beta^{2p+1})$ is a quantum root which satisfies $(H_{p+1})$. We obtain $\beta^{\tilde{K}}$ satisfying $(H_{\tilde{K}})$. Then if $K$ is odd we are done. Otherwise, we set $\beta=r_j(\beta^{\tilde{K}})$, which concludes the case (4D).
 
 In the remaining cases, $I_n(j)=\emptyset$ for all $n\geq 7$, so they can be checked by hand in a similar fashion, following the arguments in the proofs of Lemma~\ref{Lemma: class 3} (for the case (3F)) or Lemma~\ref{Lemma: class 4E} (for the cases  (4EA), (4ED), (4SA)). To have a complete proof, we now explicit these verifications.
 
In the case (3F), $m=2$ and upon relabelling, suppose that $(a_{j_2^{(1)},j},a_{j_2^{(2)},j})=(-2,-1)$. Then we check that $\beta:=r_{j^{(2)}_2}w^{(1)}_3r_jw^{(1)}_2r_j(\beta_\emptyset)$ is a quantum root such that $I_n(\beta_{J\cup \{j\}})=I_n(\beta_J)\sqcup I_n(j)$ for all $n\geq 2$.

For the case (4SA), $m=3$ and denote $j_2^{(3)}$ the unique leaf of $I_2$ in $\supp(j)$, by definition it is also a leaf of $I_1$. In the case (4SA2), since $j$ is a leaf of $I_3$, there is another $j_2^{(2)}\in \supp(j)$ which does not belong to $I_3$. We set $w=r_{j^{(3)}_2}w^{(2)}_3 w^{(1)}_3r_jw^{(2)}_2w^{(1)}_2r_j$. Then, in the case (4SA1), $\beta:=w(\beta_\emptyset)$ is the required quantum root. In the case (4SA2), we instead set $\beta:=w^{(1)}_4r_jw(\beta_\emptyset)$ (note that $w^{(2)}_3=1_{W^v}$ hence $\langle w(\beta_\emptyset^\vee), \alpha_j\rangle =-1$ as required to apply Proposition~\ref{Proposition: simplereflectionquantumroot}).

It remains to deal with the cases (4EA) and (4ED). Again $m=3$ and by definition we can suppose  that $j_2^{(3)},j_3^{(2)}$ are leaves of $I_1$ (they correspond to $j_2''$ and $j_3'$ respectively, with the notation of Proposition~\ref{Classification supp=4}). Suppose that $(I_n)_{n\geq 2}$ is not of the form (4ED2).

Let $$\beta^1:=r_{j^{(2)}_3}r_{j^{(2)}_2}w^{(1)}_4r_jr_{j_2^{(3)}}w^{(1)}_3r_jr_{j_2^{(2)}}w^{(1)}_2r_j(\beta_\emptyset),$$ 
then by Proposition~\ref{Proposition: simplereflectionquantumroot}, $\beta^1$ is a quantum root and satisfies $(N_j,N_{j^{(1)}_2},N_{j^{(2)}_2},N_{j^{(3)}_2},N_{j^{(2)}_3})=(4,4,3,2,2)$. If $(I_n)_{n\geq 2}$ is of the form (4EA) and $j\notin I_5$, then we set $\beta:=\beta^1$. If $(I_n)_{n\geq 2}$ is of the form (4EA) but $j\in I_5$, we instead set $\beta:=w^{(1)}_5r_j (\beta^1)$. It remains to treat the cases (4ED).

Let $\beta^2:=w_4^{(2)} r_{j^{(3)}_2}w^{(1)}_5r_j (\beta^1)$. If $j\notin I_5$ (hence in the case (4ED1) or a particular case of (4ED3)), we set $\beta:= \beta^2$. Else $j\in I_5$, we set $\beta:= r_j(\beta^2)$ (note that this is indeed a quantum root by Proposition~\ref{Proposition: simplereflectionquantumroot} because either $w^{(2)}_4=1_{W^v}$ either or $w^{(1)}_5=1_{W^v}$). 

The only remaining case is the form (4ED2). It corresponds to the analysis done in the proof of Lemma~\ref{l_impossible_scheme}, and we construct $\beta$ following this proof: Recall the notation for $n_0\in \llbracket2,6\rrbracket$, such that the leaf $j_{k(n_0,1)}^{(1)}$ of $I_{n_0}$ is also a leaf of $I_{n_0-1}$, note that $k(n_0,1)=7-n_0$. Let us start by $\beta_0$ such that $(I_n(\beta_0))_{n\geq 0}$ is of the form (4ED1), with $I_k(\beta_0)=I_k$ for $k\leq n_0-1$ and $I_k(\beta_0)\setminus\{j_2^{(2)},j_3^{(2)},j_2^{(3)}\}=[j,j_{6-k}^{(1)}]$ for $k\in \llbracket n_0,6\rrbracket$ (in particular $I_6(\beta_0)=\emptyset$ and $I_5(\beta_0)=\{j\}$). Then $\alpha_j(\beta_0^\vee)=-1$ so $r_j(\beta_0)$ is quantum with Dynkin sequence of the form (4ED2). If $n_0=6$, we can set $\beta:= r_j(\beta_0)$. Else we check that $\beta:=r_{j_{7-n_0}^{(1)}}\dots r_{j_2^{(1)}} r_j(\beta_0)$ is a quantum root with Dynkin sequence $(I_n)_{n\geq 1}$.

This concludes the proof of Theorem~\ref{Theorem: Classification quantum roots}.
\end{proof}

\subsection{Examples of sets of quantum roots}
As shown by Lemma~\ref{Lemma : form of covers} and Proposition~\ref{p_almost_simple_roots_covers}, quantum roots naturally appears in the study of covers for the affine Bruhat order.
In this subsection, we determine the sets of quantum roots in some particular cases (type $A_n$, $A_n^{(1)}$, $D_n^{(1)}$, etc.). Besides giving examples, we want to illustrate that the methods developed in this section to prove Theorem~\ref{t_finiteness_almost_simple_roots} are constructive and enable to determine the set of quantum roots in particular examples.

Let $\beta\in \Phi_+$. Write $\beta=r_{i_L}\ldots r_{i_2}.\alpha_{i_1}$, with $L$ minimal, $i_1,\ldots, i_L\in I$. For $\ell\in \Z_{\geq 0}$, set  \[I_\ell(\beta)=\{j\in I\mid |\{t\in \llbracket 1,L\rrbracket \mid i_t=j\}|\geq  \ell\}.\]

\subsubsection{Finite type}

The following result was explained to us by Dinakar Muthiah.

\begin{Proposition}\label{p_quantum_ADE}
Assume that the root system $\Phi$ is  of type A, D or E. Then the set of quantum roots of $\Phi$ is $\Phi_+$.
\end{Proposition}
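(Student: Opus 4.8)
The statement is that for a root system $\Phi$ of type $A$, $D$ or $E$ (the simply-laced finite types), every positive root is quantum. Since the set of quantum roots is stable under the recursive procedure of Proposition~\ref{Proposition: simplereflectionquantumroot}, the natural approach is an induction on $\htt(\beta^\vee)$, using the simply-laced hypothesis to guarantee the ``$|\langle\beta^\vee,\alpha_i\rangle|\leq 1$'' condition at every step.

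\textbf{Base case and inductive step.} First I would observe that simple roots are quantum (as already noted after Definition~\ref{d_almost_simple_root}), which handles $\htt(\beta^\vee)=1$. Now let $\beta\in\Phi_+$ be non-simple and assume every positive root of strictly smaller coheight is quantum. Since $\beta$ is not simple, there exists $i\in I$ with $r_i(\beta)\in\Phi_+$ and $\htt(r_i(\beta)^\vee)<\htt(\beta^\vee)$; concretely one takes $i$ such that $\langle\beta^\vee,\alpha_i\rangle>0$, which exists because otherwise $\beta$ would be a minimal positive root, i.e.\ simple. Set $\beta':=r_i(\beta)$, a positive root with $\htt((\beta')^\vee)=\htt(\beta^\vee)-\langle\beta^\vee,\alpha_i\rangle<\htt(\beta^\vee)$, so by the induction hypothesis $\beta'$ is quantum. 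To conclude that $\beta=r_i(\beta')$ is quantum, by Proposition~\ref{Proposition: simplereflectionquantumroot}(2) it suffices to check $|\langle(\beta')^\vee,\alpha_i\rangle|\leq 1$.

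\textbf{The key point: simply-laced bound on pairings.} The main (and only) real content is the claim that in a simply-laced root system, $|\langle\gamma^\vee,\alpha\rangle|\leq 1$ for any root $\gamma$ and any simple root $\alpha$ with $\gamma\neq\pm\alpha$; and that in general $\langle\gamma^\vee,\alpha_i\rangle\in\{-1,0,1,2,-2\}$ with the values $\pm 2$ occurring only for $\gamma=\pm\alpha_i$. In our situation $(\beta')^\vee=r_i(\beta^\vee)$ and $\beta'\neq\pm\alpha_i$ (since $\beta$ is not simple, $\beta\neq\alpha_i$, hence $\beta'=r_i(\beta)\neq r_i(\pm\alpha_i)=\mp\alpha_i$), so $|\langle(\beta')^\vee,\alpha_i\rangle|\leq 1$ as needed. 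This pairing bound is where simple-lacedness is essential: it is a standard fact for finite simply-laced systems that $\langle\gamma^\vee,\delta\rangle\in\{0,\pm 1\}$ whenever $\gamma,\delta$ are roots with $\gamma\neq\pm\delta$, which follows e.g.\ from the fact that all roots have the same length and $\langle\gamma^\vee,\delta\rangle\langle\delta^\vee,\gamma\rangle=4\cos^2\theta\in\{0,1,2,3,4\}$ with the value $\geq 2$ forcing proportional roots. I expect this to be the step requiring the most care, but it is entirely classical; the rest is a direct unwinding of Proposition~\ref{Proposition: simplereflectionquantumroot}. Finally, combining the base case and the inductive step gives $\cQ(\Phi_+)=\Phi_+$.
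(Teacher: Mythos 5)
Your proof is correct, and it takes a genuinely different route from the one in the paper. The paper verifies the definition of quantum root directly: using the Bourbaki embedding of $\Phi$ in $\R^n$ with the standard inner product, all roots have norm $\sqrt 2$, so $\alpha^\vee=\alpha$; Cauchy--Schwarz then gives $\langle\alpha^\vee,\beta\rangle\in\llbracket-1,1\rrbracket$ for non-proportional roots, and since $\langle\alpha^\vee,\beta\rangle>0$ for every $\alpha\in\Inv(s_\beta)$ this pairing is forced to equal $1$, which is precisely the quantumness condition. Your proof instead proceeds by induction on $\htt(\beta^\vee)$, passing through Proposition~\ref{Proposition: simplereflectionquantumroot}(2): from a quantum root $\beta'$ of smaller coheight you recover $\beta=r_i(\beta')$, and the simply-laced bound $|\langle(\beta')^\vee,\alpha_i\rangle|\leq 1$ makes the hypothesis of that proposition automatic. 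Both proofs ultimately rest on the same classical pairing bound for finite simply-laced systems, but you invoke it to feed the recursive machinery of Proposition~\ref{Proposition: simplereflectionquantumroot} (which is how the paper constructs quantum roots elsewhere), whereas the paper's argument is a one-shot direct check of Definition~\ref{d_almost_simple_root}. Your approach has the advantage of being uniform with the methods used in the rest of Section~\ref{s_almost_simple_roots}; the paper's has the advantage of being self-contained and not relying on any of the structure lemmas. (The paper also records, in a remark, a third proof via the classification Theorem~\ref{Theorem: Classification quantum roots}, which is different again from both.) One small point you could tighten: the existence of $i$ with $\langle\beta^\vee,\alpha_i\rangle>0$ is cleanest via $2=\langle\beta^\vee,\beta\rangle=\sum_j n_j\langle\beta^\vee,\alpha_j\rangle$ with all $n_j\geq 0$, rather than the minimality heuristic.
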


\begin{proof}
By \cite[Planche I to X]{bourbaki1981elements}, we can embed $\Phi$ in $\R^{n}$, for some $n\in \N$, equipped with the canonical scalar product $(\cdot|\cdot)$ and $\Phi$ is a subset of elements which have exactly two non-zero coordinates, both equal to $\pm 1$ and their norm is thus $\sqrt{2}$.  Let $\alpha\in \Phi$.  We have $\alpha^\vee=2\alpha/(\alpha|\alpha)=\alpha$ for every $\alpha\in \Phi$. Thus if $\alpha,\beta\in \Phi$, we have $\langle \alpha^\vee,\beta\rangle=(\alpha|\beta)\in \Z$ and $|(\alpha|\beta)|\leq |\alpha||\beta|\leq 2$, so $\langle \alpha^\vee,\beta\rangle\in \llbracket -2,2\rrbracket$. Now if $\alpha$ and $\beta$ are not proportional, we have $|(\alpha|\beta)|<|\alpha||\beta|$ and thus $\langle \alpha^\vee,\beta\rangle \in \llbracket -1,1\rrbracket$. For every $\alpha\in \Inv(s_\beta)$, we have $\langle \alpha^\vee,\beta\rangle>0$ (see  \cite[Corollary 1.10]{philippe2023grading}  for example) and thus $\langle \alpha^\vee,\beta\rangle=1$ for all $\alpha\in \Inv(s_\beta)$, which proves that $\beta$ is quantum.
\end{proof}

\begin{Remark}
    Proposition~\ref{p_quantum_ADE} can also be proved using Theorem~\ref{Theorem: Classification quantum roots}: Dynkin diagrams of type ADE are trees which are $1$-star-convex at any given vertex.  Fix a root system $\Phi$ of type ADE, and $\beta\in \Phi_+$. Then the Dynkin diagram $I_1(\beta)$ is of type ADE aswell. If it is of type A, it has no vertex of degree 3 or larger, so $I_2(\beta)=\emptyset$. Hence $(I_n(\beta))_{n\geq 1}$ is quantum and $\beta\in \cQ(\Phi_+)$. If $I_1(\beta)$ is of type $D$ or $E$, it admits a unique vertex $j$ of degree 3 and no vertex of higher degree, so $I_2(\beta)$ is non-empty if and only if it contains $j$. We check that if this is the case, $(I_n(\beta))_{n\geq 1}$ needs to be of the form (4A), with $I_3(\beta)=\emptyset$, when $I_1(\beta)$ is of type D; or of the forms (4A), (4D),(4EA), (4ED) when $I_1(\beta)$ is of type E.
\end{Remark}

However, for every other indecomposable Cartan matrix (of finite type), there exists a positive root which is not quantum.  Indeed, there exist $i,j\in I$ such that $\langle \alpha_i^\vee,\alpha_j\rangle\leq-2$. Set $\beta=r_i.\alpha_j=-\langle \alpha_i^\vee,\alpha_j\rangle \alpha_i+\alpha_j$.
Then $\alpha_i^\vee\in \Inv(s_\beta)$ and $\langle \alpha_i^\vee,r_i.\alpha_j\rangle=-2\langle \alpha_i^\vee,\alpha_j\rangle-1\geq 3$ and thus $r_i.\alpha_j$ is not quantum. Note that when $\Phi$ is infinite, then $\Phi_+$ strictly contains the set of quantum roots, which is finite.

\subsubsection{Type $A_n^{(1)}$, $n\geq 2$}

\begin{Proposition}
We assume that the Kac-Moody matrix $A$ is  of  type $A_n^{(1)}$, with $n\in \Z_{\geq 2}$ (see \cite[Chapter 4]{kac1994infinite} for the classification).  We write $I=\Z/(n+1)\Z$, with $\langle \alpha_{\overline{i}}^\vee,\alpha_{\overline{i+1}}\rangle=\langle \alpha_{\overline{i}}^\vee,\alpha_{\overline{i-1}}\rangle=-1$ for all $\overline{i}\in \Z/(n+1)\Z$.

Let $k,m\in \llbracket 0,n\rrbracket$ with  $k\leq m$. Set $\alpha_{\overline{k},\overline{m}}=\sum_{i=k}^m \alpha_{\overline{i}}$ and $\alpha_{\overline{m},\overline{k}}=\sum_{i=m}^{k+n+1} \alpha_{\overline{i}}$. Then the set of quantum roots is $\{\alpha_{\overline{k},\overline{m}}\mid (k,m)\in \llbracket 0,n\rrbracket^2\}.$
\end{Proposition}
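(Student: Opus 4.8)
The plan is to describe the set of quantum roots of $A_n^{(1)}$ by first identifying the positive real roots whose coroot has all coefficients equal to $0$ or $1$ (these will be exactly the $\alpha_{\overline k,\overline m}$), and then showing that no other positive root is quantum. The forward direction is the easy one: by Proposition~\ref{p_characterization_I1}, if $\beta\in\cQ(\Phi_+)$ then $I_1(\beta)$ is a $1$-star-convex Dynkin tree. In type $A_n^{(1)}$ the Dynkin diagram is an $(n+1)$-cycle with all edge weights $1$, so its proper subdiagrams that are trees are exactly the paths (line segments) $\{\overline k,\overline{k+1},\dots,\overline m\}$ (and the whole cycle is not a tree). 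Every such path is a type-$A$ diagram, hence $1$-star-convex, and for such $T$ the unique root with $\beta^\vee=\sum_{i\in T}\alpha_i^\vee$ is quantum by Proposition~\ref{p_characterization_I1}(3). This already shows $\{\alpha_{\overline k,\overline m}\}\subseteq\cQ(\Phi_+)$, and one checks these are precisely the roots whose Dynkin sequence has $I_2=\emptyset$.

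For the reverse inclusion, suppose $\beta\in\cQ(\Phi_+)$. By Proposition~\ref{p_characterization_I1} again, $I_1(\beta)$ is a proper subpath $T=\{\overline k,\dots,\overline m\}$ of the cycle, since the only $1$-star-convex trees inside an $(n+1)$-cycle are the proper paths. Now $T$, as a type-$A_{m-k+1}$ diagram, has \emph{no} vertex of degree $3$: indeed $\deg(i)=-\sum_{j\in I_1\setminus\{i\}}a_{j,i}\le 2$ for every vertex of a path, with equality only at interior vertices where the two neighbours each contribute $-a_{j,i}=-1$. By Lemma~\ref{l_description_chains_I2}, any minimal element of $I_2(\beta)$ for $\leq_2$ must have $\deg=3$; hence $I_2(\beta)=\emptyset$. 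Therefore $\beta^\vee=\sum_{i\in I_1(\beta)}\alpha_i^\vee=\sum_{i=k}^{m}\alpha_{\overline i}^\vee$ (or $\sum_{i=m}^{k+n+1}\alpha_{\overline i}^\vee$ depending on which arc $T$ is), i.e. $\beta=\alpha_{\overline k,\overline m}$ for suitable $k\le m$. This completes the classification.

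The main obstacle — really the only nontrivial point — is justifying that the $1$-star-convex trees inside an $(n+1)$-cycle with unit weights are exactly the proper arcs: one must rule out the full cycle (it contains a circuit, so it is not a tree, and moreover $I_1(\beta)$ is always a proper subset since $\Phi_+$ for $A_n^{(1)}$ also contains imaginary combinations — but here $\beta$ is a \emph{real} root, so its support, being the support of a real root in an affine root system, is a proper connected subdiagram by the standard theory of affine root systems, e.g. because the null root $\delta$ is the only positive root supported on all of $I$ and it is imaginary). Once that is in place, the rest is the degree count above together with a direct application of Proposition~\ref{p_characterization_I1} and Lemma~\ref{l_description_chains_I2}, with no further computation needed. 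One should also double-check the cardinality bookkeeping: the arcs of length $\le n$ in the $(n+1)$-cycle are parametrised by $(k,m)$ with $0\le k\le m\le n$ together with the ``wrap-around'' arcs $\alpha_{\overline m,\overline k}$, and these exhaust the real roots $\Phi_+$ of $A_n^{(1)}$ of the stated form, consistent with the known description of $\Phi_+$ as $\{\alpha_{\overline k,\overline m}+r\delta\}$ with the non-imaginary part being exactly an arc.
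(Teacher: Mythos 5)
Your proof is correct in structure and essentially follows the paper's route: bound the degree of every vertex by $2$ to force $I_2(\beta)=\emptyset$ via Lemma~\ref{l_description_chains_I2}, conclude $\beta^\vee=\sum_{i\in I_1(\beta)}\alpha_i^\vee$ with $I_1(\beta)$ a proper arc of the cycle, and use Proposition~\ref{p_characterization_I1} to classify those arcs and to produce the quantum root for each one (the paper does this last step with an explicit reduced word and Lemma~\ref{l_characterization_almost_simpleness}, you do it by invoking Proposition~\ref{p_characterization_I1}(3); both are fine).

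However, the parenthetical ``moreover'' remark is false and should be deleted: in an affine root system, real roots most certainly can have full support, and $\delta$ is \emph{not} the only positive root supported on all of $I$ --- for instance in $A_2^{(1)}$, $\alpha_1+\delta=\alpha_0+2\alpha_1+\alpha_2$ is a positive real root with support $I$, and more generally $\mathring\alpha+k\delta$ has full support for any $k\geq 1$. Fortunately this wrong claim is not used: the justification you give just before it (``it contains a circuit, so it is not a tree,'' hence the whole cycle is excluded as $I_1(\beta)$ by Proposition~\ref{p_characterization_I1}) is exactly the right reason and suffices on its own. Strike the false aside and the argument is clean.
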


\begin{proof}
Let $\beta\in \Phi_+$ be a quantum root. We have $\deg(j)=2$  for every $j\in \Z/(n+1)\Z$ (we defined the degree in  \eqref{e_definition_degree}). By Lemma~\ref{l_description_chains_I2}, we deduce that $I_2(\beta)=\emptyset$. Therefore   we can write $\beta=\sum_{i\in \Z/(n+1)\Z} n_i \alpha_i$, where $n_i\in \{0,1\}$ for $i\in \Z/(n+1)\Z$. We conclude with Proposition~\ref{p_characterization_I1}.  

Conversely, take $k,m\in \llbracket 0,n\rrbracket$   such that $k<m$. Then since $\langle \alpha_{\overline{i+1}}^\vee, \alpha_{\overline{i}}\rangle =\langle \alpha_{\overline{i-1}}^\vee, \alpha_{\overline{i}}\rangle=-1$ for every $\overline{i}\in \Z/(n+1)\Z$, we have $\alpha_{\overline{k},\overline{m}}=r_{\overline{k}}\ldots r_{\overline{m-1}}.\alpha_{i_{\overline{m}}}=\alpha_{\overline{k},\overline{m}}$ and $r_{\overline{k'}}\ldots r_{\overline{m-1}}.\alpha_{i_{\overline{m}}}^\vee=\alpha_{\overline{k'}}^\vee+\alpha_{\overline{k'+1}}^\vee+\ldots +\alpha_{\overline{m}}^\vee$  for every $k'\in \llbracket k,m\rrbracket$. Then by Lemma~\ref{l_characterization_almost_simpleness}, we deduce that  $\alpha_{\overline{k},\overline{m}}$ is quantum. Similarly, $\alpha_{\overline{m},\overline{k}}$ is quantum, which proves the lemma.
\end{proof}

Note that the case $A_1^{(1)}$ is a particular case of subsubsection~\ref{ss_no_minus_one}.

\subsubsection{Type $D_n^{(1)}$, $n\geq 4$}

Let $n\in \Z_{\geq 4}$. Let $I=I_{D_n^{(1)}}$ be the  Dynkin diagram of $D_n^{(1)}$  (see Figure~\ref{f_Dynkin_diagram_Dn_aff}). For  $\Gamma$  a subgraph of $I$, we set $\beta_\Gamma=\sum_{i\in \Gamma}\alpha_i$ and for  $(\Gamma,\Gamma')$ a pair of subgraphs of $I$ we set $\beta_{(\Gamma,\Gamma')}=\sum_{i\in \Gamma} \alpha_i+\sum_{i\in \Gamma'} \alpha_i$.

Let $\iota:I\rightarrow I$ be the automorphism defined by $\iota(j_i)=j_{m-i}$ for all $i\in \llbracket 0,m \rrbracket$, $\iota(j_{m'})=j_{0'}$ and $\iota(j_{0'})=j_{m'}$.

 \begin{figure}[h]
 \centering
 \includegraphics[scale=0.33]{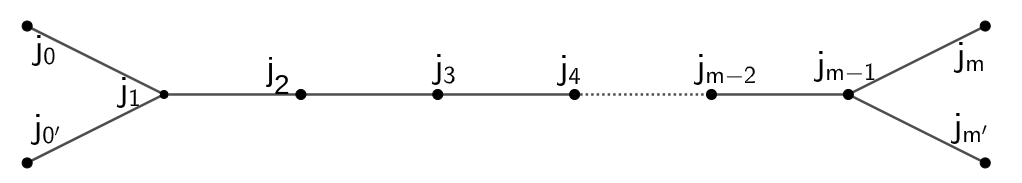}
 \caption{Dynkin diagram of type $D_n^{(1)}$}\label{f_Dynkin_diagram_Dn_aff}
\end{figure}

\begin{Proposition}
    \begin{enumerate}
        \item Let $\Gamma$ be a connected subgraph of $I$. Then $\beta_\Gamma$ is a quantum root.

        \item Let $(\Gamma,\Gamma')$ be a pair of connected subgraphs of $I$ such that $\Gamma'\subset \Gamma$, $\{j_0,j_{0'}\}\subset \Gamma$, $\{j_0,j_{0'},j_1,j_2\}\subset \Gamma'$, $j_{m},j_{m'}\notin \Gamma'$ and such that $|\{j_{m},j_{m'}\}\cap \Gamma|\leq 1$. Then $\beta_{(\Gamma,\Gamma')}$ and $\beta_{(\iota(\Gamma),\iota(\Gamma'))}$ are  quantum roots. 

        \item Let $k,\ell\in \llbracket 2,m-1\rrbracket$ be such that $k+1<\ell$. Then $\sum_{i\in I}\alpha_i+\sum_{i=1}^k\alpha_{j_i}+\sum_{i=m-1}^\ell \alpha_{j_i}$ is a quantum root.

        \item Let $\beta$ be a quantum root of $\Phi_+$. Then $I_3(\beta)=\emptyset$ and $\beta$ belongs to the set of roots described in (1), (2) and (3).
    \end{enumerate}
\end{Proposition}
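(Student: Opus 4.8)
The plan is to feed the structural results of Section~\ref{s_almost_simple_roots} into the very rigid shape of the Dynkin diagram $I=I_{D_n^{(1)}}$. First I would record the features that do all the work: $I$ is a tree, all its edges have weight $1$ (so $a_{i,j}\in\{0,-1\}$ for $i\neq j$), whence for every subdiagram $\deg(j)=|\supp(j)|-1$ is just the number of neighbours of $j$, every connected subdiagram is automatically $1$-star-convex, and the only vertices of degree $\geq 3$ are the two branch points $j_1$ and $j_{m-1}$, each of degree exactly $3$ and each carrying exactly two leaf-neighbours of $I$ ($j_0,j_{0'}$, resp.\ $j_m,j_{m'}$); for $n=4$ there is instead a single central vertex of degree $4$. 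Part (1) is then immediate from Proposition~\ref{p_characterization_I1}(3): for connected $\Gamma\subseteq I$ there is a quantum root with coroot $\sum_{i\in\Gamma}\alpha_i^\vee$, and since $\Phi$ is simply-laced this root is $\beta_\Gamma=\sum_{i\in\Gamma}\alpha_i$.

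Next I would prove the core of (4), that $I_3(\beta)=\emptyset$. If not, take a connected component $\cC$ of $I_2(\beta)$ meeting $I_3$; by Proposition~\ref{p_connected_components_I2} it has a minimum $j_1$, and by Lemma~\ref{l_description_chains_I2} $\deg(j_1)=3$, so (simply-laced) $|\supp(j_1)|=4$ and $j_1$ is a branch point whose three neighbours all lie in $I_1(\beta)$, two of them leaves of $I$ with two-element support. By Lemma~\ref{l_description_chains_In}, $j_1\in I_3$; now Lemma~\ref{Lemma: Class 4S} asserts that a minimal $j_1\in I_2$ with $|\supp(j_1)|=4$, $\supp(j_1)\subset I_2$ and $j_1\in I_3$ has a \emph{unique} neighbour of support $2$, contradicting the presence of two leaf-neighbours. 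If instead $\supp(j_1)\not\subset I_2$, one argues directly: evaluating $\langle\gamma^\vee_{\bt(j_1,3)},\alpha_{j_1}\rangle=-1$ with each neighbour's $N_\bullet(\bt(j_1,3))\leq N_{j_1}(\bt(j_1,3))=2$ forces one leaf-neighbour $j_0$ into $I_2$ with $N_{j_0}(\bt(j_1,3))=2$, i.e.\ $\bt(j_0,2)<\bt(j_1,3)$, while Lemma~\ref{l_description_chains_I2}(3)(b) forces $\bt(j_1,3)<\bt(j_0,2)$. Hence $I_3(\beta)=\emptyset$.

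With $I_3(\beta)=\emptyset$, a quantum root $\beta$ is determined by $\big(I_1(\beta),I_2(\beta)\big)$ with $I_1(\beta)$ a connected subdiagram; rerunning the previous computation with $\bt(j_1,3)=\infty$ shows no leaf of $I$ lies in $I_2(\beta)$. Therefore each connected component of $I_2(\beta)$ is anchored at a branch point and, its two leaf-prongs being empty, is a segment of the central path $j_1,\dots,j_{m-1}$; since there are at most two branch points and, by Theorem~\ref{Theorem: Classification quantum roots}(3), each component of $I_2(\beta)$ contains a single vertex of degree $3$ (which already prevents both branch points from lying in one component), the pair $(I_1(\beta),I_2(\beta))$ can only be of one of the three shapes displayed in (1)–(3), the diagram automorphism $\iota$ accounting for the mirror cases. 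For the constructive halves of (2) and (3) I would start from $\beta_{I_1(\beta)}$ (quantum by (1)) and peel off the reflections $r_j$, $j\in I_2(\beta)$, one at a time, running each segment outward from its branch point and invoking Proposition~\ref{Proposition: simplereflectionquantumroot}(2) — the pairing $2N_j-\sum_{i\sim j}N_i$ to be checked along a type-$A$ segment always lies in $\{-1,0,1\}$ — and, when $I_2(\beta)$ has two components, possibly invoking the mergeability Lemma~\ref{l_union_quantum_dynkin_diagrams} instead; a one-line coordinate check then identifies the output with the stated coroot.

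The main obstacle is the bookkeeping in part (4): the exclusion of $I_3(\beta)\neq\emptyset$, and then of leaves from $I_2(\beta)$, both rest on the precise statement of Lemma~\ref{l_description_chains_I2}(3)(b) (and of Lemma~\ref{Lemma: Class 4S}); one must also handle separately the two small diagrams $n=4$ (a single central vertex of degree $4$, so its $\leq_2$-components are at most singletons) and $n=5$ (the two branch points adjacent, hence never simultaneously active), and verify that the three families of (1)–(3) are pairwise disjoint modulo $\iota$, so that part (4) indeed exhausts $\cQ(\Phi_+)$. Matching the surviving Dynkin sequences against the dictionary of Section~\ref{subsection: classification} and running the reflection recursion is then routine.
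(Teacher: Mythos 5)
Your proposal is correct in substance and lands on the same key facts the paper uses, but the route through item (4) is packaged differently, so a brief comparison is in order. To prove $I_3(\beta)=\emptyset$ the paper treats all three neighbours of the degree-$3$ vertex $j_1$ at once: it bounds the leaf coefficients at $t=\bt(j_1,3)$ via Lemma~\ref{l_relation_Nk-1_Nk}(2) (which applies whether the leaf is in $I_2$ or not) and then reads off $\alpha_{j_1}(\gamma^\vee_t)\geq 0$, a contradiction. You instead split on whether $\supp(j_1)\subset I_2$, invoking Lemma~\ref{Lemma: Class 4S} (uniqueness of the support-$2$ neighbour) in one case and a hand computation with Lemma~\ref{l_description_chains_I2}(3)(b) in the other; both halves work, but the paper's single application of Lemma~\ref{l_relation_Nk-1_Nk}(2) is tighter. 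Similarly, you route the structure of $I_2(\beta)$ through Theorem~\ref{Theorem: Classification quantum roots} (uniqueness of the degree-$3$ vertex per component), where the paper uses the lower-level Proposition~\ref{p_connected_components_I2} and Lemma~\ref{l_description_chains_I2} directly; this is heavier machinery but not incorrect. Your constructive halves of (2)--(3) match the paper's peel-off of simple reflections from the branch point.

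Two points deserve more care. First, ``rerunning the previous computation with $\bt(j_1,3)=\infty$'' to exclude leaves from $I_2(\beta)$ is too elliptic to stand as written; the actual argument is that a leaf $j_0\in I_2(\beta)$ has $|\supp(j_0)|=2$, cannot be minimal (its degree is $\leq 1$, not $3$), so by Lemma~\ref{l_description_chains_I2}(3)(b) one needs $\bt(j_1,3)<\bt(j_0,2)<\infty$, forcing $j_1\in I_3(\beta)$, which has just been ruled out. Second, you flag the diagrams $n=4$ and $n=5$ as requiring separate treatment but do not carry it out, and these cases are not as routine as your last paragraph suggests: for $n=4$ (and $n=5$) items (2) and (3) are vacuous as stated, yet quantum roots with $I_2(\beta)=\{j_1\}$ do exist (e.g.\ the highest root of a finite $D_4$ subsystem embedded in $D_4^{(1)}$), so item (4) as written is not exhausted by (1)--(3) there. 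This gap is shared with the paper's own proof (and seemingly with the statement of item (2)), so it is not a defect specific to your argument, but if you invoke the small-$n$ cases you should either resolve them or note that (2)--(3) need to be reformulated for $m\leq 3$.
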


\begin{proof}
Note that since the Kac-Moody matrix defining $\Gamma$ is symmetric, if $\beta\in \Phi$ is written $\beta=\sum_{i\in I} n_i\alpha_i$, with $(n_i)\in \Z^I$, then  $\beta^\vee=\sum_{i\in I} n_i\alpha_i^\vee$. 

(1) Follows from Proposition~\ref{p_characterization_I1}, since $a_{i_j}=-1$ for all $i,j\in I$ with $i\neq j$. 

(2) Set $\beta_\emptyset=\sum_{i\in \Gamma} \alpha_i$. Then $\beta_\emptyset$ is quantum by (1). Let $k=\max\{i\in \llbracket 2,m-1\rrbracket\mid j_i\in \Gamma'\}$. Then by successive applications of Proposition~\ref{Proposition: simplereflectionquantumroot}, \[\beta_{(\Gamma,\Gamma')}=r_{j_{0'}}r_{j_0}r_{j_k}r_{j_{k-1}}\ldots r_{j_2}r_{j_1}(\beta_{\emptyset})\] is quantum. By symmetry, $\beta_{(\iota(\Gamma),\iota(\Gamma')}$ is quantum.

(3) Set $\beta_\emptyset=\sum_{i\in I}\alpha_i$.  By successive applications of Proposition~\ref{Proposition: simplereflectionquantumroot}, \[\sum_{i\in I}\alpha_i+\sum_{i=1}^k\alpha_{j_i}+\sum_{i=m-1}^\ell \alpha_{j_i}=(\prod_{i=\ell}^{m-1} r_{j_i})(\prod_{t=k}^{1}r_{j_t})(\beta_\emptyset)= r_{j_\ell}r_{j_{\ell+1}}\ldots r_{j_{m-2}}r_{j_{m-1}}r_{j_k}r_{j_{k-1}}\ldots r_{j_1}(\beta_{\emptyset})\] is quantum.

    (4) Let $\beta$ be a quantum root. If $I_2(\beta)= \emptyset$, then $\beta$ is as in (1) by Proposition~\ref{p_characterization_I1}. Assume  now that $I_2(\beta)$ is non-empty. Let $E$ be the set of elements of $I$ which are minimal for $\leq_2$.  Then by Lemma~\ref{l_description_chains_I2}, $E \subset \{j_1,j_{m-1}\}$ and $|E|$ is the number of connected components of $I_2(\beta)$.   First assume that $|E|=1$. Up to replacing $\beta$ by $\iota(\beta)$ (where $\iota(\beta)=\sum_{i\in I} k_{\iota(i)}\alpha_i$, if $\beta=\sum_{i\in I} k_i\alpha_i$), we may assume that $E=\{j_1\}$. Then as $\deg(j_1)=3$, we have $j_0,j_{0'},j_2\in I_1(\beta)$. By Lemma~\ref{l_relation_Nk-1_Nk} 2) applied with $(j_{k-1},j_k)=(j_1,j_0),$ we have $\infty\geq \bt(j_0,2)\geq\bt(j_1,3)$ and similarly $\bt(j_{0'},2)\geq \bt(j_1,3)$. By Lemma~\ref{l_description_chains_In}, $\bt(j_2,3)\geq \bt(j_1,3).$ Suppose $t:=\bt(j_1,3)<\infty$. Then $\gamma_t^\vee=\alpha_{j_0}^\vee+\alpha_{j_{0'}}^\vee+2\alpha_{j_1}^\vee+x\alpha_{j_2}^\vee+?$, with $x\in \{1,2\}$. Then $\alpha_{j_1}(\gamma_t^\vee)=4-2-x\geq 0$ and thus $\alpha_{j_1}(\gamma_t^\vee)\neq -1$: a contradiction.
    Consequently, $j_1\notin I_3(\beta)$ and by Lemma~\ref{l_description_chains_In}, $I_3(\beta)=\emptyset$. Therefore  $j_0,j_0'\notin I_2(\beta)$. As $\supp(j_1)=\{j_0,j_{0'},j_2\}$, if  $I_2(\beta)\neq \{j_1\}$, then $j_2\rhd_2 j_1$. 
    By Proposition~\ref{p_connected_components_I2}, $I_2(\beta)=\{j_1,j_2,\ldots\}$ is totally ordered for $\leq_2$, with $j_1\leq_2 j_2\leq_2 j_3\leq_2 \ldots$.  Let $x$ be the maximal element of $I_2(\beta)$. By Lemma~\ref{l_description_chains_I2}, $\deg(x)\leq 2$ and thus if $x=j_{m-1}$, then $|\{j_m,j_{m'}\}\cap I_1(\beta)|\leq 1$. 
By  \eqref{e_passage_3_before_2}, $j_{m},j_{m'}\notin I_2(\beta)$. Then $\beta$ belongs to the set of roots described in 2).

     We now assume $|E|=2$. Then $E=\{j_1,j_{m-1}\}$. Similarly as above, $I_3(\beta)=\emptyset$. Similarly as above, $j_0,j_{0'},j_m,j_{m'}\in I_1(\beta)$. As $I_1(\beta)$ is connected, we deduce that $I_1(\beta)=I$. By Proposition~\ref{p_connected_components_I2}, the unique maximal chains for $\leq_2$ containing $j_1$ and $j_{m-1}$ respectively are \[j_1\lhd_2\ldots \lhd_2 j_k,\text{ and }j_m\lhd_2 j_{m-1}\lhd_2\ldots \lhd_2 j_{\ell},\] for some $k\in \llbracket 1,m-2\rrbracket$ and some $\ell\in \llbracket 2,m-1\rrbracket$. By Lemma~\ref{l_description_chains_I2} (3), $j_{k+1},j_{\ell-1}\in I_2\setminus I_1$. Therefore $k+1\leq \ell-1$, and the proposition follows.

\end{proof}
\subsubsection{Case where the Kac-Moody matrix has no coefficient $-1$}\label{ss_no_minus_one}

\begin{Lemma}\label{l_no_-1_coefficient}
Let $\beta\in \Phi_+$ be a quantum root.  Write $\beta=r_{i_L}\ldots r_{i_{2}}.\alpha_{i_1}$, with $i_1,\ldots,i_L\in I$ and $L\in \N$ minimal. Then we have $a_{i_2,i_{1}}=-1$ (where $A=(a_{i,j})_{i,j\in I}$ is the Kac-Moody matrix of the Kac-Moody datum). In particular, if $a_{i,j}\neq -1$ for every $i,j\in I$, the set of quantum roots is exactly the set of simple roots.

\end{Lemma}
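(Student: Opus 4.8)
The plan is to read the statement off directly from the characterization of quantum roots in Lemma~\ref{l_characterization_almost_simpleness}. The assertion about $i_2$ only has content when $\beta$ is not simple, so I first dispose of the simple case: if $\beta=\alpha_{i_1}$ then $L=1$, there is no $i_2$, and moreover $\Inv(r_i)=\{\alpha_i\}$ shows every simple root is quantum, which is all that the ``in particular'' needs in that direction. So from now on I assume $\beta$ is not simple; then $L\geq 2$ and $i_2$ is well defined, and I keep the notation $\gamma_t^\vee=r_{i_t}\ldots r_{i_2}.\alpha_{i_1}^\vee$ set up before Lemma~\ref{l_inequality_length} (this requires $L$ minimal, i.e. $\underline w=(r_{i_L},\ldots,r_{i_2})$ reduced and still reduced after appending $r_{i_1}$, exactly as recalled there).

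Since $\beta$ is quantum, condition $(3)$ of Lemma~\ref{l_characterization_almost_simpleness}, namely \eqref{e_roots}, applies: for every $t\in\llbracket 1,L\rrbracket$ one has $\gamma_t^\vee=\sum_{k=1}^t\alpha_{i_k}^\vee$, and $\langle\gamma_{t-1}^\vee,\alpha_{i_t}\rangle=-1$ whenever $t\neq 1$. Specializing to $t=2$ and using $\gamma_1^\vee=\alpha_{i_1}^\vee$ yields $\langle\alpha_{i_1}^\vee,\alpha_{i_2}\rangle=-1$; equivalently, $\gamma_2^\vee=r_{i_2}.\alpha_{i_1}^\vee=\alpha_{i_1}^\vee-\langle\alpha_{i_1}^\vee,\alpha_{i_2}\rangle\alpha_{i_2}^\vee$ equals $\alpha_{i_1}^\vee+\alpha_{i_2}^\vee$. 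In other words the off-diagonal entry of $A$ attached to the pair $i_1,i_2$ is $-1$; in particular it is nonzero, so $i_1\neq i_2$ and $\{i_1,i_2\}$ is an edge of weight $1$ in the Dynkin diagram. This is the asserted constraint on the start of a minimal decomposition of a non-simple quantum root.

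For the last sentence of the lemma: if the Kac--Moody matrix $A=(a_{i,j})$ has no off-diagonal coefficient equal to $-1$, then by the previous paragraph there can be no non-simple quantum root, whence $\cQ(\Phi_+)\subseteq\{\alpha_i\mid i\in I\}$; the reverse inclusion holds because $\Inv(r_i)=\{\alpha_i\}$ makes each $\alpha_i$ quantum. So $\cQ(\Phi_+)$ is exactly the set of simple roots.

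I do not expect any genuine obstacle here: all the substance is contained in Lemma~\ref{l_characterization_almost_simpleness}, and the argument is a one-line specialization of \eqref{e_roots} at $t=2$. The only subtlety worth a sentence is that the partial coroots $\gamma_1,\dots,\gamma_L$ are only well behaved because $L$ has been taken minimal — the standard reducedness point already exploited throughout Section~\ref{s_almost_simple_roots}, which I would simply cite rather than re-prove.
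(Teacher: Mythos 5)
Your proof is correct and, in substance, identical to the paper's. The paper does not go through Lemma~\ref{l_characterization_almost_simpleness}; it simply observes that $r_{i_L}\ldots r_{i_3}.\alpha_{i_2}$ lies in $\Inv(s_\beta)\setminus\{\beta\}$ (read off from \eqref{e_Inversion_set}) and computes $\langle\beta^\vee,\,r_{i_L}\ldots r_{i_3}.\alpha_{i_2}\rangle=-\langle\alpha_{i_1}^\vee,\alpha_{i_2}\rangle$ by $W^v$-invariance, which must equal $1$ by Definition~\ref{d_almost_simple_root}. You instead invoke condition~(3) of Lemma~\ref{l_characterization_almost_simpleness} at $t=2$; but the proof of that equivalence in the paper was obtained by exactly this sort of telescoping pairing computation, so the two routes are really the same observation packaged once or twice removed. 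Your version is marginally more economical on re-derivation; the paper's is more self-contained (it needs only the definition of quantum root, not the full characterization lemma).

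One small point worth flagging for both you and the authors: what the computation actually yields is $\langle\alpha_{i_1}^\vee,\alpha_{i_2}\rangle=-1$, which under the paper's convention $\langle\alpha_i^\vee,\alpha_j\rangle=a_{ij}$ reads $a_{i_1,i_2}=-1$, not $a_{i_2,i_1}=-1$ as stated. (These can genuinely differ: in type $G_2$ with $a_{1,2}=-1$, $a_{2,1}=-3$, the root $r_2.\alpha_1$ is quantum, and here $a_{i_1,i_2}=a_{1,2}=-1$ while $a_{i_2,i_1}=a_{2,1}=-3$.) The paper's own proof line silently transposes the indices in the same way, so this is a pre-existing typo in the source; it does not affect the ``in particular'' conclusion, whose hypothesis is symmetric in $i,j$. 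Your phrasing (``off-diagonal entry attached to the pair $i_1,i_2$'') happily sidesteps it, but since you were aiming to prove the lemma as stated, it would be worth noting explicitly which of $a_{i_1,i_2}$ and $a_{i_2,i_1}$ your argument controls.
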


\begin{proof}
    We have $s_{\beta}=r_{i_L}\ldots r_{i_{2}} r_{i_1} r_{i_{2}} \ldots r_{i_L}$. We have $r_{i_L}\ldots r_{i_{3}}.\alpha_{i_{2}}\in \Inv(s_{\beta})\setminus \{\beta\}$  and \[\langle r_{i_L}\ldots r_{i_2}.\alpha_{i_1}^\vee,r_{i_L}\ldots r_{i_3}.\alpha_{i_2}\rangle=-\langle \alpha_{i_1}^\vee,\alpha_{i_2}\rangle=-a_{i_2,i_1}=1. \]
\end{proof}

\section{From quantum roots to covers for the affine Bruhat order}\label{s_almost_simple_roots_covers}
In this section, we use the finiteness of the set of quantum roots to obtain finiteness results on the affine Bruhat order on the Weyl semi-group associated to a Kac-Moody datum. 
We start this section by the introduction of this object and of the affine Bruhat order. 
\subsection{Definition and notations: Affine Weyl semi-group}

Let $\mathcal D=(A,X,Y,(\alpha_i)_{i\in I},(\alpha_i^\vee)_{i\in I})$ be a Kac-Moody root datum. Let $W^v$ be the vectorial Weyl group and $\Phi$ the root system attached to it.
\paragraph{Dominant coweights and Tits cone}

Elements of $Y$ are called coweights, a coweight $\lambda$ is \textbf{regular} if $\langle \lambda,\alpha_i\rangle\neq 0$ for all $i\in I$, or equivalently if its fixator in $W^v$ is trivial. We say that a coweight $\lambda\in Y$ is \textbf{dominant} if $\langle \lambda,\alpha_i\rangle\geq 0$ for all $i\in I$, or equivalently if $\langle \lambda,\beta\rangle \geq 0$ for all $\beta\in\Phi_+$. The set of dominant coweights is denoted as \index{y@$Y^{++}$, $Y^+$} $Y^{++}=\{\lambda \in Y\mid \langle \lambda,\alpha_i\rangle \geq 0\; \forall i\in I\}$ and the set of dominant regular coweights is denoted \index{Y@$Y^{++}_{reg}$}$Y^{++}_{reg}=\{\lambda \in Y^{++}\mid \langle \lambda,\alpha_i\rangle >0\; \forall i \in I\}$.

The orbit of $Y^{++}$ by $W^v$ is the \textbf{integral Tits cone}, $Y^+=\bigcup\limits_{w\in W^v} w.Y^{++}$; it is a sub-cone of $Y$ and it is a proper sub-cone if and only if $W^v$ is infinite, if and only if $A$ is not of finite type (see \cite[1.4.2]{kumar2002kac}).

The set $Y^{++}$ is a fundamental domain for the action of $W^v$ on $Y^+$, and for any coweight $\lambda \in Y^+$, we denote $\lambda^{++}$ to be the unique element of $Y^{++}$ in its $W^v$-orbit.

\paragraph{Parabolic subgroups and minimal coset representatives}
Let $\lambda\in Y^{++}$ be a dominant coweight. Then the fixator of $\lambda$ in $W^v$ is a standard parabolic subgroup, which we denote by \index{w@$W_\lambda$, $W_J$, $W^\lambda$} $W_\lambda$. Explicitly, if $J=\{i\in I \mid \langle \lambda,\alpha_i\rangle=0\}$, then $W_\lambda =W_J:=\langle r_j\mid j\in J\rangle$. 

Any class $[w]\in W^v/W_\lambda$ admits a unique element of minimal length, and we denote by $W^\lambda$ the set of coset representatives of minimal length. Therefore $w\in W^\lambda \iff \ell(w)\leq \ell(\tilde w)\,\forall \tilde w \in wW_\lambda$.

Suppose now that $\lambda\in Y^+$ (not necessarily dominant), we denote by \index{v@$v^\lambda$}$v^\lambda$ the unique element of minimal length such that $\lambda=v^\lambda \lambda^{++}$, it is an element of $W^{\lambda^{++}}$. The coweight $\lambda$ also admits a parabolic fixator in $W^v$, which is $W_\lambda = v^\lambda W_{\lambda^{++}} (v^\lambda)^{-1}$.

A coweight $\lambda$ is \textbf{spherical} if its fixator $W_\lambda$ is finite, and it is \textbf{regular} if $W_\lambda$ is trivial. These notions only depend on the dominant part $\lambda^{++}$ of $\lambda$. We denote by \index{Y@$Y^+_{sph}$} $Y^+_{sph}$ the set of spherical coweights.
\paragraph{Affine Weyl semi-group}
Through the left action $W^v \curvearrowright Y$ we can form the semidirect product $Y\rtimes W^v$, for $(\lambda,w)\in Y\rtimes W^v$ we denote by $\qp^\lambda w$ the corresponding element of $Y\rtimes W^v$. If $W^v$ is finite, this semi-direct product is (a finite extension of) a Coxeter group, hence admits a Bruhat order, a Bruhat length and a set of simple reflections. However when $W^v$ is infinite, it does not have a Coxeter structure, and there is no known analog of the Bruhat order on it.

By definition, $Y^+\subset Y$ is stable by the action of $W^v$, therefore we can form \index{w@$W^+$}$W^+=Y^+ \rtimes W^v$ which is a sub semi-group of $Y\rtimes W^v$. This semi-group is called \textbf{the affine Weyl semi-group}, it is sometimes referred as "the affine Weyl group" in the literature, even though it is not a group.  

Eventhough $Y\rtimes W^v$ is a priori not a Coxeter group when $W^v$ is infinite, Braverman, Kazhdan and Patnaik have defined a preorder on $W^+$ in \cite[B.2]{braverman2016iwahori} using an affine root system on which $Y\rtimes W^v$ acts.

We denote by \index{p@$\pr^{Y^+}$, $\pr^{Y^{++}}$} $\pr^{Y^+}$ the projection function $W^+\rightarrow Y^+$ and $\pr^{Y^{++}}$ the projection on the dominant coweight, so if $\bx=\qp^\lambda w \in W^+$, then $\pr^{Y^+}(\bx)=\lambda$ and $\pr^{Y^{++}}(\bx)=\lambda^{++}$. We say that an element $\bx\in W^+$ is \textbf{spherical} if its coweight $\pr^{Y^+}(\bx)$ is spherical.

\paragraph{Affine roots}
Let \index{p@$\Phi^a$}$\Phi^a=\Phi \times \mathbb Z$, it is \textbf{the set of affine roots} of $\mathcal D$. Let $(\beta,n)\in \Phi^a$, we say that $(\beta,n)$ is positive if $n>0$ or ($n=0$ and $\beta \in \Phi_+$); we write $\Phi^a_+$ for the set of positive affine roots. We also have $\Phi^a = \Phi^a_+ \sqcup -\Phi^a_+$. We refer to \cite[1.2]{philippe2023grading} for a geometric interpretation of affine roots and of the affine Bruhat order.

The semidirect product $Y\rtimes W^v$ acts on $\Phi^a$ by:
\begin{equation}\label{eq : W^+ action}
    \qp^\lambda w.(\beta,n)=(w\beta,n+\langle \lambda,w\beta\rangle).
\end{equation}

For any $n \in \mathbb Z$, its sign is denoted $\sgn(n)\in \{-1,+1\}$, with the convention that $\sgn(0)=+1$.

For $n\in \Z$ and $\beta\in \Phi_+$, we set:\begin{align}\label{eq : affine_roots} \index{b@$\beta[n]$}
&\beta[n]=(\sgn(n) \beta,|n|\qp)\in \Phi_+^a \\
\index{s@$s_{\beta[n]}$}&s_{\beta[n]}=\qp^{n\beta^\vee} s_\beta \in  Y\rtimes W^v.\end{align}
If $n\neq0$ we also define $\beta[n]\in \Phi^a_+$ for $\beta\in\Phi_-$, by $\beta[n]=(-\beta)[-n]$.
Note that the element $s_{\beta[n]}$ does not belong to $W^+$ in general, but it is an element of order $2$ in $Y\rtimes W^v$, the \textbf{affine reflection associated to $\beta[n]$}.
\paragraph{Affine Bruhat order} The \textbf{affine Bruhat order} on $W^+$ is the transitive closure of the relation $<$ defined by, for $\bx\in W^+$ and $\beta[n]\in \Phi^a_+$: $s_{\beta[n]}\bx<\bx \iff \bx^{-1}(\beta[n]) \notin \Phi^a_+$. In some sense this means that $\beta[n]$ belongs to the inversion set of $\bx^{-1}$. Note in particular that if $\bx^{-1}(\beta[n]) \notin \Phi_+^a$ and $\bx$ lies in $W^+$ then so does $s_{\beta[n]}\bx$ .

Explicitly, if $\bx=\qp^\lambda w \in W^+$, then $\bx^{-1}(\beta[n])\notin \Phi_+^a$ if and only if $|n|<\sgn(n)\langle \lambda,\beta\rangle$, or $|n|=\sgn(n)\langle \lambda,\beta\rangle$ and $\sgn(n)w^{-1}\beta \in\Phi_-$.

D. Muthiah (see \cite{muthiah2018iwahori}) has proved that the transitive closure of $<$ is anti-symmetric, hence it is an order. Moreover, together with D. Orr, they define in \cite{muthiah2019bruhat} a $\mathbb Z$-valued length function on $W^+$ strictly compatible with the affine Bruhat order which we now introduce.
\paragraph{Affine Bruhat length}
    If $\qp^\lambda w \in W^+$, the \textbf{affine Bruhat length} is defined by: \index{l@$\ell^a$} \begin{equation}\label{eq: affine length formula}\ell^a(\qp^\lambda w)=2\htt(\lambda^{++})+\ell_{v^\lambda}(w).\end{equation}
    The function \index{l@$\ell_v$} $\ell_{v^\lambda}$ is the relative Bruhat length on $W^v$: $\ell_v(w)=\ell(v^{-1}w)-\ell(v)$ for all $v,w\in W^v$. The equivalence between this definition of the affine Bruhat length and its definition in \cite{muthiah2019bruhat} is given by \cite[Proposition 2.10]{philippe2023grading}. This length is strictly compatible with the affine Bruhat order by \cite[Theorem 3.3]{muthiah2019bruhat}. 

\paragraph{Covers and gradation of partially ordered sets}
Let $(X,<)$ be a partially ordered set, for $x,y\in X$, we say that $y$ \textbf{covers} $x$ if: $x<y$ and $\{z\in X\mid x<z<y\}=\emptyset$. We write $x\lhd y$ if $y$ covers $x$.

A length function $\ell: X\rightarrow \mathbb Z$ is said to be \textbf{strictly compatible} with $<$ if, for a comparable pair $(x,y)\in X^2$, we have $x<y\iff \ell(x)<\ell(y)$. Moreover it is a \textbf{grading} of $(X,<)$ if: $y$ covers $x$ if and only if $y>x$ and $\ell(y)=\ell(x)+1$. 

The Bruhat length on $W^v$ is an example of grading (actually $\mathbb N$ valued) of $(W^v,<)$, where $<$ is the Bruhat order, we refer to \cite[Chapter 2]{bjorner2005combinatorics} for general properties of the Bruhat order.

The affine Bruhat length is a $\mathbb Z$ grading of $(W^+,<)$ (by \cite[Theorem 6.2]{muthiah2019bruhat} in particular cases and \cite{philippe2023grading} in the general case): for all $\bx,\by\in W^+$ such that $\bx<\by$, \begin{equation}\label{e_characterization_cover}
    \bx \lhd \by \Leftrightarrow \ell^a(\by)=\ell^a(\bx)+1.
\end{equation}

The next lemma is a weaker reformulation of \cite[Proposition 3.20]{philippe2023grading}, which is a more explicit classification of covers in $W^+$. In the reductive case, this is a reformulation of the classification of covers given by Schremmer in terms of quantum Bruhat graph (see \cite[Proposition 4.5]{schremmer2024affinebruhat}).

\begin{Lemma}\label{Lemma : form of covers}
    Let $\bx,\by \in W^+$ and suppose that $\by$ covers $\bx$. Then there are unique elements $\lambda \in Y^{++},\,v, w \in W^v,\, \beta\in \Phi_+$ and $n\in \mathbb Z$ such that:
    \begin{itemize}
        \item $v$ is a minimal coset representative: $v\in W^\lambda$
        \item $\bx = \qp^{v\lambda} w$
        \item $\by=s_{v(\beta)[n]}\bx = \qp^{v(s_\beta \lambda +n\beta^\vee)}s_{v(\beta)} w$.
    \end{itemize}
    Moreover $n \in \{0,\langle \lambda,\beta\rangle,-1, \langle \lambda,\beta\rangle +1\}$, and $\pr^{Y^{++}}(\by)=\pr^{Y^{++}}(\bx)$ if and only if $n \in \{0,\langle \lambda,\beta\rangle\}$.

    Furthermore:
    \begin{enumerate}[i)]
        \item If $n=\langle \lambda,\beta\rangle$ (possibly equal to zero), then $\ell(s_\beta v^{-1}w)=\ell(v^{-1}w)+1$, so $v^{-1}w \lhd s_\beta v^{-1}w$
        
        \item If $n=0\neq\langle \lambda,\beta\rangle$, then $\ell(vs_\beta)=\ell(v)-1$, so $vs_\beta \lhd v$.

        \item If $n\in \{-1,\langle \lambda,\beta\rangle+1\}$ then $\beta$ is a quantum root and there exists an element $u \in W^v$ such that $\pr^{Y^+}(\by)=vu.\pr^{Y^{++}}(\by)$ and $\ell(vu)=\ell(v)+\ell(u)$, in particular $v\leq vu$. 
    \end{enumerate}
\end{Lemma}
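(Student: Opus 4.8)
\textbf{Proof plan for Lemma~\ref{Lemma : form of covers}.}

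The plan is to reduce everything to the affine Bruhat length formula \eqref{eq: affine length formula}, the grading property \eqref{e_characterization_cover}, and the explicit description of the order relation $<$ in terms of the $Y\rtimes W^v$-action \eqref{eq : W^+ action} on affine roots. First I would set up the combinatorial normal form: since $\by$ covers $\bx$, the definition of $<$ means there is a positive affine root $\tilde\beta[n]$ with $\by = s_{\tilde\beta[n]}\bx$ and $\bx^{-1}(\tilde\beta[n])\notin\Phi^a_+$; writing $\bx=\qp^{\mu}w$ with $\mu\in Y^+$, set $\lambda=\mu^{++}$ and $v=v^{\mu}\in W^\lambda$ so that $\mu=v\lambda$. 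Then write $\tilde\beta=v(\beta)$ for a unique $\beta\in\Phi_+$ (replacing $(\tilde\beta,n)$ by its negative if needed to keep positivity), and compute $\by = \qp^{v(s_\beta\lambda+n\beta^\vee)}s_{v(\beta)}w$ directly from the formula $s_{\tilde\beta[n]}=\qp^{n\tilde\beta^\vee}s_{\tilde\beta}$ and \eqref{eq : W^+ action}. Uniqueness of $(\lambda,v,w,\beta,n)$ follows because $\lambda=\pr^{Y^{++}}(\bx)$, $v$ is the minimal coset representative of $\pr^{Y^+}(\bx)$, $w$ is then determined, and $(\beta,n)$ is recovered from $\by\bx^{-1}$.

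Next I would pin down the possible values of $n$. The condition $\bx^{-1}(\tilde\beta[n])\notin\Phi^a_+$ unwinds (as recalled in the paragraph on the affine Bruhat order) to $|n|<\sgn(n)\langle\mu,\tilde\beta\rangle$, or equality with a sign condition on $w^{-1}$; since $\langle\mu,\tilde\beta\rangle=\langle v\lambda, v\beta\rangle=\langle\lambda,\beta\rangle\geq 0$, this forces $n\in\llbracket 0,\langle\lambda,\beta\rangle\rrbracket$ a priori. To cut this down to the four values $\{0,\langle\lambda,\beta\rangle,-1,\langle\lambda,\beta\rangle+1\}$ — wait, more precisely the claim is $n\in\{0,\langle\lambda,\beta\rangle,-1,\langle\lambda,\beta\rangle+1\}$, so I must also allow $n=-1$ and $n=\langle\lambda,\beta\rangle+1$, which occur exactly when $\pr^{Y^{++}}(\by)\neq\pr^{Y^{++}}(\bx)$ — I would compute $\ell^a(\by)-\ell^a(\bx)$ using \eqref{eq: affine length formula} and impose that it equals $1$ by \eqref{e_characterization_cover}. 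Writing $\lambda'=\pr^{Y^{++}}(\by)$, one has $\htt(\lambda')=\htt(s_\beta\lambda+n\beta^\vee)^{++}$; the key point is that $s_\beta\lambda + n\beta^\vee = \lambda - (\langle\lambda,\beta\rangle - n)\beta^\vee$, which is dominant-conjugate to $\lambda$ precisely when $n\in\{0,\langle\lambda,\beta\rangle\}$ (the two ``reflection'' cases), and otherwise its dominant representative differs in height by a controlled amount. Matching $2\htt(\lambda')+\ell_{v'}(w') = 2\htt(\lambda)+\ell_v(w)+1$ then leaves only the listed possibilities for $n$, and simultaneously yields the length statements i) and ii) in the equal-dominant-part cases: when $n=\langle\lambda,\beta\rangle$ the coweight is unchanged and the increment forces $\ell_{v\lambda}$-length of $s_{v\beta}w$ to exceed that of $w$ by one, which after translating through $\ell_{v^\lambda}$ becomes $\ell(s_\beta v^{-1}w)=\ell(v^{-1}w)+1$; symmetrically for ii).

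The real work — and the step I expect to be the main obstacle — is case iii), showing that when $n\in\{-1,\langle\lambda,\beta\rangle+1\}$ the root $\beta$ is quantum and producing the element $u$ with $\pr^{Y^+}(\by)=vu\cdot\pr^{Y^{++}}(\by)$ and $\ell(vu)=\ell(v)+\ell(u)$. Here I would invoke \cite[Proposition 3.20]{philippe2023grading}, of which the present lemma is stated to be a weaker reformulation, so that the quantum-root conclusion and the existence of $u$ can be quoted rather than re-derived; the honest alternative is to redo the height computation: $\pr^{Y^+}(\by)=v(s_\beta\lambda + n\beta^\vee)$, and for $n=\langle\lambda,\beta\rangle+1$ this equals $v(\lambda+\beta^\vee)$, whose dominant part $\lambda'$ satisfies $\htt(\lambda')\leq\htt(\lambda)+1$ with the length bookkeeping in \eqref{eq: affine length formula} forcing equality and forcing $\ell_{v^{\lambda}}(\cdot)$-contributions to behave additively — this additivity is exactly the statement $\ell(vu)=\ell(v)+\ell(u)$, and the constraint that the whole increment is $+1$ is what squeezes the inversion-set combinatorics of $s_\beta$ into the defining inequality $\langle\beta^\vee,\gamma\rangle=1$ for all $\gamma\in\Inv(s_\beta)\setminus\{\beta\}$ of Definition~\ref{d_almost_simple_root}. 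I would organize this last part by first treating $n=-1$ (where $\by$ has strictly smaller dominant coweight, via $s_\beta\lambda-\beta^\vee$) and then $n=\langle\lambda,\beta\rangle+1$ (strictly larger), noting these are exchanged under $\bx\leftrightarrow\by$, so one case implies the other. Given the explicit reference available, I would keep this proof short: establish the normal form and $n$-values from scratch, then cite \cite[Proposition 3.20]{philippe2023grading} for i)--iii), remarking only on how the quantum condition emerges from forcing the length increment to be exactly one.
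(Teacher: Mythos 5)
Your plan is essentially the paper's own argument: the paper disposes of this lemma in one sentence, attributing the normal form to \cite[Prop.~3.1]{philippe2023grading} and the items i)--iii) to \cite[Prop.~3.20]{philippe2023grading}, which is exactly the citation you land on. Your optional ``honest alternative'' sketch also captures the right mechanism --- the quantum condition is forced because $2\htt(\beta^\vee)-\ell(s_\beta)\geq 1$ always, with equality characterizing quantum roots (Lemma~\ref{l_characterization_almost_simpleness}(4)), and a cover's length budget of exactly $+1$ pins down the equality case; this is precisely the calculation made explicit later in Proposition~\ref{p_almost_simple_roots_covers}.

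One caveat on the sketch: you have the direction of the defining inequality reversed. From the paper's convention $s_{\beta[n]}\bz<\bz \iff \bz^{-1}(\beta[n])\notin\Phi^a_+$, the condition for $\by=s_{\tilde\beta[n]}\bx$ to satisfy $\by>\bx$ is $\bx^{-1}(\tilde\beta[n])\in\Phi^a_+$ (equivalently $\by^{-1}(\tilde\beta[n])\notin\Phi^a_+$), not $\bx^{-1}(\tilde\beta[n])\notin\Phi^a_+$ as you wrote. Consequently the ``a priori'' range you derive for $n$ is off; in fact for $\tilde\beta=v\beta\in\Phi_+$ and arbitrary $n<0$ the relation $\bx<s_{\tilde\beta[n]}\bx$ is automatic, and it is the covering (length-one) constraint, not the order relation, that cuts the range of $n$ down to the four extremal values. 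Since you fall back on citing \cite[Prop.~3.20]{philippe2023grading} for the actual proof, this sign slip does not propagate into an error, but you should fix it if you intend the computation to stand on its own.
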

The first part is a direct consequence of the definition of the affine Bruhat order and of \cite[Prop. 3.1]{philippe2023grading}, items $i)$ and $ii)$ correspond to items $1)$ and $2)$ of \cite[Proposition 3.20]{philippe2023grading} whereas $iii)$ corresponds to subitems $(a),(c)$ of items $3)$ and $4)$ of the same reference.

 In particular, we use finiteness of the set of quantum roots to show that any element $\bx \in W^+$ has a finite number of covers, and if it is spherical, then it also has a finite number of co-covers. This extends to the general Kac-Moody case a result obtained by Welch in the affine ADE case (\cite[Theorem 1.1]{welch2022classification}). We provide examples of non-spherical elements which admit an infinite number of co-covers, and we give a characterization of the irreducible Kac-Moody root systems for which every element admits a finite number of co-covers. At the end of this section, we construct explicit covers in $W^+$ associated to any given quantum root, therefore showing that the notion of quantum root is the most refined one in this context.

\subsection{Finiteness results for covers and co-covers}

\begin{Theorem}\label{Theorem : finiteness of covers}
    Suppose that $\bx$ is an element of an affine Weyl semi-group $W^+$. Then $\bx$ has a finite number of covers for the affine Bruhat order.
\end{Theorem}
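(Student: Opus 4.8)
The plan is to use Lemma~\ref{Lemma : form of covers} to reduce the count of covers of a fixed $\bx = \qp^{v\lambda}w$ to finitely many discrete choices. Write $\bx = \qp^\mu w$ with $\mu = v\lambda$, $\lambda = \mu^{++} \in Y^{++}$, $v = v^\mu \in W^\lambda$. By the lemma, every cover $\by$ of $\bx$ is of the form $\by = s_{v(\beta)[n]}\bx$ for a unique pair $(\beta,n)$ with $\beta \in \Phi_+$ and $n \in \{0,\,\langle\lambda,\beta\rangle,\,-1,\,\langle\lambda,\beta\rangle+1\}$; so it suffices to bound the number of admissible $\beta$. I would split into the two cases of the lemma according to whether $\pr^{Y^{++}}(\by) = \pr^{Y^{++}}(\bx)$.

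\textbf{Case 1: $n \in \{0,\langle\lambda,\beta\rangle\}$ (dominant part unchanged).} Here items i) and ii) of Lemma~\ref{Lemma : form of covers} tell us that either $v^{-1}w \lhd s_\beta v^{-1}w$ in $(W^v,<)$, or $vs_\beta \lhd v$. In the first subcase, a cover of an element of the Coxeter group $W^v$ is obtained by multiplying on the left by a reflection $s_\beta$, and any given element of a Coxeter group has only finitely many such covers (this is standard: covers in Bruhat order raise length by exactly $1$ and are finite in number — see e.g. \cite{bjorner2005combinatorics}); this pins down $s_\beta$, hence $\beta$ (as $\beta \mapsto s_\beta$ is injective on $\Phi_+$), to finitely many values, and then $n \in \{0,\langle\lambda,\beta\rangle\}$ gives at most two choices each. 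In the second subcase $vs_\beta \lhd v$, so again $s_\beta$ lies among the finitely many reflections $r$ with $vr \lhd v$, i.e. $vr < v$ with $\ell(vr) = \ell(v)-1$; these are indexed by $\Inv(v)$, which is finite of size $\ell(v)$. So Case 1 contributes finitely many covers.

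\textbf{Case 2: $n \in \{-1,\langle\lambda,\beta\rangle+1\}$ (dominant part changes).} This is where Theorem~\ref{t_finiteness_almost_simple_roots} enters: by item iii) of Lemma~\ref{Lemma : form of covers}, $\beta$ must be a \emph{quantum} root, and by Theorem~\ref{t_finiteness_almost_simple_roots} the set $\cQ(\Phi_+)$ is finite. Thus $\beta$ ranges over a finite set, and once $\beta$ is fixed $n$ takes at most two values. Hence Case 2 also contributes finitely many covers. Combining the two cases, $\bx$ has finitely many covers. I expect the main obstacle to be purely bookkeeping: making sure the uniqueness statement in Lemma~\ref{Lemma : form of covers} is used correctly so that distinct covers $\by$ really do correspond to distinct data $(\lambda, v, w, \beta, n)$ — since $\lambda, v, w$ are determined by $\bx$, the cover $\by$ is determined by $(\beta,n)$, and the finiteness of the possibilities for $(\beta,n)$ in each case then immediately bounds the number of covers. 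No genuinely hard estimate is needed here; the real content (finiteness of $\cQ(\Phi_+)$) has already been established in Theorem~\ref{t_finiteness_almost_simple_roots}, and the rest is assembling the case analysis from Lemma~\ref{Lemma : form of covers} together with the elementary finiteness of covers and of inversion sets in the Coxeter group $W^v$.
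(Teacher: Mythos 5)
Your proof is correct and follows essentially the same route as the paper's: split covers of $\bx$ according to whether the dominant part of the coweight changes, use items i)--ii) of Lemma~\ref{Lemma : form of covers} and finiteness of covers/co-covers in the finite-rank Coxeter group $W^v$ for the unchanged case, and use item iii) together with Theorem~\ref{t_finiteness_almost_simple_roots} (finiteness of $\cQ(\Phi_+)$) for the changed case. The only cosmetic difference is that you bound the second subcase by $|\Inv(v)|$ rather than by the exact number of co-covers of $v$, which is a coarser but equally valid finiteness bound.
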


\begin{proof}
For any $u\in W^v$, let $n_{cov}(u)$ (resp. $n_{cocov}(u)$) denote the number of covers (resp. cocovers) of $u$ in $W^v$. Since $W^v$ is a Coxeter group of finite rank, these numbers are finite. 

Let $\bx$ be an element of $W^+$. Then, by item $iii)$ of Lemma \ref{Lemma : form of covers}, the number of covers $\by$ of $\bx$ such that $\pr^{Y^{++}}(\by)\neq \pr^{Y^{++}}(\bx)$ is bounded by twice the number $N_{asr}(\Phi)$ of quantum roots of $\Phi$, which is finite by Theorem~\ref{t_finiteness_almost_simple_roots}. Moreover, write $\bx=\qp^{v\lambda}w$ with $\lambda = \pr^{Y^{++}}(\bx)$ and $v\in W^v$ minimal, then the number of covers $\by$ such that $\pr^{Y^{++}}(\by)= \pr^{Y^{++}}(\bx)$ is exactly $n_{cov}(v^{-1}w)+n_{cocov}(v)$ using items $i),ii)$ of the same lemma. Hence $\bx$ has at most $2N_{asr}(\Phi)+n_{cov}(v^{-1}w)+n_{cocov}(v)$ covers.
\end{proof}

\begin{Corollary}\label{c_finiteness_chains}
    Let $n\in \Z_{\geq 0}$ and $\bx\in W^+$. Then \[\{\by\in W^+\mid \exists (\bx_1,\ldots,\bx_n)\in (W^+)^n, \bx=\bx_1\lhd \bx_2\lhd\ldots \bx_{n-1}\lhd \bx_{n}=\by\}\] is finite.
\end{Corollary}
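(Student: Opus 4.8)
The statement asks to show that the set of elements reachable from a fixed $\bx\in W^+$ by chains of covers of bounded length $n$ is finite. The natural approach is induction on $n$, using Theorem~\ref{Theorem : finiteness of covers} as the base mechanism. For $n=1$ (or $n=0$), the set in question is $\{\bx\}$, which is trivially finite; more substantially, for a single cover step the finiteness is exactly the content of Theorem~\ref{Theorem : finiteness of covers}. So the key point is to propagate finiteness through the iteration.

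For the inductive step, suppose the set
\[
S_{n-1}=\{\by\in W^+\mid \exists\,(\bx_1,\ldots,\bx_{n-1}),\ \bx=\bx_1\lhd\cdots\lhd\bx_{n-1}=\by\}
\]
is finite. Every element $\bz$ reachable by a chain of length $n$ from $\bx$ is a cover of some $\by\in S_{n-1}$ (take $\bz=\bx_n$ and $\by=\bx_{n-1}$). Hence the set $S_n$ of such $\bz$ is contained in $\bigcup_{\by\in S_{n-1}}\{\bz\in W^+\mid \by\lhd \bz\}$. By Theorem~\ref{Theorem : finiteness of covers}, each set $\{\bz\mid\by\lhd\bz\}$ of covers of a fixed $\by$ is finite, and $S_{n-1}$ is finite by the induction hypothesis, so $S_n$ is a finite union of finite sets, hence finite. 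This completes the induction.

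There is essentially no obstacle here: the whole argument is a one-line bookkeeping observation once Theorem~\ref{Theorem : finiteness of covers} is in hand, namely that ``finitely many covers at each step'' plus ``finitely many starting points at each step'' yields ``finitely many endpoints''. The only thing to be careful about is the edge case $n=0$ (where the chain is just $\bx=\by$, giving the singleton $\{\bx\}$) and the convention for $n=1$; both are immediate. I would phrase the proof as a short induction on $n$, invoking Theorem~\ref{Theorem : finiteness of covers} in the inductive step, and remark that one does not even need $\bx$ to be spherical, since the finiteness-of-covers result holds for all elements of $W^+$.

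\begin{proof}
We argue by induction on $n$. For $n=0$ the set in question is $\{\bx\}$, which is finite. Assume the result holds for $n-1$, and let
\[
S_{n-1}=\{\by\in W^+\mid \exists\,(\bx_1,\ldots,\bx_{n-1})\in (W^+)^{n-1},\ \bx=\bx_1\lhd \bx_2\lhd\cdots\lhd \bx_{n-1}=\by\},
\]
which is finite by the induction hypothesis. If $\bz\in W^+$ is such that there exists a chain $\bx=\bx_1\lhd\cdots\lhd\bx_n=\bz$, then setting $\by=\bx_{n-1}$ we have $\by\in S_{n-1}$ and $\by\lhd\bz$. Hence the set $S_n$ of such $\bz$ satisfies
\[
S_n\subseteq \bigcup_{\by\in S_{n-1}}\{\bz\in W^+\mid \by\lhd\bz\}.
\]
By Theorem~\ref{Theorem : finiteness of covers}, for each $\by\in W^+$ the set $\{\bz\in W^+\mid \by\lhd\bz\}$ of covers of $\by$ is finite. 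Therefore $S_n$ is contained in a finite union of finite sets, hence is finite.
\end{proof}
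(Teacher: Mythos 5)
Your proof is correct and is precisely the straightforward induction on $n$ that the paper leaves implicit (the paper states Corollary~\ref{c_finiteness_chains} without proof, treating it as an immediate consequence of Theorem~\ref{Theorem : finiteness of covers}). Nothing to add.
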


\begin{Corollary}
    Let $\bx,\by\in W^+$. Then $[\bx,\by]:=\{\bz\in W^+\mid \bx\leq \bz\leq \by\}$ is finite.
\end{Corollary}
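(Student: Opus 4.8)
The plan is to deduce finiteness of $[\bx,\by]$ from finiteness of the set of covers (Theorem~\ref{Theorem : finiteness of covers}) together with the fact that $\ell^a$ is a $\Z$-valued grading of $(W^+,<)$ strictly compatible with the order. The key point is that the affine Bruhat length gives a uniform handle: since $\ell^a$ is strictly compatible with $<$, any $\bz\in[\bx,\by]$ satisfies $\ell^a(\bx)\leq \ell^a(\bz)\leq \ell^a(\by)$, so the number of ``steps'' between $\bx$ and $\bz$ is bounded by $N:=\ell^a(\by)-\ell^a(\bx)$. First I would invoke the grading property \eqref{e_characterization_cover}: if $\bx\leq\bz$ then, by repeatedly choosing an element covering the previous one and using strict compatibility of $\ell^a$, one can build a saturated chain $\bx=\bx_0\lhd\bx_1\lhd\cdots\lhd\bx_m=\bz$ of length $m=\ell^a(\bz)-\ell^a(\bx)\leq N$. (This is the standard fact that in a graded poset every interval below a fixed element is reached by a saturated chain whose length is the difference of ranks.)

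Next I would apply Corollary~\ref{c_finiteness_chains}, which states precisely that for fixed $\bx$ and fixed $n$, the set of elements reachable from $\bx$ by a chain of $n$ covers is finite. Taking the union over $n\in\llbracket 0,N\rrbracket$, the set of all $\bz$ with $\bx\leq \bz$ and $\ell^a(\bz)\leq\ell^a(\by)$ is finite; in particular $[\bx,\by]$, being contained in this set, is finite. If one prefers not to route through Corollary~\ref{c_finiteness_chains} explicitly, the same conclusion follows by a direct induction on $N$: for $N=0$ the interval is $\{\bx\}$ (or empty if $\bx\not\leq\by$); for $N>0$, every $\bz\in[\bx,\by]\setminus\{\bx\}$ lies above some cover $\bx'$ of $\bx$ with $\bx'\leq\by$, there are finitely many such $\bx'$ by Theorem~\ref{Theorem : finiteness of covers}, and each interval $[\bx',\by]$ is finite by the induction hypothesis since $\ell^a(\by)-\ell^a(\bx')=N-1$.

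The only mild subtlety — and the one place to be careful — is justifying that one can always descend from a chain to a \emph{saturated} chain realizing the rank difference, i.e. that $\ell^a$ is genuinely a grading and not merely strictly compatible; but this is exactly \eqref{e_characterization_cover}, quoted from \cite{muthiah2019bruhat} and \cite{philippe2023grading}, so there is no real obstacle. I would write the argument in the inductive form, as it is shortest and self-contained given Theorem~\ref{Theorem : finiteness of covers}, and note in passing that Corollary~\ref{c_finiteness_chains} is the same statement packaged differently. No step requires any new computation.
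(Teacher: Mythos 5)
Your proposal is correct and takes essentially the same route as the paper: use the grading \eqref{e_characterization_cover} to saturate any $\bx\leq\bz\leq\by$ into a cover chain of length at most $\ell^a(\by)-\ell^a(\bx)$, then invoke Corollary~\ref{c_finiteness_chains} (itself a consequence of Theorem~\ref{Theorem : finiteness of covers}). The alternative direct induction on $N=\ell^a(\by)-\ell^a(\bx)$ you sketch is only a cosmetic repackaging of the same idea.
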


\begin{proof}
Assume $\bx\leq \by$. 
    As the Bruhat order is strictly compatible with $\ell^a$ (by \cite[Theorem 3.3]{muthiah2019bruhat}), for every $\bz\in [\bx,\by]$, there exist $k\in \llbracket 0,\ell^a(\by)-\ell^a(\bx)\rrbracket$ and $\bx_1,\ldots,\bx_k\in W^+$ such that $\bx=\bx_1\lhd\bx_2\lhd \ldots \lhd \bx_k=\bz$ and thus the result follows from Corollary~\ref{c_finiteness_chains}. 
\end{proof}

\begin{Lemma}\label{l_characterization_bruhat_order_Yin}
    Let $Y_{in}=\{\lambda\in Y\mid \langle \lambda,\alpha_i\rangle=0,\forall i \in I\}$. Let $\lambda\in Y_{in}$, $\mu\in Y^+$ and $v,w\in W^v$. Then $\qp^{\mu}w \leq \qp^{\lambda}v$ if and only if $\mu=\lambda$ and $v\leq w$ (for the Bruhat order on $W^v$). 
\end{Lemma}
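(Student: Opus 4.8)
The strategy is to describe the lower interval $\{\bz\in W^+\mid \bz\le \qp^\lambda v\}$ explicitly; the key point is that, since $\lambda\in Y_{in}$, every elementary step of the affine Bruhat order issued from an element of coweight $\lambda$ keeps the coweight equal to $\lambda$ and acts on the $W^v$-part by a left reflection. First I would record the relevant features of $\lambda$: every $\beta\in\Phi$ is an integral combination of the simple roots $\alpha_i$, hence $\langle\lambda,\beta\rangle=0$ for all $\beta\in\Phi$, and in particular $s_\beta(\lambda)=\lambda-\langle\lambda,\beta\rangle\beta^\vee=\lambda$, and more generally $u(\lambda)=\lambda$ for every $u\in W^v$. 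Note also $Y_{in}\subseteq Y^{++}\subseteq Y^+$, so that $\qp^\lambda u\in W^+$ for every $u\in W^v$.

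Next I would compute the elementary steps of the order lying directly below $\qp^\lambda u$, for an arbitrary $u\in W^v$. For $\beta\in\Phi_+$ and $n\in\mathbb{Z}$, the explicit description of the affine Bruhat order recalled above gives $s_{\beta[n]}\qp^\lambda u<\qp^\lambda u$ if and only if $(\qp^\lambda u)^{-1}(\beta[n])\notin\Phi^a_+$; since $\langle\lambda,\beta\rangle=0$, this holds exactly when $n=0$ and $u^{-1}\beta\in\Phi_-$. In that case $s_{\beta[0]}\qp^\lambda u=s_\beta\qp^\lambda u=\qp^{s_\beta(\lambda)}s_\beta u=\qp^\lambda s_\beta u$, and $u^{-1}\beta\in\Phi_-$ (with $\beta\in\Phi_+$) means $\beta\in\Inv(u^{-1})$, i.e. $u^{-1}s_\beta<u^{-1}$, i.e. $\ell(s_\beta u)<\ell(u)$. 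So the elementary steps below $\qp^\lambda u$ are exactly the maps $\qp^\lambda u\mapsto \qp^\lambda s_\beta u$ with $\beta\in\Phi_+$ and $\ell(s_\beta u)<\ell(u)$: they keep the coweight equal to $\lambda$ and each strictly decreases the Bruhat length of the $W^v$-part. (For completeness, $s_{\beta[n]}\qp^\lambda u=\qp^{n\beta^\vee+\lambda}s_\beta u$ has coweight $\neq\lambda$ as soon as $n\neq 0$, which re-confirms that only $n=0$ can occur.)

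Finally I would pass to the transitive closure. Since every elementary step below an element $\qp^\lambda u$ is again of the form $\qp^\lambda u'$, an easy induction gives $\{\bz\in W^+\mid \bz\le \qp^\lambda v\}\subseteq \qp^\lambda W^v$, which already forces $\mu=\lambda$. Moreover the chain relation ``$u'=s_\beta u$ with $\beta\in\Phi_+$ and $\ell(u')<\ell(u)$'' is precisely the reflection description of the Bruhat order on the Coxeter group $W^v$ read downwards from the top: a chain of such steps joins $v$ to $u'$ if and only if $u'\le v$ (the classical subword/reflection characterization of the Bruhat order, see \cite[Ch.~2]{bjorner2005combinatorics}). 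Hence $\qp^\lambda u\le \qp^\lambda v$ in $W^+$ if and only if $u\le v$ in $W^v$, and combining this with $\mu=\lambda$ we get that $\qp^\mu w\le \qp^\lambda v$ if and only if $\mu=\lambda$ and $w\le v$ (the $W^v$-component of the smaller element being Bruhat-below that of the larger one), as claimed.

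The argument carries no genuine obstacle beyond this bookkeeping; the only delicate point is checking that the transitive closure never leaves the slice $\qp^\lambda W^v$, which is exactly the identity $s_{\beta[n]}\qp^\lambda u=\qp^{n\beta^\vee+\lambda}s_\beta u$ combined with the fact that the elementary relation forces $n=0$ here. Everything else is a straightforward unwinding of the definition of the affine Bruhat order together with the classical description of the Bruhat order on $W^v$.
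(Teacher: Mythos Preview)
Your proof is correct and follows essentially the same route as the paper's: show that any elementary step below an element of coweight $\lambda\in Y_{in}$ must have $n=0$ (you use the explicit criterion for $(\qp^\lambda u)^{-1}(\beta[n])\notin\Phi^a_+$ directly; the paper invokes \cite[Lemma~1.7]{philippe2023grading} to force $\lambda+n\beta^\vee\in[\lambda,s_\beta\lambda]=\{\lambda\}$), and then identify the resulting step $\qp^\lambda u\mapsto\qp^\lambda s_\beta u$ with the Bruhat relation on $W^v$. One remark: your derivation correctly yields $w\le v$, which is also what the paper's proof actually establishes---the inequality $v\le w$ in the printed statement is a typo (and the application in Theorem~\ref{thm_finiteness_co-covers} is consistent with $w\le v$).
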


\begin{proof}
Let $\beta\in \Phi_+$ and $n\in \Z$ be such that $s_{\beta[n]} \qp^\lambda v<\qp^\lambda v$, note that $s_{\beta[n]} \qp^\lambda v=\qp^{\lambda+n\beta^\vee} s_\beta v$. Then by Lemma~\cite[Lemma 1.7]{philippe2023grading} we have $\lambda+n\beta^\vee\in [\lambda,s_\beta.\lambda]=\{\lambda\}$ and hence $n=0$.

    As $\lambda\in Y_{in}$, we have $W^\lambda=\{1\}$. Now $s_\beta \qp^\lambda w=\qp^\lambda s_\beta w< \qp^\lambda w$ if and only if $\ell^a(\qp^\lambda s_\beta w)<\ell^a(\qp^\lambda w)$ if and only if $\ell(s_\beta w)< \ell(w)$ if and only if $s_\beta w< w$ for the Bruhat order on $W^v$. Lemma follows. 
\end{proof}

\begin{Theorem}\label{thm_finiteness_co-covers}
    Suppose that $\by\in W^+$ is spherical, or that $\pr^{Y^+}(\by)\in Y_{in}$. Then it admits a finite number of co-covers.
\end{Theorem}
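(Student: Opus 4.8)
The strategy is to reduce the counting of co-covers to two finitely-many-choices problems, paralleling the proof of Theorem~\ref{Theorem : finiteness of covers} but now analysing the relations $\bx \lhd \by$ from the \emph{upper} end. Fix $\by \in W^+$ and write $\by = \qp^{u\mu}w'$ with $\mu = \pr^{Y^{++}}(\by)$ and $u \in W^{\mu}$ of minimal length. If $\bx \lhd \by$, then by Lemma~\ref{Lemma : form of covers} applied to the pair $(\bx,\by)$ there are $\lambda \in Y^{++}$, $v \in W^{\lambda}$, $\beta \in \Phi_+$ and $n \in \{0,\langle\lambda,\beta\rangle,-1,\langle\lambda,\beta\rangle+1\}$ with $\bx = \qp^{v\lambda}w$ and $\by = s_{v(\beta)[n]}\bx$. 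The key point is that $\lambda$, $v$, $w$ are determined by $\bx$, so I must bound the number of $\bx$'s producing the \emph{given} $\by$. I separate according to whether $\pr^{Y^{++}}(\bx) = \pr^{Y^{++}}(\by)$ or not.

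\textbf{Case 1: equal dominant parts.} Here $\lambda = \mu$, and the two subcases $n \in \{0,\langle\lambda,\beta\rangle\}$ of Lemma~\ref{Lemma : form of covers} give, via items i) and ii), that either $v^{-1}w \lhd s_\beta v^{-1}w = u^{-1}w'$ in $W^v$ with $v = u$, or $vs_\beta \lhd v = $ (something determined by $\by$) with $v^{-1}w$ also determined. In either subcase $\bx$ is recovered from a \emph{co-cover} of a fixed element of $W^v$ (namely $u^{-1}w'$, resp. $u$) together with the fixed datum of $\by$. Since $W^v$ is a Coxeter group of finite rank, every element has finitely many co-covers, so this case contributes finitely many $\bx$. (This is exactly symmetric to the corresponding case in Theorem~\ref{Theorem : finiteness of covers}, with ``covers'' and ``co-covers'' in $W^v$ interchanged.)

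\textbf{Case 2: unequal dominant parts.} Here $n \in \{-1,\langle\lambda,\beta\rangle+1\}$, $\beta$ is a quantum root, and $\pr^{Y^{++}}(\bx) = \lambda = s_\beta\mu - (n \text{ or } \ldots)$—more precisely, $\mu = \pr^{Y^{++}}(\by)$ equals $(s_\beta\lambda + n\beta^\vee)^{++}$. The set $\cQ(\Phi_+)$ of quantum roots is finite by Theorem~\ref{t_finiteness_almost_simple_roots}, so there are finitely many choices of $\beta$; for each, finitely many choices of $n$ (at most two once $\langle\lambda,\beta\rangle$ is pinned down). The genuinely new difficulty, and where the \textbf{spherical hypothesis} enters, is bounding the number of $\lambda \in Y^{++}$ with $(s_\beta\lambda + n\beta^\vee)^{++} = \mu$: a priori infinitely many $\lambda$ could map to $\mu$ under $\lambda \mapsto (s_\beta\lambda+n\beta^\vee)^{++}$ since the $W^v$-orbit of $\mu$ is infinite. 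The plan is: from $\by = s_{v(\beta)[n]}\bx$ one gets $\pr^{Y^+}(\by) = v(s_\beta\lambda+n\beta^\vee)$ lies in the $W^v$-orbit of $\mu$ and equals $v u'.\mu$ for the element $u'$ of item iii), with $\ell(vu') = \ell(v)+\ell(u')$; hence $\pr^{Y^+}(\bx) = v\lambda$ and $v\lambda^{++}$-type data must satisfy $\lambda = s_\beta(\text{coweight in }W^v.\mu) - n\beta^\vee \in Y^{++}$. Because $\mu$ is spherical, $W_\mu$ is finite, so $W^v.\mu$ meets any affine subspace of the form $\beta^\vee$-line translate in finitely many points; concretely, $\langle \lambda,\beta\rangle$ is forced (it determines $n$ and conversely), and then $\lambda$ lies in the finite set $\{s_\beta\nu - n\beta^\vee : \nu \in W^v.\mu,\ \langle\nu,\beta\rangle = -\langle\mu,\beta\rangle \text{ appropriately}\} \cap Y^{++}$—the spherical condition guarantees this intersection is finite by a standard argument on the fibres of $\nu \mapsto \langle\nu,\beta\rangle$ restricted to a single $W^v$-orbit (the level sets are finite unions of $W_\mu$-translates, and one controls $\nu$ by its pairing against the simple roots together with $\beta$). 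Once $\lambda$, $\beta$, $n$ are fixed, $v$ ranges over minimal coset representatives subject to $vu'.\mu = \pr^{Y^+}(\by)$ being the fixed value, which pins $v$ up to $W_\mu$, again a finite ambiguity; and $w$ is then determined by the linear part of $\by$.

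The case $\pr^{Y^+}(\by) \in Y_{in}$ is easier and handled separately: there $\lambda \in Y_{in}$ forces $\mu$ central, all quantum-root covers are excluded (the dominant part cannot change), and Lemma~\ref{l_characterization_bruhat_order_Yin} reduces co-covers of $\by = \qp^\lambda v$ to co-covers of $v$ in $(W^v,<)$, of which there are finitely many. \textbf{The main obstacle} I anticipate is precisely the finiteness of $\{\lambda \in Y^{++} : (s_\beta\lambda+n\beta^\vee)^{++} = \mu\}$ in Case 2: making rigorous that sphericity of $\mu$ (equivalently of $\by$) is exactly what prevents an infinite ``tube'' of dominant coweights $\lambda$ collapsing onto $\mu$, and conversely (as the paper notes in Proposition~\ref{Proposition: nonspherical infinite cocovers}) that without sphericity this genuinely fails.
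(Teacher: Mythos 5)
Your skeleton matches the paper's: split the co-covers of $\by=\qp^\mu w'$ by whether $\pr^{Y^{++}}$ changes, reduce the unchanged-part case to finiteness of co-/covers in the Coxeter group $W^v$, and invoke Theorem~\ref{t_finiteness_almost_simple_roots} for the changed-part case. But two claims need repair.

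\textbf{Case~1 is not a mere covers/co-covers swap of Theorem~\ref{Theorem : finiteness of covers}, and sphericity is already needed here.} In the subcase $n=0\neq\langle\mu^{++},\beta\rangle$ of item~ii), one has $\pr^{Y^+}(\bx)=v\mu^{++}$ and $\pr^{Y^+}(\by)=vs_\beta\mu^{++}$, hence $vs_\beta\in v^\mu W_{\mu^{++}}$ and $v$ \emph{covers} $vs_\beta$. So $vs_\beta$ is not a fixed element of $W^v$ (and is not $u$ in general) but ranges over the whole coset $v^\mu W_{\mu^{++}}$; one needs $W_{\mu^{++}}$ finite — i.e.\ $\mu$ spherical — to make this set, and hence the set of admissible $(v,\beta)$, finite. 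Your ``co-cover of $u$'' description and your remark ``exactly symmetric'' silently omit this use of the hypothesis.

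\textbf{Case~2 has a genuine gap in the proposed bound on $\lambda$.} You try to show directly that $\{\lambda\in Y^{++}:(s_\beta\lambda+n\beta^\vee)^{++}=\mu\}$ is finite by arguing the fibres of $\nu\mapsto\langle\nu,\beta\rangle$ on the orbit $W^v.\mu$ are finite, glossed as ``finite unions of $W_\mu$-translates.'' That gloss isn't a statement that makes sense (the stabilizer $W_{\mu^{++}}$ fixes the orbit pointwise), and the actual finiteness claim — that $\{\gamma\in\Phi:\langle\mu,\gamma\rangle=c\}$ is finite for $\mu$ spherical — is nontrivial (in the non-symmetrizable case there is no invariant form to lean on, and even in the symmetrizable case it requires an argument). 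The paper sidesteps it entirely: item~iii) of Lemma~\ref{Lemma : form of covers} gives an element $u$ with $\pr^{Y^+}(\by)=vu.\mu^{++}$ and $\ell(vu)=\ell(v)+\ell(u)$, so $vu\in v^\mu W_{\mu^{++}}$ and therefore $\ell(v)\leq\ell(vu)\leq L:=\max\{\ell(\tilde v):\tilde v\in v^\mu W_{\mu^{++}}\}$, which is finite by sphericity. This bounds $v$ in a finite ball of $W^v$; combined with finiteness of $\cQ(\Phi_+)$ for $\beta$ and the two possible $n$, the coweight $\lambda$ and then $\tilde w=s_{v(\beta)}w'$ are determined. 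You in fact record the identity $vu'.\mu^{++}=\pr^{Y^+}(\by)$ at the very end of your sketch — the shortcut is to extract $\ell(v)\leq L$ from it immediately and drop the orbit-geometry detour.

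The reduction of the $Y_{in}$ case via Lemma~\ref{l_characterization_bruhat_order_Yin} is correct.
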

\begin{proof}
    Suppose that $\by$ is spherical, let us write $\by=\qp^\mu w$ with $\mu\in Y^+_{sph}$. Let $C(\by)$ denote the set of co-covers of $\by$ in $W^+$. We have $C(\by)=C_1(\by)\sqcup C_2(\by)$, where $C_1(\by)=\{\bx \in C(\by)\mid \pr^{Y^{++}}(\bx)=\pr^{Y^{++}}(\by)=\mu^{++}\}$ and $C_2(\by)=C(\by)\setminus C_1(\by)$. We show separately that both of these sets are finite. Recall that $v^\mu$ denotes the minimal element of $\{v\in W^v \mid \mu=v(\mu^{++})\}$.
    \begin{itemize}
        \item Suppose that $\bx \in C_1(\by)$, then by Lemma \ref{Lemma : form of covers}, we can write $\bx=\qp^{v(\mu^{++})}\tilde w$ and $\by=s_{v(\beta)[n]}(\bx)$ for some $\beta \in \Phi_+$, $v\in W^{\mu^{++}},\tilde w\in W^v$ and $n\in \{0,\langle \mu^{++},\beta\rangle\}$. The element $\tilde w$ is entirely determined by $(v,\beta)$ since $\tilde w =s_{v(\beta)}w$. So we need to prove that there is a finite number of viable choices for $(v,\beta)$. We have:
        \begin{itemize}
            \item If $n=0\neq \langle  \mu^{++},\beta\rangle$, then $\mu=vs_\beta(\mu^{++})$, so $vs_\beta$ lies in $v^\mu.W_{\mu^{++}}$, where $W_{\mu^{++}}$ is the stabilizer of $\mu^{++}$ in $W^v$. By assumption, $\mu$ is spherical so $W_{\mu^{++}}$ is finite, and so is $v^\mu.W_{\mu^{++}}$. Moreover, by item $ii)$ of Lemma \ref{Lemma : form of covers}, $v$ covers $vs_\beta$. Since $W^v$ is a Coxeter group of finite rank, any element of $v^\mu W_{\mu^{++}}$ admits a finite number of covers, hence the pair $(v,\beta)$ can only take a finite number of values.
            \item If $n\neq 0$, then $\bx$ and $\by$ have the same co-weight $\mu$, so $v=v^\mu$. Moreover, $\tilde w = s_{v(\beta)}w=vs_\beta v^{-1} w$ and therefore by item $i)$ of Lemma \ref{Lemma : form of covers}, $v^{-1}w=s_\beta v^{-1}\tilde w$ covers $v^{-1}\tilde w = s_\beta v^{-1} w$. So the number of viable choices of $\beta$ is equal to the number of co-covers of $v^{-1}w$, which is finite.
        \end{itemize}
        Either way, we thus have shown that $C_1(\by)$ is finite, let us now turn to $C_2(\by)$.
        \item Suppose that $\bx \in C_2(\by)$, by Lemma \ref{Lemma : form of covers} we can write $\bx=\qp^{v\lambda}\tilde w$ with $\lambda \in Y^{++}, \,\tilde w\in W^v, \,v\in W^\lambda$ and $\by=s_{v(\beta)[n]}\bx$ for $\beta \in \Phi_+$ and $n\in \{-1,\langle \lambda,\beta\rangle+1\}$. The coweight $\lambda$ is either equal to $v^{-1}\mu-\beta^\vee$ (if $n\neq1$) or to $s_\beta v^{-1}\mu -\beta^\vee$ (if $n=-1$). On top of that, $\tilde w=s_{v(\beta)}w$ and thus it suffices to show that the set of pairs $(v,\beta)$ such that $\by\in \{\qp^{v(\lambda+\beta^\vee)}w,\qp^{vs_\beta(\lambda+\beta^\vee)}w\}$ and is a cover of $\qp^{v\lambda}s_{v(\beta)}w$ (for one of these two choices of $\lambda$) is finite. Note that, since $\mu$ is spherical, the set $\{\tilde v \in W^v\mid \mu=\tilde v \mu^{++}\}=v^\mu.W_{\mu^{++}}$ is finite so we can define $L=\max\{\ell(\tilde v)\mid \tilde v\in W^v,\tilde v\mu^{++}=\mu\}$. Then by item $iii)$ of Lemma \ref{Lemma : form of covers}, we have $\ell(v)<L$, and $\beta$ is a quantum root. Therefore, since $\{v\in W^v\mid \ell(v)<L\}$ is finite and the set of quantum roots is finite as well by Theorem~\ref{t_finiteness_almost_simple_roots}, we deduce that $C_2(\by)$ is finite.
    \end{itemize}
    If $\by\in W^+$ is such that $\pr^{Y^+}(\by)\in Y_{in}=\bigcap_{i\in I}\ker(\alpha_i)$, then by Lemma~\ref{l_characterization_bruhat_order_Yin} $\pr^{Y^+}(\bx)=\pr^{Y^+}(\by)$ for all $\bx \in W^+$ such that $\bx<\by$, and  there is as many co-covers of $\by$ as co-covers of $w$ in $W^v$, hence a finite number. 
\end{proof}

\begin{Remark}
    When $Y$ is associated with an affine Kac-Moody matrix, then every element of $Y^+$ is either spherical or in $Y_{in}$ (by \cite[Proposition 5.8 b)]{kac1994infinite}). Thus the theorem above generalizes \cite[Corollary 24]{welch2022classification}.
\end{Remark}

Recall that $A=(\alpha_j(\alpha_i^\vee))_{i,j\in I}$ is the Kac-Moody matrix of the root datum $\cD$. Following \cite[1]{kac1994infinite}, we say that the matrix $A$ is \textbf{indecomposable} if for every non-empty subsets $J,K\subset I$ such that $J\sqcup K=I$, there exists $(j,k)\in J\times K$ such that  $\alpha_j(\alpha_k^\vee)\neq 0$.

 \begin{Lemma}\label{l_reciprocal_thm_cocovers}
     Assume that $A$ is indecomposable  and that there exists $\mu\in Y^{++}$ such that $W_{\mu}$ is infinite and $W^v$ does not fix $\mu$. Then there exists $\bx\in W^+$ admitting   infinitely many co-covers.
 \end{Lemma}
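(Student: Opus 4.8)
The goal is to produce an element $\bx = \qp^{v\lambda} w \in W^+$ with infinitely many co-covers. The strategy is to exploit item $iii)$ of Lemma~\ref{Lemma : form of covers}: a co-cover $\bx$ of some spherical-looking $\by$ arising from $n\in\{-1,\langle\lambda,\beta\rangle+1\}$ has $\pr^{Y^+}(\bx) = v u.\pr^{Y^{++}}(\bx)$ with $\ell(vu)=\ell(v)+\ell(u)$, so $v\le vu$. The finiteness argument in Theorem~\ref{thm_finiteness_co-covers} broke down precisely because the set $\{\tilde v\mid \tilde v\,\mu^{++}=\mu\}$ could be infinite; so conversely, I want to build an explicit infinite family going in the "co-cover" direction. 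Concretely, I would choose $\by = \qp^{\mu} 1$ for a well-chosen $\mu\in Y^{++}$ with $W_\mu$ infinite, and construct infinitely many $\bx_k$ with $\bx_k \lhd \by$, each with a distinct dominant coweight $\lambda_k = v_k^{-1}\mu - \beta^\vee$ (or $s_\beta v_k^{-1}\mu - \beta^\vee$), where $v_k$ ranges over an infinite subset of $v^\mu W_\mu$ and $\beta$ is a fixed quantum root — in fact a simple root $\alpha_i$, which is always quantum.

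First I would use the hypothesis that $A$ is indecomposable and $W^v$ does not fix $\mu$ to locate, via the decomposition $I = J \sqcup K$ with $J = \{i \in I\mid \langle\mu,\alpha_i\rangle = 0\}$ (so $W_\mu = W_J$, infinite) and $K = I\setminus J$ nonempty, an index $k_0\in K$ and an index $j_0\in J$ with $\langle\alpha_{j_0}^\vee,\alpha_{k_0}\rangle \neq 0$; indecomposability guarantees $J$ and $K$ are "linked". Since $W_J$ is infinite, $W_J\{j_0\}$ — equivalently the orbit of $\alpha_{j_0}$ under $W_J$, or the set of elements $v^\mu z$ for $z\in W_J$ — is infinite. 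The plan is then: for suitable $z \in W_J$, set $v_z = v^\mu z\in W^{\mu^{++}}$ and consider whether $\bx_z := s_{v_z(\alpha_{k_0})[n]}^{-1} \by$ lies in $W^+$ and is a genuine co-cover of $\by$, for an appropriate $n$. The key checks are: (a) $\bx_z^{-1}(v_z(\alpha_{k_0})[n]) \notin \Phi^a_+$ so that $\bx_z < \by$; (b) the covering relation $\bx_z \lhd \by$, which by \eqref{e_characterization_cover} amounts to $\ell^a(\by) = \ell^a(\bx_z) + 1$; and (c) that distinct $z$ give distinct $\bx_z$, which will follow because their dominant coweights $\lambda_z$ differ (as $v_z^{-1}\mu$ does not depend on $z$ but $\beta^\vee$ is subtracted after applying $v_z^{-1}$, or more carefully because the $W^v$-parts differ).

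For the length bookkeeping I would compute $\ell^a(\bx_z) = 2\htt(\lambda_z^{++}) + \ell_{v^{\lambda_z}}(w_z)$ using \eqref{eq: affine length formula} and compare with $\ell^a(\by) = 2\htt(\mu^{++})$, choosing $n$ (namely $n=-1$ or $n = \langle\lambda_z,\alpha_{k_0}\rangle+1$) and the data $\lambda_z$ so that the difference is exactly $1$; Lemma~\ref{Lemma : form of covers} item $iii)$ together with the explicit formulas there essentially packages this, so I would phrase the construction as "reverse-engineering" a cover: start from $\by$, a quantum root ($=\alpha_{k_0}$ after translating by $v_z$), and an element $u\in W_J$ with $\ell(v_z u) = \ell(v_z)+\ell(u)$, and invoke the explicit cover-construction promised at the end of Section~\ref{s_almost_simple_roots_covers} (Proposition~\ref{p_almost_simple_roots_covers}) to realize $\by$ as covering such an $\bx_z$. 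Then infinitude of $W_J$ gives infinitely many $u$, hence infinitely many $\bx_z$, provided distinct $u$ yield distinct $\bx_z$ — which I would check by reading off $\pr^{Y^+}(\bx_z)$ and using that the map $u\mapsto v_z u.\mu^{++}$ separates points when $\ell(v_z u)=\ell(v_z)+\ell(u)$.

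\textbf{Main obstacle.} The delicate point is arranging the length increment to be exactly $1$ and simultaneously keeping $\bx_z$ in $W^+$ (i.e. $\pr^{Y^+}(\bx_z)\in Y^+$) for infinitely many $z$; the constraint $n\in\{-1,\langle\lambda,\beta\rangle+1\}$ forces $\langle\lambda,\beta\rangle$ to be small, so I must choose $\mu$ and the quantum root carefully — very likely $\beta$ a simple root $\alpha_{k_0}$ with $k_0$ adjacent to the infinite $J$-component — and verify that subtracting $\alpha_{k_0}^\vee$ from $v_z^{-1}\mu$ (or its $s_\beta$-image) stays in the Tits cone. I expect this to reduce, via Lemma~\ref{Lemma : form of covers} and the cover-construction of Section~\ref{s_almost_simple_roots_covers}, to a finite combinatorial check about the rank-two (or rank-three) subsystem spanned by $\alpha_{k_0}$ and a neighbouring simple root in $J$, after which the infinitude is immediate from $|W_J| = \infty$.
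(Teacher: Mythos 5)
Your general direction is right — push a simple root $\alpha_i$ lying outside the infinite parabolic but linked to it, and let an infinite set in the stabilizer act — but there are several gaps that the paper's proof fills in ways your outline doesn't quite reach.

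First, the paper does not work with the given $\mu$ directly. It replaces it: let $J=\{i\mid \alpha_i(\mu)=0\}$, pick a \emph{minimal} infinite parabolic $J'\subset J$ (so $W_{J'}$ is infinite but every $W_{J''}$ with $J''\subsetneq J'$ is finite), and then build a fresh $\nu$ with $\alpha_j(\nu)=3$ off $J'$ and $\alpha_j(\nu)=0$ on $J'$, so $W_\nu=W_{J'}$. This minimality is the step you omit, and it is not cosmetic: setting $\lambda=\nu-\alpha_i^\vee$ (where $i\notin J'$ is linked to some $j\in J'$ via indecomposability), the stabilizer $W_\lambda$ is contained in $W_{J'\setminus\{j\}}$, hence finite — and \emph{that} is what makes the orbit $W_\nu.\lambda$ infinite and hence the family $\{\bx_w\}$ infinite. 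Without the minimality of $J'$, $W_\lambda$ could still be infinite and distinct $w\in W_\nu$ could collapse to the same coweight, killing the count. The value $3$ (not $1$) in the definition of $\nu$ is also what ensures $\lambda$ stays strictly dominant in the directions off $J'$.

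Second, you flag the delicate point yourself — forcing $\ell^a(\by)-\ell^a(\bx_z)$ to equal exactly $1$ for infinitely many $z$ — and your fallback is Proposition~\ref{p_almost_simple_roots_covers}. That proposition requires $\lambda$ and $\lambda+\beta^\vee$ to be \emph{regular} dominant, which is exactly what fails here since $\mu$ (and $\nu$) have large stabilizers, so it cannot be invoked. The paper instead sidesteps the exactness issue: after showing $s_{\beta[\langle w.\lambda,\beta\rangle+1]}\bx_w=\qp^\nu$ and computing $\ell^a(\bx_w)\in\{\ell^a(\qp^\nu)-1,\ell^a(\qp^\nu)-3\}$, it does not claim each $\bx_w$ is a direct co-cover of $\qp^\nu$. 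Rather, it observes that infinitely many $\bx_w$ lie in $C'(C'(C'(\qp^\nu)))$ (iterated co-covers plus the element itself), so by pigeonhole \emph{some} element has infinitely many co-covers. This is weaker than your plan (it does not explicitly identify the element with infinitely many co-covers as $\qp^\nu$), but it is exactly what the lemma demands, and it avoids the length-exactness difficulty you identify as the main obstacle. Your "finite rank-two check" heuristic does not resolve this: the bound on the length gap is global, coming from the formula $\ell^a(\bx_w)=2\htt(\lambda)+\ell_{w}(wr_iw^{-1})$, not from a local root-subsystem computation.
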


 \begin{proof}
 Let $J=\{i\in I\mid \alpha_i(\mu)=0\}$. Then $J\subsetneq I$. Let $J'\subset J$ be such that $W_{J'}$ is infinite and  for every $J''\subsetneq J'$, $W_{J''}$ is finite. Let $\nu\in Y$ be such that 
 $\alpha_j(\nu)=3$ for every $j\in I\setminus J'$ and $\alpha_j(\nu)=0$ for every $j\in J'$. Then $\nu\in  Y^{++}$ and $W_\nu=W_{J'}$.

 As $A$ is indecomposable, there exists $(i,j)\in (I\setminus J')\times J'$ such that $\alpha_i(\alpha_j^\vee)\neq 0$.  Set $\lambda=\nu-\alpha_i^\vee\in ]\nu,r_i.\nu[$. Then $\alpha_j(\lambda)=\alpha_j(\nu)-\alpha_j(\alpha_i^\vee)=-\alpha_j(\alpha_i^\vee)>0$, $\alpha_i(\lambda)=\alpha_i(\nu)-2>0$ and for every $i'\in I\setminus \{j\}$, we have $\alpha_{i'}(\lambda)=\alpha_{i'}(\nu)-\alpha_{i'}(\alpha_j^\vee)\geq \alpha_{i'}(\nu)\geq 0$. Therefore $\lambda\in Y^{++}$ and  $\{k\in I\mid \alpha_k(\lambda)=0\}\subset J'\setminus\{j\}$. Thus $W_{\lambda}\subsetneq W_\nu$ and by assumption on $J'$, $W_{\lambda}$ is finite.

     Let $w\in W_\nu\cap W^\lambda$ (we prove below that this set is non-empty).  Set $\beta=w.\alpha_i$ and $\bx_w=\qp^{w.\lambda}s_{\beta}$.  We have: \[s_{\beta[\langle w.\lambda,\beta\rangle+1]}\bx_{w}=\qp^{(\langle w.\lambda,\beta\rangle+1)\beta^\vee} \qp^{s_\beta w.\lambda}=\qp^\nu\] and thus $\bx_w<\qp^\nu$ if and only if $\ell^a(\bx_w)<\ell^a (\qp^\nu)$. Moreover  we have: 
     \[\begin{aligned} \ell^a(\bx_w)=&2\htt(\lambda)+\ell_{w}(s_{w.\alpha_i})\\ 
     &=2(\htt(\nu)-1)+\ell(w^{-1} wr_iw^{-1})-\ell(w)\\
     &=2\ell^a(\qp^\nu)-2+\ell(r_iw^{-1})-\ell(w).\end{aligned}\]

     Therefore  \begin{equation}\label{e_difference_length}
         \ell^a(\bx_w)\in \{\ell^a(\qp^\nu)-1,\ell^a(\qp^{\nu})-3\},
     \end{equation}
for every $w\in W^\lambda\cap W_\nu$.

Let $w\in W_\nu$. Then we can write uniquely $w=vu$, with $v\in W^\lambda$, $u\in W_\lambda$. But then $u\in W_\nu$ and hence $v\in W^\lambda\cap W_\nu$. Therefore $W_\nu.\lambda=(W_\nu\cap W^\lambda).\lambda$. As $W_\lambda$ is finite,  $W_\nu.\lambda$ is infinite and thus $(W^\lambda\cap W_\nu).\lambda$  and $\{\bx_w\mid w\in W^\lambda\cap W_\nu\}$ are  infinite.

 For $\by\in W^+$ denote by $C'(\by)$ the union of the set of co-covers of $\by$ with $\{\by\}$. By compatibility of $\ell^a$ with $<$  and by \eqref{e_difference_length},  for every $w\in W^\nu\cap W^\lambda$, we have $\bx_w\in C'(C'(C'(\qp^\nu)))$. Therefore \[C'(C'(C'(\qp^\nu)))\supset \{\bx_w\mid w\in W^\lambda\cap W_\nu\},\]  and the right hand side is infinite. Consequently there exists $\bx\in W^+$ for which $C'(\bx)$ is infinite.
 \end{proof}

\begin{Example}
    Assume that $I$ has cardinality $3$ and write $I=\{i,j,k\}$. Assume that $\alpha_m(\alpha_{m'}^\vee)\alpha_{m'}(\alpha_{m}^\vee)\geq 4$ for every $m,m'\in \{i,j,k\}$. By \cite[1.3.21 Proposition]{kumar2002kac}, this implies that $\langle r_i,r_j\rangle$, $\langle r_j,r_k\rangle$ and $\langle r_i,r_k\rangle$ are infinite. Take $\nu\in Y$ such that $\alpha_i(\nu)\geq 3$, $\alpha_j(\nu)=\alpha_k(\nu)=0$. Set $\lambda=\nu-\alpha_i^\vee\in Y^{++}$. Then $W_\lambda=\{1\}$ and with the same proof as in Lemma~\ref{l_reciprocal_thm_cocovers}, we can prove that $\qp^{w.\lambda}wr_iw^{-1}$ is covered by $\qp^\nu$ for every $w\in \langle r_j,r_k\rangle$.
\end{Example}

Let $\A=Y\otimes \R$\index{a@$\A$} be the \textbf{standard apartment}. The elements of $\Phi$ can naturally be regarded as elements of the dual $\A^*$ of $\A$. The Tits cone $\sT$\index{t@$\sT$} is then $W^v.\overline{C^v_f}$, where  $\overline{C^v_f}=\{x\in \A \mid \langle \lambda,\alpha_i\rangle\geq 0,\forall i\in I\}$.

\begin{Proposition}\label{Proposition: nonspherical infinite cocovers}
We assume that $(\alpha_i(\alpha_j^\vee))_{i,j\in I}$ is indecomposable. Let $\A_{in}=\bigcap_{i\in I} \ker(\alpha_i)$.
The following properties are equivalent:
\begin{enumerate}
    \item The set $\sT\setminus \A_{in}$ is open.

     \item Every proper parabolic subgroup of $W^v$ is finite.

    \item For every $\bx\in W^+$, the set of co-covers of $\bx$ is finite.

    \end{enumerate}
\end{Proposition}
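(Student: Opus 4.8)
The strategy is to prove the chain of implications $(1)\Rightarrow(2)\Rightarrow(3)\Rightarrow(1)$, or possibly to prove $(1)\Leftrightarrow(2)$ directly and then close the loop with $(2)\Rightarrow(3)$ and $\neg(2)\Rightarrow\neg(3)$. The last of these is exactly Lemma~\ref{l_reciprocal_thm_cocovers}: if some proper parabolic $W_{J'}$ is infinite, then indecomposability of $A$ lets us build a dominant coweight $\mu$ with $W_\mu$ infinite and $W^v.\mu\neq\{\mu\}$ (take $\nu$ with $\alpha_j(\nu)=0$ for $j\in J'$ and $\alpha_j(\nu)>0$ elsewhere; since $A$ is indecomposable and $J'\subsetneq I$, $W^v$ cannot fix $\nu$), and then Lemma~\ref{l_reciprocal_thm_cocovers} produces an element of $W^+$ with infinitely many co-covers. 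So $\neg(2)\Rightarrow\neg(3)$, i.e.\ $(3)\Rightarrow(2)$. The implication $(2)\Rightarrow(3)$ is essentially Theorem~\ref{thm_finiteness_co-covers}: if every proper parabolic subgroup of $W^v$ is finite, then for every $\lambda\in Y^{++}$ the fixator $W_\lambda=W_J$ (with $J=\{i\mid\alpha_i(\lambda)=0\}$) is either finite (if $J\subsetneq I$) or all of $W^v$ (if $J=I$, i.e.\ $\lambda\in\A_{in}\cap Y$); in the first case $\lambda$ is spherical, in the second $\pr^{Y^+}(\bx)\in Y_{in}$. Theorem~\ref{thm_finiteness_co-covers} covers both cases, so every $\bx\in W^+$ has finitely many co-covers.

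The remaining equivalence is the purely combinatorial/geometric statement $(1)\Leftrightarrow(2)$ about the Tits cone. For $(2)\Rightarrow(1)$: suppose every proper parabolic is finite, and let $x\in\sT\setminus\A_{in}$. Write $x=w.y$ with $y\in\overline{C^v_f}$, and let $J=\{i\in I\mid\alpha_i(y)=0\}$; then the stabilizer of $y$ is $W_J$, and since $x\notin\A_{in}$ we have $y\notin\A_{in}$, so $J\subsetneq I$, hence $W_J$ is finite. It then suffices to show a neighbourhood of $y$ stays in $\sT$: indeed $\overline{C^v_f}$ is a closed convex cone, $y$ lies on exactly the walls indexed by $J$, and one checks (using finiteness of $W_J$, so that only finitely many reflections move $y$, and the local structure of the Tits cone near a point with finite stabilizer — which is the same as in a finite root subsystem) that a small ball around $y$ is contained in $\bigcup_{w'\in W_J}w'.\overline{C^v_f}\subset\sT$. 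Translating by $w$ gives a neighbourhood of $x$ in $\sT$. Conversely, for $(1)\Rightarrow(2)$ I would argue by contraposition: if some proper parabolic $W_J$ ($J\subsetneq I$) is infinite, pick $y\in\overline{C^v_f}$ with $\{i\mid\alpha_i(y)=0\}=J$ and $y\notin\A_{in}$ (possible since $J\subsetneq I$); then $y\in\sT\setminus\A_{in}$, but $y$ lies on the boundary walls of the infinite cell complex $\bigcup_{w\in W_J}w.\overline{C^v_f}$ in a way that accumulates — more precisely, the point $y$ is a limit of points $y-\epsilon\alpha_i^\vee$ (for suitable $i\in J$) which, because $W_J$ is infinite, eventually leave $\sT$ (this is the standard fact that the Tits cone of an infinite Coxeter group is not a neighbourhood of the origin-facing boundary faces); hence $\sT\setminus\A_{in}$ is not open at $y$.

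\textbf{Main obstacle.} The delicate point is $(1)\Leftrightarrow(2)$, and within it the direction $(1)\Rightarrow(2)$ (equivalently $\neg(2)\Rightarrow\neg(1)$): one must show that when $W_J$ is infinite, the point $y$ with support exactly $J$ genuinely sits on the ``bad'' part of the boundary of $\sT$, i.e.\ that arbitrarily close to $y$ there are points outside $\sT$. This requires the classical description of the Tits cone (e.g.\ Kumar \cite[1.4.2]{kumar2002kac} or Kac): $\sT$ together with its closure, and the fact that a point of $\overline{C^v_f}$ has a neighbourhood in $\sT$ if and only if its stabilizer (the parabolic generated by the walls through it) is finite. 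Modulo citing that fact, the argument is short; the work is in setting up the right $y$ using indecomposability so that its wall-support is a prescribed $J$ with $W_J$ infinite while keeping $y\notin\A_{in}$, and in checking the elementary case analysis in $(2)\Rightarrow(3)$ that every $\lambda\in Y^{++}$ falls into the ``spherical'' or the ``$Y_{in}$'' bucket. I expect the rest to be routine given Theorem~\ref{thm_finiteness_co-covers} and Lemma~\ref{l_reciprocal_thm_cocovers}.
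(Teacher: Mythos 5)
Your proof is correct and uses the same ingredients as the paper's: Kac's criterion \cite[Proposition~3.12~f)]{kac1994infinite} identifying $\mathring{\sT}$ with the set of points having finite stabilizer, Theorem~\ref{thm_finiteness_co-covers}, and Lemma~\ref{l_reciprocal_thm_cocovers}. The routing differs only cosmetically — the paper proves $(1)\Rightarrow(2)$ directly (given $(1)$, a test coweight with prescribed zero-support lands in the open set $\sT\setminus\A_{in}\subset\mathring{\sT}$, so its stabilizer is finite by Kac) rather than by contraposition as you do, and it proves $(3)\Rightarrow(1)$ rather than your $(3)\Rightarrow(2)$, but both closings of the cycle appeal to Lemma~\ref{l_reciprocal_thm_cocovers} in the same way.
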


\begin{proof}
(1) $\Rightarrow$ (2).  Assume (1). Let $J$ be a proper subset of $I$. Let $\lambda\in Y$ be such that $\alpha_j(\lambda)=0$ for all $j\in J$ and $\alpha_j(\lambda)>0$ for all $j\in I\setminus J$. Then $\lambda\in \overline{C^v_f}\setminus \A_{in}\subset \sT\setminus \A_{in}\subset \mathring{\sT}$. Then $W_\lambda=W_J$ and by \cite[Proposition 3.12 f)]{kac1994infinite}, $W_J$ is finite.

(2) $\Rightarrow$ (1). Assume (2). Let $x\in \sT\setminus \A_{in}$. We can write $x=w.y$, for some $w\in W^v$ and $y\in \overline{C^v_f}\setminus \A_{in}$. Let $J=\{i\in I\mid \alpha_i(y)=0\}$. Then $J\subsetneq I$ and thus $W_y=W_J$ is finite. By \cite[Proposition 3.12 f)]{kac1994infinite}, there exists an open neighbourhood $V$ of $y$ contained in $\sT$. Then the fixator of every element of $V$ is finite and thus $V\subset \sT\setminus \A_{in}$. Then $x\in w^{-1}.V\subset \sT\setminus \A_{in}$, which proves that $\sT\setminus \A_{in}$ is open. 

(3) $\Rightarrow$ (1) Assume (3). Then by Lemma~\ref{l_reciprocal_thm_cocovers}, for every $\mu\in Y^+$, we have either $W_\mu$ is finite or $W_\mu=W^v$. Let $x\in \sT$. Write $x=w.y$, with $w\in W^v$ and $y\in \overline{C^v_f}$. Let $J=\{i\in I\mid \alpha_i(y)=0\}$. Let $\mu\in Y$ be such that for $i\in I$, $\alpha_i(\mu)=1$ if $i\in I\setminus J$ and $\alpha_i(\mu)=0$ if $i\in J$. Then $W_\mu=W_x$. If $W_x=W^v$, then $x\in \A_{in}$ and if $W_x$ is finite, then $x\in \mathring{\sT}$ by \cite[Proposition 3.12 f)]{kac1994infinite}, which proves that $\sT=\A_{in}\sqcup \mathring{\sT}$. Thus we have 1).

(1) $\Rightarrow$ (3) is a consequence of Theorem~\ref{thm_finiteness_co-covers}.
\end{proof}
 
\subsection{Explicit covers for a given quantum root}
In this section, we construct, for any quantum root, explicit covers where it appears, hence the the restriction on $\beta\in \Phi_+$ appearing  in Lemma~\ref{Lemma : form of covers} is optimal. 
\begin{Proposition}\label{p_almost_simple_roots_covers}
    Let $\beta$ be a quantum root, and let $\lambda \in Y^{++}_{reg}$ be such that $\lambda+\beta^\vee \in Y^{++}_{reg}$ (explicitly, $\langle \lambda,\alpha_i\rangle > -\langle \beta^\vee,\alpha_i\rangle$ for all $i\in I$). Then:
    \begin{enumerate}
       
        \item Let $v$ be any element of $W^v$ and let $w\in W^v$ be such that $\ell(s_\beta w)=\ell(s_\beta)+\ell(w)$ (in particular this is always verified for $w=1_{W^v}$). Then $\qp^{v(\lambda+\beta^\vee)}vw$ covers $\qp^{v(\lambda)}vs_\beta w$.
        
        \item Let $v\in W^v$ be such that $\ell(vs_\beta)=\ell(v)+\ell(s_\beta)$ and let $w$ be any element of $W^v$. Then $\qp^{vs_\beta(\lambda+\beta^\vee)}s_{v(\beta)}w$ covers $\qp^{v(\lambda)}w$.
    \end{enumerate}
In particular, for any quantum root $\beta$ there exist covers of the form given in Lemma \ref{Lemma : form of covers} with varying dominance class.
\end{Proposition}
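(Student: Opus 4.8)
The plan is to verify items (1) and (2) directly from the definition of the affine Bruhat order together with the characterization of covers via the length function $\ell^a$ (see \eqref{e_characterization_cover}), so the work splits into two parts: showing the relevant pair is comparable in the right direction, and computing the difference of affine lengths to be exactly $1$.

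For item (1), write $\bx=\qp^{v\lambda}vs_\beta w$ and $\by=\qp^{v(\lambda+\beta^\vee)}vw$. First I would observe that $\by=s_{v(\beta)[n]}\bx$ with $n=\langle\lambda,\beta\rangle+1$; indeed $s_{v(\beta)[n]}=\qp^{n v(\beta^\vee)}s_{v(\beta)}$ and a direct computation using $s_\beta\lambda=\lambda-\langle\lambda,\beta\rangle\beta^\vee$ gives $s_{v(\beta)[n]}\bx=\qp^{v(s_\beta\lambda+n\beta^\vee)}s_{v(\beta)}vs_\beta w=\qp^{v(\lambda+\beta^\vee)}vw=\by$. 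Then I must check $\bx<\by$, i.e. that $\bx^{-1}(v(\beta)[n])\notin\Phi^a_+$. Using the explicit criterion stated after the definition of the affine Bruhat order (for $\qp^\mu u$ and $\beta'[m]$ with $m>0$, one needs $m<\langle\mu,\beta'\rangle$, or $m=\langle\mu,\beta'\rangle$ and $u^{-1}\beta'\in\Phi_-$), with $\mu=v\lambda$, $\beta'=v(\beta)$, $m=n=\langle\lambda,\beta\rangle+1$: since $\lambda+\beta^\vee\in Y^{++}_{reg}$ we have $\langle\lambda,\beta\rangle\geq 1-\langle\beta^\vee,\beta\rangle=-1$, hmm — more carefully, regularity of $\lambda$ gives $\langle\lambda,\beta\rangle\geq 1$ when $\beta\in\Phi_+$, so $\langle v\lambda,v(\beta)\rangle=\langle\lambda,\beta\rangle$ and we are in the boundary case $m=\langle\mu,\beta'\rangle+1$?? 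No: here $m=\langle\lambda,\beta\rangle+1>\langle\lambda,\beta\rangle$, so I instead apply the criterion to $\by$ and the \emph{negative} affine root, or equivalently use that $s_{v(\beta)[n]}\by<\by$ is the relation to check. The cleanest route: verify $\by^{-1}(v(\beta)[n])\notin\Phi^a_+$ directly, which unwinds to $|n|<\langle v(\lambda+\beta^\vee),v\beta\rangle=\langle\lambda,\beta\rangle+2$ together with the sign condition — this holds since $n=\langle\lambda,\beta\rangle+1<\langle\lambda,\beta\rangle+2$. Hence $\by$ covers or strictly dominates $\bx$; then I compute $\ell^a(\by)-\ell^a(\bx)$ using \eqref{eq: affine length formula}: $\pr^{Y^{++}}(\by)=\lambda+\beta^\vee$, $\pr^{Y^{++}}(\bx)=\lambda$, so the height term contributes $2\htt(\beta^\vee)$; the relative length terms are $\ell_{v}(vw)=\ell(w)$ and $\ell_{v}(vs_\beta w)=\ell(s_\beta w)=\ell(s_\beta)+\ell(w)$ by hypothesis, and $\ell(s_\beta)=2\htt(\beta^\vee)-1$ by Lemma~\ref{l_characterization_almost_simpleness}(4) since $\beta$ is quantum. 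Thus $\ell^a(\by)-\ell^a(\bx)=2\htt(\beta^\vee)+\ell(w)-\big(2\htt(\beta^\vee)-1+\ell(w)\big)=1$, and by \eqref{e_characterization_cover} $\by$ covers $\bx$.

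For item (2), write $\bx=\qp^{v\lambda}w$ and $\by=\qp^{vs_\beta(\lambda+\beta^\vee)}s_{v(\beta)}w$. Similarly I identify $\by=s_{v(\beta)[n]}\bx$ with $n=-1$: compute $s_{v(\beta)[-1]}\bx=\qp^{-v(\beta^\vee)}s_{v(\beta)}\qp^{v\lambda}w=\qp^{v(s_\beta\lambda-\beta^\vee)}s_{v(\beta)}w$, and $s_\beta\lambda-\beta^\vee=s_\beta(\lambda+\beta^\vee)$, giving $\by$. I then check $\bx^{-1}(v(\beta)[-1])\notin\Phi^a_+$ via the criterion for $m=n=-1<0$: one needs $|n|<-\langle v\lambda,v\beta\rangle$, or $|n|=-\langle v\lambda,v\beta\rangle$ with the sign condition — but $\langle\lambda,\beta\rangle>0$ by regularity, so the direct condition fails; instead, as before, check the relation on the $\by$ side: $\by^{-1}(-v(\beta)[1])\notin\Phi^a_+$, i.e. $1<\langle vs_\beta(\lambda+\beta^\vee),v(-\beta)\rangle=-\langle s_\beta(\lambda+\beta^\vee),\beta\rangle=\langle\lambda+\beta^\vee,\beta\rangle=\langle\lambda,\beta\rangle+2$, which holds. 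So $\by>\bx$, and the sign/minimality bookkeeping is where I should be most careful. For the length difference: $\pr^{Y^{++}}(\by)=\lambda+\beta^\vee$ again (as $s_\beta(\lambda+\beta^\vee)$ lies in the $W^v$-orbit of the dominant $\lambda+\beta^\vee$), contributing $2\htt(\beta^\vee)$; the element $vs_\beta$ has $\ell(vs_\beta)=\ell(v)+\ell(s_\beta)$ by hypothesis so it is the minimal coset representative computation — here I must use $v^{\mu}$ with $\mu=vs_\beta(\lambda+\beta^\vee)$ and the hypothesis $\ell(vs_\beta)=\ell(v)+\ell(s_\beta)$ to pin down $v^\mu$ and evaluate $\ell_{v^\mu}(s_{v(\beta)}w)$; combined with $\ell(s_\beta)=2\htt(\beta^\vee)-1$ this should again produce a net difference of exactly $1$. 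Finally, for the concluding sentence ("for any quantum root $\beta$ there exist covers of the form given in Lemma~\ref{Lemma : form of covers} with varying dominance class"), I simply note that taking $w=1_{W^v}$ in (1) with $v=1_{W^v}$ gives a cover $\qp^{\lambda+\beta^\vee}\,1\,\lhd\,\qp^{\lambda}s_\beta$ with $n=\langle\lambda,\beta\rangle+1$, which is precisely case $iii)$ of Lemma~\ref{Lemma : form of covers}, and the dominance classes differ since $\lambda\neq\lambda+\beta^\vee$; existence of such $\lambda\in Y^{++}_{reg}$ with $\lambda+\beta^\vee\in Y^{++}_{reg}$ follows because $Y^{++}_{reg}$ is a nonempty cone open in the relevant sense, so one can take $\lambda$ deep enough in the dominant chamber.

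The main obstacle I expect is \textbf{not} the length computation — that is essentially forced by Lemma~\ref{l_characterization_almost_simpleness}(4) — but rather the careful verification that the pair is comparable in the correct direction and that the minimal coset representatives $v^\lambda$, $v^{\mu}$ are what the hypotheses $\ell(s_\beta w)=\ell(s_\beta)+\ell(w)$ resp. $\ell(vs_\beta)=\ell(v)+\ell(s_\beta)$ force them to be; getting the signs right in the criterion "$\bx^{-1}(\beta[n])\notin\Phi^a_+$" for $n=-1$ and $n=\langle\lambda,\beta\rangle+1$, and correctly matching $\pr^{Y^+}$ versus $\pr^{Y^{++}}$ through the $W^v$-action, is the delicate bookkeeping. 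Once comparability is established, \eqref{e_characterization_cover} converts the problem to the length identity, which closes immediately.
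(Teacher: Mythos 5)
Your proof is correct and follows the same route as the paper: apply the explicit formula $\ell^a(\qp^{v\lambda}w)=2\htt(\lambda)+\ell(v^{-1}w)-\ell(v)$ for regular $\lambda$, use Lemma~\ref{l_characterization_almost_simpleness} to get $\ell(s_\beta)=2\htt(\beta^\vee)-1$, and read off that the length gap is exactly one precisely under the stated length-additivity hypotheses. The extra work you do checking comparability via the root-inversion criterion is correct but dispensable --- since both elements lie in $W^+$ and differ by an affine reflection, they are automatically comparable and strict compatibility of $\ell^a$ with $<$ settles the direction --- and the minimal coset representative $v^\mu=vs_\beta$ in item (2) is already pinned down by regularity of $\lambda+\beta^\vee$, not by the hypothesis $\ell(vs_\beta)=\ell(v)+\ell(s_\beta)$, which enters only in the final length count.
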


\begin{proof}
    Recall the expression of the affine Bruhat length obtained in \cite[Prop. 2.10]{philippe2023grading}: For any $\lambda \in Y^{++}_{reg}$, $v,w\in W^v$:
    \begin{equation}
        \ell^a(\qp^{v(\lambda)}w)=2\htt(\lambda)+\ell(v^{-1}w)-\ell(v).
    \end{equation}
    Thus, for any $\lambda \in Y^{++}_{reg}$ such that $\lambda+\beta^\vee \in Y^{++}_{reg}$:
    \begin{align*}
        \ell^a(s_{v(\beta)[\langle \lambda,\beta\rangle +1]}\qp^{v(\lambda)}v s_\beta w)&=\ell^a(\qp^{v(\lambda+\beta^\vee)}v w)\\ &=2\htt(\lambda+\beta^\vee)+\ell(w)-\ell(v)\\
        &=\ell^a(\qp^{v(\lambda)}vs_\beta w)+2 \htt(\beta^\vee)+\ell(w)-\ell(s_\beta w).\\
        \ell^a(s_{v(\beta)[-1]}\qp^{v(\lambda)}w) &= \ell^a(\qp^{vs_\beta(\lambda+\beta^\vee)}vs_\beta v^{-1}w)\\ &=2\htt(\lambda+\beta^\vee)+\ell(v^{-1}w)-\ell(vs_\beta)\\ &=\ell^a(\qp^{v(\lambda)}w)+2\htt(\beta^\vee)+\ell(v)-\ell(vs_\beta).
    \end{align*}
    Therefore, since quantum root are characterized by the fact that $2\htt(\beta^\vee)=\ell(s_\beta)+1$ (by Lemma~\ref{l_characterization_almost_simpleness}), we obtained that the length difference $ \ell^a(s_{v(\beta)[\langle \lambda,\beta\rangle +1]}\qp^{v(\lambda)}v s_\beta w)-\ell^a(\qp^{v(\lambda)}vs_\beta w)$ (resp. $\ell^a(s_{v(\beta)[-1]}\qp^{v(\lambda)}w)-\ell^a(\qp^{v(\lambda)}w)$) is exactly one if and only if condition 1. (resp. condition 2.) is satisfied. 
\end{proof}

\printindex

\bibliography{bibliographie.bib}

\bibliographystyle{alpha}

\end{document}